\documentclass[11pt]{amsart}

\setcounter{tocdepth}{1}
\setcounter{secnumdepth}{4}

\usepackage{amsmath, amssymb, amsthm,mathtools,tikz,tikz-cd,graphicx,color,stmaryrd,hyperref,enumerate}

\usepackage[shortlabels]{enumitem}

\usepackage[utf8]{inputenc}

\usepackage[margin=1in]{geometry}
\usepackage{cleveref}

\usepackage[english]{babel}

\hypersetup{
    colorlinks,
    citecolor=blue,
    filecolor=blue,
    linkcolor=blue,
    urlcolor=blue
}

\addtolength{\evensidemargin}{-40pt}
\addtolength{\oddsidemargin}{-40pt}

\newcommand{\C}{{\mathbb C}}

\newcommand{\mcD}{\mathcal{D}}
\newcommand{\mcG}{\mathcal{G}}

\newcommand{\bbG}{\mathbb{G}}

\newcommand{\mcH}{\mathcal{H}}

\newcommand{\G}{\mathbb{G}}

\newcommand{\p}{\mathfrak{p}}

\newcommand\ii{{\mathrm{i}}}

\newcommand{\St}{{\mathrm{St}}}

\newcommand{\st}{{\mathrm{st}}}
\newcommand{\unst}{{\mathrm{unst}}}

\newcommand\rG{{\mathrm{G}}}

\newcommand{\Oo}{\mathcal{O}}

\newcommand{\sgn}{\operatorname{sgn}}

\newcommand{\sign}{\operatorname{sgn}}

\newcommand{\Ind}{\operatorname{Ind}}

\newcommand{\diag}{\operatorname{diag}}

\newcommand{\bG}{\mathbf{G}}
\newcommand{\cA}{\mathcal{A}}
\newcommand{\cB}{\mathcal{B}}

\newcommand{\R}{{\mathbb R}}
\newcommand{\rZ}{{\mathrm{Z}}}
\newcommand{\Z}{{\mathbb Z}}
\newcommand{\F}{{\mathbb F}}
\newcommand{\bH}{{\mathbb H}}

\newcommand{\cG}{{\mathcal G}}
\newcommand{\cL}{{\mathcal L}}

\newcommand{\Q}{{\mathbb Q}}
\newcommand{\cO}{{\mathfrak o}}

\newcommand{\cD}{{\mathcal D}}

\newcommand{\ft}{{\mathfrak t}}

\newcommand{\fs}{{\mathfrak{s}}}

\newcommand{\one}{{\mathbf 1}}

\newcommand{\fB}{\mathfrak{B}}

\newcommand{\cE}{\mathcal{E}}

\newcommand{\res}{\mathrm{res}}
\newcommand{\Irr}{\operatorname{Irr}}

\newcommand{\Lie}{\operatorname{Lie}}

\newcommand{\A}{\mathbb{A}}

\newcommand{\princ}{\mathrm{princ}}
\newcommand{\cusp}{\mathrm{cusp}}
\newcommand{\tor}{\mathrm{tor}}

\newcommand{\der}{{\mathrm{der}}}

\newcommand{\red}{{\mathrm{red}}}

\newcommand{\enh}{{\mathrm{e}}}

\newcommand{\LLC}{{\mathrm{LLC}}}

\newcommand{\SL}{{\mathrm{SL}}}

\newcommand{\GL}{{\mathrm{GL}}}
\newcommand{\SO}{{\mathrm{SO}}}
\newcommand{\tS}{{\mathrm{S}}}
\newcommand{\GO}{{\mathrm{GO}}}

\newcommand{\tO}{{\mathrm{O}}}
\newcommand{\tU}{{\mathrm{U}}}

\newcommand{\GU}{{\mathrm{GU}}}
\newcommand{\SU}{{\mathrm{SU}}}

\newcommand{\PGL}{{\mathrm{PGL}}}
\newcommand{\GSp}{{\mathrm{GSp}}}
\newcommand{\GSpin}{{\mathrm{GSpin}}}
\newcommand{\Spin}{{\mathrm{Spin}}}
\newcommand{\PGSpin}{{\mathrm{PGSpin}}}

\newcommand{\Sp}{{\mathrm{Sp}}}

\newcommand{\Cent}{{\mathrm{Z}}}

\DeclareMathOperator{\Gal}{Gal}

\DeclareMathOperator{\cInd}{c-Ind}

\DeclareMathOperator{\tr}{tr}
\DeclareMathOperator{\fdeg}{fdeg}

\DeclareMathOperator{\Nm}{Nm}
\DeclareMathOperator{\Ch}{Ch}

\DeclareMathOperator{\Sc}{Sc}

\numberwithin{equation}{subsection}
\newtheorem{thm}[equation]{Theorem}
\newtheorem{property}[equation]{Property}
\newtheorem{lemma}[equation]{Lemma}

\newtheorem{cor}[equation]{Corollary}

\numberwithin{equation}{subsection}
\newtheorem{prop}[equation]{Proposition}

\theoremstyle{definition}
\numberwithin{equation}{subsection}
\newtheorem{remark}[equation]{Remark}

\newtheorem{numberedparagraph}[equation]{}

\newtheorem{conj}[equation]{Conjecture}

\newtheorem{defn}[equation]{Definition}

\newtheorem{prop-def}{Proposition-Definition}

\makeatletter
\def\@settitle{\begin{center}
    \normalfont
\uppercasenonmath\@title
  \@title
  \end{center}
}

\title[Explicit Local Langlands correspondence for $\GSp_4$ and $\Sp_4$]{\textbf{The explicit Local Langlands Correspondence for $\GSp_4$, $\Sp_4$ and stability}\\\MakeLowercase{\textit{with an application to Modularity Lifting}}}

\author{Kenta Suzuki}
\address{M.I.T., 77 Massachusetts Avenue,
Cambridge, MA, USA}
\email{kjsuzuki@mit.edu}

\author{Yujie Xu}
\address{M.I.T., 77 Massachusetts Avenue,
Cambridge, MA, USA}
\email{yujiexu@mit.edu}

\begin{document}

\maketitle

{\centering\footnotesize {\fontfamily{qbk}\selectfont \textit{Dedicated to Professor George Lusztig, with admiration.}}\par}

\begin{abstract}
We give a purely local proof of the explicit Local Langlands Correspondence for $\GSp_4$ and $\Sp_4$. Moreover, we give a unique characterization in terms of stability of $L$-packets and other properties. 
Finally, in the appendix, we give an application of our explicit local Langlands correspondence to modularity lifting. 
\end{abstract}

\tableofcontents

\section{Introduction}

Let $F$ be a non-archimedean local field and $\bG$ a connected reductive algebraic group over $F$. Let $G^{\vee}$ be the group of $\C$-points of the reductive group whose root datum is the coroot datum of $\bG$. The Local Langlands Conjecture predicts a surjective map\footnote{To avoid overunning the margins, we use abbreviations ``irred.'' for ``irreducible'', ``repres.'' for ``representations'', ``iso.'' for ``isomorphism'', ``cont.'' for ``continuous'' and ``conj.'' for ``conjugacy''.

For simplicity, we only state the conjecture for quasisplit $p$-adic groups in the introduction, which is sufficient for our current paper.} 
\[\begin{Bmatrix}\text{irred. smooth}\\
\text{repres. } \pi\text{ of }\bG(F)\end{Bmatrix}/\text{iso.}\;\longrightarrow
\; \begin{Bmatrix}L\text{-parameters
}
\\ \text{i.e.~cont. homomorphisms}
\\
\varphi_{\pi}\colon W_F\times\SL_2(\C)\to G^{\vee}\rtimes W_F\end{Bmatrix}/\text{$G^\vee$-conj.},\]
where $W_F$ is the Weil group of $F$. The fibers of this map, called \textit{$L$-packets}, are expected to be finite. 
In order to obtain a bijection between the group side and the Galois side, the above Conjecture was later \textit{enhanced} (\'a la Deligne, Vogan, Lusztig etc.).~On the Galois side, one considers \textit{enhanced $L$-parameters}. 

\noindent Many cases of the Local Langlands Conjecture have been established, most notably: 
\begin{itemize}
    \item for $\GL_n(F)$:~\cite{Harris-Taylor, Henniart-LLC-GLn,Scholze-LLC};
    \item for $\SL_n(F)$:~\cite{Hiraga-Saito} for $\mathrm{char}(F)=0$ and~\cite{ABPS-SL} for $\mathrm{char}(F)>0$~(see also \cite{Gelbart-Knapp-SLn-1981,Gelbart-Knapp-SLn-1982});
    \item quasi-split classical groups for $F$ of characteristic zero: \cite{Arthur-classification, Moeglin} etc.
    \item exceptional group $G_2$: \cite{LLC-G2}
\end{itemize}
    For classical groups, the main methods in literature are either (1) to classify representations of these groups in terms of representations of the general linear groups via twisted endoscopy, and to compare the stabilized twisted trace formula on the general linear group side and the stabilized (twisted) trace formula on the classical group side, or (2) to use the theta correspondence. 

    In \cite{aubert-xu-Hecke-algebra}, the second author took a completely different approach 
    to the construction of explicit Local Langlands Correspondences for $p$-adic reductive groups via reduction to LLC for supercuspidal representations of proper Levi subgroups. This strategy was then applied in \cite{LLC-G2} to construct the explicit Local Langlands Correspondence for $p$-adic $G_2$, which is the first known case in literature of Local Langlands Correspondence for exceptional groups. In \cite{G2-stability}, the authors uniquely characterize the Local Langlands Correspondence constructed in \cite{LLC-G2} using an extension of the \textit{atomic stability} property of $L$-packets as formulated by DeBacker, Kaletha etc.~(see for example \cite[Conjecture 2.2]{kaletha-survey-2022}), which is a generalization of the stability property in \cite{DeBacker-Reeder}. 
    To do this, we compute the coefficients of certain local character expansions building on methods in \cite{harish-chandra-local-character,Debacker-Sally-germs,Barbasch-Moy-LCE}.

    In this article, we apply this general strategy pioneered in \cite{LLC-G2, G2-stability} and construct the explicit Local Langlands Correspondence for the symplectic groups $\GSp_4$ and $\Sp_4$ over an arbitrary non-archimedean local field of residual characteristic $\neq 2$, with explicit $L$-packets and explicit matching between the group and Galois sides.

    More precisely, we use a combination of the Langlands-Shahidi method, (extended affine) Hecke algebra techniques, Kazhdan-Lusztig theory and generalized Springer correspondence--in particular, the AMS Conjecture on cuspidal support \cite[Conjecture 7.8]{AMS18}. For \textit{intermediate series}, i.e.~Bernstein series with supercuspidal support ``in between'' a torus and $G$ itself, we use our previous result on Hecke algebra isomorphisms and local Langlands correspondence for Bernstein series obtained in \cite{aubert-xu-Hecke-algebra}. For principal series (i.e.~Bernstein series with supercuspidal support in a torus), we improve on previous works we use 
    \cite{Roche-principal-series, Reeder-isogeny,ABPS-KTheory,Ram} to match the group and Galois sides.

    For supercuspidal representations, we make explicit the theory of \cite{Kal-reg,Kaletha-nonsingular} for the non-singular supercuspidal representations and their $L$-packets. 
    For 
    \textit{singular}\footnote{which we define to be simply the ones that are \textit{not non-singular in the sense of \cite{Kaletha-nonsingular}}} supercuspidal representations, which are 
    not covered in \textit{loc.cit.}
    , we use \cite[Conjecture 7.8]{AMS18} (see Property \ref{property:AMS-conjecture-7.8}) to exhibit them in \textit{mixed} $L$-packets with non-supercuspidal representations. These mixed $L$-packets are drastically different from the supercuspidal $L$-packets of \cite{Kal-reg,Kaletha-nonsingular}. 

Furthermore, our LLC satisfies several expected properties, including the expectation that $\Irr(S_{\varphi})$ parametrizes the internal structure of the $L$-packet $\Pi_{\varphi}(G)$, where $S_{\varphi}$ is the component group of the centralizer of the (image of the) $L$-parameter $\varphi$. Moreover, we explicitly compute the coefficients of local character expansions of Harish-Chandra characters for certain non-supercuspidal representations (see $\S$\ref{stability-section}), which allows us to give a unique characterization of our LLC using \textit{stability} for $L$-packets.

Finally, \textit{explicit} Local Langlands Correspondences (e.g.~explicit Kazhdan--Lusztig triples) have important applications to number theory, such as to the Taylor--Wiles methods and modularity lifting theorems. In Appendix \ref{TW-appendix}, we record such an application, following \cite{BCGP,Thorne-BKvanishing, whitmore2022taylorwiles}.

\subsection{Main results}
We now state our main results. Let $\Irr^{\fs}(G)$ be the Bernstein series attached to the inertial class $\fs=[L,\sigma]$ (for more details, see \cite[(3.3.2)]{LLC-G2}
). 
Let $\Phi_e(G)$ denote the set of $G^\vee$-conjugacy classes of enhanced $L$-parameters for $G$. Let $\Phi_e^{\fs^{\vee}}(G)\subset \Phi_e(G)$ be the Bernstein series on the Galois side, whose cuspidal support lies in $\fs^{\vee}=[L^{\vee},(\varphi_{\sigma},\rho_{\sigma})]$, i.e.~the image under LLC for $L$ of $\fs$ (for more details, see \cite[\S 2.4]{LLC-G2}
). 
For any $\fs=[L,\sigma]_G\in\fB(G)$, the LLC for $L$ given by $\sigma\mapsto(\varphi_\sigma,\rho_\sigma)$ is expected to induce a bijection (see \cite[Conjecture~2]{AMS18} and Conjecture~\ref{conj:matching}):
\begin{equation}\label{Bernstein-block-isom-intro}
    \mathrm{Irr}^{\fs}(G)\xrightarrow{\sim}\Phi_\enh^{\fs^{\vee}}(G). 
\end{equation}
For the group $\GSp_4$ and $\Sp_4$, by \cite[Main Theorem]{aubert-xu-Hecke-algebra}, we have such a bijection \eqref{Bernstein-block-isom-intro} for each Bernstein series $\mathrm{Irr}^{\fs}(G)$ of \textit{intermediate series}. On the other hand, the analogous bijection to \eqref{Bernstein-block-isom-intro} holds for \textit{principal series} Bernstein blocks thanks to \cite{Roche-principal-series,Reeder-isogeny,ABPS-KTheory, AMS18}. 

Let $G=\GSp_4(F)$ or $\Sp_4(F)$, and $p\neq 2$. Combined with the detailed analysis in all of $\mathsection$\ref{sec:group-side} through $\mathsection$\ref{stability-section}, we explicitly construct the Local Langlands Correspondence
\begin{equation} \label{main-thm-bijection}
\begin{split}
   \LLC\colon \mathrm{Irr}(G)&\xrightarrow{1\text{-}1}\Phi_\enh(G)\\
    \pi &\mapsto (\varphi_{\pi},\rho_{\pi}),
\end{split}\end{equation}
and obtain the following result (see Theorem~\ref{main-thm}). 
\begin{thm} 
The explicit Local Langlands Correspondence  ~\eqref{main-thm-bijection} verifies $\Pi_{\varphi_\pi}(G)\xrightarrow{\sim}\Irr(S_{\varphi_\pi})$ for any $\pi\in \Irr(G)$, and satisfies \eqref{Bernstein-block-isom-intro} for any $\fs\in\fB(G)$,
where $\fs^\vee=[L^\vee,(\varphi_\sigma,\rho_\sigma)]_{G^\vee}$, as well as
a list of properties (see \S\ref{section-properties}) that uniquely characterize our correspondence. 
\end{thm}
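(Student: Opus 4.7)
The plan is to exploit the Bernstein decomposition $\Irr(G)=\bigsqcup_{\fs\in\fB(G)}\Irr^{\fs}(G)$ on the group side and the corresponding decomposition $\Phi_\enh(G)=\bigsqcup_{\fs^\vee}\Phi_\enh^{\fs^\vee}(G)$ on the Galois side (the latter governed by the AMS cuspidal support map, Property~\ref{property:AMS-conjecture-7.8}), and construct the bijection \eqref{main-thm-bijection} one Bernstein block at a time. For $G=\GSp_4(F)$ or $\Sp_4(F)$ there are only finitely many shapes of inertial classes, namely: (i) principal series blocks with $L=T$; (ii) intermediate series blocks with $L$ a proper Levi strictly containing the torus; and (iii) supercuspidal blocks with $L=G$. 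The three regimes require different tools, and the first step is to organize the analysis from $\mathsection\ref{sec:group-side}$ onward so that each $\fs$ is assigned to exactly one regime.

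For principal series I would combine Roche's Hecke algebra presentation \cite{Roche-principal-series}, Reeder's isogeny analysis \cite{Reeder-isogeny}, and the $K$-theoretic realization of \cite{ABPS-KTheory}, together with Ram-style combinatorics, to produce an explicit bijection with $\Phi_\enh^{\fs^\vee}(G)$ matching Langlands quotients to enhanced parameters. For intermediate series I would invoke the Hecke algebra isomorphism of \cite{aubert-xu-Hecke-algebra} (which already yields \eqref{Bernstein-block-isom-intro}) and then pin down the normalization by insisting on compatibility with Langlands--Shahidi $L$-factors and parabolic induction. For supercuspidal blocks I would first apply the explicit version of Kaletha's non-singular construction \cite{Kal-reg, Kaletha-nonsingular} to all non-singular $\pi$; the remaining, genuinely singular supercuspidals are then handled by working backwards from \cite[Conjecture~7.8]{AMS18}: their enhanced parameters must lie in the cuspidal support fiber of some non-supercuspidal enhanced parameter, and placing them into \emph{mixed} $L$-packets with the already constructed non-supercuspidal members of the same fiber is forced by the AMS bijection on $\Phi_\enh^{\fs^\vee}$.

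Once the piecewise bijections are in hand, gluing them produces the map \eqref{main-thm-bijection}. To verify $\Pi_{\varphi_\pi}(G)\xrightarrow{\sim}\Irr(S_{\varphi_\pi})$, I would read off the component group $S_{\varphi_\pi}$ for each parameter from the explicit $G^\vee$-centralizer (there are only a handful of cases: abelian $S_\varphi$ for generic parameters, and specific non-abelian cases for the endoscopic/singular packets), and check that the number of inequivalent irreducible constituents of $\Pi_{\varphi_\pi}(G)$ equals $|\Irr(S_{\varphi_\pi})|$ with the matching produced by the Hecke algebra/AMS construction of the enhancement $\rho_\pi$. Then the list of expected properties in $\mathsection\ref{section-properties}$ (compatibility with parabolic induction, central characters, twists, contragredient, Langlands--Shahidi $L$- and $\varepsilon$-factors, formal degrees, and the GSpin-to-Sp restriction) is verified property by property using the explicit formulas produced in the construction.

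For uniqueness, I would argue that any other correspondence satisfying this list must agree with ours on the generic parameters via Shahidi's conjecture plus Langlands--Shahidi $L$-factors, and then propagate the agreement to non-generic parameters via stability of the packet characters, using the local character expansions computed in $\mathsection\ref{stability-section}$ as in \cite{G2-stability}. The main obstacle will be the singular supercuspidal packets: they are not covered by \cite{Kal-reg,Kaletha-nonsingular}, so both the construction of the enhancement map $\pi\mapsto\rho_\pi$ and the verification of stability force one to compute coefficients of Harish-Chandra local character expansions on mixed packets, extending the Barbasch--Moy and DeBacker--Sally techniques of \cite{harish-chandra-local-character, Debacker-Sally-germs, Barbasch-Moy-LCE}. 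Pinning down these coefficients precisely enough to see the stable linear combinations is the delicate step that drives the argument.
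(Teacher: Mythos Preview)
Your proposal is correct and follows essentially the paper's approach: construct the bijection Bernstein-block-by-block using Roche/Reeder/ABPS for principal series, \cite{aubert-xu-Hecke-algebra} for intermediate series, Kaletha for non-singular supercuspidals, and the AMS cuspidal support plus the stability computations of \S\ref{stability-section} to place the singular supercuspidals into mixed packets. The only minor deviation is that the paper's uniqueness claim (Theorem~\ref{main-thm}) rests on Properties~\ref{langlands-class}, \ref{property:size of L-packets}, \ref{infinitesimal-prop}, \ref{property:AMS-conjecture-7.8}, \ref{property:L-packets} (and Property~\ref{sp-gsp-functoriality} for $\Sp_4$) rather than on Langlands--Shahidi $L$/$\varepsilon$-factors or contragredient compatibility, with stability already absorbed into the construction of the mixed packets in \S\ref{section-mixed-packets-GSp4}.
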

\noindent In other words,  
\begin{enumerate}
    \item to each explicitly described $\pi\in\Irr(G)$, we attach an explicit $L$-parameter $\varphi_{\pi}$ and determine its enhancement $\rho_{\pi}$ explicitly; 
    \item to each $\varphi\in \Phi(G)$, we describe (the shape of) 
    its $L$-packet $\Pi_{\varphi}(G)$, and give an internal parametrization in terms of $\rho\in\Irr(S_{\varphi})$;
    \item Moreover, for non-supercuspidal representations, we specify the precise parabolic induction that it occurs in. 
\end{enumerate}

\noindent\textit{Acknowledgements.} 
Y.X.~was supported by NSF grant DMS~2202677. K.S.~was partially supported by MIT-UROP. The authors would like to thank Jack Thorne for bringing their attention to the applications of explicit LLC to modularity lifting and for help with references. 
The authors would also like to thank Anne-Marie Aubert, Stephen DeBacker, Dick Gross, Ju-Lee Kim, George Lusztig, Maarten Solleveld, Loren Spice, Jack Thorne and Dmitri Whitmore for helpful conversations related to this project. The authors would like to thank George Lusztig and Wei Zhang for their continued interest and encouragement. 

The authors would like to thank Maarten Solleveld for helpful feedback on a previous version of this paper. The authors would like to thank MIT for providing an intellectually stimulating working environment.

\section{Preliminaries}

Let $F$ be a nonarchimedean local field. Let $J_2:=\begin{pmatrix}&1\\1\end{pmatrix}$ and $\beta:=\begin{pmatrix}&J_2\\-J_2\end{pmatrix}$. Consider the following groups
\begin{align*}
\Sp_4&:=\{g\in\GL_4(F):{^T}g\beta g=\beta\}\\
\GSp_4&:=\{g\in\GL_4(F):{^T}g\beta g=\mu(g)\beta,\text{ for some }\mu(g)\in F^\times\}.
\end{align*}
In particular, there is an exact sequence
\(
1\to \Sp_4(F)\to \GSp_4(F)\xrightarrow{\mu}F^\times\to 1.
\)
The Langlands dual groups are $\GSp_4^\vee=\GSpin_5(\C)$ and $\Sp_4^\vee=\PGSpin_5(\C)\cong \SO_5(\C)$. Here $\GSpin_5:=(\GL_1\times\Spin_5)/\mu_2$ where $\mu_2$ is diagonally embedded as in \cite[Definition~2.3]{asgari}.

\subsection{Root datum}
The following are the data for the root datum for $\Sp_4,\GSp_4$ \cite{Tadic-symplectic-1994,asgari,asgari-shahidi}, of type $C_2$. We also realize everything in terms of the torus $T=\{(a_1,a_2,b_2,b_1):a_1b_1=a_2b_2=\mu\}$.
\begin{itemize}
    \item For $\Sp$, the lattice is $X^*(T):=\Z\{\epsilon_1,\epsilon_2\}$, the roots are $\Delta:=\{\pm\epsilon_1\pm\epsilon_2,\pm2\epsilon_1,\pm2\epsilon_2\}$, and the simple roots are $\{\epsilon_1-\epsilon_{2},2\epsilon_2\}$.
    \item For $\GSp$, the lattice is $X^*(T):=\Z\{\epsilon_0,\epsilon_1,\epsilon_2\}$, the roots are $\Delta:=\{\pm\epsilon_1\pm\epsilon_2\}\cup\{\pm(\epsilon_0-2\epsilon_1),\pm(\epsilon_0-2\epsilon_2),\pm(\epsilon_0-\epsilon_1-\epsilon_2)\}$, and the simple roots are $\{\epsilon_1-\epsilon_2,2\epsilon_2-\epsilon_0\}$.
\end{itemize}
Here, $\epsilon_i(a_1,a_2,b_1,b_2)=a_i$ for $i=1,2$ and $\epsilon_0(a_1,a_2,b_2,b_1)=\mu$.

The root groups are given by:
\begin{align*}
U_{\epsilon_i-\epsilon_j}&=\begin{pmatrix}1+x\one_{ij}\\&1-x\one_{n+1-j,n+1-i}\end{pmatrix}\\
U_{\epsilon_i+\epsilon_j}&=\begin{pmatrix}1&x(\one_{i,n+1-j}+\one_{j,n+1-i})\\&1\end{pmatrix}\\
U_{2\epsilon_i}&=\begin{pmatrix}1&x\one_{i,n+1-i}\\&1\end{pmatrix}\\
U_{-\epsilon_i-\epsilon_j}&=\begin{pmatrix}1\\x(\one_{n+1-i,j}+\one_{n+1-j,i})&1\end{pmatrix}\\
U_{-2\epsilon_i}&=\begin{pmatrix}1&\\x\one_{n+1-i,i}&1\end{pmatrix},
\end{align*}
where $\one_{ij}$ is the matrix with a single one in the $(i,j)$-component.

Letting $\alpha:=\epsilon_1-\epsilon_2$ and $\beta:=2\epsilon_2$ (or $2\epsilon_2-\epsilon_0$, for $\GSp$), and $\delta:=-2\epsilon_1$ (or $\epsilon_0-2\epsilon_1$ for $\GSp$) we obtain:
\begin{center}  \includegraphics[scale=0.05]{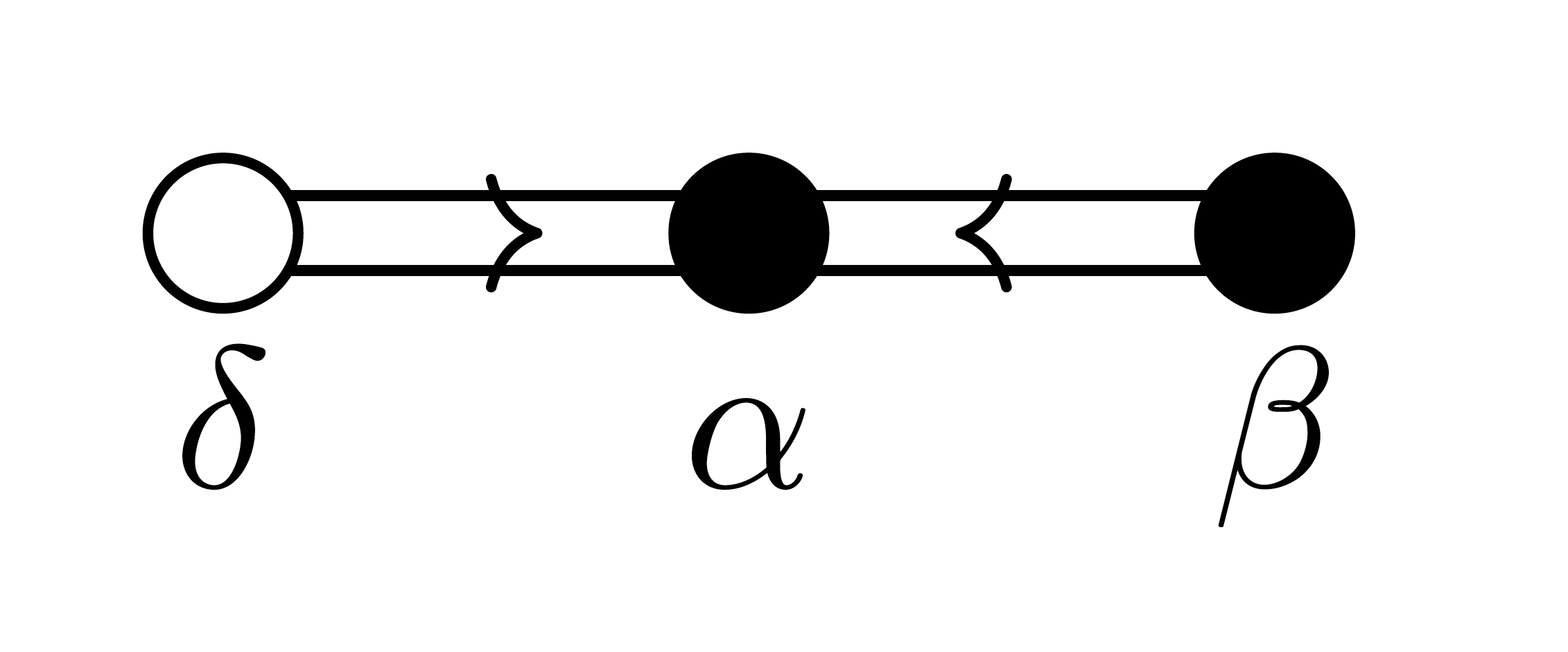}\end{center}
Coroots are given by $\alpha^\vee:=\frac{2(\alpha,-)}{(\alpha,\alpha)}$. For $\Sp_4$ and $\GSp_4$, they are of type $B_2$:
\begin{itemize}
    \item $X_*(T):=\Z\{\epsilon_1^\vee,\epsilon_2^\vee\}$, and the simple coroots are $\{\alpha^\vee:=\epsilon_1^\vee-\epsilon_2^\vee,\beta^\vee:=\epsilon_2^\vee\}$. 
    \item $X_*(T):=\Z\{\epsilon_0^\vee,\epsilon_1^\vee,\epsilon_2^\vee\}$, and the simple coroots are $\{\alpha^\vee:=\epsilon_1^\vee-\epsilon_2^\vee,\beta^\vee:=\epsilon_2^\vee\}$.
\end{itemize}

Here, $\epsilon_0^\vee(t_0)\epsilon_1^\vee(t_1)\epsilon_2^\vee(t_2)=(t_1,t_2,t_0t_2^{-1},t_0t_1^{-1})$.

The Dynkin diagram is:
\begin{center}  \includegraphics[scale=0.05]{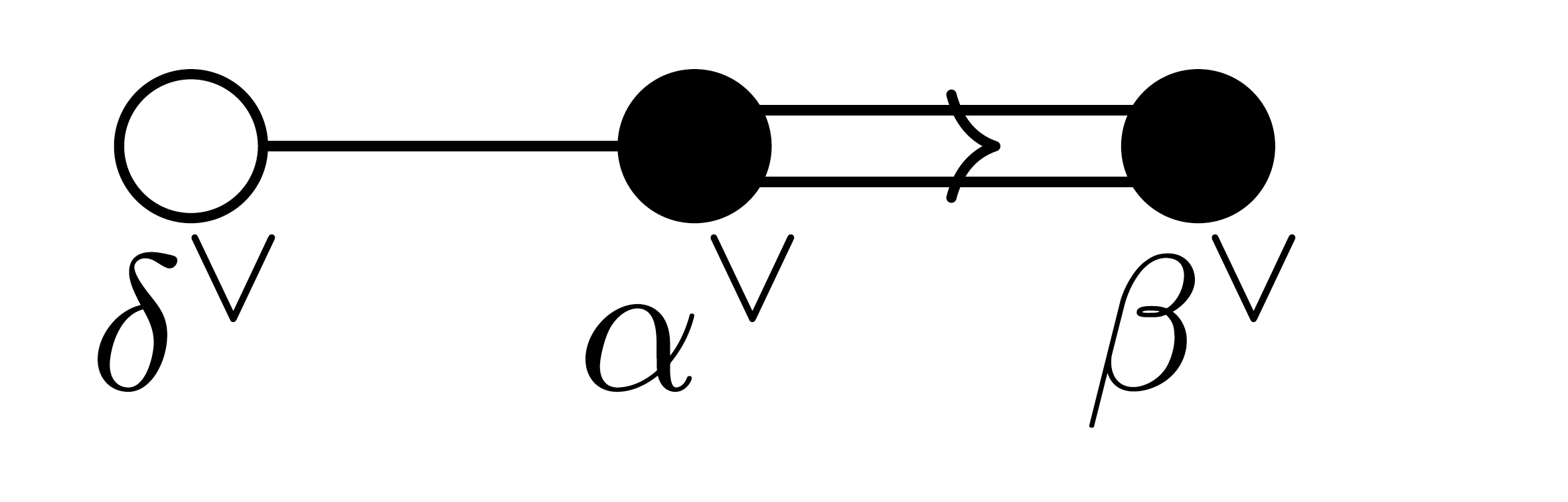}\end{center}
\begin{remark}\label{gsp4-self-dual}
$\GSp_4$ happens to be self-dual, under the following isomorphism:
\begin{align}
    X^*(T)=\Z\{\epsilon_0,\epsilon_1,\epsilon_2\}&\to X_*(T)=\Z\{\epsilon_0^\vee,\epsilon_1^\vee,\epsilon_2^\vee\}\nonumber\\
    \epsilon_0&\mapsto -2\epsilon_0^\vee-\epsilon_1^\vee-\epsilon_2^\vee\label{gsp4-self-dual-iso}\\
    \epsilon_1&\mapsto-\epsilon_0^\vee\nonumber\\
    \epsilon_2&\mapsto-\epsilon_0^\vee-\epsilon_2^\vee,\nonumber
\end{align}
where $\alpha_1\mapsto\alpha_2^\vee$ and $\alpha_2\mapsto\alpha_1^\vee$, and its inverse is given by $\epsilon_0^\vee\mapsto-\epsilon_1,\epsilon_1^\vee\mapsto\epsilon_1+\epsilon_2-\epsilon_0,\epsilon_2^\vee\mapsto\epsilon_1-\epsilon_2$.
\end{remark}

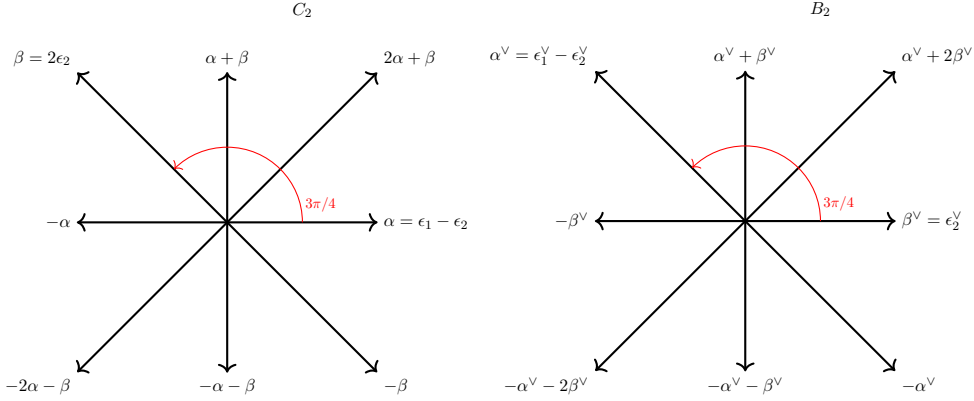
\begin{figure}
  \begin{tikzpicture}
    \foreach\ang in {90,180,270,360}{
     \draw[->,black!80!black,thick] (0,0) -- (\ang:2cm);
    }
    \foreach\ang in {45,135,225,315}{
     \draw[->,black!80!black,thick] (0,0) -- (\ang:2.82cm);
    }
    \draw[red,->](1,0) arc(0:135:1cm)node[pos=0.1,right,scale=0.5]{$3\pi/4$};
    \node[anchor=west,scale=0.6] at (2,0) {$\alpha=\epsilon_1-\epsilon_2$};
    \node[anchor=south east,scale=0.6] at (-2,2) {$\beta=2\epsilon_2$};
    \node[anchor=east,scale=0.6] at (-2,0) {$-\alpha$};
    \node[anchor=south,scale=0.6] at (0,2) {$\alpha+\beta$};
    \node[anchor=south west,scale=0.6] at (2,2) {$2\alpha+\beta$};
    \node[anchor=north east,scale=0.6] at (-2,-2) {$-2\alpha-\beta$};
    \node[anchor=north,scale=0.6] at (0,-2) {$-\alpha-\beta$};
    \node[anchor=north west,scale=0.6] at (2,-2) {$-\beta$};
    \node[anchor=north,scale=0.6] at (1,3) {$C_2$};
  \end{tikzpicture}
  \begin{tikzpicture}
    \foreach\ang in {90,180,270,360}{
     \draw[->,black!80!black,thick] (0,0) -- (\ang:2cm);
    }
    \foreach\ang in {45,135,225,315}{
     \draw[->,black!80!black,thick] (0,0) -- (\ang:2.82cm);
    }
    \draw[red,->](1,0) arc(0:135:1cm)node[pos=0.1,right,scale=0.5]{$3\pi/4$};
    \node[anchor=west,scale=0.6] at (2,0) {$\beta^\vee=\epsilon_2^\vee$};
    \node[anchor=south east,scale=0.6] at (-2,2) {$\alpha^\vee=\epsilon_1^\vee-\epsilon_2^\vee$};
    \node[anchor=east,scale=0.6] at (-2,0) {$-\beta^\vee$};
    \node[anchor=south,scale=0.6] at (0,2) {$\alpha^\vee+\beta^\vee$};
    \node[anchor=south west,scale=0.6] at (2,2) {$\alpha^\vee+2\beta^\vee$};
    \node[anchor=north east,scale=0.6] at (-2,-2) {$-\alpha^\vee-2\beta^\vee$};
    \node[anchor=north,scale=0.6] at (0,-2) {$-\alpha^\vee-\beta^\vee$};
    \node[anchor=north west,scale=0.6] at (2,-2) {$-\alpha^\vee$};
    \node[anchor=north,scale=0.6] at (1,3) {$B_2$};
  \end{tikzpicture}\caption{Root diagram for $B_2=C_2$}\label{c2-root-system}
  \end{figure}
  
\begin{remark}\label{nilpotent-sp4}
By the exceptional isomorphism $B_2=C_2$, we have the following description of nilpotent orbits in $\GSp_4$ and $\Sp_4$ (see \cite[Thm~5.1.2,5.1.3]{collingwood}):
{\begin{center}
\begin{tabular}{ |c|c|c|c|c| } 
 \hline
 &Orbits of $B_2$& Orbits of $C_2$&Roots of $C_2$&Levi subgroup of $\GSp_4$\\ \hline
 regular&$[5]$&$[4]$&$e_{\alpha}+e_\beta$&$\GSp_4$\\
subregular&$[3,1^2]$&$[2^2]$&$e_\beta$&$\GL_2\times\GSp_0$\\
 minimal&$[2^2,1]$&$[2,1^2]$&$e_{\alpha}$&$\GL_1\times\GSp_2$\\
 zero&$[1^5]$&$[1^4]$&$0$&$T$\\
 \hline
\end{tabular}
\end{center}}
\end{remark}

For later use (e.g.~\S \ref{stability-section}), we record the following table \ref{table-Weyl-group-conj-class} for Weyl group conjugacy classes for $\GSp_4$ and $\Sp_4$. 
\begin{table}[ht]
\begin{tabular}{ |c|c| } 
 \hline
 names & cycle types\\ \hline
 $e$& $(1)(1)$\\
 $A_1$& $(1)(\overline{1})$\\
 $\widetilde{A}_1$& $(2)$\\
 $A_1\times A_1$& $(\overline{1})(\overline{1})$ \\
 $C_2$& $(\overline{2})$\\
 \hline
\end{tabular}
\vskip0.2cm
\caption{\label{table-Weyl-group-conj-class} Weyl group conjugacy classes }
\end{table}
We will also need the following picture of a $C_2$-apartment in the building $\mathcal{B}(\GSp_4)$. 
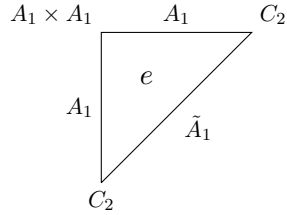
\begin{figure}
\begin{tikzpicture}
    \draw[black]{}(0,-2)--(0,0)--(2,0)--(0,-2);
    \node[anchor=south, scale=0.8] at (1,0) {$A_1$};
    \node[anchor=east, scale=0.8] at (0,-1) {$A_1$};
    \node[anchor=north west,scale=0.8] at (1,-1){$\tilde{A}_1$};
    \node[anchor=north,scale=0.8] at (0,-2){$C_2$};
    \node[anchor=south east, scale=0.8] at (0,0) {$A_1\times A_1$};
    \node[anchor=south west, scale=0.8] at (2,0) {$C_2$};
    \node[] at (0.6,-0.6){$e$};
\end{tikzpicture}
\caption{\label{C2-apartment-figure} The apartment in $\mathcal{B}(\GSp_4)$}
\end{figure}

\subsection{Levi subgroups}\label{Levi-section} 
The Levi subgroups of $\GSp_4$ (resp., $\Sp_4$) are:
\begin{itemize}
    \item $\GSp_4$ (resp., $\Sp_4$)
    \item $\GL_2\times\GSp_0$ (resp., $\GL_2\times\Sp_0$). Explicitly, it is $\GSp_4\cap (\GL_1\times\GL_2\times\GL_1)$.
    \item $\GL_1\times\GSp_2$ (resp., $\GL_1\times\Sp_2$). Explicitly, it is $\GSp_4\cap (\GL_2\times\GL_2)$.
    \item $\GL_1\times\GL_1\times\GSp_0$ (resp., $\GL_1\times\GL_1\times\Sp_0$), the maximal torus.
\end{itemize}
Given representations $\pi$ of $\GL_2$ and characters $\chi_1,\chi_2,\chi_3$, we let $\pi\rtimes\chi_1$, $\chi_1\rtimes\pi$, and $\chi_1\times\chi_2\rtimes\chi_3$ be the (normalized) parabolic induction from $\GL_2\times\GSp_0$, $\GL_2\times\GSp_2$, and $\GL_1\times\GL_1\times\GSp_0$, respectively, using notation from \cite[\S1]{sally-tadic}.

  \begin{remark}\label{gsp4-levi-dual}
  The exceptional isomorphism $\GSp_4^\vee\cong\GSp_4$ of Remark~\ref{gsp4-self-dual} gives the identifications between the dual Levi subgroups:
  \begin{align*}
      \GSp_4^\vee&\cong\GSp_4\\
      (\GL_2\times\GSp_0)^\vee&\cong\GL_1\times\GSp_2\\
      (\GL_1\times\GSp_2)^\vee&\cong\GL_2\times\GSp_0\\
      (\GL_1\times\GL_1\times\GSp_0)^\vee&\cong\GL_1\times\GL_1\times\GSp_0.
  \end{align*}
  \end{remark}
  
  \begin{remark}[LLC for Levis of $\GSp_4(F)$]\label{gsp4-levi-llc-remark}
By Remark~\ref{gsp4-self-dual}, the LLC for the maximal torus $T$ is given as:
\begin{align*}
\hom(W_F,T(\C))&\cong\Irr(T)\\
(\chi_1(w),\chi_2(w),\chi_0\chi_2^{-1}(w),\chi_0\chi_1^{-1}(w))&\mapsto\widehat\chi_0^{-1}\widehat\chi_1\widehat\chi_2\otimes\widehat\chi_1\widehat\chi_2^{-1}\otimes\widehat\chi_1^{-1}.
\end{align*}
Similarly, the LLC for the Levi $\GL_2(F)\times\GSp_0(F)\subset\GSp_4(F)$ is given by:
\begin{align*}
\hom(W_F\times\SL_2(\C),\GL_1(\C)\times\GSp_2(\C))&\cong\Irr(\GL_2(F)\times\GSp_0(F))\\
(\rho\otimes\varphi)&\mapsto(\widehat\rho\otimes\pi_\varphi^\vee)\boxtimes\widehat\rho^{-1},
\end{align*}
where $\pi_\varphi$ is the image of $\varphi$ under the LLC for $\GL_2(F)$. Finally, the LLC for the Levi $\GL_1(F)\times\GSp_2(F)\subset\GSp_4(F)$ is given by:
\begin{align*}
\hom(W_F\times\SL_2(\C),\GL_2(\C)\times\GSp_0(\C))&\cong\Irr(\GL_1(F)\times\GSp_2(F))\\
(\varphi\otimes\rho)&\mapsto(\widehat\rho^{-1}\omega_{\pi_\varphi})\boxtimes\pi_\varphi^\vee,
\end{align*}
where $\omega_{\pi_\varphi}=\widehat{\det(\varphi)}$ is the central character of $\pi_\varphi$.
\end{remark}

\subsection{Parahoric subgroups}\label{parahoric-section}
Types of the reductive quotient of maximal parahoric subgroups are given by deleting a node from the extended Dynkin diagram. We fix a standard choice of parahoric subgroups, with roots as indicated by Figure~\ref{parahorics-diagram}. For $\GSp_4(F)$, the vertices $\beta$ and $\delta$ are in the same orbit in the building:
        \begin{itemize}
        \item Removing $\delta$ (or $\beta$) gives the Dynkin diagram $C_2$, giving the parahoric subgroup $\GSp_4(\cO_F)$ with reductive quotient $\GSp_4(k)$.
        \item Removing $\alpha$ gives the Dynkin diagram $A_1\sqcup A_1$, giving the groups
        \[
        G_\alpha:=\GSp_4(F)\cap\begin{pmatrix}
        \cO&\cO&\cO&\p^{-1}\\
        \p&\cO&\cO&\cO\\
        \p&\cO&\cO&\cO\\
        \p&\p&\p&\cO
        \end{pmatrix}\supset G_{\alpha+}=\GSp_4(F)\cap\begin{pmatrix}
        1+\p&\cO&\cO&\cO\\
        \p&1+\p&\p&\cO\\
        \p&\p&1+\p&\cO\\
        \p^2&\p&\p&1+\p
        \end{pmatrix},
        \]
        with reductive quotient $\GSp_{2,2}(k):=\{(g,h)\in \GSp_2\times \GSp_2:\mu(g)=\mu(h)\}$.
    \end{itemize}
    Similarly, for $\Sp_4(F)$, we have:
    \begin{itemize}
        \item Removing $\delta$ gives the Dynkin diagram $C_2$, giving the parahoric subgroup $\Sp_4(\cO_F)$ with reductive quotient $\Sp_4(k)$.
        \item Removing $\beta$ gives the Dynkin diagram $C_2$, giving the parahoric subgroup \[
        \Sp_4(F)\cap\begin{pmatrix}M_2(\cO)&M_2(\p^{-1})\\M_2(\p)& M_2(\cO)\end{pmatrix}=\begin{pmatrix}\varpi^{-1} I_2\\&I_2\end{pmatrix}\Sp_4(\cO_F)\begin{pmatrix}\varpi I_2\\&I_2\end{pmatrix}\]
        with reductive quotient $\Sp_4(k)$. Here the matrix $\mathrm{diag}(\varpi I_2,I_2)$ is in $\GSp_4(F)$, but not $\Sp_4(F)$.
        \item Removing $\alpha$ gives the Dynkin diagram $A_1\sqcup A_1$, giving the group
        \[
        G_\alpha:=\Sp_4(F)\cap\begin{pmatrix}
        \cO&\cO&\cO&\p^{-1}\\
        \p&\cO&\cO&\cO\\
        \p&\cO&\cO&\cO\\
        \p&\p&\p&\cO
        \end{pmatrix}\supset G_{\alpha+}=\Sp_4(F)\cap\begin{pmatrix}
        1+\p&\cO&\cO&\cO\\
        \p&1+\p&\p&\cO\\
        \p&\p&1+\p&\cO\\
        \p^2&\p&\p&1+\p
        \end{pmatrix},
        \]
        with reductive quotient $\Sp_2(k)\times \Sp_2(k)$. 
    \end{itemize}
    However, note that the isomorphism of $G_\alpha/G_{\alpha+}$ with $\GSp_{2,2}(k)$ (resp., $\Sp_2(k)\times\Sp_2(k)$) above are non-canonical (i.e., depend on a choice of a uniformizer $\varpi$.) To make these isomorphisms more canonical, consider the endoscopic subgroup $H:=\Cent_G(s)$ with $s=\diag(1,-1,-1,1)$ which is isomorphic to $\GSp_{2,2}(F)$ (resp., $\Sp_{2,2}(F)$):
    \begin{align*}
        \GSp_{2,2}(F)&\xrightarrow\sim H\\
        (\begin{pmatrix}a_1&b_1\\c_1&d_1\end{pmatrix},\begin{pmatrix}a_2&b_2\\c_2&d_2\end{pmatrix})&\mapsto\begin{pmatrix}a_2&&&b_2\\&a_1&b_1\\&c_1&d_1\\c_2&&&d_2\end{pmatrix}
    \end{align*}
    Now there is a canonical isomorphism of $G_\alpha/G_{\alpha+}$ with the reductive quotient of the parahoric subgroup
    \[
    H_\alpha:=\{(g,h)\in M_2(\cO)\times \begin{pmatrix}\cO&\p^{-1}\\\p&\cO\end{pmatrix}:\det(g)=\det(h)\in\cO^\times\}.
    \]
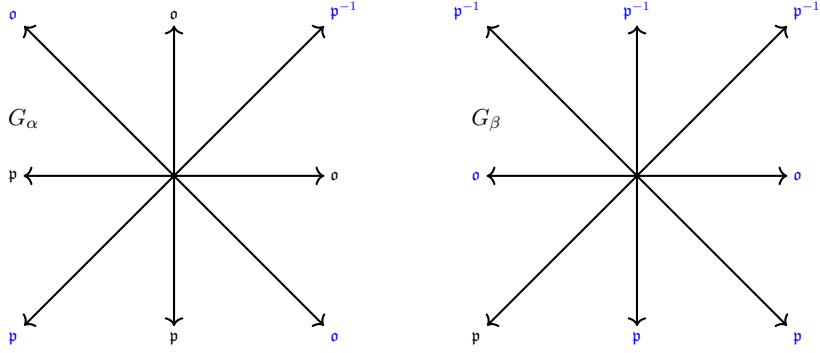
\begin{figure}
  \begin{tikzpicture}
    \foreach\ang in {90,180,270,360}{
     \draw[->,black!80!black,thick] (0,0) -- (\ang:2cm);
    }
    \foreach\ang in {45,135,225,315}{
     \draw[->,black!80!black,thick] (0,0) -- (\ang:2.82cm);
    }
    \node[anchor=west,scale=0.6] at (2,0) {$\cO$};
    \node[blue,anchor=south east,scale=0.6] at (-2,2) {$\cO$};
    \node[anchor=east,scale=0.6] at (-2,0) {$\p$};
    \node[anchor=south,scale=0.6] at (0,2) {$\cO$};
    \node[blue,anchor=south west,scale=0.6] at (2,2) {$\p^{-1}$};
    \node[blue,anchor=north east,scale=0.6] at (-2,-2) {$\p$};
    \node[anchor=north,scale=0.6] at (0,-2) {$\p$};
    \node[blue,anchor=north west,scale=0.6] at (2,-2) {$\cO$};
    \node[anchor=north,scale=0.8] at (-2,1) {$G_\alpha$};
  \end{tikzpicture}\hspace{1cm} \begin{tikzpicture}
    \foreach\ang in {90,180,270,360}{
     \draw[->,black!80!black,thick] (0,0) -- (\ang:2cm);
    }
    \foreach\ang in {45,135,225,315}{
     \draw[->,black!80!black,thick] (0,0) -- (\ang:2.82cm);
    }
    \node[blue,anchor=west,scale=0.6] at (2,0) {$\cO$};
    \node[blue,anchor=south east,scale=0.6] at (-2,2) {$\p^{-1}$};
    \node[blue,anchor=east,scale=0.6] at (-2,0) {$\cO$};
    \node[blue,anchor=south,scale=0.6] at (0,2) {$\p^{-1}$};
    \node[blue,anchor=south west,scale=0.6] at (2,2) {$\p^{-1}$};
    \node[anchor=north east,scale=0.6] at (-2,-2) {$\p$};
    \node[blue,anchor=north,scale=0.6] at (0,-2) {$\p$};
    \node[blue,anchor=north west,scale=0.6] at (2,-2) {$\p$};
    \node[anchor=north,scale=0.8] at (-2,1) {$G_\beta$};
  \end{tikzpicture}
  \caption{Parahoric subgroups $G_\alpha$ and $G_\beta$}\label{parahorics-diagram}
  \end{figure}

\section{The group side}\label{sec:group-side}
\subsection{Supercuspidal representations}
\subsubsection{\textbf{Depth-zero supercuspidal representations of $\Sp_4,\GSp_4$}}\label{d0s}
\begin{numberedparagraph}
First we recall a few general facts on depth-zero supercuspidals. 
Let $\pi$ be an irreducible depth-zero supercuspidal representation of $G$. Then there exists a vertex $x\in\mathcal{B}_{\red}(G,F)$ and an irreducible cuspidal representation $\tau$ of $\mathbb{G}_x(\F_q)$, such that the restriction of $\pi$ to $G_{x,0}$ contains the inflation of $\tau$ (see \cite[\S1-2]{Morris-ENS} or \cite[Proposition~6.6]{Moy-PrasadII}). The normalizer $N_G(G_{x,0})$ of $G_{x,0}$ in $G$ is a totally disconnected group that is compact mod center, which by \cite[proof of (5.2.8)]{Bruhat-Tits-II} coincides with the fixator $G_{[x]}$ of $[x]$ under the action of $G$ on the reduced building of $\mathbf{G}$. Then $\pi$ is compactly induced from a representation of $N_G(G_{x,0})$, i.e.
\begin{equation} \label{eqn:depth zero supercuspidal}
\pi=\cInd_{G_{[x]}}^G(\boldsymbol{\tau}).
\end{equation}
Many properties of the representation $\pi$ is already visible from the representation $\boldsymbol{\tau}$ of $G_{[x]}$:
\begin{lemma}\label{depth-0-fdeg}\cite[Prop~3.2.4]{LLC-G2}
The formal degree of the depth-zero representation $\pi=\cInd_{G_{x,0}}^G\mathbf\tau$ is
\[
\fdeg(\pi)=\frac{q^{rk(G)/2}\dim(\tau_{unip})}{|(Z_{\G_{x,0}^\vee}(s))(\F_q)|_{p'}},
\]
where $|\cdot|_{p'}$ denotes the coprime-to-$p$ order.
\end{lemma}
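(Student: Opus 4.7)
The plan is to combine Harish-Chandra's standard formula for the formal degree of a compactly induced supercuspidal with Lusztig's dimension formula for cuspidal representations of finite reductive groups.

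First, I would invoke the general fact that for any discrete series representation obtained by compact induction $\pi=\cInd_{K}^G\boldsymbol{\tau}$ from an open subgroup $K$ that is compact modulo the center, one has
\[
\fdeg(\pi)\;=\;\frac{\dim(\boldsymbol\tau)}{\vol(K/Z_G)}
\]
for an appropriately normalized Haar measure on $G/Z_G$. In our situation $K=G_{[x]}=N_G(G_{x,0})$ (compact mod center by \cite[(5.2.8)]{Bruhat-Tits-II}) and $\boldsymbol\tau$ is the extension of the inflation of $\tau$ from $G_{x,0}$. Since $G_{[x]}/G_{x,0}Z_G$ is finite and $\tau$ is stabilized by this quotient, $\dim(\boldsymbol\tau)=\dim(\tau)$, so the problem reduces to computing $\dim(\tau)$ and $\vol(G_{x,0})$.

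Next, I would apply Lusztig's classification of irreducible representations of finite reductive groups. The cuspidal representation $\tau$ of $\mathbb{G}_{x,0}(\F_q)$ lies in a rational series $\mathcal{E}(\mathbb{G}_{x,0}(\F_q),s)$ for some semisimple $s\in\mathbb{G}_{x,0}^\vee(\F_q)$, and Jordan decomposition produces a unipotent cuspidal representation $\tau_{\mathrm{unip}}$ of $Z_{\mathbb{G}_{x,0}^\vee}(s)(\F_q)$; the key dimension identity (see e.g.~Digne--Michel) reads
\[
\dim(\tau)\;=\;\frac{|\mathbb{G}_{x,0}(\F_q)|_{p'}}{|Z_{\mathbb{G}_{x,0}^\vee}(s)(\F_q)|_{p'}}\cdot\dim(\tau_{\mathrm{unip}}).
\]

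Finally, I would unwind the volume. With the Haar measure normalized so that $\vol(G_{x,0})=|\mathbb{G}_{x,0}(\F_q)|/q^{\dim \mathbb{G}_{x,0}}$, one has $|\mathbb{G}_{x,0}(\F_q)|_p=q^{(\dim\mathbb{G}_{x,0}-\mathrm{rk}(G))/2}$, so the denominator $\vol(G_{x,0})\cdot|\mathbb{G}_{x,0}(\F_q)|_p$ appearing after substituting the dimension formula collapses to a single explicit power of $q$; a direct check gives $q^{\mathrm{rk}(G)/2}$ as the residual factor in the numerator once the $p'$-parts match up. Assembling these steps yields the stated expression.

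The main obstacle, and essentially the only nontrivial point, is bookkeeping the measure normalization: one must verify that the choice of Haar measure implicit in the definition of $\fdeg$ matches the one used in \cite[Prop.~3.2.4]{LLC-G2} so that precisely the factor $q^{\mathrm{rk}(G)/2}$ (and not, say, $q^{\mathrm{rk}(G)}$ or a different power of $q$) appears. Everything else is a direct application of well-established structural results on depth-zero supercuspidals and finite reductive groups.
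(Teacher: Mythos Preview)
The paper does not give its own proof of this lemma; it is stated with the citation \cite[Prop.~3.2.4]{LLC-G2} and then used as a black box in the subsequent formal degree computations. Your sketch---combining the standard formula $\fdeg(\cInd_K^G\boldsymbol\tau)=\dim(\boldsymbol\tau)/\vol(K/Z)$ with Lusztig's Jordan decomposition dimension identity $\dim(\tau)=\dfrac{|\mathbb{G}_{x,0}(\F_q)|_{p'}}{|Z_{\mathbb{G}_{x,0}^\vee}(s)(\F_q)|_{p'}}\dim(\tau_{\mathrm{unip}})$---is the expected argument and is presumably what the cited reference carries out. Your identification of the measure normalization as the only delicate point is apt: the paper's later explicit computations (e.g.\ for $\pi_\beta(\theta_{10}\otimes\chi)$ and $\pi_\alpha(\eta_2;\chi)$) invoke the DeBacker--Reeder normalization, and one must check that the power of $q$ in the lemma matches that convention; note that $\mathrm{rk}(\mathbb{G}_{x,0})=\mathrm{rk}(G)$ since reductive quotients of parahorics have the same rank as $G$, which is what makes the exponent independent of the vertex $x$.
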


The following construction gives a special class of supercuspidals, i.e.~depth-zero regular supercuspidal representations of $G$ is as in \cite[Lem~3.4.12]{Kal-reg}: 
\begin{defn}\label{regular-supercuspidal-definition}
For $S\subset G$ a maximally unramified elliptic maximal torus and $\theta\colon S(F)\to\C^\times$ a regular character of depth zero, let $\pi_{(S,\theta)}:=\cInd_{S(F)G_{x,0}}^{G(F)}(\theta\otimes\pm R_{S'}^{\overline\theta})$.
\end{defn}
One can generalize the above construction and consider a larger class of supercuspidals called ``non-singular'' supercuspidals, which are the largest class of supercuspidals living in purely supercuspidal $L$-packets (see for example \cite{LLC-G2} for more exposition).     
\end{numberedparagraph}

\begin{numberedparagraph}
More concretely, depth-zero irreducible supercuspidal representations of $G$ are parametrized by irreducible cuspidal representations of reductive quotients $\G_x$ of maximal parahorics, which can be inflated to $G_{x,0}$, and (non-uniquely) extended to $G_{[x]}$. Recall from the classical Deligne-Lusztig theory \cite[\S10]{Deligne-Lusztig} and \cite[(8.4.4)]{Lusztig-characters-Princeton-book}, we have bijections
\begin{equation}\label{deligne-lusztig-bijection}
    \Irr(\G_x)\xrightarrow{\sim}\bigsqcup_{(s)}\mathcal E(\G_x(\F_q),s)\xrightarrow{\sim}\bigsqcup_{(s)}\mathcal E(\Cent_{\G_x^\vee}(s),1),
\end{equation}
where $(s)$ runs through the conjugacy classes of semisimple elements of $\G_x^\vee$. Moreover, the bijections preserve cuspidality. We hope to see when $\bH^\vee=\Cent_{\G_x^\vee}(s)$ has a unipotent cuspidal representation. We will repeatedly use the following result:
\begin{lemma}[{\cite[Thm~3.22]{Lus78}},{\cite[8.11]{Lus77}}]\label{lus}\ 
\begin{itemize}
    \item $\SO_{2n+1}(\F_q)$ has a unique unipotent cuspidal representation exactly when $n=s^2+s$ for some integer $s\ge 1$, of dimension
    \[
    \frac{|\SO_{2n+1}(\F_q)|_{p'}q^{{2n\choose2}+{2n-2\choose2}+\cdots}}{2^n(q+1)^{2n}(q^2+1)^{2n-1}\cdots(q^{2n}+1)}.
    \]
    \item $\SO_{2n}(\F_q)$ has a unique unipotent cuspidal representation exactly when $n=4s^2$ for some $s\ge1$. The non-split form $\SO^-_{2n}(\F_q)$ has a unique unipotent cuspidal representation exactly when $n=(2s+1)^2$ for some $s\ge1$.
    \item $\GL_n$ has no unipotent cuspidal representations for any $n\ge1$.
\end{itemize}
\end{lemma}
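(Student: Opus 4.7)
The plan is to invoke Lusztig's classification of unipotent characters of finite reductive groups, proceeding type by type and citing the generic-degree formula for the explicit dimension in the $B_n$ case.

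For $\GL_n$, I would recall that the unipotent characters of $\GL_n(\F_q)$ are in bijection with partitions of $n$, arising as the constituents of the parabolic induction of the trivial character from a Borel. Each nontrivial unipotent character is a Harish-Chandra constituent of an induction from the proper Levi $\GL_{n-1} \times \GL_1$ (take the hook partition and peel off a box), and the trivial representation comes from the Borel. Hence no unipotent character of $\GL_n(\F_q)$ is cuspidal for $n \geq 1$.

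For the orthogonal groups, unipotent characters are parametrized by Lusztig's symbols of appropriate defect and rank: defect $\equiv 1 \pmod 4$ for type $B_n$ (i.e.~$\SO_{2n+1}$), defect $\equiv 0 \pmod 4$ for split type $D_n$ (i.e.~$\SO_{2n}$), and defect $\equiv 2 \pmod 4$ for $^2D_n$ (i.e.~$\SO_{2n}^-$). By Lusztig's criterion, a unipotent character is cuspidal precisely when its symbol is \emph{distinguished}, meaning one row is $\{0,1,2,\ldots,d-1\}$ and the other is empty (where $d$ is the defect). Computing the rank of such a distinguished symbol then gives: for $B_n$ the rank equals $\binom{d}{2}$ with $d = 2s+1$, so $n = s^2 + s$; for split $D_n$ the rank is $\binom{d}{2}$ with $d = 4s$, giving $n = 4s^2$; and for $^2D_n$ the rank is $\binom{d}{2}$ with $d = 4s+2$, giving $n = (2s+1)^2$. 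The distinguished symbol in each rank is unique, whence uniqueness of the cuspidal unipotent character.

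For the dimension formula in the $\SO_{2n+1}(\F_q)$ case, I would apply Lusztig's generic-degree formula to the distinguished symbol with rows $\{0,1,\ldots,2s\}$ and $\emptyset$, and simplify. The main non-routine step is this generic-degree evaluation; since it is carried out in \cite{Lus77, Lus78}, which is where we are citing the lemma from, we simply quote it rather than redo the computation. The combinatorial conditions on $n$ then follow at once from the rank count above.
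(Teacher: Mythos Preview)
The paper does not prove this lemma; it is stated purely as a citation of Lusztig's results from \cite{Lus77,Lus78}, with no argument given. Your sketch is therefore more than the paper provides, and it is the standard route through Lusztig's symbol combinatorics.

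That said, two small inaccuracies in your outline are worth correcting. First, for type $B_n$ the defect of a symbol is any odd positive integer, not necessarily $\equiv 1 \pmod 4$; the mod-$4$ dichotomy applies only to types $D_n$ (defect $\equiv 0$) versus ${}^2D_n$ (defect $\equiv 2$). Second, the rank of the distinguished symbol $\binom{0,1,\ldots,d-1}{\emptyset}$ is not simply $\binom{d}{2}$: one must subtract the usual correction term ($m^2$ in type $B_n$ with $d=2m+1$, and $m(m-1)$ in types $D_n$, ${}^2D_n$ with $d=2m$). With these corrections the arithmetic does give $n=s^2+s$, $n=4s^2$, and $n=(2s+1)^2$ respectively, so your stated conclusions are right even though the intermediate step as written ($\binom{2s+1}{2}=2s^2+s$, not $s^2+s$) does not literally check out.
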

\end{numberedparagraph}

\begin{numberedparagraph}For us, by \S\ref{parahoric-section} the reductive quotients $\G_x$ are $\Sp_4(k)$ or $\Sp_2(k)\times\Sp_2(k)$ for $G=\Sp_4(F)$ and either $\GSp_4(k)$ or $\GSp_{2,2}(k):=\{(g,h)\in \GSp_2(k)\times\GSp_2(k):\mu(g)=\mu(h)\}$ for $G=\GSp_4(F)$. Using \eqref{deligne-lusztig-bijection} we classify the cuspidal representations of these groups:

\begin{lemma}\label{gsp2,2-finite-cuspidal}
Every cuspidal representations of $\GSp_{2,2}(\F_q)$ (defined in \S\ref{parahoric-section}) is given by, for $s=(g,h)\in\GL_2(\F_q)\times\GL_2(\F_q)/\F_q^\times$ where $g$ has eigenvalues $\lambda_1,\lambda_1^q$ and $h$ has eigenvalues $\lambda_2,\lambda_2^q$ where $\lambda_1,\lambda_2\in\F_{q^2}\backslash\F_q$:
\begin{itemize}
    \item if $\lambda_1^{q-1}\ne-1$ or $\lambda_2^{q-1}\ne-1$, then
    \[
    \mathcal E(\GSp_{2,2}(\F_q),s)\cong\mathcal E(R_{\F_{q^2}/\F_q}\G_m\times R_{\F_{q^2}/\F_q}\G_m/\F_q^\times,1)=\{1\}.
    \]
    Denote such a cuspidal representation as $\overline\rho_{(\alpha,\beta)}$.
    \item if $\alpha^{q-1}=\beta^{q-1}=-1$, then
    \[
    \mathcal E(\GSp_{2,2}(\F_q),s)\cong\mathcal E(R_{\F_{q^2}/\F_q}\G_m\times R_{\F_{q^2}/\F_q}\G_m/\F_q^\times\rtimes\mu_2,1)=\{1,\sgn\}.
    \]
    Denote such cuspidal representations as $\overline\rho^+_{(\alpha,\beta)}$ and $\overline\rho^-_{(\alpha,\beta)}$.
\end{itemize}
\begin{remark}\label{rho+-characterization}
The cuspidal representations $\overline\rho^+_{(\alpha,\beta)}$ are characterized as the common irreducible constituent of $\Ind_{\SL_2\times\SL_2}^{\GL_{2,2}}(R_T^\alpha\boxtimes R_T^\beta)$ and the Gelfand-Graev representation $\Gamma_{\mathcal O}^{\GL_{2,2}}$ where $\mathcal O$ is the orbit of $(\begin{pmatrix}1&1\\&1\end{pmatrix},\begin{pmatrix}1&1\\&1\end{pmatrix})$.
The restriction of $\overline\rho^+_{(\alpha,\beta)}$ to $\SL_2(\F_q)\times\SL_2(\F_q)$ is $R_+'(\theta_0)\boxtimes R_+'(\theta_0)+R_-'(\theta_0)\boxtimes R_-'(\theta_0)$, in \cite[pg~55]{bonnafe-sl2}'s notation.
\end{remark}

\end{lemma}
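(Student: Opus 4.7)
The plan is to apply the Jordan decomposition of characters via the bijection \eqref{deligne-lusztig-bijection}, which identifies cuspidal representations of $\GSp_{2,2}(\F_q)$ with pairs $(s,\psi)$ where $s$ is a semisimple conjugacy class in $\GSp_{2,2}^\vee(\F_q)$ and $\psi$ is a unipotent cuspidal representation of $\Cent_{\GSp_{2,2}^\vee}(s)$. Dualizing the exact sequence $1\to\GSp_{2,2}\to\GL_2\times\GL_2\to\G_m\to1$ (with surjection $(g,h)\mapsto \det(g)\det(h)^{-1}$) identifies $\GSp_{2,2}^\vee\cong (\GL_2\times\GL_2)/\Delta\G_m$, so semisimple classes $s$ are represented by pairs $(g,h)$ modulo the diagonal $\F_q^\times$-action. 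By Lemma~\ref{lus}, the centralizer carries a unipotent cuspidal representation only when no $\GL$-factor survives, which forces both $g$ and $h$ to have eigenvalues in $\F_{q^2}\setminus\F_q$, corresponding to regular characters $\alpha,\beta$ of $\F_{q^2}^\times$.

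In the generic case, $\Cent_{\GSp_{2,2}^\vee}(s)$ is the torus $R_{\F_{q^2}/\F_q}\G_m\times R_{\F_{q^2}/\F_q}\G_m/\F_q^\times$, whose only unipotent representation is trivial, yielding the single cuspidal $\overline\rho_{(\alpha,\beta)}$. The exceptional case occurs exactly when the centralizer in $\GSp_{2,2}^\vee$ is disconnected. A direct check shows that an element $u\in\GL_2$ conjugating $g$ to a nontrivial $\F_q^\times$-multiple $zg$ must satisfy $z=\lambda^{q-1}$ for some eigenvalue $\lambda$ of $g$; since $\lambda^{q-1}\in\ker\Nm_{\F_{q^2}/\F_q}$ automatically, the requirement $z\in\F_q^\times\setminus\{1\}$ reduces to $\lambda^{q-1}\in(\F_q^\times\cap\ker\Nm_{\F_{q^2}/\F_q})\setminus\{1\}=\{-1\}$. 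When $\alpha^{q-1}=\beta^{q-1}=-1$, such twisters exist in both factors in a way compatible modulo $\Delta\F_q^\times$, and the centralizer acquires an extra $\mu_2$ component, taking the claimed form $R_{\F_{q^2}/\F_q}\G_m\times R_{\F_{q^2}/\F_q}\G_m/\F_q^\times\rtimes\mu_2$, whose unipotent representations $\{1,\sgn\}$ yield exactly the two cuspidals $\overline\rho^\pm_{(\alpha,\beta)}$.

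The main obstacle is the bookkeeping of the $\Delta\G_m$-quotient and verifying that it produces precisely the $\mu_2$ component group in the special case; this requires care because the extra involution comes not from a Weyl element of $\GL_2\times\GL_2$ but from the central quotient. Once this structure is established, Remark~\ref{rho+-characterization} follows by combining the standard decomposition of the restriction of a $\GL_2$-cuspidal $\pi_\alpha$ with $\alpha^{q-1}=-1$ to $\SL_2$ as $R'_+(\theta_0)\oplus R'_-(\theta_0)$, the Gelfand--Graev characterization of $R'_+(\theta_0)$, and compatibility of restriction with parabolic induction from $\SL_2\times\SL_2$.
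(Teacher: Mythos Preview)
Your proof is correct and follows precisely the approach the paper sets up: applying the Deligne--Lusztig bijection \eqref{deligne-lusztig-bijection} together with Lemma~\ref{lus} to identify which semisimple classes $s$ in $\GSp_{2,2}^\vee\cong(\GL_2\times\GL_2)/\Delta\G_m$ yield cuspidal Lusztig series, and then computing the component group of the centralizer in the quotient. The paper does not spell out a proof of this lemma, so your argument is exactly the intended filling-in of details; your analysis of when the extra $\mu_2$ appears (via $ugu^{-1}=zg$ forcing $z=\lambda^{q-1}\in\F_q^\times\cap\ker\Nm=\{\pm1\}$) is the key computation.
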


\begin{lemma}\label{gsp4-finite-cuspidal}
The following are the cuspidal representations of $\GSp_4(\F_q)$:
\begin{itemize}
    \item The $q-1$ twists of the unique unipotent cuspidal, i.e., in $\mathcal E(\GSp_4,s)$ where $s\in\Cent(\GSpin_5)$.
    \item $R_T^\theta$ where $T$ is an anisotropic maximal torus and $\theta$ is a regular character.
\end{itemize}
\end{lemma}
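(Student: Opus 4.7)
The plan is to deduce the classification from the Deligne-Lusztig bijection \eqref{deligne-lusztig-bijection}. Since this bijection preserves cuspidality, classifying cuspidal representations of $\GSp_4(\F_q)$ reduces to finding pairs $(s,\rho)$ where $s$ is a semisimple conjugacy class in $\GSp_4^\vee(\F_q) = \GSpin_5(\F_q)$ and $\rho$ is a unipotent cuspidal representation of $\Cent_{\GSpin_5}(s)$. Because $\Spin_5$ is simply connected, all such centralizers are automatically connected, which simplifies the enumeration.

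First, I would enumerate the possible centralizers. Using the self-duality $\GSpin_5 \cong \GSp_4$ of Remark~\ref{gsp4-self-dual}, a semisimple $s$ falls into one of four families organized by eigenvalue pattern: (i) $s$ central, so $\Cent(s) = \GSpin_5$; (ii) $s$ Levi-generic but non-central, so $\Cent(s) \cong \GL_2 \times \GL_1$ or $\GL_1 \times \GSp_2 \cong \GL_1 \times \GL_2$; (iii) $s$ an endoscopic involution (e.g.\ $\diag(1,-1,-1,1)$), so $\Cent(s) \cong \GSp_{2,2}$; and (iv) $s$ regular, so $\Cent(s)$ is a maximal torus $T^\vee$.

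Second, for each family I would apply Lemma~\ref{lus} to check for unipotent cuspidals. For (i), $\SO_5 = \SO_{2n+1}$ with $n = 2 = 1^2 + 1$ has a unique unipotent cuspidal, which pulls back to $\GSpin_5$; since $\Cent(\GSpin_5)(\F_q) \cong \F_q^\times$ has order $q-1$, varying $s$ over the central classes yields exactly the $q-1$ representations in the first bullet. For (ii) and (iii), each centralizer has a $\GL_n$-factor with $n \ge 1$ (directly for the Levi-type case, and via the fiber product $\GL_2 \times_{\GL_1} \GL_2$ for $\GSp_{2,2}$), so the third bullet of Lemma~\ref{lus} together with the fact that unipotent cuspidality of a (fiber) product forces cuspidality of each factor rules these cases out. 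For (iv), the only unipotent representation of a torus is trivial, and the resulting irreducible of $\GSp_4(\F_q)$ is $\pm R_T^\theta$; standard Deligne-Lusztig theory gives cuspidality iff $T$ is anisotropic and irreducibility iff $\theta$ is in general position, i.e.\ regular.

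The hard part will be case (iii): verifying that $\GSp_{2,2}(\F_q)$ admits no unipotent cuspidal representation. I would handle this by pulling back unipotent representations of $\GSp_{2,2}$ to the overgroup $\GL_2 \times \GL_2$, where cuspidality reduces to a tensor product of $\GL_2$-cuspidalities, each of which is impossible by the third bullet of Lemma~\ref{lus}. The remaining cases amount to routine enumeration once the centralizers are correctly identified.
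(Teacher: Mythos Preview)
Your approach is correct and is exactly the one the paper intends: the lemma is stated without explicit proof, as a direct consequence of the Deligne--Lusztig bijection~\eqref{deligne-lusztig-bijection} together with Lemma~\ref{lus}, and your case analysis on centralizer types in $\GSpin_5\cong\GSp_4$ is the right way to unpack that.

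Two small remarks. First, in case~(iii) your phrasing ``pulling back to the overgroup $\GL_2\times\GL_2$'' is slightly off, since $\GSp_{2,2}$ sits inside $\GL_2\times\GL_2$ as a subgroup rather than mapping to it; a cleaner route is to note that $\Cent_{\GSpin_5}(s)\cong\GSpin_4$ (split or non-split), and then Lemma~\ref{lus} applied to $\SO_4^{\pm}$ with $n=2$ (which equals neither $4s^2$ nor $(2s+1)^2$ for $s\ge1$) rules out unipotent cuspidals directly. Second, in case~(iv) you correctly invoke the extra condition that $T$ be anisotropic; it is worth being explicit that the ``cuspidality-preserving'' statement of~\eqref{deligne-lusztig-bijection} really means compatibility with Harish--Chandra series, so that regular $s$ lying in a non-elliptic torus of $G^\vee$ do not contribute.
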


\begin{lemma}\label{sp4-finite-cuspidal}
The following are the cuspidal representations of $\Sp_4(\F_q)$:
\begin{itemize}
    \item The unique unipotent cuspidal.
    \item For any $\alpha\in\mu_{q+1}\backslash\{\pm1\}$ then for any $s\in\SO_5(\F_q)$ with eigenvalues $1,-1,-1,\alpha^{\pm1}$,
    \[
    \mathcal E(\Sp_4,s)\cong\mathcal E(\tO_2(\F_q)\times\tU_1(\F_q),1)=\{1,\sgn\}.
    \]
There are $(q-1)/2$ such conjugacy classes, giving rise to $q-1$ representations.
\item  For $\alpha\ne\beta^{\pm1}\in\mu_{q+1}\backslash\{\pm1\}$ and $s$ with eigenvalues $1,\alpha^{\pm1},\beta^{\pm1}$,
\[
\mathcal E(\Sp_4,s)\cong\mathcal E(T,1)=\{1\}.
\]
where $T=R_{\F_{q^2}/\F_q}\G_m\times R_{\F_{q^2}/\F_q}\G_m$ is an isotropic maximal torus.
\item For $\alpha\in\mu_{q^2+1}\backslash\{\pm1\}$ and $s$ with eigenvalues $1,\alpha,\alpha^q,\alpha^{q^2},\alpha^{q^3}$,
\[
\mathcal E(\Sp_4,s)\cong\mathcal E(T,1)=\{1\},
\]
where $T=\{t\in\F_{q^4}:\mathrm{Nm}_{\F_{q^4}/\F_{q^2}}t=1\}$ is an anisotropic maximal torus.
\end{itemize}
\end{lemma}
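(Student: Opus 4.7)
The plan is to apply the Deligne-Lusztig parametrization \eqref{deligne-lusztig-bijection} to $\G_x=\Sp_4$ and its dual $\G_x^\vee=\SO_5$. Since the bijection preserves cuspidality, I need to enumerate pairs $(s,\rho)$ where $s$ is a semisimple conjugacy class in $\SO_5(\F_q)$ and $\rho\in\mathcal E(\Cent_{\SO_5}(s),1)$ is a cuspidal unipotent representation, subject to the additional constraint that $\Cent_{\SO_5}(s)$ is not contained in a proper $\F_q$-Levi subgroup of $\SO_5$ (otherwise the resulting representation of $\Sp_4(\F_q)$ would be a proper parabolic induction).

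I would first parametrize semisimple classes in $\SO_5(\F_q)$ by the multiset of eigenvalues: pair each eigenvalue $\lambda\ne\pm1$ with $\lambda^{-1}$ into Galois orbits, and let $m_\pm$ be the multiplicity of $\pm1$. The $\det=1$ condition forces $m_++m_-$ odd and $m_-$ even, leaving only $(m_+,m_-)\in\{(5,0),(3,0),(3,2),(1,0),(1,2),(1,4)\}$. The centralizer then decomposes as a product over Galois-orbit eigenspaces: split $\GL_1$ factors for pairs in $\F_q^\times\setminus\{\pm1\}$; anisotropic $\tU_1$ factors for pairs $\{\lambda,\lambda^{-1}\}$ with $\lambda\in\mu_{q+1}\setminus\{\pm1\}$; the $4$-dimensional anisotropic torus $\ker(\Nm_{\F_{q^4}/\F_{q^2}})$ for a size-four Galois orbit $\{\alpha,\alpha^q,\alpha^{q^2},\alpha^{q^3}\}$ with $\alpha\in\mu_{q^2+1}\setminus\{\pm1\}$; and $\tO_{m_-}$, $\tO_{m_+}$ factors from the $\pm1$ eigenspaces (intersected with the $\SO_5$ determinant condition).

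Applying Lemma~\ref{lus} eliminates any centralizer with a $\GL_k$, $\SO_3$ or $\SO_4^\pm$ factor, since none of these admit cuspidal unipotents. The $\F_q$-Levi constraint further removes the sub-cases where the centralizer embeds into $\SO_3\times\GL_1$, $\GL_2$ or a split maximal torus (which happens precisely when some block is $\F_q$-split and involves only tori or $\SO_3$). The surviving classes are: $(5,0)$ with $\Cent=\SO_5$ (case 1, unique cuspidal unipotent); $(1,2)$ with the Galois-pair block anisotropic, giving $\Cent=\tO_2\times\tU_1$ (case 2, two unipotent characters $\{1,\sgn\}$ extending along the component group $\Z/2$); $(1,0)$ with two distinct anisotropic pairs in $\mu_{q+1}$, giving $\Cent=\tU_1\times\tU_1$ (case 3); and $(1,0)$ with a single four-element Galois orbit in $\mu_{q^2+1}$, giving $\Cent=\ker(\Nm_{\F_{q^4}/\F_{q^2}})$ (case 4). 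Counting Weyl-group orbits on the eigenvalue data yields the asserted multiplicities: $(q-1)/2$ classes for case 2, $(q-1)(q-3)/8$ for case 3, and $(q^2-1)/4$ for case 4.

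The main obstacle is case 2: one must justify that both unipotent characters of the disconnected group $\tO_2\times\tU_1$ correspond under \eqref{deligne-lusztig-bijection} to genuinely cuspidal representations of $\Sp_4(\F_q)$, which requires invoking Lusztig's extended theory of unipotent characters for reductive groups with nontrivial component group. Secondarily, one should verify that the discriminant constraint on the $-1$-eigenspace forces a single compatible $2$-dimensional orthogonal form per Galois orbit of $\alpha$, so that the count in case 2 is $(q-1)/2$ rather than double that.
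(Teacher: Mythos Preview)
Your proposal is correct and follows exactly the approach the paper sets up: the paper states the lemma without proof, having already introduced the Deligne--Lusztig/Jordan decomposition \eqref{deligne-lusztig-bijection} and Lemma~\ref{lus} as the tools, and your case analysis on eigenvalue multisets $(m_+,m_-)$ in $\SO_5$ together with the elliptic (not-in-a-proper-$\F_q$-Levi) constraint is precisely how one fills in the details. The two points you flag as obstacles --- that both unipotent characters of the disconnected $\tO_2\times\tU_1$ are cuspidal, and that the discriminant of the $(-1)$-eigenspace pins down a single form --- are real but minor: for the first, $\tO_2^-$ has anisotropic identity component $\tU_1$, so both extensions $\{1,\sgn\}$ of its trivial unipotent are cuspidal; for the second, the split $\tO_2^+$ case has $Z(\Cent(s))^\circ$ containing a split $\G_m$, hence lies in a proper Levi and is discarded by your own elliptic criterion, leaving $(q-1)/2$ classes as claimed.
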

\end{numberedparagraph}
As a consequence of Lemma~\ref{gsp2,2-finite-cuspidal}, Lemma~\ref{gsp4-finite-cuspidal}, and Lemma~\ref{sp4-finite-cuspidal}, we obtain the following classifications of depth-zero supercuspidals of $\GSp_4$ and $\Sp_4$.

\begin{numberedparagraph}
Firstly, we have the following classification of depth-zero supercuspidals of $\GSp_4(F)$. 
\begin{prop}\label{depth-zero-sc-GSp4}
The depth-zero supercuspidal representations $\pi$ of $G=\GSp_4(F)$ are:
\begin{enumerate}
    \item $\pi=\pi_{(S,\theta)}$ for some maximally unramified elliptic maximal torus $S$ and a regular character $\theta$ of depth zero. These are regular supercuspidals. 
    \item\label{depth-zero-sc-GSp4(1)} $\pi_{\beta}(\theta_{10}\otimes\chi):=\cInd_{G_\delta\Cent}^G(\theta_{10}\otimes\chi)$ where $\theta_{10}$ is inflated from the unique unipotent cuspidal $\tilde\theta_{10}$ of $\GSp_4(\F_q)$ and $\chi$ is a character of $\Cent$ such that $\chi(\Cent_{\GSp_4(\cO_F)})=1$. This is $F$-singular.
    \item\label{depth-zero-sc-GSp4(2)} $\pi_\alpha(\eta_2;\chi):=\cInd_{G_\alpha\Cent}^{\GSp_4}(\omega_\cusp^{\eta_2}\otimes\chi)$ which is a $k_F$-singular hence $F$-singular supercuspidal, where: 
    \begin{itemize}
        \item $\eta_2$ is a ramified quadratic character and $\varpi\in F$ is a uniformizer such that $\eta_2(\varpi)=1$
        \item $\omega_\cusp^{\eta_2}:=(\overline{\rho}_{(\lambda,\lambda)}^+)^{(I_2,\diag(\varpi,1))}$ where $\lambda^{q-1}=-1$, and $\overline{\rho}_{(\lambda,\lambda)}^+$ is the representation of $\GSp_{2,2}(\F_q)$ defined in Lemma~\ref{gsp2,2-finite-cuspidal}, which is viewed as a representation of $G_\alpha/G_{\alpha+}$ by conjugating by $(I_2,\diag(\varpi,1))$.
        \item $\chi$ is an unramified character of $\Cent$.
    \end{itemize}
    \item\label{depth-zero-sc-GSp4(3)} Induced representations $\pi_{(S,\theta\boxtimes\theta\otimes\chi)}$ where $S=\{(x,y)\in E^\times\times E^\times:\Nm_{E/F}x=\Nm_{E/F}y\}$ and $\theta$ is a character of $E^\times$ giving rise to a character $\theta\boxtimes\theta$ of $S$, and $\chi$ is a character of $F^\times$ viewed as a character of $S$ via $\Nm_{E/F}$. This is a $F$-singular but $k_F$-nonsingular representation.

\end{enumerate}
\end{prop}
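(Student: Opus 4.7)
The strategy is a systematic enumeration based on the general theory recalled in \S\ref{d0s}: every depth-zero irreducible supercuspidal $\pi$ of $G = \GSp_4(F)$ has the form $\cInd_{G_{[x]}}^G \boldsymbol{\tau}$ where $x$ is a vertex of the reduced building, $\boldsymbol{\tau}|_{G_{x,0}}$ is the inflation of an irreducible cuspidal representation $\tau$ of the reductive quotient $\mathbb{G}_x(\F_q)$, and $\boldsymbol{\tau}$ is an extension of $\tau$ to $G_{[x]}$. Thus the plan is: (i) enumerate the orbits of vertices, (ii) for each vertex, list the cuspidal representations of $\mathbb{G}_x(\F_q)$ using the Deligne--Lusztig classification \eqref{deligne-lusztig-bijection}, (iii) for each such cuspidal, describe the possible extensions to $G_{[x]}$, and (iv) match up the resulting families with the four classes in the statement.

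For step (i), \S\ref{parahoric-section} shows there are (up to $G$-conjugacy) exactly two vertex types for $\GSp_4(F)$: the hyperspecial vertex with $\mathbb{G}_x = \GSp_4$ and the vertex $\alpha$ with $\mathbb{G}_\alpha = \GSp_{2,2}$. For step (ii), I would invoke Lemma~\ref{gsp4-finite-cuspidal} at the hyperspecial vertex and Lemma~\ref{gsp2,2-finite-cuspidal} at $\alpha$. At the hyperspecial vertex, Lemma~\ref{gsp4-finite-cuspidal} produces two types of cuspidals: twists $\tilde\theta_{10} \otimes \chi$ of the unique unipotent cuspidal (corresponding to class~\eqref{depth-zero-sc-GSp4(1)}), and Deligne--Lusztig cuspidals $R_T^\theta$ for anisotropic maximal tori $T \subset \GSp_4$ with regular characters $\theta$ (producing the regular supercuspidals of class~(1) via Definition~\ref{regular-supercuspidal-definition}). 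At the vertex $\alpha$, Lemma~\ref{gsp2,2-finite-cuspidal} produces the generic cuspidals $\overline\rho_{(\alpha,\beta)}$ together with the exceptional pair $\overline\rho^\pm_{(\alpha,\beta)}$ when $\alpha^{q-1} = \beta^{q-1} = -1$.

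For step (iii), I would separate the cases. If $\tau$ is a Deligne--Lusztig cuspidal associated to an \emph{elliptic} maximal torus $S_x \subset \mathbb{G}_x$ with \emph{regular} character (which happens for $\tau = R_T^\theta$ at the hyperspecial vertex, and for $\overline\rho_{(\alpha,\beta)}$ with $\alpha \neq \beta^{\pm 1}$ at $\alpha$ after lifting $S_x$ to an unramified maximally elliptic torus $S \subset G$), then $\boldsymbol{\tau}$ comes from extending $\theta$ to $S(F)$ using the regular character machinery of \cite{Kal-reg}, giving classes~(1) and~(4). The non-regular cases at $\alpha$ are precisely $\tau = \overline\rho^\pm_{(\lambda,\lambda)}$ with $\lambda^{q-1} = -1$, together with $\overline\rho_{(\alpha,\alpha^{\pm1})}$ (where the torus parameter is $k_F$-nonsingular but not $F$-singular); these account for the ``twisted Levi'' induced representations of class~(4). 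The unipotent-cuspidal case $\tau = \tilde\theta_{10}$ on $\GSp_4(k)$ produces class~\eqref{depth-zero-sc-GSp4(1)} after extending by an unramified character $\chi$ of $Z$. For the ``sign'' cuspidal $\overline\rho^+_{(\lambda,\lambda)}$ at $\alpha$, conjugation by $(I_2, \diag(\varpi,1))$ identifies it with the representation $\omega_\cusp^{\eta_2}$ on $G_\alpha/G_{\alpha+}$, and extending by an unramified $\chi$ on $Z$ produces class~\eqref{depth-zero-sc-GSp4(2)}; the relation $\eta_2(\varpi) = 1$ is forced by the requirement that $\boldsymbol\tau$ descend to a representation of $G_\alpha Z$.

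The main obstacle will be step (iii) in the non-regular cases at vertex $\alpha$, especially correctly identifying the representation $\overline\rho^+_{(\lambda,\lambda)}$ and its conjugate $\omega_\cusp^{\eta_2}$, and checking that the extension to $G_{[\alpha]} = G_\alpha Z$ is governed by the quadratic character $\eta_2$ as stated. This involves tracking the action of the normalizer $N_G(G_{\alpha,0})/G_{\alpha,0}$ on the set of cuspidals of $\GSp_{2,2}(\F_q)$, and verifying via the characterization in Remark~\ref{rho+-characterization} that exactly the $\overline\rho^+_{(\lambda,\lambda)}$ is fixed in the appropriate way so that extension by a single unramified character produces an irreducible supercuspidal. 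The enumeration also needs a counting/exhaustiveness check: by comparing with \eqref{deligne-lusztig-bijection} applied to both $\mathbb{G}_x$'s, and noting that class~(4) corresponds to tori $S$ which ramify at the two-fold cover (so come from $\alpha$ rather than the hyperspecial vertex), every cuspidal $\tau$ is accounted for exactly once.
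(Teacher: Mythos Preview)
Your approach is exactly the paper's: the proposition is asserted there without further proof as a direct consequence of Lemmas~\ref{gsp2,2-finite-cuspidal} and~\ref{gsp4-finite-cuspidal} together with the compact-induction recipe~\eqref{eqn:depth zero supercuspidal}, and your steps (i)--(iv) spell that out.

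A few points in your step~(iii) bookkeeping need correction, though none is a structural gap. The $k_F$-regularity condition for $\overline\rho_{(\lambda_1,\lambda_2)}$ is not $\lambda_1\ne\lambda_2^{\pm1}$ but rather that at least one $\lambda_i^{q-1}\ne-1$; this is what makes the centralizer in Lemma~\ref{gsp2,2-finite-cuspidal} a torus. Class~(4) is precisely the case $\lambda_1=\lambda_2$ with $\lambda_1^{q-1}\ne-1$: the character $\theta\boxtimes\theta$ is $k_F$-regular because the swap of the two $\GL_2$-factors is outer for $\GSp_{2,2}(\F_q)$, yet $F$-singular because that swap lies in $N_{\GSp_4(F)}(S)/S(F)$. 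So your phrase ``$k_F$-nonsingular but not $F$-singular'' should read ``$k_F$-nonsingular but $F$-singular,'' and your parenthetical condition $\alpha\ne\beta^{\pm1}$ on the $k_F$-regular locus wrongly excludes exactly this case. Finally, you should track $\overline\rho^-_{(\lambda,\lambda)}$ explicitly: under the fixed $\varpi$-identification $G_\alpha/G_{\alpha+}\cong\GSp_{2,2}(\F_q)$, conjugation by $(I_2,\diag(\bar u,1))$ with $\bar u$ a non-square swaps $\overline\rho^+\leftrightarrow\overline\rho^-$, so the other ramified quadratic $\eta_2'$ produces $(\overline\rho^-_{(\lambda,\lambda)})^{(I_2,\diag(\varpi,1))}$, and both signs are absorbed into class~(3) as $\eta_2$ ranges over the two ramified quadratics.
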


\begin{remark}
    By Remark~\ref{rho+-characterization}, the representation $\tilde\rho(\eta_2)$ is characterized as the common irreducible constituent of the cuspidal $R_T^\theta$ with $\theta^2=1$ and the Gelfand-Graev representation corresponding to the nilpotent orbit $(\begin{pmatrix}1&1\\&1\end{pmatrix},\begin{pmatrix}1&\varpi\\&1\end{pmatrix})$ of $H_\alpha$.
\end{remark}
By Lemma \ref{depth-0-fdeg}, the formal degree of the singular depth-zero 
supercuspidal $\pi_{\beta}(\theta_{10}\otimes \chi)$ is
\begin{equation}
    \fdeg(\pi_{\beta}(\theta_{10}\otimes \chi))=\frac{q^{11/2}q(q-1)^2}{2(q-1)(q^2-1)(q^4-1)}=q^{1/2}\frac{q^6}{2(q+1)(q^4-1)},
\end{equation}
since $\dim(\tilde{\theta}_{10})=\frac{q(q-1)^2}{2}$, $\dim(\GSp_4(\F_q))=11$ and $|\GSp_4(\F_q)|=(q-1)q^4(q^2-1)(q^4-1)$ by \cite[p.75]{Carter-book}. Note that the normalization of volumes given by \cite{DeBacker-Reeder} guarantees that there is a factor of $q^{1/2}$ in the formal degree formula for $\GSp_4$. 

To compute the formal degree of $\pi_{\alpha}(\eta_2;\chi)$: since $\dim R_T^{\epsilon}=(q-1)^2$, we have $\dim(\omega_{\cusp}^{\eta_2})=\frac{1}{2}(q-1)^2$. Note that $|\GSp_{2,2}(\F_q)|=(q-1)q^2(q^2-1)^2$ and $\dim\GSp_{2,2}(\F_q)=7$. Therefore, we have 
\begin{equation}\label{sc-formal-degree-equation}
    \fdeg\left(\pi_{\alpha}(\eta_2;\chi)\right)=\frac{\frac{1}{2}(q-1)^2}{(q-1)q^2(q^2-1)^2q^{-7/2}}=q^{1/2}\frac{q}{2(q+1)(q^2-1)}.
\end{equation}

The formal degree of $\pi_{(S,\theta\boxtimes\theta\otimes\chi)}$ is similar:
\begin{equation}
    \fdeg\left(\pi_{(S,\theta\boxtimes\theta\otimes\chi)}\right)=\frac{(q-1)^2}{(q-1)q^2(q^2-1)^2q^{-7/2}}=q^{1/2}\frac{q}{(q+1)(q^2-1)}.
\end{equation}

The remaining cases of formal degrees can be easily computed as they are non-singular supercuspidals. 

\end{numberedparagraph}

\begin{numberedparagraph}
The case of $\Sp_4(F)$ is given as follows. 
\begin{prop}\label{depth-zero-sc-Sp4}
The depth-zero supercuspidal representations of $G=\Sp_4(F)$ are given by:

\begin{enumerate}
\item $\pi=\pi_{(S,\theta)}$ for some maximally unramified elliptic maximal torus $S$ and a regular character $\theta$ of depth zero. These are regular supercuspidals. 
    \item\label{depth-zero-sc-Sp4(1)} Induced representations $\cInd_{G_\beta}^G\rho$ and $\cInd_{G_\gamma}^G\rho$, where $\rho$ is one of the following representations of $\Sp_4(\F_q)$, inflated via $G_\beta,G_\gamma\to\Sp_4(\F_q)$:
    \begin{enumerate}
        \item\label{depth-zero-sc-Sp4(1)-theta10} The unique cuspidal unipotent $\theta_{10}$ of $\Sp_4(\F_q)$, which gives rise to $F$-singular representations $\pi_{\beta}(\theta_{10}):=\cInd_{G_{\beta}}^G\inf \theta_{10}$ and $\pi_{\gamma}(\theta_{10}):=\cInd_{G_{\gamma}}^G\inf \theta_{10}$ coming from $G_{\beta}$ and $G_{\gamma}$. These are $k_F$-singular hence $F$-singular supercuspidals.
        \item\label{depth-zero-sc-Sp4(1)-mu2} Corresponding to the characters $1,\sgn$ of $\G_s=\tO_2(\F_q)\times\tU_2(\F_q)$ under \eqref{deligne-lusztig-bijection}; this gives rise to $k_F$-nonsingular, and hence $F$-nonsingular. This gives a total of $q-1$ nonsingular representations; 
    \end{enumerate}
    \item\label{depth-zero-sc-Sp4(2)} Induced representations $\pi_\alpha^\pm(\eta_2):=\cInd_{G_\alpha}^{\Sp_4}(R_{\pm}'(\theta_0)\times R_{\pm}'(\theta_0)^{\diag(\varpi,1)})$ where $R_\pm'(\theta_0)$ are representations of $\SL_2(\F_q)$ defined in \cite[\S5.2]{bonnafe-sl2}. This is $k_F$-singular and hence $F$-singular. 
    \item Induced representations $\pi_\alpha(\theta):=\cInd_{G_\alpha}^{\Sp_4}(R_T^\theta\boxtimes(R_T^\theta)^{\diag(\varpi,1)})$ where $\theta$ is a regular character of an anisotropic torus $T$ of $\SL_2(\F_q)$. This is $F$-singular but $k_F$-nonsingular.
\end{enumerate}
\end{prop}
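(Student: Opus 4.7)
The plan is to apply the compact induction formula \eqref{eqn:depth zero supercuspidal} vertex by vertex in the reduced building of $\Sp_4(F)$. By Section~\ref{parahoric-section}, the $\Sp_4(F)$-conjugacy classes of vertices in $\mathcal{B}_{\red}(\Sp_4,F)$ are represented by three vertices: two hyperspecial-type vertices $\beta,\gamma$ with reductive quotient $\Sp_4(k)$ (not $\Sp_4(F)$-conjugate, though conjugate in $\GSp_4(F)$ via $\diag(\varpi I_2,I_2)$), and the vertex $\alpha$ with reductive quotient $\Sp_2(k)\times\Sp_2(k)$. Since $\mathrm{Z}(\Sp_4)=\{\pm 1\}$ is contained in every parahoric, no central character twist appears, in contrast to the $\GSp_4$ case of Proposition~\ref{depth-zero-sc-GSp4}. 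It therefore suffices to enumerate the irreducible cuspidals $\tau$ of each $\mathbb{G}_x(k)$ and their extensions $\boldsymbol\tau$ to $G_{[x]}$, and then compactly induce.

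At the two $\Sp_4(k)$-vertices, Lemma~\ref{sp4-finite-cuspidal} enumerates all cuspidals in four families: the unique unipotent cuspidal $\theta_{10}$; the $q-1$ cuspidals in the packets $\mathcal{E}(\Sp_4,s)\cong\{1,\sgn\}$ for $s$ with eigenvalues $(1,-1,-1,\alpha^{\pm 1})$, $\alpha\in\mu_{q+1}\setminus\{\pm 1\}$; and regular Deligne--Lusztig $R_T^\theta$ for each of the two anisotropic maximal tori of $\Sp_4(k)$. These parahorics are self-normalizing in $\Sp_4(F)$, so each $\tau$ extends uniquely to $G_{[x]}$. The regular-$\theta$ families yield the Kaletha-regular supercuspidals $\pi_{(S,\theta)}$ of item~(1), via the bijection between anisotropic maximal tori of $\Sp_4(k)$ and maximally unramified elliptic maximal tori of $\Sp_4(F)$; the unipotent family yields the $F$-singular $\pi_\beta(\theta_{10})$ and $\pi_\gamma(\theta_{10})$ of item~(2a); and the $\mathcal{E}(\Sp_4,s)$ family yields the $q-1$ nonsingular-but-non-regular supercuspidals of item~(2b), whose failure of Kaletha-regularity is accounted for by the non-regular semisimple parameter $s$. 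At the vertex $\alpha$, cuspidals of $\SL_2(k)\times\SL_2(k)$ are outer tensor products $\rho_1\boxtimes\rho_2$, where by \cite[\S 5.2]{bonnafe-sl2} each $\rho_i$ is either a regular $R_T^\theta$ with $\theta\ne\theta_0$ or one of the halves $R_\pm'(\theta_0)$ of the split $R_T^{\theta_0}$. Using the endoscopic identification $G_\alpha/G_{\alpha+}\cong H_\alpha/H_{\alpha+}$ with $H=\Cent_G(\diag(1,-1,-1,1))$ from Section~\ref{parahoric-section}, the non-canonical identification of $G_\alpha/G_{\alpha+}$ with $\Sp_2(k)\times\Sp_2(k)$ involves a $\diag(\varpi,1)$-twist in the second factor, so cuspidals of $G_\alpha$ appear as $\rho\boxtimes\rho^{\diag(\varpi,1)}$. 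The regular-$\theta$ case produces item~(4), whose $F$-singular-but-$k_F$-nonsingular nature follows because the corresponding elliptic torus of $\Sp_4$ lifts only through the endoscopic $H$; the self-pairings $R_\pm'(\theta_0)\boxtimes R_\pm'(\theta_0)^{\diag(\varpi,1)}$ produce the $k_F$-singular $\pi_\alpha^\pm(\eta_2)$ of item~(3).

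The main obstacle is the careful bookkeeping at the $\alpha$-vertex, in particular the $\varpi$-dependent identification of $G_\alpha/G_{\alpha+}$ with the endoscopic reductive quotient and the resulting $\diag(\varpi,1)$-twist between the two tensor factors. Once this is in place, the sorting of the constructed supercuspidals into items~(1)--(4) is determined by the Deligne--Lusztig type of each $\tau$ together with Kaletha's nonsingularity criterion.
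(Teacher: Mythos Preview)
Your overall approach matches the paper's: both proofs amount to running through the vertices of the building, listing the irreducible cuspidals of each reductive quotient via Deligne--Lusztig theory (Lemma~\ref{sp4-finite-cuspidal} for the hyperspecial vertices, and the well-known $\SL_2(\F_q)$ classification for the $\alpha$-vertex), and then compactly inducing. The paper in fact gives no argument beyond the sentence preceding Proposition~\ref{depth-zero-sc-GSp4}, so your level of detail is already greater than the original.

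There is, however, a genuine gap in your treatment of the $\alpha$-vertex. You correctly note that every cuspidal of $\SL_2(k)\times\SL_2(k)$ is of the form $\rho_1\boxtimes\rho_2$, but you then assert that ``cuspidals of $G_\alpha$ appear as $\rho\boxtimes\rho^{\diag(\varpi,1)}$'' and proceed to discuss only the diagonal case. This is not true: every $\rho_1\boxtimes\rho_2$ is a cuspidal of $G_\alpha/G_{\alpha+}$, and the $\diag(\varpi,1)$-twist only affects how one labels the second factor, not which tensor products occur. The off-diagonal cases $R_T^{\theta_1}\boxtimes R_T^{\theta_2}$ with $\{\theta_1,\theta_1^{-1}\}\ne\{\theta_2,\theta_2^{-1}\}$, and the mixed cases $R_T^\theta\boxtimes R_\pm'(\theta_0)$, give rise to further depth-zero supercuspidals which you have not accounted for. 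These are regular in Kaletha's sense (the pair $(\theta_1,\theta_2)$ is a regular character of the elliptic torus $T^{(1)}_{E/F,E/F}$ sitting at $\alpha$) and so belong to item~(1); but item~(1) in your argument is populated only from the two hyperspecial vertices, so as written your enumeration is incomplete.

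A second, smaller point: at the $\alpha$-vertex the four sign combinations $R_{\epsilon_1}'(\theta_0)\boxtimes R_{\epsilon_2}'(\theta_0)$ all occur, not just the matched pairs $(\pm,\pm)$. You should say a word about why the proposition only records two of them per $\eta_2$: changing $\varpi$ to a uniformizer in the other square class (equivalently, passing from $\eta_2$ to $\eta_2'$) conjugates $R_\pm'(\theta_0)$ to $R_\mp'(\theta_0)$ in the second factor, so the off-diagonal sign pairings for one choice of $\varpi$ become diagonal for the other. Without this remark your count of the $\pi_\alpha^\pm(\eta_2)$ does not close up.
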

By Lemma \ref{depth-0-fdeg}, the formal degree of the singular supercuspidals $\pi_{\beta}(\theta_{10})$ and $\pi_{\gamma}(\theta_{10})$ is 
\begin{equation}
    \fdeg(\pi_{\beta}(\theta_{10}))=\fdeg(\pi_{\gamma}(\theta_{10}))=\frac{\frac{1}{2}q(q-1)^2}{q^4(q^2-1)(q^4-1)q^{-10/2}}=\frac{q^2}{2(q+1)^2(q^2+1)}
\end{equation}
since $\dim(\theta_{10})=\frac{1}{2}q(q-1)^2$ by \cite[Theorem~8.2]{Lus77}, $\dim|\Sp_4(\F_q)|=10$ and $|\Sp_4(\F_q)|=q^4(q^2-1)(q^4-1)$. 
Note that $\pi_{\beta}(\theta_{10})$ and $\pi_{\gamma}(\theta_{10})$ live in the same $L$-packet $\Pi_{\varphi(\eta)}$, mixed with two principal series representations as in \S\ref{stability-section} $L$-packet \eqref{size-4-Sp4-packet-theta-10}. 

To compute the formal degree of $\pi_{\alpha}^{\pm}(\eta_2)$: since $\dim(R_{\pm}'(\theta_0)\times R_{\pm}'(\theta_0)^{\diag(\varpi,1)})=\frac{1}{4}(q-1)^2$, $\dim(\SL_2\times\SL_2)=6$ and $|\SL_2(\F_q)\times\SL_2(\F_q)|=q^2(q^2-1)^2$, we have 
\begin{equation}
    \fdeg(\pi_{\alpha}^{\pm}(\eta_2))=\frac{\frac{1}{4}(q-1)^2}{q^2(q^2-1)^2q^{-6/2}}=\frac{q}{4(q+1)^2}.
\end{equation}
These representations live in stable mixed $L$-packets as in Corollary \ref{2x4-llc}. 

Similarly, the formal degree of $\pi_\alpha(\theta)$ is
\begin{equation}
    \fdeg(\pi_{\alpha}(\theta))=\frac{(q-1)^2}{q^2(q^2-1)^2q^{-6/2}}=\frac{q}{(q+1)^2}.
\end{equation}
This representation lives in the mixed $L$-packet in \eqref{pi-alpha-eta-mixed-packet}.
\end{numberedparagraph}

\subsubsection{\textbf{Positive-depth supercuspidal representations of $\Sp_4,\GSp_4$}} 

\begin{numberedparagraph}\label{type-datum-sec}\textbf{Type datum.}~
Recall Yu's classification of arbitrary-depth supercuspidals in terms of type datum \cite{Yu} (which was later generalized in \cite{Kim-Yu-types} to include non-supercuspidal types). 

\begin{defn} \label{defn:cuspidal datum}
A \textit{cuspidal $G$-datum} is a tuple $\cD:=(\vec G,y,\vec r, \pi^0,\vec\phi)$ consisting of 
\begin{enumerate}
\item a tamely ramified Levi sequence $\vec G=(G^0\subset G^1\subset\cdots\subset G^d=G)$ of twisted $E$-Levi subgroups of $ G$, such that $\rZ_{\bG^0}/\rZ_{\bG}$ is anisotropic;
\item a point $y$ in $\cB(G^0,F)\cap\cA(T,E)$, whose projection to the reduced building of $G^0$ is a vertex, where $T$ is a maximal torus of $G^0$ (hence of $G^i$) that splits over $E$;
\item a sequence $\vec r=(r_0,r_1,\ldots,r_d)$ of real numbers such that $0< r_0<r_1<\cdots< r_{d-1}\le r_d$ if $d>0$, and $0\le r_0$ if $d=0$;
\item an irreducible depth-zero supercuspidal representation $\rho^0$ of $K^0=G^0_{[y]}$ whose restriction to $G^0_{y,0+}$ is trivial and such that the compact induction $\cInd_{K^0}^{G^0}\rho^0$ is irreducible supercuspidal;
\item
 a sequence $\vec\phi=(\phi_0,\phi_1,\ldots,\phi_d)$ of characters, where $\phi_i$ is a character of $G^i$ which is trivial on $G^i_{y,r_i+}$ and nontrivial on $G^i_{y,r_i}$ for $0\le  i\le d-1$, such that
 \begin{itemize}
     \item $\phi_d$ is trivial on $G^d_{y,r_d+}$ and nontrivial on $G^d_{y,r_d}$ if $r_{d-1}< r_d$, and $\phi_d=1$ if $r_{d-1}=r_d$ (with $r_{-1}$ defined to be $0$). 
     \item Moreover, $\phi_i$ is $G^{i+1}$-generic of depth $r_i$ relative to $y$ in the sense of \cite[\S9]{Yu} for $0\le i\le d-1$.
 \end{itemize}
\end{enumerate}
\end{defn}
\end{numberedparagraph}

Formal degrees of arbitrary-depth tame supercuspidal representations in the sense of \cite{Yu} can be computed as in \cite[Theorem A]{Schwein}. 
Let $\mathbf{G}$ be a semisimple $F$-group, and let $\mathcal{D}$ be a cuspidal $\mathbf{G}$-datum with associated supercuspidal representation $\pi$. Let $R_i$ denote the absolute root system of $\mathbf{G}^i$, for the twisted Levi sequence $(\mathbf{G}^i)_{0\leq i\leq d}$. Let $\exp_q(t):=q^t$.
\begin{prop}
The formal degree of $\pi$ is given by
\begin{equation}
    \fdeg(\pi)=\frac{\dim\rho}{[G^0_{[y]}:G^0_{y,0+}]}\exp_q\left(\frac{1}{2}\dim G+\frac{1}{2}\dim \bbG^0_{y,0}+\frac{1}{2}\sum\limits_{i=0}^{d-1}r_i(|R_{i+1}|-|R_i|)\right).
\end{equation}
\end{prop}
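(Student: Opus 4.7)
The plan is to derive the formula by combining Yu's explicit compact-induction construction \cite{Yu} with the standard formal-degree formula for compactly induced irreducible supercuspidal representations. By construction, the representation $\pi$ attached to the cuspidal datum $\mathcal{D}$ has the form $\pi = \cInd_K^G \kappa$, where $K$ is a specific open subgroup of $G$ generated by $G^0_{[y]}$ and by the Moy--Prasad subgroups $G^i_{y, r_{i-1}/2+}$, and the inducing representation $\kappa$ is built from a lift of $\rho$ by tensoring with the Heisenberg--Weil representations $\omega_i$ attached to the $G^{i+1}$-generic characters $\phi_i$. Under the measure conventions of \cite{DeBacker-Reeder}, the formal degree of such a compactly induced irreducible supercuspidal is
\[
\fdeg(\pi) = \frac{\dim \kappa}{\vol(K)}.
\]

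First I would compute $\dim \kappa$. Each $\omega_i$ is a Heisenberg representation on a symplectic $\F_q$-vector space obtained as a Moy--Prasad quotient of $G^{i+1}_{y, r_i/2}$ by its $G^i$-part, so $\dim \omega_i = q^{d_i/2}$ with $d_i$ the rank of that symplectic space. Since every Moy--Prasad quotient $G^j_{y,r}/G^j_{y,r+}$ is an $\F_q$-vector space whose dimension counts affine roots of $G^j$ of value $r$ at $y$, the exponent $d_i$ is a linear combination of $|R_{i+1}|-|R_i|$ and the jumps in the depth sequence. Assembling these contributions, $\dim \kappa$ becomes $\dim \rho$ times an explicit power of $q$.

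Next I would evaluate $\vol(K)$. The subgroup $K$ factors (at the level of Haar volumes) as $\vol(G^0_{[y]})$ multiplied by successive Moy--Prasad indices $[G^{i+1}_{y, r_i/2+} : G^i_{y, r_i/2+}]$, each of which is again a power of $q$ computable by counting affine roots. Plugging these two computations into the formal-degree identity above, the contributions coming from the half-depths $r_i/2$ cancel cleanly between $\dim \kappa$ and $\vol(K)$, leaving only the terms $\tfrac{1}{2} r_i (|R_{i+1}| - |R_i|)$ advertised in the proposition. The additional factors $q^{(1/2)\dim G}$ and $q^{(1/2)\dim \bbG^0_{y,0}}$ come from the normalization of Haar measure on $G$ and on the maximal parahoric $G^0_{y,0}$, while the factor $[G^0_{[y]}:G^0_{y,0+}]$ in the denominator accounts for $\vol(G^0_{[y]})$ together with the depth-zero index appearing in $\dim \rho / \dim \tau$.

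The hard part is the bookkeeping: one must simultaneously track the filtration indices $[G^j_{y,r} : G^j_{y,r+}]$ at every relevant depth, invoke $G^{i+1}$-genericity of $\phi_i$ to guarantee that the Heisenberg symplectic space indeed has dimension matching $|R_{i+1}|-|R_i|$ at depth $r_i$, and align all the measure-normalization conventions. Once these matches are in place, the cancellations are algebraic and produce the closed form stated. This is precisely the calculation carried out in \cite[Theorem~A]{Schwein}, which we invoke to obtain the stated formula.
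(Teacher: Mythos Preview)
Your proposal is correct and matches the paper's approach: the paper does not give its own proof of this proposition but simply cites \cite[Theorem~A]{Schwein} in the sentence preceding the statement, which is exactly the reference you invoke at the end. Your additional sketch of the compact-induction and Moy--Prasad bookkeeping underlying Schwein's computation is accurate but goes beyond what the paper itself provides.
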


\begin{remark} \label{rem:formal degrees}
The Formal Degree Conjecture of \cite{Hiraga-Ichino-Ikeda}, which describes the formal degree $\fdeg(\pi)$ of any irreducible smooth representation $\pi$ of $G$ in terms of adjoint gamma factor, has been proved for regular supercuspidal representations in \cite[Theorem~B]{Schwein}, for non-singular supercuspidal representations in \cite[Theorem~9.2]{Ohara-fdegr}, and for unipotent supercuspidal representations in \cite[Theorem~3]{FOS}.  
\end{remark}

\begin{numberedparagraph}\label{twistes-Levi-classification} \textbf{Twisted Levi Sequences.}~
We first classify twisted Levi subgroups in $\Sp_4$ and $\GSp_4$.
\begin{prop}[{\cite[page~23]{walds}}]\label{sp-tori}
Conjugacy classes of maximal tori in $\Sp_{2n}(F)$ are given by the data of:
\begin{itemize}
    \item finite extensions $F_1^\#,\dots,F_r^\#/F$;
    \item $2$-dimensional \'{e}tale $F_i^\#$-algebras $F_i$; and
\end{itemize}
such that $n=\sum_{i=1}^r[F_i:F]$. Then, $W:=\bigoplus_{i=1}^rF_i$ is a $2n$-dimensional vector space over $F$ with a symplectic form
\begin{equation}
q(\sum_{i=1}^rw_i,\sum_{i=1}^rw_i'):=\sum_{i= 1}^r\frac1{[F_i:F]}\tr_{F_i/F}(c_iw_i\overline w_i'),
\end{equation}
where elements $c_i\in F_i^\times$ are such that $\overline{c}_i=-c_i$, where $\overline{\cdot}$ denotes the unique nontrivial automorphism of $F_i/F_i^\#$. Then there is a torus (whose conjugacy class depends only on the $c_i$'s modulo $N_{F_i/F_i^\#}\G_m$)
\begin{equation}
T_{F_1/F_1^\#,\dots,F_r/F_r^\#}^{(1)}:=\prod_{i=1}^rR_{F_i^\#/F}R_{F_i/F_i^\#}^{(1)}\G_m
\end{equation}
acting component-wise on $W$. Similarly, conjugacy classes of $\GSp_{2n}(F)$ are given by the same data, giving rise to the torus
\begin{equation}
T_{F_1/F_1^\#,\dots,F_r/F_r^\#}:=\{(x_i)\in\prod_{i=1}^rR_{F_1/F}\G_m:\Nm_{F_1/F_1^\#}(x_1)=\cdots=\Nm_{F_r/F_r^\#}(x_r)\in F^\times\}.
\end{equation}
\end{prop}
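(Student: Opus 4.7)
The plan is to prove both directions: first that the recipe on the right-hand side produces a maximal $F$-torus of $\Sp_{2n}$, and then that every maximal torus arises, uniquely up to conjugacy, from such data. For the constructive direction, given $(F_i/F_i^\#, c_i)$, I would verify that the form $q$ on $W=\bigoplus F_i$ is a non-degenerate alternating $F$-bilinear form: alternation follows from $\overline{c_i}=-c_i$ combined with the identity $\tr_{F_i/F}\circ\overline{\cdot}=\tr_{F_i/F}$, and non-degeneracy from non-degeneracy of the trace pairing on each étale factor $F_i/F$. Next one checks that the torus $T^{(1)}:=\prod_i R_{F_i^\#/F}R^{(1)}_{F_i/F_i^\#}\G_m$, acting componentwise by multiplication, preserves $q$: for $x=(x_i)$ with each $\Nm_{F_i/F_i^\#}(x_i)=x_i\overline{x_i}=1$, one has $q(xw,xw')=\sum_i\frac{1}{[F_i:F]}\tr(c_ix_i\overline{x_i}w_i\overline{w_i'})=q(w,w')$. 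A dimension count (the rank of $T^{(1)}$ over $F$ equals $\sum_i[F_i^\#:F]=n$) confirms that $T^{(1)}$ is maximal.

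For the classification direction, let $T\subset\Sp(W)$ be any maximal $F$-torus. The commutant $A:=\Cent_{\End_F(W)}(T)$ becomes $\overline{F}^{\,2n}$ after base change to $\overline{F}$ (since $T_{\overline F}$ is split of rank $n$), hence $A$ is an étale $F$-algebra of dimension $2n$; decompose $A=\prod_{j\in J}E_j$ into a product of field extensions and let $W_j=e_jW$, a $1$-dimensional $E_j$-vector space. The adjoint involution $\sigma$ of the symplectic form restricts to an involution of $A$ and hence permutes the idempotents $e_j$. If $\sigma$ were to fix a factor $E_j$ pointwise, any nonzero $T$-invariant $F$-bilinear form on the line $W_j$ would be of the shape $(x,y)\mapsto\phi(xy)$ with $\phi\colon E_j\to F$ nontrivial, and $E_j^\times$-invariance would force $\phi((a^2-1)z)=0$ for all $a,z\in E_j$, contradicting $\mathrm{char}(F)\ne 2$. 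Hence either (a)~$\sigma$ preserves $E_j$ as a set and induces a nontrivial involution with fixed field $F_i^\#:=E_j^\sigma$ and $F_i:=E_j$, a quadratic field extension; or (b)~$\sigma$ swaps $E_j$ with some $E_k\cong E_j$, and we set $F_i^\#:=E_j$ and $F_i:=E_j\oplus E_k$, a split étale $F_i^\#$-algebra of dimension $2$. Re-indexing by pairs and fixed factors gives the data as claimed, and by maximality $T$ coincides with the norm-one subgroup of the unit group of $A$ relative to $\sigma$, yielding the asserted formula for $T$.

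Finally, on each $W_i=F_i$, any $T$-invariant, non-degenerate, $\sigma|_{F_i}$-skew $F$-bilinear form is of the trace shape with some $c_i\in F_i^\times$ satisfying $\overline{c_i}=-c_i$; replacing $c_i$ by $\Nm_{F_i/F_i^\#}(u)\,c_i$ amounts to conjugating the torus by the scalar $u\in F_i^\times$ acting on $W_i$, so the conjugacy class of $T$ depends only on $c_i$ modulo $\Nm_{F_i/F_i^\#}F_i^\times$. The $\GSp_{2n}$ case follows the same steps, with the norm-one condition replaced by the common-similitude condition $\Nm_{F_1/F_1^\#}(x_1)=\cdots=\Nm_{F_r/F_r^\#}(x_r)\in F^\times$; this common value is exactly the similitude factor $\mu(x)$, and the centralizer/involution analysis is identical. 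The main obstacle is the bookkeeping in the split case $F_i=F_i^\#\oplus F_i^\#$, where one must carefully check that $R^{(1)}_{F_i/F_i^\#}\G_m\cong\G_m$ over $F_i^\#$ and that the trace-form description degenerates correctly to a hyperbolic plane on $W_i=F_i^\#\oplus F_i^\#$; the remainder is standard Galois-cohomological bookkeeping.
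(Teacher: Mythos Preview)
The paper does not supply its own proof of this proposition; it is quoted from Waldspurger \cite[page~23]{walds} and then used as a black box to enumerate twisted Levi subgroups. Your outline is the standard \'etale-algebra-with-involution argument and is essentially correct in structure.

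One step needs repair. In ruling out the possibility that the adjoint involution $\sigma$ fixes a factor $E_j$ pointwise, you invoke ``$E_j^\times$-invariance'' of $q|_{W_j}$ and deduce $\phi((a^2-1)z)=0$. But if $\sigma|_{E_j}=\id$, then the $E_j$-component of $T$ lies in $\{a\in E_j^\times:a^2=1\}=\mu_2$, so $T$-invariance gives you nothing close to $E_j^\times$-invariance. The fix is already implicit in your setup: $\sigma|_{E_j}=\id$ says exactly that $q(ax,y)=q(x,ay)$ for all $a\in E_j$, so $q|_{W_j}$ is $E_j$-balanced and hence of the form $(x,y)\mapsto\phi(xy)$ as you claim; but $\phi(xy)=\phi(yx)$ is symmetric while $q$ is alternating, forcing $q|_{W_j}=0$ in characteristic $\ne 2$, which contradicts the non-degeneracy of $W_j$ (the latter following from $\sigma(e_j)=e_j$). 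A cleaner alternative: by maximality $T=U(A,\sigma)^\circ$, and a factor with $\sigma|_{E_j}=\id$ contributes only $\mu_2$ to $U(A,\sigma)$, so $\dim T<n$. A second, smaller imprecision: when you say that replacing $c_i$ by $\Nm(u)c_i$ ``amounts to conjugating by $u\in F_i^\times$'', note that multiplication by $u$ on $W_i$ lies in $\Sp(W)$ only when $\Nm(u)=1$; what you really obtain is an isometry of symplectic spaces $(W,q_{c_i})\cong(W,q_{\Nm(u)c_i})$, and one must then transport along an isometry $(W,q_{\Nm(u)c_i})\cong(W,q_{c_i})$ (which exists since all symplectic forms on $F^{2n}$ are equivalent) to see that the two tori are $\Sp_{2n}(F)$-conjugate.
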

For $\Sp_4(F)$, the anistropic maximal tori are thus of the following form: 
\begin{itemize}
    \item $T^{(1)}_{F_1/F,F_2/F}(c_1,c_2)=R^{(1)}_{F_1/F}\G_m\times R^{(1)}_{F_2/F}\G_m$, with $F_1,F_2/F$ quadratic extensions, where $c_i\in F^\times/N_{F_i/F}(F_1^\times)$;
    \item $T^{(1)}_{F_1^\#\oplus F_1^\#/F_1^\#}=\{(x,y)\in R_{F_1^\#/F}\G_m\times R_{F_1^\#/F}\G_m:xy=1\}$ with $F_1^\#/F$ a quadratic extension; and
    \item $T^{(1)}_{F_1/F_1^\#}(c)=R_{F_1^\#/F}R_{F_1/F_1^\#}^{(1)}\G_m$, with $F_1/F_1^\#/F$ a tower of quadratic extensions, where $c\in(F_1^\#)^\times/N_{F_1/F_1^\#}(F_1^\times)$.
\end{itemize}
Twisted Levi subgroups are obtained as centralizers of coroots into these tori. 

For the torus\[T^{(1)}_{F_1/F,F_2/F}(c_1,c_2)=R^{(1)}_{F_1/F}\G_m\times R^{(1)}_{F_2/F}\G_m\subset\SL_2(F)\times\SL_2(F)\subset\Sp_4(F),\] its subtorus $R^{(1)}_{F_1/F}\G_m\times 1$ (resp., $1\times R^{(1)}_{F_2/F}\G_m$) has centralizer $R^{(1)}_{F_1/F}\G_m\times\SL_2(F)$ (resp., $\SL_2(F)\times R^{(1)}_{F_2/F}\G_m$).

When $F_1=F_2$ the torus $T^{(1)}_{F_1/F,F_2/F}(c_1,c_2)$ also has the diagonal sub-torus $\Delta(R^{(1)}_{F_1/F}\G_m)$, which has centralizer $\mathrm U_{F_1/F}(c_1,c_2)$, the unitary group of the hermitian space $E\oplus E$ with hermitian form $h(w_1\oplus w_2,w_1'\oplus w_2')=\frac12(c_1w_1\overline w_1'+c_2w_2\overline w_2')$.  

The torus $T^{(1)}_{F_1^\#\oplus F_1^\#/F_1^\#}$ has the sub-torus $\{(x,y)\in F^\times\times F^\times:xy=1\}$, which has centralizer $\GL_2(F)\times\Sp_0(F)$.

The torus $T^{(1)}_{F_1/F_1^\#}$ has no nontrivial $F$-rational sub-tori.

Similarly, for $\GSp_4(F)$, the maximal tori which are anisotropic modulo center are thus of the following form:
\begin{itemize}
    \item $T_{F_1/F,F_2/F}(c_1,c_2)=\{(x,y)\in R_{F_1/F}\G_m\times R_{F_2/F}\G_m:\Nm_{F_1/F}x=\Nm_{F_2/F}y\}$ for quadratic field extensions $F_1,F_2/F$;
    \item $T_{F_1^\#\oplus F_1^\#/F_1^\#}=\{(x,y)\in R_{F_1^\#/F}\G_m\times R_{F_1^\#/F}\G_m:xy\in F^\times\}$ for a quadratic extension $F_1^\#/F$; and
    \item $T_{F_1/F_1^\#}(c):=\{x\in R_{F_1/F}\G_m:\Nm_{F_1/F_1^\#}x\in F^\times\}$, where $F_1/F_1^\#$ is a quadratic field extension.
\end{itemize}

For the torus $T_{F_1/F,F_2/F}\subset\{(x,y)\in\GL_2(F)\times\GL_2(F):\det(x)=\det(y)\}\subset\GSp_4(F)$, 
its subtorus $\{(x,y)\in R_{F_1/F}\G_m\times F^\times:\Nm_{F_1/F}x=y^2\}$ has centralizer 
\[\{(x,y)\in R_{F_1/F}\G_m\times\GL_2(F):\Nm_{F_1/F}x=\det(y)\}.\] The base change to $F_1$ gives the Levi subgroup $F_1^\times\times\GL_2(F_1)$. 

When $F_1=F_2$ it also has the diagonal sub-torus $\Delta(R_{F_1/F}\G_m)$, which has centralizer $\GU_{F_1/F}(2)$, whose base change to $F_1$ gives the Levi subgroup $F_1\times\GL_2(F_1)$.

Finally, the tori $T_{F_1^\#\oplus F_1^\#/F_1^\#}$ and $T_{F_1/F_1^\#}$ have no interesting sub-tori.
\end{numberedparagraph}

\begin{numberedparagraph}\textbf{Explicit type data for $\GSp_4(F)$ and $\Sp_4(F)$.}~

For $G=\Sp_4$ the type datum are:
\begin{enumerate}[label=($\text{pos-depth}_{{\arabic*}}$)]
    \item $\vec G=(T_{F_1/F_1^\#}^{(1)})$ for a tower of quadratic extensions $F_1/F_1^\#/F$. Here $G^0$ is abelian, so $\dim\rho^0=1$. The corresponding representation is nonsingular.
    \item $\vec G=(T_{F_1^\#\oplus F_1^\#/F_1^\#}^{(1)})$ for a quadratic extension $F_1^\#/F$. Here $G^0$ is abelian, so $\dim\rho^0=1$. The corresponding representation is nonsingular.
    \item $\vec G=(T^{(1)}_{F_1/F,F_2/F})$, with $F_1,F_2/F$ quadratic extensions. $G^0$ is abelian, so $\dim\rho^0=1$. The corresponding representation is nonsingular.
    \item $\vec G=(R_{F_1/F}^{(1)}\G_m\times\SL_2(F)\subset G)$ for a quadratic extension $F_1/F$. The positive-depth representation is nonsingular, since $R_{F_1/F}^{(1)}\G_m\times\SL_2(F)$ does not have any singular supercuspidal representations.
    \item $\vec G=(\mathrm U_{F_1/F}(c_1,c_2)\subset G)$ for a quadratic extension $F_1/F$.
    Here, the character $\phi_0$ is trivial, since $G^0$ does not have any interesting characters.

The unitary group $G^1=\mathrm U_{F_1/F}(c_1,c_2)$ is quasi-split if and only if the discriminant $-c_1c_2\in Nm_{F_1/F}(F_1^\times)$. Thus, $G^1$ has singular supercuspidals if and only if $G^0$ is quasi-split, which happens if $-c_1c_2\in Nm_{F_1/F}(F_1^\times)$.

    \item $\vec G=(T_{F_1/F,F_2/F}^{(1)}\subset R_{F_1/F}^{(1)}\G_m\times \SL_2(F)\subset G)$ for quadratic extensions $F_1,F_2/F$. Here, $G^0$ is abelian so $\dim\rho^0=1$. The corresponding representation is nonsingular.
    \item $\vec G=(T_{F_1/F,F_1/F}^{(1)}\subset \mathrm U_{F_1/F}(c_1,c_2)\subset G)$ for a quadratic extension $F_1/F$. Here, $G^0$ is abelian so $\dim\rho^0=1$. Moreover, $G^1$ has no interesting characters, so $\phi_1=1$. The corresponding representation is nonsingular.
    \item $\vec G=(T_{F_1^\#\oplus F_1^\#/F_1^\#}^{(1)}\subset\GL_2(F)\times\Sp_0(F)\subset G)$, for a quadratic extension $F_1^\#/F_1$. Here, $G^0$ is abelian so $\dim\rho^0=1$ and the representation is nonsingular.
\end{enumerate}

The possibilities for $G=\GSp_4$ are:
\begin{enumerate}[label=($\text{pos-depth}_{{\arabic*}}$)]
    \item $\vec G=(G^0=T_{F_1/F_1^\#}\subset G)$ for a tower of quadratic extensions $F_1/F_1^\#/F$. Since $G^0$ is abelian, $\dim\rho^0=1$. The corresponding representation is nonsingular.
    \item $\vec G=(G^0=T_{F_1^\#\oplus F_1^\#/F_1^\#}\subset G)$ for a quadratic extension $F_1^\#/F$. Since $G^0$ is abelian, $\dim\rho^0=1$. The corresponding representation is nonsingular.
    \item $\vec G=(G^0=T_{F_1/F,F_2/F}\subset G)$, with $F_1,F_2/F$ quadratic extensions. Since $G^0$ is abelian, $\dim\rho^0=1$. The corresponding representation is nonsingular.
    \item $\vec G=(G^0=\{(x,y)\in R_{F_1/F}\G_m\times\GL_2(F):\Nm_{F_1/F}x=\det(y)\}\subset G^1=G)$ for a quadratic extension $F_1/F$.  The positive-depth representation is nonsingular, since $R_{F_1/F}\G_m\times\GL_2(F)$ does not have any singular supercuspidal representations.
    \item $\vec G=(G^0=GU_{F_1/F}(c_1,c_2)\subset G^1=G)$ for a quadratic extension $F_1/F$. 
    The unitary group $G^0=\mathrm{GU}_{F_1/F}(c_1,c_2)$ is quasi-split if and only if the discriminant $-c_1c_2\in Nm_{F_1/F}(F_1^\times)$. Thus, $G^0$ has singular supercuspidals if and only if $G^0$ is quasi-split, which happens if $-c_1c_2\in Nm_{F_1/F}(F_1^\times)$.
    \item $\vec G= (G^0=T_{F_1/F,F_2/F}\subset G^1=\{(x,y)\in R_{F_1/F}\G_m\times\GL_2(F):\Nm_{E/F}x=\det(y)\}\subset G^2=G)$ for quadratic extensions $F_1,F_2/F$. The corresponding representation is nonsingular.
    \item $\vec G=(G^0=T_{F_1/F,F_1/F}\subset GU_{F_1/F}(2)\subset G^1=G)$ for a quadratic extension $F_1/F$. The corresponding representation is nonsingular.
    \item $\vec G=(T_{F_1^\#\oplus F_1^\#/F_1^\#}^{(1)}\subset\GL_2(F)\times\GSp_0(F)\subset G)$, for a quadratic extension $F_1^\#/F_1$. Here, $G^0$ is abelian so $\dim\rho^0=1$ and the representation is nonsingular.
\end{enumerate}
Note that the trivial representation of the compact unitary group $\SU(2)$ is a singular supercuspidal, which is only visible on the level of Vogan packets; it mixes with the Steinberg of $\SL_2(F)$.    
\end{numberedparagraph}

\subsection{Reducibility of induced representations}\label{section-matching-nonsc-GSp4}

\begin{prop}[{\cite[Prop~6.1]{sha91}}]

(a) Let $G=\GSp_4(F)$ for $F$ a non-archimedean field. Let $\alpha$ and $\beta$ be the short and long simple roots of $G$, respectively. Let $\mathbf{P}=\mathbf{M}\mathbf{N}$ be the maximal parabolic subgroup such that $\mathbf{M}$ is generated by $\alpha$ and $\mathbf{M}\cong \GL_2\times\GL_1$. Fix an irreducible unitary supercuspidal representation $\sigma=\sigma_1\otimes\chi$ of $M=\mathbf{M}(F)$, where $\sigma_1$ is a supercuspidal unitary representation of $\GL_2(F)$ with central character $\omega$ and $\chi$ is a unitary character of $F^*$. Then $I(\sigma)$ is always irreducible. The representation $I(\sigma_1\nu^s\otimes\chi)$ is reducible if and only if $\omega=1$ and $s=\pm \frac{1}{2}$, where $\nu$ denotes $\nu=|\det()|$ for $\GL_2(F)$. The representation $I(\sigma_1\nu^{1/2}\otimes\chi)$ has a unique generic special subrepresentation and a unique irreducible preunitary non-tempered non-generic quotient. For $0<s<1/2$, all the representations $I(\sigma_1\nu^s\otimes\chi)$ are in the complementary series and $s=1/2$ is their end point. 

(b) Let $G=\Sp_4(F)$, the representation $I(\sigma)$ is reducible if and only if $\sigma\cong\widetilde{\sigma}$ (thus $\omega^2=1$) and $\omega\neq 1$. Suppose $\omega=1$ so that $I(\sigma)$ is irreducible. Then $I(\sigma\nu^s)$ is reducible if and only if $s=\pm 1/2$. The representation $I(\sigma\nu^s)$ has a unique generic special subrepresentation and a unique irreducible preunitary non-tempered non-generic quotient. For $0<s<1/2$, all the representations $I(\sigma\nu^s)$ are in the complementary series and $s=1/2$ is their end point. 

(c) The Plancherel measure $\mu(s\widetilde{\alpha},\sigma)$ is given by the formula 
\begin{equation}
    \mu(s\widetilde{\alpha})=\begin{cases}\gamma(G/P)^2q^{n(\sigma_1)}\frac{(1-\omega(\varpi)q^{-2s})(1-\omega(\varpi)^{-1}q^{2s})}{(1-\omega(\varpi)q^{-1-2s})(1-\omega(\varpi)^{-1}q^{-1+2s})}&\text{if $\omega$ is unramified}\\ \gamma(G/P)^2q^{n(\sigma_1)+n(\omega)}&\text{otherwise}\end{cases}
\end{equation}
Here $n(\sigma_1)$ and $n(\omega)$ are the conductors of $\sigma_1$ and $\omega$, respectively. 
\end{prop}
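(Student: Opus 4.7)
The plan is to apply the Langlands--Shahidi method. The key observation is that reducibility of $I(\sigma\nu^s)$ for an irreducible unitary supercuspidal $\sigma$ on a maximal Levi $M$ is governed by the analytic behavior of the Harish-Chandra Plancherel measure $\mu(s\widetilde\alpha,\sigma)$: by Harish-Chandra's theory, for real $s_0>0$ the representation $I(\sigma\nu^{s_0})$ is reducible exactly when $\mu(s\widetilde\alpha,\sigma)$ has a pole at $s_0$, while reducibility at $s_0=0$ for tempered $\sigma$ is controlled by the associated R-group.

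First, I would compute $\mu(s\widetilde\alpha,\sigma)$ via Shahidi's formula
\[
\mu(s\widetilde\alpha,\sigma)=\gamma(G/P)^2\,\gamma(s,\sigma,r,\psi)\,\gamma(-s,\widetilde\sigma,r,\bar\psi),
\]
where $r$ is the adjoint representation of ${}^L M$ on ${}^L \mathfrak{n}$. For the maximal parabolic of $\GSp_4$ with Levi $M\cong\GL_2\times\GL_1$ generated by the short simple root $\alpha$, I would decompose $r$ into irreducible pieces using the short-root structure, and then evaluate the resulting local L-functions and $\varepsilon$-factors on $\sigma=\sigma_1\otimes\chi$. Since $\sigma_1$ is supercuspidal, most of these local L-factors degenerate to $1$, leaving one piece that depends on the central character $\omega$; after simplification this yields the displayed formula in (c).

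Next, I would read off the pole locus directly: for unramified $\omega$, the denominator $(1-\omega(\varpi)q^{-1-2s})(1-\omega(\varpi)^{-1}q^{-1+2s})$ vanishes on the positive real axis at $s=\pm\tfrac12$ precisely when $\omega(\varpi)=1$, i.e.\ when $\omega$ is trivial; for ramified $\omega$, the measure has no real poles, so $I(\sigma_1\nu^s\otimes\chi)$ remains irreducible for all real $s$. For $\Sp_4$, I would use compatibility of parabolic induction with restriction from $\GSp_4$ together with an R-group analysis to reduce to a study of self-dual supercuspidals: reducibility at $s=0$ arises exactly from a nontrivial R-group when $\sigma\cong\widetilde\sigma$ with $\omega\neq 1$, and the behavior at $s=\pm\tfrac12$ mirrors the $\GSp_4$ case when $\omega=1$.

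Finally, the existence of a unique generic special subrepresentation and a unique non-tempered Langlands quotient at $s=1/2$ follows from the Langlands classification: in a reducible standard module the generic constituent is the unique irreducible subrepresentation (a generalized Steinberg) and the Langlands quotient is the unique non-tempered irreducible quotient; preunitarity for $0<s<\tfrac12$ then follows from a standard complementary-series argument with $s=\tfrac12$ as endpoint. The main obstacle is the explicit identification of $r$ and the precise form of the L-factors involving $\omega$; once those are in hand, the rest reduces to bookkeeping of the conductor exponents $n(\sigma_1)$ and $n(\omega)$ entering the $\varepsilon$-factors in the ramified case.
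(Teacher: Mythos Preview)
Your proposal is correct and is essentially the approach of the cited reference: the paper itself gives no proof of this proposition but simply quotes it from Shahidi, and the Langlands--Shahidi method you outline (computing the Plancherel measure via Shahidi's local coefficient formula, reading off reducibility from the pole structure, and handling $s=0$ via the R-group) is exactly how the result is established there. One small point worth tightening is the identification of the adjoint factor $r$: for this Siegel-type parabolic the relevant piece is $\wedge^2$ of the standard representation of $\GL_2$ twisted by the similitude, so that the governing $L$-factor is $L(2s,\omega)$, which makes the dependence on $\omega$ and the vanishing at $s=\pm\tfrac12$ transparent.
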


For a character $\chi$ of $F^\times$, let $e(\chi):=\log_q|\chi(\varpi)|$ be the unique real number such that $\chi=\nu^{e(\chi)}\chi_0$ where $\chi_0$ is a unitary character.

\begin{lemma}[{\cite[Lem~3.2]{sally-tadic}}]\label{reducibility-lemma}
Let $\chi_1$, $\chi_2$, and $\theta$ be characters of $F^\times$. Then $\chi_1\times\chi_2\rtimes\theta$ is reducible if and only if $\chi_1=\nu^{\pm1}$, $\chi_2=\nu^{\pm1}$, or $\chi_1=\nu^{\pm1}\chi_2^{\pm1}$.
\end{lemma}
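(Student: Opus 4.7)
The plan is to apply the standard reducibility criterion for principal series of a split reductive $p$-adic group: the normalized parabolic induction $\mathrm{Ind}_B^G(\chi)$ from the Borel $B=TU$ is reducible if and only if there exists a positive root $\gamma$ with $\chi\circ\gamma^\vee=\nu^{\pm1}$, where $\nu=|\cdot|_F$. This criterion is classical, following from Harish-Chandra--Silberger intertwining-operator theory combined with Casselman's square-integrability/temperedness criterion; it reduces the lemma to a finite enumeration of positive roots of type $C_2$.

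First I would rewrite $\chi=\chi_1\times\chi_2\rtimes\theta$ as a character of the split torus $T$ of $\GSp_4$. Using the parametrization $\epsilon_0^\vee(t_0)\epsilon_1^\vee(t_1)\epsilon_2^\vee(t_2)=(t_1,t_2,t_0t_2^{-1},t_0t_1^{-1})$ recorded in \S2.1, the character $\chi$ sends this torus element to $\chi_1(t_1)\chi_2(t_2)\theta(t_0)$. Next I would enumerate the four positive roots and their coroots in $X_*(T)$: using $s_\alpha(\beta^\vee)=\alpha^\vee+\beta^\vee$ and $s_\beta(\alpha^\vee)=\alpha^\vee+2\beta^\vee$ (which follow from the $C_2$ Cartan integers $\langle\alpha,\beta^\vee\rangle=-1$ and $\langle\beta,\alpha^\vee\rangle=-2$), one obtains $\alpha^\vee=\epsilon_1^\vee-\epsilon_2^\vee$, $\beta^\vee=\epsilon_2^\vee$, $(\alpha+\beta)^\vee=\epsilon_1^\vee+\epsilon_2^\vee$, and $(2\alpha+\beta)^\vee=\epsilon_1^\vee$. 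Pulling back $\chi$ along each coroot yields
\begin{align*}
\chi\circ\alpha^\vee&=\chi_1\chi_2^{-1},&
\chi\circ\beta^\vee&=\chi_2,\\
\chi\circ(\alpha+\beta)^\vee&=\chi_1\chi_2,&
\chi\circ(2\alpha+\beta)^\vee&=\chi_1.
\end{align*}
Imposing $\chi\circ\gamma^\vee=\nu^{\pm1}$ on each of the four gives, respectively, $\chi_1=\nu^{\pm1}\chi_2$, $\chi_2=\nu^{\pm1}$, $\chi_1=\nu^{\pm1}\chi_2^{-1}$, and $\chi_1=\nu^{\pm1}$; these combine precisely to the three conditions listed in the statement.

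The main subtlety is the ``only if'' direction when $\chi$ is not regular: the rank-one criterion above directly forces irreducibility when $\chi$ has trivial Weyl-group stabilizer, but for singular $\chi$ one must additionally rule out hidden reducibility coming from a nontrivial Knapp--Stein $R$-group. For $\GSp_4$ the $R$-group attached to a character satisfying none of the four coroot equations is trivial, as can be verified by a case-by-case analysis over the $W(C_2)$-conjugacy classes listed in Table~\ref{table-Weyl-group-conj-class}, checking in each case that the set of ``good'' roots (those on which the Plancherel factor is finite and nonzero) spans the coinvariant space and leaves no room for a nontrivial $R$-contribution. This case analysis is also implicit in the more detailed classification of principal series in \cite{sally-tadic}. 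Assembling these steps yields the claim.
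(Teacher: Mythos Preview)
The paper does not supply its own proof of this lemma; it is quoted as an established result from \cite[Lem~3.2]{sally-tadic}. Your argument via the rank-one reducibility criterion and explicit coroot pullbacks is correct and is essentially the standard route (and the one taken in the cited source): the coroot computations $(\alpha^\vee,\beta^\vee,(\alpha+\beta)^\vee,(2\alpha+\beta)^\vee)=(\epsilon_1^\vee-\epsilon_2^\vee,\epsilon_2^\vee,\epsilon_1^\vee+\epsilon_2^\vee,\epsilon_1^\vee)$ are right, and your remark on the $R$-group is the proper completion of the ``only if'' direction---for $\GSp_4$ every rank-one Levi has semisimple part of type $\GL_2$, so the rank-one Plancherel factors never vanish on unitary characters, $W_\chi'=W_\chi$, and no reducibility arises beyond the four coroot conditions.
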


We thus have the following theorem:

\begin{thm}\label{gsp4-induced}
A representation of $\GSp_4(F)$ parabolically induced from a Levi $L\subset G$ is not irreducible exactly in the following cases:
\begin{enumerate}
    \item \label{gsp4-110-levi}When $L=T$, the representation $\chi_1\times\chi_2\rtimes\theta$ is reducible when either:
    \begin{enumerate}
        \item if $\chi_1\times\chi_2\rtimes\theta$ is regular, i.e., $\chi_1\ne1,\chi_2\ne1,\chi_1\ne\chi_2^{\pm1}$:
        \begin{enumerate}
            \item\label{gsp4-1ai} $\chi_1=\nu\chi_2$ where $\chi_2^2\ne\nu^{-2},\nu^{-1},1$ and $\chi_2\ne \nu^{-2},\nu$. Then $\nu\chi_2\times\chi_2\rtimes\theta$ has length $2$ and in the Grothendieck ring
            \[
            \nu\chi_2\times\chi_2\rtimes\theta=\nu^{1/2}\chi_21_{\GL_2}\rtimes\theta+\nu^{1/2}\chi_2\St_{\GL_2}\rtimes\theta.
            \]
            The Langlands classification is
            \begin{align*}
                \nu^{1/2}\chi_2\St_{\GL_2}\rtimes\theta&=\begin{cases}J(\nu^{1/2}\chi_2\St_{\GL_2};\theta)&e(\chi_2)>-\frac12\\
                J(\nu^{1/2}\chi_2\St_{\GL_2}\rtimes\theta)&e(\chi_2)=-\frac12\\
                J(\nu^{-1/2}\chi_2^{-1}\St_{\GL_2};\nu\chi_2^2\theta)&e(\chi_2)<-\frac12
                \end{cases}\\
                \nu^{1/2}\chi_21_{\GL_2}\rtimes\theta&=\begin{cases}J(\nu\chi_2,\chi_2;\theta)&e(\chi_2)>0\\
                J(\nu\chi_2,\chi_2\rtimes\theta)&e(\chi_2)=0\\
                J(\nu\chi_2,\chi_2^{-1};\chi_2\theta)&0>e(\chi_2)\ge-\frac12\\
                J(\chi_2^{-1},\nu\chi_2;\nu\chi_2\theta)&-\frac12>e(\chi_2)>-1\\
                J(\chi_2^{-1};\nu^{-1}\chi_2^{-1}\rtimes\nu\chi_2^2\theta)&e(\chi_2)=-1\\
                J(\chi_2^{-1},\nu^{-1}\chi_2^{-1};\nu\chi_2^2\theta)&e(\chi_2)<-1
                \end{cases}
                \end{align*}
            \item\label{gsp4-1aii} $\chi_2=\nu$ and $\chi_1\ne1,\nu^{\pm1},\nu^{\pm2}$. Then $\chi_1\times\nu\rtimes\theta$ has length $2$ and in the Grothendieck ring
            \[
            \chi_1\times\nu\rtimes\theta=\chi_1\rtimes\nu^{1/2}\theta\St_{\GSp_2}+\chi_1\rtimes\nu^{1/2}\theta1_{\GSp_2}.
            \]
            Then,
            \begin{align*} \chi_1\rtimes\nu^{1/2}\theta\St_{\GSp_2}&=\begin{cases}
            J(\chi_1;\nu^{1/2}\theta\St_{\GSp_2})&e(\chi_1)>0\\
            J(\chi_1\rtimes\nu^{1/2}\theta\St_{\GSp_2})&e(\chi_1)=0\\
            J(\chi_1^{-1};\nu^{1/2}\chi_1\theta\St_{\GSp_2})&e(\chi_1)<0
                \end{cases}\\
            \chi_1\rtimes\nu^{1/2}\theta1_{\GSp_2}&=\begin{cases}
            J(\chi_1,\nu;\theta)&e(\chi_1)>0\\
            J(\nu;\chi_1\rtimes\theta)&e(\chi_1)=0\\
            J(\chi_1^{-1},\nu;\chi_1\theta)&e(\chi_1)<0
            \end{cases}
            \end{align*}
            \item\label{gsp4-1aiii} $\chi_1=\nu^2$ and $\chi_2=\nu$. Then $\nu^2\times\nu\rtimes\theta$ has length $4$, consisting of:
            \[
            \nu^{3/2}\theta\St_{\GSp_4},\nu^{3/2}\theta1_{\GSp_4},J(\nu^2;\nu^{1/2}\theta\St_{\GSp_2}), J(\nu^{3/2}\St_{\GL_2};\theta)
            \]
            \item\label{gsp4-1aiv}$\chi_1=\nu\chi_2$ and $\chi_2$ of order $2$. Then $\nu\chi_2\times\chi_2\rtimes\theta$ has length $4$, with a unique essentially square-integrable subquotient denoted by $\delta([\chi_2,\nu\chi_2],\theta)$, as well as
            \[
           J(\nu^{1/2}\chi_2\St_{\GL_2};\theta),J(\nu^{1/2}\chi_2\St_{\GL_2};\chi_2\theta),J(\nu\chi_2;\chi_2\rtimes\theta).
            \]
        \end{enumerate}
        \item if $\chi_1\times\chi_2\rtimes\theta$ is not regular:
        \begin{enumerate}
            \item\label{gsp4-1bi} $\chi_1=\nu,\chi_2=1$ then $\nu\times1\rtimes\theta$ has length $4$ consisting of essentially tempered representations $\tau(S,\theta)$ and $\tau(T,\theta)$ such that $1\rtimes\nu^{1/2}\theta\St=\tau(S,\theta)+\tau(T,\theta)$, as well as
            \(
            J(\nu;1_{F^\times}\rtimes\theta)\) and \(J(\nu^{1/2}\St_{\GL_2};\theta)
            \), where $1\rtimes \theta1_{\GSp_2}=J(\nu;1\rtimes\theta)+J(\nu^{1/2}\St;\theta)$
            \item\label{gsp4-1bii} $\chi_1=\chi_2=\nu$ then $\nu\times\nu\rtimes\theta$ has length $2$ consisting of
            \begin{align*}\nu\rtimes\nu^{1/2}\theta1_{\GSp_2}&=J(\nu;\nu^{1/2}\theta\St_{\GSp_2})\\
            \nu\rtimes\nu^{1/2}\theta\St_{\GSp_2}&=J(\nu,\nu;\theta).
            \end{align*}
            \item\label{gsp4-1biii} $\chi_1=\nu\chi_2$ and $\chi_1^2=\nu$, then $\nu\chi_2\times\chi_2\rtimes\theta$ has length $2$ consisting of 
            \[\nu^{1/2}\chi_21_{\GL_2}\rtimes\theta,\nu^{1/2}\chi_2\St_{\GL_2}\rtimes\theta.
            \]
            Here, $\nu^{1/2}\chi_2\St_{\GL_2}\rtimes\theta$ is tempered and $\nu^{1/2}\chi_21_{\GL_2}\rtimes\theta=J(\nu\chi_2,\nu\chi_2;\chi_2\theta)$.
        \end{enumerate}
    \end{enumerate}
    \item\label{gsp4-20-levi} When $L=\GL_2\times\GSp_0$, the representation $\nu^\beta\rho\rtimes\chi$, where $\beta\in\R$, $\rho$ is a unitary supercuspidal of $\GL_2$, and $\chi\colon F^\times\to\C^\times$ is reducible if and only if $\beta=\pm1/2$ and $\rho=\rho^\vee$ and $\omega_\rho=1$.
    
    Moreover, $\nu^{1/2}\rho\rtimes\chi$ has a unique generic special sub-representation and a unique irreducible preunitary nontempered non-generic quotient.
    
    \item\label{gsp4-12-levi} When $L=\GL_1\times\GSp_2$, the representation $\chi\rtimes\rho$, where $\chi\colon F^\times\to\C^\times$ and $\rho$ is a supercuspidal representation of $\GSp_2$, is reducible in the following cases:
    \begin{enumerate}
        \item\label{gsp4-3a} $\chi=1_{F^\times}$, in which case $1_{F^\times}\rtimes\rho$ splits into a sum of two tempered irreducible sub-representations which are not equivalent.
        \item\label{gsp4-3b} $\chi=\nu^{\pm1}\xi_o$ where $\xi_o\colon F^\times\to\C^\times$ is a character of order two such that $\xi_o\rho\cong\rho$. Then $\nu\xi_o\rtimes\rho$ has a unique irreducible sub-representation which is square-integrable.
    \end{enumerate}
\end{enumerate}
\end{thm}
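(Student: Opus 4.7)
My plan is to prove the theorem by separating the three parts according to the rank of the inducing parabolic, since different techniques apply. Parts (2) and (3) are essentially rank-one situations and follow from Shahidi's Proposition~6.1 quoted at the start of the subsection: (2) is exactly statement (a) of that proposition (reducibility of $\sigma_1 \nu^s \otimes \chi$ induced from the Klingen-type Levi $\GL_2 \times \GL_1$), and (3) reduces to analyzing $\chi \rtimes \rho$ for $\rho$ supercuspidal on $\GSp_2 \cong \GL_2$ (up to center). For case (3a), the self-duality $\rho^\vee \cong \rho$ with $\omega_\rho = 1$ forces $1 \rtimes \rho$ to split as a sum of two inequivalent tempered pieces by the Plancherel formula combined with $R$-group analysis. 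For case (3b), I would check that $\nu \xi_o \rtimes \rho$ has reducibility at the appropriate point by computing the Plancherel measure pole using the formula in Shahidi's proposition.

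For part (1), the initial criterion that $\chi_1 \times \chi_2 \rtimes \theta$ is reducible exactly when $\chi_1 = \nu^{\pm 1}$, $\chi_2 = \nu^{\pm 1}$, or $\chi_1 = \nu^{\pm 1}\chi_2^{\pm 1}$ is Lemma~\ref{reducibility-lemma}. The plan is then to determine the composition series in each of the listed subcases (a)(i)--(iv), (b)(i)--(iii) by induction in stages: writing $\chi_1 \times \chi_2 \rtimes \theta$ either as $\chi_1 \rtimes (\chi_2 \rtimes \theta)$ (induced from the Klingen-type Levi $\GL_1 \times \GSp_2$) or as $(\chi_1 \times \chi_2) \rtimes \theta$ (induced from the Siegel-type Levi $\GL_2 \times \GSp_0$), depending on which factor already has $\nu^{\pm 1/2}$-twisted Steinberg or trivial $\GL_2$-components. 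Each time one of these intermediate rank-one inductions is reducible, one decomposes it into $\St_{\GL_2}$ and $1_{\GL_2}$ (or the $\GSp_2$ analogues) in the Grothendieck group and then invokes parts (2) and (3).

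The composition factors are identified via the Langlands classification: each irreducible subquotient is the unique Langlands quotient $J(\cdots)$ of a standard module whose exponents one reads off from the character $\chi_1 \otimes \chi_2 \otimes \theta$, after placing the exponents into the positive Weyl chamber. To confirm each identification I would compute the Jacquet module of the candidate standard module using the geometric lemma of Bernstein--Zelevinsky and match semisimplifications on the torus side, exactly as in \cite{sally-tadic}. For the non-regular cases (b)(i)--(iii) and the four-dimensional case (a)(iii) with $(\chi_1, \chi_2) = (\nu^2, \nu)$, the extra reducibility comes from the fact that the $\GL_2$-Steinberg itself becomes square-integrable $\GSp_4$-Steinberg, so the filtration has length four with $\St_{\GSp_4}$, $1_{\GSp_4}$, and two intermediate Langlands quotients; these are identified by matching Jacquet modules with Weyl group orbits of the regular exponent.

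The main obstacle is case (a)(iv), the essentially square-integrable case $\chi_1 = \nu\chi_2$ with $\chi_2$ of order two, since the subquotient $\delta([\chi_2, \nu\chi_2], \theta)$ is genuinely square-integrable and does \emph{not} appear as a Langlands quotient of a standard module in the usual sense. I would construct it as the unique common subrepresentation of $\nu^{1/2}\chi_2 \St_{\GL_2} \rtimes \theta$ and $\chi_2 \rtimes (\chi_2 \rtimes \theta)$, verify its square-integrability using Casselman's criterion on its Jacquet modules, and then fill in the remaining three Langlands quotients by elimination. Throughout, regularity of the inducing character drastically simplifies Jacquet module bookkeeping, which is why the case analysis splits into the regular (a) and non-regular (b) branches.
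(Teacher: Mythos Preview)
Your proposal outlines essentially the methods of \cite{sally-tadic}, which is exactly what the paper's own proof does: it simply cites the relevant lemmas from \cite[\S3--4]{sally-tadic} (Lemmas~3.3--3.9 for the principal series cases, and \S4 for the maximal Levi cases). So your strategy of induction in stages, Jacquet module computations via the geometric lemma, Langlands classification, and Casselman's criterion is precisely the content behind those citations, and your case-by-case plan for part~(1) matches the structure of \cite{sally-tadic} closely.

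Two corrections are worth flagging. First, a terminology slip: the Levi $\GL_2\times\GL_1$ generated by the short root $\alpha$ in Shahidi's Proposition~6.1(a) is the \emph{Siegel} Levi, not the Klingen one; part~(2) of the theorem is the Siegel case and part~(3) is the Klingen case $\GL_1\times\GSp_2$. Second, and more substantively, Shahidi's proposition as quoted only treats the Siegel parabolic, so the Plancherel measure formula in its part~(c) does not directly give you the reducibility points in case~(3b): for the Klingen parabolic the relevant Langlands--Shahidi local coefficient involves a different adjoint action, and the reducibility occurs at $\nu^{\pm1}\xi_o$ (not $\nu^{\pm1/2}$). You would need the analogous Plancherel computation for the Klingen parabolic, which is carried out in \cite[\S4]{sally-tadic} rather than in Shahidi's quoted proposition. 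With that reference adjusted, your sketch is correct.
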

\begin{proof}
Case~\eqref{gsp4-110-levi} is from \cite[\S3]{sally-tadic} and Cases~\eqref{gsp4-20-levi}~and~\eqref{gsp4-12-levi} are from \cite[\S4]{sally-tadic}.

More precisely, Case~\ref{gsp4-1ai} is \cite[Lemma~3.3]{sally-tadic}, Case~\ref{gsp4-1aii} is \cite[Lemma~3.4]{sally-tadic}, Case~\ref{gsp4-1aiii} is \cite[Lemma~3.5]{sally-tadic}, Case~\ref{gsp4-1aiv} is \cite[Lemma~3.6]{sally-tadic}, Case~\ref{gsp4-1bi} is \cite[Lemma~3.8]{sally-tadic}, Case~\ref{gsp4-1bii} is \cite[Lemma~3.9]{sally-tadic}, Case~\ref{gsp4-1biii} is \cite[Lemma~3.7]{sally-tadic}.
\end{proof}

Let $\xi$ have order $2$ and write $\xi\rtimes1=T_\xi^1+T_\xi^2$ as a sum of irreducible representations of $\Sp_2$. Moreover, for any supercuspidal representation $\sigma$ of $\SL_2(F)$, let
\[
F_\sigma^\times:=\{a\in F^\times:\sigma^{\diag(a,1)}\cong\sigma\},
\]
which is really a subgroup of the finite group $F^\times/(F^\times)^2$.

The analogue of Theorem~\ref{gsp4-induced} for $\Sp_4$ is:
{
\begin{thm}\label{sp4-induced} 
\begin{enumerate}
    \item\label{sp4-110-levi} When $L=T$, the representation $\chi_1\times\chi_2\rtimes1$ is reducible when:
    \begin{enumerate}
        \item\label{sp4-1a} The representations coming from irreducibles of $\GSp_4$, i.e., $\chi_1\ne\nu^{\pm1}$, $\chi_2\ne\nu^{\pm1}$, and $\chi_1\ne\nu^{\pm1}\chi_2$. Then $\chi_1\times\chi_2\rtimes1$ is reducible exactly when $\chi_1$ or $\chi_2$ has order $2$. We may suppose without loss that $\chi_2$ has order $2$.

        \begin{enumerate}
        \item\label{sp4-1ai} If $\chi_1=\chi_2$ or $\chi_1$ is not of order $2$ then $\chi_1\rtimes T_{\chi_2}^1$ and $\chi_1\rtimes T_{\chi_2}^2$ are irreducible
        \item\label{sp4-1aii} If $\chi_1=\chi_2$ then both $\chi_1\rtimes T_{\chi_1}^1$ and $\chi_1\rtimes T_{\chi_1}^2$ have length two.
        \end{enumerate}
        
        \item\label{sp4-1b} if $\chi_1\times\chi_2\rtimes1$ is regular, i.e., $\chi_1\ne1,\chi_2\ne1,\chi_1\ne\chi_2^{\pm1}$:
        \begin{enumerate}
            \item\label{sp4-1bi} $\chi_1=\nu\chi_2$ where $\chi_2^2\ne\nu^{-2},\nu^{-1},1$ and $\chi_2\ne \nu^{-2},\nu$. Then
            \[
            \nu\chi_2\times\chi_2\rtimes1=\nu^{1/2}\chi_21_{\GL_2}\rtimes1+\nu^{1/2}\chi_2\St_{\GL_2}\rtimes1            \]
            has length two.
            \item\label{sp4-1bii} $\chi_2=\nu$ and $\chi_1\ne1,\nu^{\pm1},\nu^{\pm2}$. Then
            \[
            \chi_1\times\nu\rtimes1=\chi_1\rtimes\nu^{1/2}\St_{\Sp_2}+\chi_1\rtimes\nu^{1/2}1_{\Sp_2}
            \]
            has length two.
            \item\label{sp4-1biii} $\chi_1=\nu^2$ and $\chi_2=\nu$. Then $\nu^2\times\nu\rtimes1$ has length $4$, consisting of:
            \[
            \nu^{3/2}\St_{\Sp_4},\nu^{3/2}1_{\Sp_4},J(\nu^2;\nu^{1/2}\St_{\Sp_2}), J(\nu^{3/2}\St_{\GL_2};1)
            \]
            \item\label{sp4-1biv} $\chi_1=\nu\chi_2$ and $\chi_2$ of order $2$. Then \[\nu\chi_2\times\chi_2\rtimes1=\nu^{1/2}\chi_21_{\GL_2}\rtimes1+\nu^{1/2}\chi_2\St_{\GL_2}\rtimes1\]where $\nu^{1/2}\chi_21_{\GL_2}\rtimes1$ and $\nu^{1/2}\chi_2\St_{\GL_2}\rtimes1$ each have length three.
        \end{enumerate}
        Otherwise, there are no extra reducibilities.
        \item if $\chi_1\times\chi_2\rtimes1$ is not regular:
        \begin{enumerate}
            \item\label{sp4-1ci} $\chi_1=\nu,\chi_2=1$ then $\nu\times1\rtimes1$ has length $4$ consisting of essentially tempered representations $\tau$ and $\tau'$, as well as
            \[
            J(\nu;1_{F^\times}\rtimes1_{\Sp_2}),J(\nu^{1/2}\St_{\GL_2};1)
            \]
            \item\label{sp4-1cii} $\chi_1=\chi_2=\nu$ then
            \[\nu\times\nu\rtimes1=\nu\rtimes\nu^{1/2}1_{\Sp_2}+\nu\rtimes\nu^{1/2}\St_{\Sp_2}.
            \]
            in the Grothendieck ring, where both $\nu\rtimes\nu^{1/2}1_{\Sp_2}$ and $\nu\rtimes\nu^{1/2}\St_{\Sp_2}$ are irreducible.
            \item\label{sp4-1ciii} $\chi_1=\nu\chi_2$ and $\chi_1^2=\nu$, then $\nu\chi_2\times\chi_2\rtimes1$ has length $2$ consisting of 
            \[\nu^{1/2}\chi_21_{\GL_2}\rtimes1,\nu^{1/2}\chi_2\St_{\GL_2}\rtimes1.
            \]
            Otherwise, there are no extra reducibilities.
        \end{enumerate}
    \end{enumerate}
    \item\label{sp4-2} When $L=\GL_2\times\Sp_0=\GL_2$, the representation $\nu^\beta\rho\rtimes1$, where $\beta\in\R$ and $\rho$ is a unitary supercuspidal of $\GL_2$ is reducible if and only if $\rho$ is self-dual and:
    \begin{enumerate}
        \item $\beta=\pm1/2$ and $\omega_\rho=1_{F^\times}$; or
        \item $\beta=0$ and $\omega_\rho\ne1_{F^\times}$,.
    \end{enumerate}
    \item\label{sp4-3} When $L=\GL_1\times\Sp_2$, the representation $\nu^\beta\chi\rtimes\rho$, where $\chi$ is a unitary character and $\beta\in\R$ and $\rho$ is a supercuspidal representation of $\Sp_2$, is reducible in the following cases:
    \begin{enumerate}
        \item\label{sp4-3a} $\chi=1_{F^\times}$ and $\beta=0$,
        \item\label{sp4-3b} $\chi$ has order two and nontrivial on $F_\sigma^\times$ and $\beta=0$.
        \item\label{sp4-3c} $\chi$ has order two and trivial on $F_\sigma^\times$ and $\beta=\pm1$.
    \end{enumerate}
\end{enumerate}
\end{thm}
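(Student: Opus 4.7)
The plan is to combine three ingredients: (i) restriction from $\GSp_4(F)$ to $\Sp_4(F)$, which lets us import Theorem~\ref{gsp4-induced} for most cases; (ii) Shahidi's Plancherel-measure computations recalled in the proposition at the start of this section; and (iii) the classical splitting phenomenon $\xi\rtimes 1=T_\xi^1+T_\xi^2$ on $\Sp_2$ for a quadratic character $\xi$, which introduces new reducibilities in $\Sp_4$ that are invisible at the $\GSp_4$ level.

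For Case~(1), the principal series, subcases (1b) and (1c) are the $\Sp_4$-specializations of the regular and non-regular $\GSp_4$-cases of Theorem~\ref{gsp4-induced}, obtained by suppressing the $\GSp_4$-central character $\theta$. Since parabolic induction is compatible with restriction by Mackey theory, both the reducibility conditions of Lemma~\ref{reducibility-lemma} and the explicit composition factors descend from $\GSp_4$ to $\Sp_4$. Subcase (1a) captures the complementary situation: $\chi_1\times\chi_2\rtimes\theta$ is already irreducible on $\GSp_4$, but the restriction to $\Sp_4$ becomes reducible precisely when some $\chi_i$ has order two, in which case the splitting $\xi\rtimes 1=T_\xi^1+T_\xi^2$ at the $\Sp_2$-level produces the summands $\chi_1\rtimes T_{\chi_2}^i$.

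For Case~(2), the statement is exactly the content of Shahidi's proposition~(b) recalled earlier: the reducibility of $\nu^\beta\rho\rtimes 1$ for a self-dual supercuspidal $\rho$ of $\GL_2(F)$ occurs at $\beta=\pm 1/2$ when $\omega_\rho=1$ and at $\beta=0$ otherwise. For Case~(3), I would apply Shahidi's Plancherel-measure formula to the maximal parabolic with Levi $\GL_1\times\Sp_2$: the reducibility points correspond to poles of the relevant adjoint $L$-function, whose factorization through the stabilizer $F_\sigma^\times\subset F^\times/(F^\times)^2$ of the supercuspidal $\sigma$ produces precisely cases (3a), (3b), (3c).

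The main obstacle is subcase (1a)(ii), where $\chi_1=\chi_2$ are both quadratic and each $\chi_1\rtimes T_{\chi_1}^i$ has length two; here the full composition series is not determined by restriction from $\GSp_4$ alone. To finish this case I would compute the relevant Jacquet modules using Bernstein--Zelevinsky geometric lemmas, or equivalently analyze the corresponding Bernstein block via the affine Hecke algebra isomorphism of \cite{aubert-xu-Hecke-algebra}, which makes the length-two decomposition of $\chi_1\rtimes T_{\chi_1}^i$ explicit.
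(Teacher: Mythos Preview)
The paper's proof is a one-line citation: ``See \cite[Section~5]{sally-tadic}.'' All of the casework here is already carried out by Sally--Tadi\'c, and the paper simply imports it wholesale, just as the $\GSp_4$ analogue (Theorem~\ref{gsp4-induced}) was imported from \cite[\S3--4]{sally-tadic}.

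Your sketch is a reasonable outline of how one would reprove Sally--Tadi\'c's results using the tools assembled in this section, and it is broadly correct. But there is a gap in your treatment of (1b) and (1c): you claim the composition factors ``descend from $\GSp_4$ to $\Sp_4$'' via restriction, yet restriction can introduce \emph{further} splitting beyond what you flag in (1a). Concretely, in case~\ref{gsp4-1aiv} the $\GSp_4$-representation has length~$4$, while the $\Sp_4$-version~\ref{sp4-1biv} has length~$6$ (two pieces of length~$3$); the extra constituents arise because the discrete series $\delta([\chi_2,\nu\chi_2],\theta)$ itself splits upon restriction (cf.~\eqref{sp4-splits}). Your framework of ``quadratic $\chi_i$ causes splitting'' does cover this, but the statement that (1b) follows directly from Theorem~\ref{gsp4-induced} by suppressing $\theta$ is too coarse---you would need to track which $\GSp_4$-constituents have nontrivial $X_G$-stabilizer (in the sense of \cite[\S2]{Gelbart-Knapp-SLn-1982}) and split accordingly. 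The Sally--Tadi\'c reference handles this systematically via Jacquet modules, which is essentially the fix you propose for (1a)(ii).
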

\begin{proof}
See \cite[Section~5]{sally-tadic}.
\end{proof}}

\section{The Galois side}\label{sec:Galois-side}

We are concerned with $L$-parameters of $G=\Sp_4,\GSp_4$, i.e, homomorphisms $\varphi\colon W_F\times\SL_2(\C)\to G^\vee$ such that $\varphi(w)$ is semisimple for any $w\in W_F$, and the restriction $\varphi|_{\SL_2(\C)}$ is a morphism of complex algebraic groups.

\begin{lemma}\label{no-levi-lemma}
If $\cG_\varphi^\circ$ is abelian, then members of the $L$-packet for $\varphi$ are representations with support $\Cent_{G^\vee}(\Cent_{\cG_\varphi}^\circ)^\vee$.
\end{lemma}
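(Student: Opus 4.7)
The plan is to argue that the hypothesis forces the parameter $\varphi$ to factor through a proper Levi subgroup of $G^\vee$, and then invoke the compatibility of LLC with parabolic induction (which on the Bernstein side matches the decomposition via cuspidal supports, cf.\ the AMS formalism referenced earlier in the paper).

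First, I would verify the Levi structure. The centralizer $\cG_\varphi = \Cent_{G^\vee}(\mathrm{Im}\,\varphi)$ is a reductive subgroup of $G^\vee$, since $\varphi$ has semisimple image on $W_F$ and algebraic image on $\SL_2(\C)$; hence its identity component $\cG_\varphi^\circ$ is connected reductive. Being abelian by hypothesis, it is therefore a torus $S\subseteq G^\vee$. By the standard fact that the centralizer of a torus in a connected reductive group is a Levi subgroup, $L^\vee:=\Cent_{G^\vee}(S)$ is a Levi subgroup of $G^\vee$. Tautologically $\mathrm{Im}\,\varphi$ commutes with everything in $\cG_\varphi\supseteq S$, so $\mathrm{Im}\,\varphi\subseteq L^\vee$. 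Thus $\varphi$ may be regarded as an $L$-parameter $\varphi_L$ for the Levi subgroup $L$ of $G$ with dual $L^\vee$.

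Second, I would unpack the $L$-packet of $\varphi_L$ for $L$. Since $S$ is the connected center of $L^\vee$ and lies in $\cG_\varphi^\circ$, the centralizer of $\varphi_L$ in $L^\vee$ equals the preimage of $\cG_\varphi \cap L^\vee$; in particular its identity component agrees with $S$ modulo the central torus of $L^\vee$, so $\varphi_L$ is a discrete (indeed, once quotiented by $Z(L^\vee)^\circ$, possibly supercuspidal) parameter for $L$. The explicit LLC for Levi subgroups of $\GSp_4$ and $\Sp_4$ established in Remark~\ref{gsp4-levi-llc-remark} (and its $\Sp_4$-analogue) then produces the $L$-packet $\Pi_{\varphi_L}(L)$, consisting of representations of $L(F)$.

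Third, I would transfer back to $G$ using the matching \eqref{Bernstein-block-isom-intro}: on the Galois side, the cuspidal support of the enhanced parameter $(\varphi,\rho)$ lies in the Bernstein component $\mathfrak{s}^\vee=[L^\vee,(\varphi_L,\rho_L)]_{G^\vee}$, and under the bijection $\mathrm{Irr}^{\mathfrak{s}}(G)\xrightarrow{\sim}\Phi_e^{\mathfrak{s}^\vee}(G)$ (for intermediate series this is \cite{aubert-xu-Hecke-algebra}, for principal series \cite{Roche-principal-series,Reeder-isogeny,ABPS-KTheory,AMS18}) the supercuspidal support of any $\pi\in\Pi_\varphi(G)$ therefore lies in $L$. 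The members of the packet are exactly the irreducible subquotients of the normalized parabolic inductions $\mathrm{Ind}_P^G(\sigma)$ for $\sigma\in\Pi_{\varphi_L}(L)$, which have supercuspidal support on $L$ by definition.

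The main obstacle is not the Levi factorization (which is formal) but making sure that every element of $\Pi_\varphi(G)$ is captured this way; that is, that no irreducible representation with cuspidal support outside $L$ can have $\varphi$ as its parameter. This amounts to invoking the cuspidal-support compatibility of the AMS correspondence (Property~\ref{property:AMS-conjecture-7.8}), together with the Bernstein-by-Bernstein bijection \eqref{Bernstein-block-isom-intro}, which in the cases $G=\GSp_4,\Sp_4$ has already been established in the cited works and in the preceding sections of this paper.
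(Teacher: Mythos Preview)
Your approach is correct in outline and ultimately rests on the same ingredient as the paper---Property~\ref{property:AMS-conjecture-7.8}---but it is more circuitous and leaves one step imprecise. The paper's proof is a three-line direct computation on the Galois side: since $\cG_\varphi^\circ$ is a torus, the only quasi-Levi of $\cG_\varphi$ (in the sense of \cite{AMS18}) is $\Cent_{\cG_\varphi}(\cG_\varphi^\circ)$; hence the cuspidal support $\cL^\varphi$ of every $(u_\varphi,\rho)$ is this group, its connected center is $\cG_\varphi^\circ$, and therefore $\Sc(\varphi,\rho)$ lands in the Levi $\Cent_{G^\vee}(\cG_\varphi^\circ)$. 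One invocation of Property~\ref{property:AMS-conjecture-7.8} then finishes.

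Your route---factor $\varphi$ through $L^\vee=\Cent_{G^\vee}(S)$, argue $\varphi_L$ is cuspidal for $L$, then pull back via the Bernstein bijection---is valid, but the step ``$\varphi_L$ is a discrete (indeed, \ldots\ possibly supercuspidal) parameter'' is exactly what needs proof, and you do not supply it. What is required is the identity $S=Z(L^\vee)^\circ$: one has $S\subseteq Z(L^\vee)^\circ$ trivially, and conversely $Z(L^\vee)^\circ$ centralizes $L^\vee\supseteq\varphi(W_F)$, hence lies in $\cG_\varphi$, hence in $\cG_\varphi^\circ=S$ by connectedness. With this in hand $(\varphi_L,\rho_L)$ is cuspidal for $L$ and your third paragraph goes through. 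The paper's argument avoids this detour by never leaving the cuspidal-support formalism; your version has the mild advantage of making the Levi factorization of $\varphi$ itself explicit, which is sometimes useful downstream.
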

\begin{proof}

Let $\rho\in\Irr(S_\varphi)$. Since $\cG_\varphi^\circ$ is abelian, the cuspidal support $\cL^\varphi$ of $(u_\varphi,\rho)$, which is a quasi-Levi of $\cG_\varphi$ in the sense of \cite[pg~5]{AMS18}, must be $\Cent_{\cG_\varphi}(\cG_\varphi^\circ)$. Thus the cuspidal support of $(\varphi,\rho)$ must be $\Cent_{G^\vee}(\Cent_{\cL^\varphi}^\circ)=\Cent_{G^\vee}(\cG_\varphi^\circ)$. By Property~\ref{property:AMS-conjecture-7.8} the members of the $L$-packet of $\varphi$ has support $\Cent_{G^\vee}(\Cent_{\cG_\varphi}^\circ)^\vee$.
\end{proof}

Let $G=\Sp_4(F)$ and $\varphi\colon W_F\times\SL_2\to G^\vee=\SO_5(\C)$ be an $L$-parameter. Consider $\varphi|_{W_F}$ as a $5$-dimensional representation of $W_F$ with an invariant symmetric inner product.

We use the following notation from \S\ref{section-properties}:
\[
    \cG_\varphi=\Cent_{\SO_5(\C)}(\varphi(W_F))\hspace{0.2cm}\text{ and }\hspace{0.2cm}
    S_\varphi=\pi_0(\Cent_{\SO_5(\C)}(\varphi(W_F'))).
\]
The cuspidal support map $\Sc\colon \Phi_e(G)\to\bigsqcup_{L\in\cL(G)}\Phi_{e,\cusp}(L)/W_G(L)$ is defined via the Springer correspondence\footnote{There exist in literature different ways to normalize the Springer correspondences, see for example \cite{Collingwood-McGovern}; for constructing LLC, the normalization used sends the regular nilpotent orbit to the sign representation of $W$.} for $\cG_\varphi$, so we conduct case-work on the shape of the $L$-parameter $\varphi$.

There are the following cases, depending on how the $W_F$-representation $U$ decomposes (parameterized by partitions of $5$). 
\begin{enumerate}
    \item
    $U$ it is irreducible, so $\cG_\varphi=1$ and $S_\varphi=1$. This is a supercuspidal singleton packet.
    \item
    $U=V\oplus\chi$ where $\dim V=4$ with a symmetric form $V\otimes V\to\C$ and $\chi^2=1$. Here $\cG_\varphi=\mu_2$ and $S_\varphi=\mu_2$. Here, $\cL_\varphi=\mu_2$ so $\Cent_{G^\vee}(\Cent_{\cL_\varphi}^\circ)=G^\vee$. Thus this is a purely supercuspidal packet of size $2$.
    \item
    $U=V_1\oplus V_2$ where $\dim V_1=3$ and $\dim V_2=2$, both self-dual with invariant symmetric forms. Here $\cG_\varphi=\mu_2$ and $S_\varphi=\mu_2$. Again, this is a purely supercuspidal packet of size $2$.
    \item
    $U=V\oplus \chi_1\oplus\chi_2$ where $\dim V=3$ and $V$ is self-dual with an invariant symmetric form. Either:
    \begin{enumerate}
        \item\label{311-case-a} $\chi_1=\chi_2$ so $\chi_1^2=\chi_2^2=1$ since $\chi_1\oplus\chi_2$ must be self-dual. Now $\cG_\varphi=\tS(\mu_2\times \tO_2(\C))\cong\tO_2(\C)$, since an automorphism of $U$ must act by scalars on $V$ and by an orthogonal transformation on $\chi_1\oplus\chi_2$. Since $\cG_\varphi$ has no unipotents, $\varphi|_{\SL_2(\C)}$ is trivial and $S_\varphi=\mu_2$.
        
        Here $\cL_\varphi=1\times\SO_2(\C)$ so the cuspidal support is $\Cent_{G^\vee}(\Cent^\circ_{\cL_\varphi})=\Cent_{G^\vee}(1\times\SO_2(\C))=\GL_1(\C)\times\SO_3(\C)$. Since supercuspidal $L$-parameters of $\SO_3(\C)\cong\PGL_2(\C)$ have trivial unipotent, by Property~\ref{infinitesimal-prop} (and the observation that $\varphi|_{\SL}$), we have \[\varphi=\lambda_\varphi=\iota_{\GL_1\times\SO_3}\circ \lambda_{\varphi_v}=\iota_{\GL_1\times\SO_3}\circ\varphi_v.\]
        Thus the packet consists of sub-quotients of the parabolic induction $\widehat\chi_1\rtimes\pi_V$ where $\pi_V$ is the representation of $\Sp_2(F)$ corresponding to $V$ under the LLC for $\Sp_2(F)\cong\SL_2(F)$ (this is well-defined, since $V$ corresponds to a singleton packet). 
        \item $\chi_1\ne\chi_2$ and $\chi_1^2=\chi_2^2=1$ then $\cG_\varphi=\mu_2^2$, so $\varphi|_{\SL_2(\C)}$ is trivial and $S_\varphi=\mu_2^2$. By Lemma~\ref{no-levi-lemma}, this is a purely supercuspidal packet of size $4$.
        \item $\chi_1\ne\chi_2$ and $\chi_1=\chi_2^{-1}$ then $\chi_1\oplus\chi_2$ carries the symmetric form $\langle(a_1,b_1),(a_2,b_2)\rangle:=a_1b_2+a_2b_1$ so $\cG_\varphi=\C^\times$ and $\varphi|_{\SL_2(\C)}=1$ and $S_\varphi=1$. Again by Lemma~\ref{no-levi-lemma} the support of the unique member of the $L$-packet is $\Cent_{\SO_5}(\cG_\varphi^\circ)^\vee=F^\times\times\Sp_2(F)$. By the same argument as in case~\ref{311-case-a}, the member of the $L$-packet is $\chi_1\rtimes\pi_V$.
        
    \end{enumerate}
    \item
    $U=V_1\oplus V_2\oplus\chi$ where $\dim V_1=\dim V_2=2$, and $\chi^2=1$. Either:
    \begin{enumerate}
        \item $V_1\cong V_2$ and $V_1$ has an invariant symmetric form so $\cG_\varphi=\C^\times$ and $S_\varphi=1$. By Lemma~\ref{no-levi-lemma}, this is a purely supercuspidal singleton packet.
        \item $V_1\cong V_2$ and $V_1$ has an invariant symplectic form $\omega$ then $V_1\oplus V_1$ carries the symmetric form $\langle v_1\oplus v_2,w_1\oplus w_2\rangle:=\omega(v_1,w_2)-\omega(v_2,w_1)$. Then $\chi=1$ and $\cG_\varphi=\Sp_2(\C)$. The Springer correspondence for $\Sp_2\cong\SL_2$ is shown on Table~\ref{table-sl2-springer}. Thus the Levi subgroup $\cL_\varphi\subset\cG_\varphi$ is either $T$ or $\Sp_2(\C)$ and $\Cent_{G^\vee}(\Cent_{\cL_\varphi}^\circ)$ is either $\GL_2(\C)\times\SO_1(\C)$ or $G^\vee$, correspondingly. Thus:
        {\begin{center}
\begin{table}[ht]        \begin{tabular}{ |c|c| } 
 \hline
 Unipotent pairs& Representations of $W=\mu_2$\\ \hline
 $([1^2],1)$&$1$\\
  $([2],1)$&$\sgn$\\
 $([2],-1)$&cusp\\
 \hline
\end{tabular}\label{table-sl2-springer}\caption{The Springer correspondence for $\SL_2$ \cite[\S10.3]{Lusztig-IC}}\end{table}\end{center}}

\begin{itemize}
    \item When $\varphi|_{\SL_2}=1$ then $S_\varphi=1$ so the $L$-packet is $\{\pi_V\rtimes1\}$. Here, since $V$ is an $L$-parameter into $\SL_2$, we have $\omega_\rho=1$, so by Theorem~\ref{sp4-induced} the representation $\pi_V\rtimes 1$ is irreducible.
    \item When $\varphi|_{\SL_2}$ is nontrivial, then $S_\varphi=\mu_2$ so the $L$-packet has size $2$. This packet is determined in Section~\ref{section-mixed-packets-GSp4}.
    
    Concretely, the second $L$-parameter can be considered the $W_F\times\SL_2(\C)$-representation $U=M_2(\C)\oplus\C$ where $W_F$ acts on $M_2(\C)$ by left multiplication via the representation $V_1$, and $\SL_2(\C)$ acts on $M_2(\C)$ by right multiplication.
\end{itemize}

\label{sp4-case5b}
        \item $V_1\ncong V_2$ and both have an invariant symmetric form, then $\chi\cong\det(V_1)\otimes\det(V_2)$. Here $\cG_\varphi=\mu_2^2$ and $S_\varphi=\mu_2^2$. By Lemma~\ref{no-levi-lemma} this is a purely supercuspidal packet of size four.
        \item $V_1\ncong V_2$ and $V_1\cong V_2^\vee$ then $\cG_\varphi=\C^\times$ and $S_\varphi=1$. By Lemma~\ref{no-levi-lemma} the member of the singleton $L$-packet is $\pi_V\rtimes1$, supported in $\GL_2(F)\times\Sp_0(F)$. The representation $\pi_V\rtimes 1$ is irreducible by Theorem~\ref{sp4-induced}\eqref{sp4-2}, since $\pi_V$ is not self-dual.
    \end{enumerate}
    \item
    $U=V\oplus\chi_1\oplus\chi_2\oplus\chi_3$ where $\dim V=2$ with $V$ self-dual with an invariant symmetric form $V\otimes V\to\C$. Either:
    \begin{enumerate}
        \item $\chi_1=\chi_2=\chi_3$ with $\chi_1^2=1$ then $\cG_\varphi= \SO_3(\C)\times\mu_2$, and $\chi_1=\det(V)$. The Springer correspondence for $\SO_3(\C)\cong\PGL_2(\C)$ is given in Table~\ref{Springer-SO3}, where all local systems are supported in the torus.{\begin{center}\begin{table}\label{Springer-SO3}\begin{tabular}{ |c|c| }
 \hline
 Unipotent pairs& Representations of $W=\mu_2$\\ \hline
 $([1^2],1)$&$1$\\
 $([2],1)$&$\sgn$\\
 \hline\end{tabular}\caption{Springer Correspondence for $\SO_3(\C)$}\end{table}\end{center}}Thus $\cL_\varphi=\mu_2\times\C^\times\subset\cG_\varphi$. Now $\Cent_{G^\vee}(\Cent_{\cL_\varphi}^\circ)=\C^\times\times\SO_3(\C)$ and the members of the $L$-packet are supported in $\GL_1(F)\times\Sp_2(F)$. Explicitly, the restriction $\varphi|_{\SL_2(\C)}$ is either:
        \begin{enumerate}
            \item trivial, so $S_\varphi=\mu_2$. The $W_F$-representation $V\oplus\chi_1$ can be viewed as an $L$-parameter $W_F\to\SO_3(\C)$, which then corresponds to representations $\pi_1,\pi_2$ of $\Sp_2(F)$ under LLC for $\Sp_2(F)$ (the packet has size $2$). The $L$-packet is $\{\widehat\chi_1\rtimes\pi_1,\widehat\chi_1\rtimes\pi_2\}$, which are irreducible by Theorem~\ref{sp4-induced}\eqref{sp4-3}.
            \item\label{sp4-galois-6aii} nontrivial. Then $S_\varphi=\mu_2$, and by Property~\ref{infinitesimal-prop} the $L$-packet is $\{\nu\widehat\chi_1\rtimes\pi_1,\nu\widehat\chi_1\rtimes\pi_2\}$, which are irreducible by Theorem~\ref{sp4-induced}\eqref{sp4-3}.
        \end{enumerate}

        \item $\chi_1=\chi_2\ne\chi_3$ then $\chi_1^2=\chi_3^2=1$ and $\chi_3=\det(V)$ and $\cG_\varphi=\mu_2\times\tS(\tO_2(\C)\times\mu_2)$ with $S_\varphi=\mu_2\times\mu_2$. By Lemma~\ref{no-levi-lemma} the members of the size four $L$-packet are supported in $\GL_1(F)\times\Sp_2(F)$. By the LLC for $\Sp_2(F)$ the $W_F$-representation $V\oplus\chi_3$ viewed as an $L$-parameter $W_F\to\SO_3(\C)$ gives an $L$-packet $\{\pi_1,\pi_2\}$. Now, each of the representations $\chi_1\rtimes\pi_1$ and $\chi_1\rtimes\pi_2$ have length two by Theorem~\ref{sp4-induced}\eqref{sp4-3}, so they decompose into, say $\tau_{11}+\tau_{12}$ and $\tau_{21}+\tau_{22}$, respectively. Then the $L$-packet for $\varphi$ is $\{\tau_{11},\tau_{12},\tau_{21},\tau_{22}\}$.
        \item $\chi_1\ne\chi_2\ne\chi_3$ and $\chi_1^2=\chi_2^2=\chi_3^2=1$ then $\cG_\varphi=\mu_2\times\tS(\mu_2\times\mu_2\times\mu_2)$ and $S_\varphi\cong\mu_2^3$. This is a purely supercuspidal packet by Lemma~\ref{no-levi-lemma}.
        \item $\chi_1\ne\chi_2\ne\chi_3$ and $\chi_1^2=1$ and $\chi_2=\chi_3^{-1}$ but $\chi_2^2\ne1$. Here $\cG_\varphi=\mu_2\times\C^\times$ and $S_\varphi\cong \mu_2$. The members of the $L$-packet are supported in $\GL_1(F)\times\Sp_2(F)$. Letting $\{\pi_1,\pi_2\}$ be the $L$-packet under the LLC for $\Sp_2(F)$ corresponding to the $W_F$-representation $V\oplus\chi_1$ viewed as a $L$-parameter $W_F\to\SO_3(\C)$, the $L$-packet for $\varphi$ is $\{\chi_2\rtimes\pi_1,\chi_2\rtimes\pi_2\}$, which is irreducible by Theorem~\ref{sp4-induced}\eqref{sp4-3}.
    \end{enumerate}
    \item\label{sp4-galois-case7}
    $U=1\oplus \chi_1\oplus\chi_1^{-1}\oplus\chi_2\oplus\chi_2^{-1}$.
    \begin{enumerate}
        \item $\chi_1=\chi_2=1$ then $\cG_\varphi=\SO_5(\C)$. The Springer correspondence of $\cG_\varphi$ is \cite[\S10.6]{Lusztig-IC}:
        {\begin{center}
\begin{tabular}{ |c|c| } 
 \hline
 Unipotent pairs& Representations of $W=\mu_2^2\rtimes S_2$\\ \hline
 $([5],1)$&$(\emptyset,[1^2])$\\
 $([3,1^2],1)$&$([1],[1])$\\
 $([3,1^2],-1)$&$(\emptyset,[2])$\\
 $([2^2,1],1)$&$([1^2],\emptyset)$\\
 $([1^5],1)$&$([2],\emptyset)$\\
 \hline
\end{tabular}
\end{center}}
where we identify representations of the semidirect product $(\Z/2)^2\rtimes S_2$ via Lemma~\ref{mackey-induced} (see also, \cite[Theorem~10.1.2]{Collingwood-McGovern}). All of the representations are principal series.

 By Property~\ref{infinitesimal-prop}, the cuspidal support of the $L$-parameter is
\[
   \varphi_v(w)=\lambda_{\varphi_v}(w)=\lambda_{\varphi}(w)=\varphi(\diag(\|w\|^{1/2},\|w\|^{-1/2})).\]
By Remark~\ref{nilpotent-sp4} all nilpotent orbits in $G^\vee$ are induced from some regular nilpotent orbit in a Levi subgroup $L^\vee\subset G^\vee$. Thus $\varphi(\|w\|^{1/2},\|w\|^{-1/2})$ is dual to the modulus character $\delta_{B_L\backslash L}$. Thus, by Remark~\ref{gsp4-levi-llc-remark}, we have $\varphi_v=\widehat\chi_1^{-1}\delta_{B_L\backslash L}$. Thus the $L$-packet contains an irreducible subquotient of $\ii_P^G(\St_L)$.

\begin{enumerate}
    \item If $\varphi|_{\SL_2}$ is $[4]$, then the $L$-packet member is a subquotient of $\ii^G_B(\delta_{B\backslash G})$, which is square-integrable modulo center, by Property~\ref{property:L-packets}. Thus the $L$-packet is $\{\St_{\GSp_4}\}$.
    \item If $\varphi|_{\SL_2}$ is $[2^2]$ then $S_\varphi=\mu_2$, then the $L$-packet members are irreducible constituents of $1\rtimes\St_{\GL_2}$. This is case~\ref{sp4-1ci} and the $L$-packet is $\{\tau(S,\nu^{-1/2}\widehat\chi_1^{-1}),\tau(T,\nu^{-1/2}\widehat\chi_1^{-1})\}$.
    \item If $\varphi|_{\SL_2}$ is $[2,1^2]$. The $L$-packet members is $\St_{\GL_2}\rtimes1$, which is case~\ref{sp4-1ciii}.
    \item If $\varphi|_{\SL_2}$ is trivial, then the $L$-packet is $\{1\times1\rtimes1\}$, where $1\times1\rtimes1$ is irreducible by Theorem~\ref{sp4-induced}\eqref{sp4-1a}.
\end{enumerate}

\item $\chi_1=\chi_2\ne1$ then $\chi_1$ has order $2$ and $\cG_\varphi=\tS(\tO_4(\C)\times\mu_2)\cong \tO_4(\C)$. 

The Springer correspondence for $\tO_4$ is (see \cite[\S10.1, p. 166]{collingwood}):
{\begin{center}\begin{tabular}{ |c|c| } 
 \hline
 Unipotent pairs& Representations of $W=\mu_2^2\rtimes\mu_2$\\ \hline
 $(00,1)$&$(1\otimes1,1)=1_W$\\
 $(00,-1)$&$(1\otimes1,\sgn)$\\
 $(0e,1)=(e0,1)$&$(1\otimes\sgn,1)$\\
 $(ee,(1,1))$&$(\sgn\otimes\sgn,1)=\sgn_W$\\
 $(ee,(1,-1))$&$(\sgn\otimes\sgn,\sgn)$\\
 $(ee,(-1,1))$&cusp\\
 $(ee,(-1,-1))$&cusp\\
 \hline
\end{tabular}\end{center}}
Here on the right $0$ and $e$ denote the unipotent classes of $\SL_2$, which induce unipotent classes on $\SO_4=(\SL_2\times\SL_2)/\mu_2$, and on the left are representations of the Weyl group $W=\mu_2^2\rtimes\mu_2$ parameterized via Lemma~\ref{mackey-induced}. 

Thus $\cL_\varphi\subset\cG_\varphi^\circ=\SO_4(\C)$ is either the maximal torus or $\SO_4(\C)$. When $\cL_\varphi=\SO_4(\C)$, we have $\Cent_{G^{\vee}}(\Cent_{\cL_{\varphi}}^{\circ})=G^{\vee}$, which corresponds to a supercuspidal member in the $L$-packet for $\varphi$. When $\cL_\varphi$ is a maximal torus, we have $\Cent_{G^{\vee}}(\Cent_{\cL_{\varphi}}^{\circ})$ is also a torus, which gives rise to a principal series representation in the $L$-packet for $\varphi$. Moreover, since the $L$-parameter is bounded, by Property~\ref{property:L-packets} the representations are tempered. Either $\varphi|_{\SL_2}$ is:
\begin{enumerate}
    \item trivial. Here, $S_\varphi\cong\mu_2$. The $L$-packet consists of irreducible constituents of $\chi_1\times\chi_1\rtimes1$. This is case~\ref{sp4-1ai} and the $L$-packet is $\{\widehat\chi_1\rtimes T^1_{\widehat\chi_1},\widehat\chi_1\rtimes T^2_{\widehat\chi_1}\}$. 
    \label{sp4-galois-7bi}
    \item the embedding into the first copy of $\SL_2(\C)$. Here, $S_\varphi=1$ and the $L$-packet consists of an irreducible constituent of $\nu^{1/2}\chi_1\times\nu^{-1/2}\chi_1\rtimes1$, which is Theorem~\ref{sp4-induced}\eqref{sp4-1b}. Since the member is tempered, the packet is $\{\chi_1\St_{\GL_2}\rtimes1\}$.
    \label{sp4-galois-7bii} 
    \item the diagonal embedding of $\SL_2(\C)$. Here, $S_\varphi\cong\mu_2^2$. Concretely, the $L$-parameter $\varphi$ may be viewed as the $W_F\times\SL_2(\C)$-representation $U=M_2(\C)\oplus\C$ where $W_F$ acts on $M_2(\C)$ by $\chi_1$ and $\SL_2(\C)$ acts on $M_2(\C)$ by conjugation. The symmetric form is the trace pairing on $M_2(\C)$. \label{sp4-galois-7biii}
\end{enumerate}

Thus in case~\ref{sp4-galois-7biii} the members of the size four $L$-packet consists of two supercuspidals and two principal series. The $L$-packet is determined in Section~\ref{section-mixed-packets-GSp4}. 
\label{sp4-case7b}

\item $\chi_2=1$ and $\chi_1$ is of order $2$. We have $\cG_\varphi=\tS(\tO_3\times \tO_2)\cong\SO_3\times \tO_2$. Since both the Springer correspondence for $\SO_3$ and $\tO_2$ do not have any nontrivial cuspidal supports (by Table~\ref{Springer-SO3}), the members of $L$-packets are principal series. Moreover, again the $L$-packet is bounded, so by Property~\ref{property:L-packets} the representations are tempered.
\begin{enumerate}
    \item if $\varphi|_{\SL_2}=1$, then $S_\varphi=\mu_2$ and the packet consists of irreducible constituents of $\chi_1\times1\rtimes1$. This is case~\ref{sp4-1ai}, so the $L$-packet is $\{1\rtimes T_{\chi_1}^1,1\rtimes T_{\chi_1}^2\}$.
    \item if $\varphi|_{\SL_2}$ is non-trivial, then $S_\varphi=\mu_2$ and the packet consists of irreducible constituents of $\chi_1\times\nu^{1/2}\rtimes1$. This is case~\ref{sp4-1ai} and the $L$-packet is $\{\nu^{1/2}\rtimes T_{\chi_1}^1,\nu^{1/2}\rtimes T_{\chi_1}^2\}$.
\end{enumerate}

        \item $\chi_2=1$ and $\chi_1^2\ne1$. Here $\cG_\varphi=\SO_3(\C)\times\SO_2(\C)$. By Table~\ref{Springer-SO3} the unipotent pairs are all supported in the torus, so the $L$-packets are singletons consisting of a principal series. The restriction $\varphi|_{\SL_2(\C)}$ is either:
        \begin{enumerate}
            \item trivial, then the packet is $\{\chi_1\times1\rtimes1\}$, where $\chi_1\times1\rtimes 1$ is irreducible by Theorem~\ref{sp4-induced}\eqref{sp4-1a}. 
            \item nontrivial, then the packet is $\{\chi_1\times\nu^{1/2}\rtimes1\}$, where $\chi_1\times\nu^{1/2}\rtimes1$ is irreducible by Theorem~\ref{sp4-induced}\eqref{sp4-1a}. 
        \end{enumerate}
        
        \label{case7c}
        \item $\chi_1\ne\chi_2$ are distinct order $2$ characters. Here $\cG_\varphi=\tS(\tO_2(\C)\times \tO_2(\C)\times\mu_2)\cong\tO_2(\C)^2$. Here $S_\varphi=\mu_2^2$ and by Lemma~\ref{no-levi-lemma} the $L$-packet members are principal series. The $L$-packet consists of the irreducible constituents of $\chi_1\times\chi_2\rtimes1$, which has length $4$ by Theorem~\ref{sp4-induced}\eqref{sp4-1aii}.

        \label{sp4-galois-7e}
        \item $\chi_1=\chi_2^{-1}\ne1$ and $\chi_1^2\ne1$. Here $\cG_\varphi=\GL_2(\C)$ and $S_\varphi=1$. Here $\cL_\varphi\subset\GL_2(\C)$ is the maximal torus, so the $L$-packet consists of principal series representations.
        \begin{enumerate}
            \item if $\varphi|_{\SL_2}$ is trivial, then the $L$-packet is $\{\chi_1\times\chi_1\rtimes1\}$, where irreducibility is by Theorem~\ref{sp4-induced}\eqref{sp4-1a}.
            \item if $\varphi|_{\SL_2}$ is nontrivial, then the member is a irreducible constituent of $\nu^{1/2}\chi_1\times\nu^{-1/2}\chi_1\rtimes1$. If $\chi_1\ne\nu^{\pm3/2}$ and $\chi_1^2\ne\nu^{\pm1}$ then the $L$-packet is $\{\chi_1\St_{\GL_2}\rtimes1\}$.
            
            Otherwise, if $\chi_1=\nu^{\pm3/2}$ then $\nu^{\pm3/2}\St_{\GL_2}\rtimes1$ has length two, since we are in case~\ref{sp4-1biii}. By Property~\ref{langlands-class} the $L$-packet is $\{J(\nu^{\pm3/2}\St_{\GL_2};1)\}$.
            
            If $\chi_1=\nu^{\pm1/2}$ then $\nu^{\pm1/2}\St_{\GL_2}\rtimes1$ has length two, since we are in case~\ref{sp4-1ci}.
            
            If $\chi_1=\nu^{\pm1/2}\xi_1$ for some order $2$ character $\xi_1$ then $\nu^{\pm1/2}\xi_1\St_{\GL_2}\rtimes1$ has length three, and the $L$-packet is $\{J(\nu^{1/2}\xi_1\St_{\GL_2},1)\}$.
        \end{enumerate}
        
        \label{sp4-galois-7f}
        \label{sp4-galois-7i}
        \item If $\chi_1^{\pm1}$ and $\chi_2^{\pm1}$ are all distinct, then $\cG_\varphi=\C^\times\times\C^\times$ and $\varphi|_{\SL_2(\C)}=1$ and $S_\varphi=1$. By Lemma~\ref{no-levi-lemma} the $L$-packet is a singleton $\{\chi_1\times\chi_2\rtimes1\}$, which is reducible by Theorem~\ref{sp4-induced}\eqref{sp4-1a}.
        \label{sp4-galois-7j}

    \end{enumerate}
\end{enumerate}
In particular, the only mixed packets occur in cases~\ref{sp4-case5b}~and~\ref{sp4-galois-7biii}.

We also use the following well-known fact:
\begin{lemma}[{Mackey's little groups method, \cite[\S8.2]{serre-finite-groups}}]\label{mackey-induced}
Let $G=A\rtimes H$ be a finite group, where $A$ is abelian. Then, there is a bijection
\[\Irr(G)\cong\{\chi\in H\backslash A^*,\rho\in\Irr(H^\chi)\},\]
where $H\backslash A^*$ denotes the set of $H$-orbits in $A^*=\hom(A,\C^\times)$ and $H^\chi$ is the stabilizer of $\chi$. A pair $(\chi,\rho)$ corresponds to the irreducible $G$-representation $\Ind_{A\rtimes H^\chi}^G(\widetilde\chi\otimes\rho)$, where $\widetilde\chi(ah):=\chi(a)$ for $a\in A$ and $h\in H^\chi$.
\end{lemma}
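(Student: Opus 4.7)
The plan is to follow Serre's classical argument: produce the bijection by restricting to the abelian normal subgroup $A$, and conversely by verifying Mackey's irreducibility criterion for the induced representations.

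First I would fix an irreducible representation $\pi$ of $G$ and decompose $\pi|_A$. Since $A$ is abelian, this restriction is a direct sum of characters, so $\pi|_A=\bigoplus_{\chi\in X}V_\chi$ where $V_\chi$ denotes the $\chi$-isotypic component and $X\subset A^*$ is the set of characters that appear. Because $A$ is normal in $G$, the group $G$ (and in particular $H$) permutes the isotypic components by the formula $g\cdot V_\chi=V_{{}^g\chi}$, where $({}^g\chi)(a)=\chi(g^{-1}ag)$. Irreducibility of $\pi$ forces $X$ to be a single $H$-orbit. Picking a representative $\chi\in X$, the stabilizer subgroup $A\rtimes H^\chi$ preserves $V_\chi$, so $V_\chi$ is a representation of $A\rtimes H^\chi$ on which $A$ acts by $\chi$; equivalently, $V_\chi=\widetilde\chi\otimes\rho$ for some representation $\rho$ of $H^\chi$ (viewed via the quotient $A\rtimes H^\chi\twoheadrightarrow H^\chi$).

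Next I would identify $\pi$ with $\Ind_{A\rtimes H^\chi}^G(V_\chi)$. By Frobenius reciprocity there is a nonzero $G$-equivariant map $\Ind_{A\rtimes H^\chi}^G(V_\chi)\to\pi$, and comparing dimensions (using that translates of $V_\chi$ by coset representatives of $H/H^\chi$ exhaust $\pi$) shows this map is an isomorphism. Irreducibility of $\pi$ then forces $\rho\in\Irr(H^\chi)$: any proper $H^\chi$-subrepresentation of $\rho$ would induce to a proper $G$-subrepresentation of $\pi$. This produces the map from $\Irr(G)$ to the set of pairs $(\chi,\rho)$ modulo the natural $H$-action, and independence of the choice of orbit representative is clear.

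Conversely, given a pair $(\chi,\rho)$, I would show $\Ind_{A\rtimes H^\chi}^G(\widetilde\chi\otimes\rho)$ is irreducible by Mackey's irreducibility criterion. Choose coset representatives $\{h_i\}$ for $H/H^\chi$; for each nontrivial double coset $(A\rtimes H^\chi)h_i(A\rtimes H^\chi)$, the two conjugate representations of $(A\rtimes H^\chi)\cap h_i(A\rtimes H^\chi)h_i^{-1}$ restrict to $A$ as $\chi$ and ${}^{h_i}\chi$ respectively; since $h_i\notin H^\chi$ these characters differ, so $\hom$ between them vanishes. Only the trivial double coset contributes, giving $\End_G(\Ind\widetilde\chi\otimes\rho)=\End_{H^\chi}(\rho)=\C$, hence irreducibility. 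Two pairs $(\chi,\rho)$ and $(\chi',\rho')$ give isomorphic inductions iff they lie in the same $H$-orbit on pairs, completing the bijection.

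The main technical point is the extension step: verifying that on the isotypic component $V_\chi$ the character $\widetilde\chi$ of $A$ genuinely extends to a representation of $A\rtimes H^\chi$ via the semidirect product splitting $\widetilde\chi(ah)=\chi(a)$, so that $V_\chi\cong\widetilde\chi\otimes\rho$ as $A\rtimes H^\chi$-modules. This is where the semidirect product hypothesis (as opposed to a general extension $1\to A\to G\to H\to 1$) is essential; in the general case one would need a projective representation of $H^\chi$ and a cocycle obstruction, but here the splitting $H\hookrightarrow G$ makes the extension canonical and the argument goes through cleanly.
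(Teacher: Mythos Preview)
Your argument is correct and is precisely the classical proof from Serre~\cite[\S8.2]{serre-finite-groups}. Note that the paper does not supply its own proof of this lemma; it simply cites Serre and uses the result as a black box, so there is nothing further to compare.
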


Now let $G=\GSp_4(F)$ and $\varphi\colon W_F\times\SL_2(\C)\to G^\vee\cong\GSp_4(\C)$ an $L$-parameter. Now $\varphi|_{W_F}$ can be considered a $4$-dimensional $W_F$-representation $U$ with a invariant symplectic form $\omega\colon U\otimes U\to\xi$, where $\xi$ is the similitude character. Now $U$ decomposes into irreducible representations according to partitions $[4]$, $[2^2]$, $[2,1^2]$, or $[1^4]$ (the partition $[3,1]$ is impossible since the attached bilinear form is necessarily symmetric). Then, $\cG_\varphi$ is the group of $W_F$-representation endomorphisms $g\colon U\to U$ such that the following diagram commutes for some constant $c\in\C^\times$ (the similitude):
\[ \begin{tikzcd}
U\otimes U \arrow{r}{\omega} \arrow[swap]{d}{g\otimes g} & \xi \arrow{d}{c} \\
U\otimes U \arrow{r}{\omega}& \xi.
\end{tikzcd}
\]
Thus there are the following cases: 
\begin{enumerate}
    \item $U$ is irreducible with $U\cong \xi U^\vee$ and the unique pairing $U\otimes U\to\xi$ is anti-symmetric. Here $\cG_\varphi=\C^\times$ and $S_\varphi=1$ so the packet is a singleton supercuspidal.   \item $U=V_1\oplus V_2$ where $V_1$ and $V_2$ are irreducible of dimension $2$. Either:
    \begin{enumerate}
        \item\label{gsp4-galois-2a} $V_1\cong V_2$, with an invariant anti-symmetric form $\omega\colon V_1\otimes V_1\to\xi$. Here $\xi=\det(V_1)$. Then $U$ carries the symplectic form $\omega'(v_1\oplus w_1,v_2\oplus w_2)=\omega(v_1,w_2)+\omega(w_1,v_2)$. Thus, $\cG_\varphi=\GO_2(\C)\cong(\C^\times)^2\rtimes\mu_2$, embedded as
        \(
        \begin{pmatrix}aI_2&bI_2\\cI_2&dI_2\end{pmatrix}\in\GSp_4(\C)
        \) and $S_\varphi=\mu_2$. By Remark~\ref{no-levi-lemma}, the $L$-parameter is supported in $\GL_2(\C)\times\GSp_0(\C)$, so the representations are supported in $\GL_1(F)\times\GSp_2(F)$. The cuspidal support of $\varphi$ is $V_1$ and $\xi$ viewed as an $L$-parameter $W_F\to\GL_2(\C)\times\GSp_0(\C)$. By Remark~\ref{gsp4-levi-llc-remark} to the representation $\widehat\xi^{-1}\det(\pi_{V_1})\boxtimes\pi_{V_1}^\vee=1\boxtimes\pi_{V_1}^\vee$ of $\GL_1(F)\times\GSp_2(F)$, which is the cuspidal support of $\varphi$. Here, $\pi_{V_1}$ is the representation of $\GSp_2(F)$ corresponding to $V_1$ under LLC for $\GSp_2(F)$. Thus the members of the $L$-packet are the two irreducible constituents of $1\rtimes\pi_{V_1}^\vee$ (this is case~\ref{gsp4-3a}).
        
        \item $V_1\cong V_2$, with an invariant symmetric form $\langle-,-\rangle\colon V_1\otimes V_1\to\xi$. Here, $\xi=\det(V_1)$. Then $\omega(v_1\oplus w_1,v_2\oplus w_2)=\langle v_1,w_2\rangle-\langle v_2,w_1\rangle$.
        
        Thus, $\cG_\varphi=\GL_2(\C)$ embedded as $\diag(g,J{^T}g^{-1}J^{-1})\in\GSp_4(\C)$ and $S_\varphi=1$. Letting $T\subset\cG_\varphi$ be a maximal torus the (trivially) enhanced $L$-parameters are supported in $\Cent_{G^\vee}(T)=\GL_1\C\times\GSp_2\C$, so the members of packets are supported in $\GL_2F\times\GSp_0F$.
        
        \begin{enumerate}
            \item If $\varphi(\SL_2)=1$ then the cuspidal support of $\varphi$ is $\xi$ and $V$ viewed as a $L$-parameter $W_F\to\GL_1\C\times\GSp_2\C$. By Remark~\ref{gsp4-levi-llc-remark}, the member of the $L$-packet is an irreducible constituent of $(\widehat\xi\otimes\pi_{V_1}^\vee)\rtimes\widehat\xi^{-1}$.
            
            We are in case~\ref{gsp4-20-levi}. Since $V_1\cong\xi V_1^\vee$ we have $\pi_{V_1}\cong\widehat\xi\otimes\pi_{V_1}^\vee$. Thus if $\xi=\nu^\beta\xi'$ for a unitary character $\xi'$ and $\beta\in\R$ then $\pi_{V_1}\rtimes\widehat\xi^{-1}$ is irreducible as long as $\beta\ne\pm1$. In this case the $L$-packet is $\{\pi_{V_1}\rtimes\widehat\xi^{-1}\}$.
            
            Otherwise since the $L$-parameter $\varphi$ is not (essentially) bounded the singleton $L$-packet consists of the unique essentially tempered subquotient of $\pi_{V_1}\rtimes\widehat\xi^{-1}$.

            \item If $\varphi|_{\SL_2}$ is nontrivial then the cuspidal support of $\varphi$ is $\nu\xi$ and $\nu^{1/2}V$ viewed as a $L$-parameter $W_F\to\GL_1\times\GSp_2(\C)$. By Remark~\ref{gsp4-levi-llc-remark}, the member of the $L$-packet is an irreducible constituent of $(\nu^{1/2}\widehat\xi\otimes\pi_{V_1}^\vee)\rtimes\nu^{-1}\widehat\xi^{-1}\cong\nu^{1/2}\pi_{V_1}\rtimes\nu^{-1}\widehat\xi^{-1}$. Letting $\xi=\nu^\beta\xi'$ as above, if $\beta\notin\{0,-2\}$ then the singleton $L$-packet consists of the unique essentially tempered subquotient of $\nu^{1/2}\pi_{V_1}\rtimes\nu^{-1}\widehat\xi^{-1}$, by Property~\ref{property:L-packets}.
        \end{enumerate}
        
        \item $V_1\ncong V_2$ then $V_1\cong\xi\otimes V_2^\vee$ and so $\cG_\varphi=\C^\times\times\C^\times$ and $S_\varphi=1$. Here, $\xi=\det(V_1)$. By Lemma~\ref{no-levi-lemma} the $L$-parameter is supported in $\GL_2(\C)\times\GSp_0(\C)$, given by $(V_1,\xi)$ viewed as an $L$-parameter $W_F\to\GL_2(\C)\times\GSp_0(\C)$. Thus by Remark~\ref{gsp4-levi-llc-remark} the $L$-packet member is an irreducible constituent of $1\rtimes\pi_{V_1}^\vee$, where $\pi_{V_1}$ is the supercuspidal representation of $\GL_2(F)$ corresponding to $V_1$ under the LLC for $\GL_2(F)$.
    \end{enumerate}
    \item $U=V\oplus\chi_1\oplus\chi_2$ where $V$ is irreducible of dimension $2$ and $\chi_1,\chi_2$ are characters of $W_F$. There is an anti-symmetric pairing $\omega\colon V\otimes V\to\xi$, where $\xi=\det(V)$. Moreover, $\chi_1\chi_2=\xi$ and there is an anti-symmetric pairing $\omega'$ on $\chi_1\oplus\chi_2$ given by $\omega'(a_1\oplus b_1,a_2\oplus b_2)=a_1b_2-a_2b_1$. Either:
    \begin{enumerate}
        \item $\chi_1=\chi_2$, then $\cG_\varphi=\{(z,g)\in\C^\times\times\GL_2(\C):z^2=\det(g)\}\cong\C^\times\times\SL_2(\C)$. By Table~\ref{table-sl2-springer} there are two cases:
        \begin{enumerate}
            \item $\varphi|_{\SL_2}=1$, in which case the unipotent pair is supported in $\C^\times\times T$. Then $S_\varphi=1$ and the $L$-parameter is supported in $\GL_1(\C)\times\GSp_2(\C)$. The support is $V$ and $\chi_1$ viewed as an $L$-parameter $W_F\to\GL_1(\C)\times\GSp_2(\C)$. Thus by Remark~\ref{gsp4-levi-llc-remark}, the packet is $\{(\widehat\chi_1\otimes\pi_V^\vee)\rtimes\widehat\chi_1^{-1}\}$. Here, $(\widehat\chi_1\otimes\pi_V^\vee)\rtimes\widehat\chi_1^{-1}$ is irreducible by Theorem~\ref{gsp4-induced}, since $\det(\chi_1\otimes V^\vee)=1$ implies the representation $\widehat\chi_1^{-1}$ is unitary.  
            \item\label{gsp4-galois-3aii} $\varphi|_{\SL_2}$ is regular unipotent, in which case the unipotent pair is supported in either $\C^\times\times T$ or $\C^\times\times\SL_2(\C)$. Thus the $L$-packet is of size $2$, with an intermediate series supported in $\GL_2(F)\times\GSp_0(F)$ and a supercuspidal representation. This packet is determined in Section~\ref{section-mixed-packets-GSp4}.
        \end{enumerate}
        \item $\chi_1\ne\chi_2$ and $\chi_1\chi_2=\xi$ then $\cG_\varphi=\{(z,g)\in\C^\times\times T:z^2=\det(g)\}\cong\C^\times\times\C^\times$, embedded as $\begin{pmatrix}a\\&z\\&&z\\&&&b\end{pmatrix}\in\GSp_4(\C)$ where $ab=z^2$. Here $S_\varphi=1$ and the enhanced $L$-parameter is supported in $\GL_1(\C)\times\GSp_2(\C)$, given by $\chi_1$ and $V$ viewed as an $L$-parameter $W_F\to\GL_1(\C)\times\GSp_2(\C)$. Thus the $L$-packet member is an irreducible constituent of $(\widehat\chi_1\otimes\pi_V^\vee)\rtimes\widehat\chi_1^{-1}$.
        
        We are in case~\ref{gsp4-20-levi} of Theorem~\ref{gsp4-induced}. Let $\beta=e(\chi_1\chi_2^{-1}):=\log_q(\chi_1\chi_2^{-1}(\varpi))$.
        Then $(\widehat\chi_1\otimes\pi_V^\vee)\rtimes\widehat\chi_1^{-1}$ is irreducible unless $\beta\in\{\pm1\}$. If $\beta\in\{\pm1\}$ then the $L$-packet member is the unique essentially non-tempered subquotient of $(\widehat\chi_1\otimes\pi_V^\vee)\rtimes\widehat\chi_1^{-1}$, since the $L$-parameter $\varphi$ is not bounded.
        
    \end{enumerate}
    
    \item\label{gsp4-galois-4} $U=\chi_1\oplus\chi_2\oplus\chi_3\oplus\chi_4$ where $\chi_i$ are characters of $W_F$. Either:
    \begin{enumerate}
        \item\label{gsp4-galois-4a} $\chi_1=\chi_2=\chi_3=\chi_4$ and $\chi_1^2=\xi$, then $\cG_\varphi=G^\vee$. The Springer correspondence of $G^\vee=\GSp_4(\C)$ is (by the classification in Remark~\ref{nilpotent-sp4}): 
        {\begin{center}
\begin{tabular}{ |c|c| } 
 \hline
 Unipotent pairs& Representations of $W=\mu_2^2\rtimes S_2$\\ \hline
 $([4],1)$&$(\emptyset,[1^2])$\\
 $([2^2],1)$&$([1],[1])$\\
 $([2^2],-1)$&$(\emptyset,[2])$\\
 $([2,1^2],1)$&$([1^2],\emptyset)$\\
 $([1^4],1)$&$([2],\emptyset)$\\
 \hline
\end{tabular}
\end{center}}
Here, again the representations of $W$ are parametrized by Lemma~\ref{mackey-induced} (see also, \cite[Theorem~10.1.2]{Collingwood-McGovern}\footnote{Note that our normalization of the Springer correspondence differs with \cite{Collingwood-McGovern} by a $\sgn$-twist.}). Since all the unipotent pairs are supported in the torus, all representations here are principal series. By Property~\ref{infinitesimal-prop}, the cuspidal support of the $L$-parameter is
\[
   \varphi_v(w)=\lambda_{\varphi_v}(w)=\lambda_{\varphi}(w)=\chi_1(w)\varphi(\diag(\|w\|^{1/2},\|w\|^{-1/2})).\]
By Remark~\ref{nilpotent-sp4} all nilpotent orbits in $G^\vee$ are induced from some regular nilpotent orbit in a Levi subgroup $L^\vee\subset G^\vee$. Thus $\varphi(\|w\|^{1/2},\|w\|^{-1/2})$ is dual to the modulus character $\delta_{B_L\backslash L}$. Thus, by Remark~\ref{gsp4-levi-llc-remark}, we have $\varphi_v=\widehat\chi_1^{-1}\delta_{B_L\backslash L}$. Thus the $L$-packet contains an irreducible subquotient of $\widehat\chi_1^{-1}\ii_P^G(\St_L)$.

\begin{enumerate}
    \item If $\varphi|_{\SL_2}$ is $[4]$, then the $L$-packet member is a subquotient of $\widehat\chi_1^{-1}\ii^G_B(\delta_{B\backslash G})$, which is square-integrable modulo center, by Property~\ref{property:L-packets}. Thus the $L$-packet is $\{\widehat\chi_1^{-1}\St_{\GSp_4}\}$.
    \item If $\varphi|_{\SL_2}$ is $[2^2]$ then $S_\varphi=\mu_2$, then the $L$-packet members are irreducible constiuents of $1\rtimes \widehat\chi_1^{-1}\St_{\GL_2}$. This is case~\ref{gsp4-1bi} and the $L$-packet is $\{\tau(S,\nu^{-1/2}\widehat\chi_1^{-1}),\tau(T,\nu^{-1/2}\widehat\chi_1^{-1})\}$.
    \item If $\varphi|_{\SL_2}$ is $[2,1^2]$. The $L$-packet members is $\St_{\GL_2}\rtimes\widehat\chi_1^{-1}$, which is case~\ref{gsp4-1ai}. 
    \item If $\varphi|_{\SL_2}$ is trivial, then the $L$-packet is $\{1\times1\rtimes\widehat\chi_1^{-1}\}$, where $1\times1\rtimes\widehat\chi_1^{-1}$ is irreducible by Lemma~\ref{reducibility-lemma}.
\end{enumerate}
        \item\label{gsp4-galois-4b} $\chi_1=\chi_2\ne\chi_3=\chi_4$ and $\chi_1^2=\chi_3^2=\xi$, then $\cG_\varphi=\{(g,h)\in\GSp_2\times\GSp_2:\mu(g)=\mu(h)\}$. Thus $W_F\to T^\vee\subset\GSp_4(\C)$ is given by $(\chi_1,\chi_3,\chi_3,\chi_1)$. The Springer correspondence for $\cG_\varphi$ is:
        
        {\begin{center}
\begin{tabular}{ |c|c| } 
 \hline
 Unipotent pairs& Representations of $W=\mu_2^2$\\ \hline
 $(00,1)$&$1\otimes1$\\
 $(0e,1)$&$1\otimes\sgn$\\
 $(e0,1)$&$\sgn\otimes1$\\
 $(ee,1)$&$\sgn\otimes\sgn$\\
 $(ee,-1)$&cuspidal\\
 \hline
\end{tabular}
\end{center}}
In all cases the image $\varphi(W_F)$ is compact modulo center, so by Property~\ref{property:L-packets} the representations in the $L$-packets are essentially tempered. Either:
\begin{enumerate}
    \item If $\varphi(\SL_2(\C))=1$, then $S_\varphi=1$. The $L$-parameter is supported in $\chi_1\otimes\chi_3\otimes\xi$, so by Remark~\ref{gsp4-levi-llc-remark}, the member is an irreducible constituent of $\widehat\chi_1^{-1}\widehat\chi_3\times\widehat\chi_1^{-1}\widehat\chi_3\rtimes\widehat\chi_1^{-1}$. By \cite[Lem~3.2]{sally-tadic} this is irreducible.
    \item If $\varphi|_{\SL_2(\C)}$ is the embedding to the first factor of $\cG_\varphi$, then $S_\varphi=1$ and the $L$-parameter is supported in $\chi_3\otimes \nu^{1/2}\chi_1\otimes\xi$. Thus by Remark~\ref{gsp4-levi-llc-remark} the member is an irreducible constituent of $\nu^{1/2}\widehat\chi_1\widehat\chi_3^{-1}\times\nu^{-1/2}\widehat\chi_1\widehat\chi_3^{-1}\rtimes\widehat\chi_1^{-1}$. This is case~\ref{gsp4-1biii}, so the $L$-packet is $\{\widehat\chi_1\St_{\GL_2}\rtimes\widehat\chi_1^{-1}\}$.
    \item If $\varphi|_{\SL_2(\C)}$ is the embedding to the second factor of $\cG_\varphi$, swap the role of $\chi_1$ and $\chi_3$ and we are in the case above.
    \item\label{gsp4-galois-4b-iv} If $\varphi|_{\SL_2(\C)}$ is regular we have $S_\varphi=\mu_2$, and the corresponding unipotent pairs have support in either $T^\vee$ or $\cG_\varphi$. Thus the packet is of size $2$ consisting of a principal series and a supercuspidal. The $L$-packet is determined in Section~\ref{section-mixed-packets-GSp4}.
\end{enumerate}

        \item\label{gsp4-galois-4c} $\chi_1=\chi_2\ne\chi_3=\chi_4$ and $\chi_1\chi_3=\xi$, then $\cG_\varphi$ is the Levi $\GL_2(\C)\times\GSp_0(\C)$. Here $S_\varphi=1$ and the $L$-packet members are principal series, since the unipotent pairs are supported in $T^\vee$. Moreover, since the $L$-parameter factors through the Levi $\GL_2(\C)\times\GSp_0(\C)$, the $L$-packet is not discrete, and hence by Property~\ref{property:L-packets} the members are not square-integrable modulo center.
        \begin{enumerate}
            \item if $\varphi(\SL_2)=1$, then the $L$-parameter has support $\chi_1\otimes\chi_1\otimes\xi$. Thus $L$-packet is $\{\widehat\chi_3^{-1}\widehat\chi_1\times1\rtimes\widehat\chi_1^{-1}\}$, where irreducibility is by Lemma~\ref{reducibility-lemma}.
            \item if $\varphi|_{\SL_2}$ is nontrivial, then the $L$-parameter has support $\nu^{1/2}\chi_1\otimes\nu^{-1/2}\chi_1\otimes\xi$, so the $L$-packet member is an irreducible constituent of $\widehat\chi_3^{-1}\widehat\chi_1\times\nu\rtimes\nu^{-1/2}\widehat\chi_1^{-1}$. If $\widehat\chi_3^{-1}\widehat\chi_1\notin\{1,\nu^{\pm1},\nu^{\pm2}\}$ then this is case~\ref{gsp4-1aii} and the $L$-packet is $\{\widehat\chi_3^{-1}\widehat\chi_1\rtimes\widehat\chi_1^{-1}\St_{\GSp_2}\}$ by Property~\ref{langlands-class}.
            
            If $\widehat\chi_3^{-1}\widehat\chi_1=\nu^{\pm1}$ then we are in case~\ref{gsp4-1bii} and the $L$-packet must be $\{\nu^{3/2}\St_{\GL_2};\nu^{-1/2}\widehat\chi_1^{-1}\}$. 
            
            Otherwise, $\widehat\chi_3^{-1}\widehat\chi_1=\nu^{\pm2}$ and we are in case~\ref{gsp4-1aiii}. By Property~\ref{langlands-class} the $L$-packet is $\{J(\nu^2;\widehat\chi_1^{-1}\St_{\GSp_2})\}$. 
        \end{enumerate}
        
        \item\label{gsp4-galois-4d} $\chi_1=\chi_2\ne\chi_3\ne\chi_4$ and $\chi_1^2=\chi_3\chi_4=\xi$ then the $L$-parameter $\varphi|_{W_F}\colon W_F\to T^\vee\hookrightarrow G^\vee$ is given by $(\chi_3,\chi_1I_2,\chi_4)$. Here $\cG_\varphi$ is the Levi $\GL_1(\C)\times\GSp_2(\C)$, so by Property~\ref{property:L-packets} the $L$-packet members are not square-integrable modulo center. Here $S_\varphi=1$ and the $L$-packet members are principal series.
        \begin{enumerate}
            \item if $\varphi(\SL_2)=1$ then the $L$-parameter has support $\chi_1\otimes\chi_3\otimes\xi$. The $L$-packet consists of a subquotient of $\widehat\chi_1^{-1}\widehat\chi_3\times\widehat\chi_1\widehat\chi_3^{-1}\rtimes\widehat\chi_1^{-1}$. There are several cases:
            \begin{itemize}
                \item If $(\widehat\chi_1^{-1}\widehat\chi_3)^2=\nu^{\pm1}$, then we are in case~\ref{gsp4-1ai} and the $L$-packet is $\{\nu^{\mp1/2}\widehat\chi_1^{-1}\widehat\chi_31_{\GL_2}\rtimes\widehat\chi_1^{-1}\}$.
                \item If $\widehat\chi_1^{-1}\widehat\chi_3=\nu^{\pm1}$ then we are in case~\ref{gsp4-1bii} and the $L$-packet is $\{\nu\rtimes\nu^{-1/2}\widehat\chi_1^{-1}1_{\GSp_2}\}$.
                \item Otherwise by Lemma~\ref{reducibility-lemma} the packet is $\{\widehat\chi_1^{-1}\widehat\chi_3\times\widehat\chi_1\widehat\chi_3^{-1}\rtimes\widehat\chi_1^{-1}\}$.
            \end{itemize}
        \end{enumerate}

        \item\label{gsp4-galois-4e} $\chi_1\ne\chi_2\ne\chi_3\ne\chi_4$ with $\chi_1\chi_4=\chi_2\chi_3$ then $\cG_\varphi$ is the maximal torus. Thus $S_\varphi=1$ and the $L$-parameter is supported in $\chi_1\otimes\chi_2\otimes\xi$. The $L$-packet member is an irreducible subquotient of $\widehat\chi_1\widehat\chi_3^{-1}\times\widehat\chi_1\widehat\chi_2^{-1}\rtimes\widehat\chi_1^{-1}$.
        
        If $\widehat\chi_i\widehat\chi_j^{-1}$ is not of the form $\nu^{\pm1}$ for any $i\ne j$ then this is irreducible by \cite[Lem~3.2]{sally-tadic}. Otherwise:
        \begin{itemize}
            \item if $\widehat\chi_1\widehat\chi_2^{-1}=\nu$ and $\widehat\chi_1\widehat\chi_3^{-1}\notin\{1,\nu^{\pm1},\nu^{\pm2}\}$ then we are in case~\ref{gsp4-1aii} and the $L$-packet is $\{\widehat\chi_1\widehat\chi_3^{-1}\rtimes\nu^{1/2}\widehat\chi_1^{-1}1_{\GSp_2}\}$.
            \item if $\widehat\chi_1\widehat\chi_2^{-1}=\widehat\chi_2\widehat\chi_3^{-1}=\nu$ then we are in case~\ref{gsp4-1aiii} and the $L$-packet is $\{\nu^{3/2}\widehat\chi_1^{-1}1_{\GSp_4}\}$.
        \end{itemize}
    \end{enumerate}
\end{enumerate}

The mixed packets are cases~\ref{gsp4-galois-3aii}~and~\ref{gsp4-galois-4b}.

\section{Mixed packets}\label{section-mixed-packets-GSp4}

Denote the three order $2$ characters of $F^\times$ as $\eta,\eta_2,\eta_2'$, where $\eta(x):=(-1)^{v_F(x)}$ is unramified and $\eta_2$ and $\eta_2'$ are ramified quadratics.

\subsection{The \texorpdfstring{$\GSp_4$}{GSp4} case}
The mixed packet for $\GSp_4$ occurs in:
\begin{enumerate}
\item case~\ref{gsp4-galois-3aii}
\begin{proof}
    In case~\ref{gsp4-galois-3aii}, let $\varphi_v=(\chi_1,\chi_1\varphi_u)\colon W_F'\to\GL_1(\C)\times\GSp_2(\C)$ be the cuspidal support of the intermediate series, where $\varphi_v|_{\SL_2}=1$ by Remark~\ref{gl_n-cuspidal-l} and $\det(\varphi_u)=1$. By Property~\ref{infinitesimal-prop} we have $\varphi_v(w,x)=\lambda_{\varphi_v}(w)=\lambda_\varphi(w)$. Here,
    \[
    \lambda_\varphi(w)=\diag(\|w\|^{1/2}\chi_1(w),\chi_1(w)\varphi_u(w),\|w\|^{-1/2}\chi_1(w))
    \]
    so $\varphi_v(w)=\|w\|^{1/2}\chi_1(w)\otimes\chi_1(w)\varphi_u(w)$. By Remark~\ref{gsp4-levi-llc-remark} this corresponds to the representation $\nu^{1/2}\pi_u\boxtimes\nu^{-1/2}\widehat\chi_1^{-1}$ where $\pi_u$ is the self-dual supercuspidal representation of $\PGL_2(F)$ corresponding to $\varphi_u$ under the LLC for $\PGL_2(F)$. Thus the intermediate series member of the $L$-packet is an irreducible subquotient of $\nu^{1/2}\pi_u\rtimes\nu^{-1/2}\widehat\chi_1^{-1}$. By Theorem~\ref{gsp4-induced} \eqref{gsp4-20-levi} it has a unique irreducible sub-representation $\delta(\nu^{1/2}\pi_u\rtimes\nu^{-1/2}\widehat\chi_1^{-1})$, which is square-integrable. Thus $\delta(\nu^{1/2}\pi_u\rtimes\nu^{-1/2}\widehat\chi_1^{-1})\in\Pi_\varphi$.

    \begin{itemize}
        \item when the $\PGL_2(F)$-representation $\pi_u$ has depth zero, it is classified by a regular depth-zero character $\theta\colon E^\times/F^\times\to\C^\times$, where $E/F$ is the unramified quadratic extension.
        \begin{equation}
        \Pi_{\varphi(\theta)}:=\left\{\delta\left(\nu^{1/2}\pi_{(E^\times,\theta)}\rtimes\nu^{-1/2}\widehat\chi_1^{-1}\right),\quad\pi_{(S,\theta\boxtimes\theta\otimes\widehat\chi_1^{-1})}\right\},
    \end{equation}
    where the supercuspidal $\pi_{(S,\theta\boxtimes\theta\otimes\widehat\chi_1^{-1})}$ is defined in Lemma~\ref{depth-zero-sc-GSp4}.
    \item when the $\GL_2$-representation $\pi_u$ has positive depth, the $L$-packet is of the form
    \begin{equation}
        \Pi_{\varphi}:=\{\delta(\nu^{1/2}\pi_u\rtimes\nu^{-1/2}\widehat\chi_1^{-1}),\pi(\pi_u)\otimes\widehat\chi_1^{-1}\},
    \end{equation}
    where:
    \begin{itemize}
        \item $\pi_u$ is a supercuspidal representation of $\GL_2(F)$, which corresponds to a nontrivial representation $\mathrm{JL}(\pi_u)$ of $D^\times/F^\times$ under the Jacquet-Langlands correspondence, for $D/F$ the quaternion algebra. The Kim-Yu type is given by a twisted Levi sequence $(G^0\subset\cdots\subset G^d=D^\times/F^\times)$.
        \item $\pi(\pi_u)$ has Kim-Yu type given by the twisted Levi sequence $(G^0\subset\cdots\subset G^d=D^\times/F^\times\subset\GSp_4(F))$.
    \end{itemize}
    \end{itemize}
\end{proof}
    \item case~\ref{gsp4-galois-4b-iv}
    \begin{proof}
        In case~\ref{gsp4-galois-4b}, let $\varphi_v\colon W_F'\to T^\vee$ be the cuspidal support of the principal series, where since $T^\vee$ has no unipotents, we have $\varphi_v|_{\SL_2}=1$. By Property~\ref{infinitesimal-prop} we have $\varphi_v(w,x)=\lambda_{\varphi_v}(w)=\lambda_\varphi(w)$. Here,
\[
\lambda_\varphi(w)=\diag(
\|w\|^{1/2}\chi_1(w),\|w\|^{1/2}\chi_3(w),\|w\|^{-1/2}\chi_3(w),\|w\|^{-1/2}\chi_1(w)).
\]
Under the isomorphism of Remark~\ref{gsp4-self-dual}.
the $L$-parameter $\varphi$ corresponds to an irreducible subquotient of $\nu\theta\times\theta\rtimes\nu^{-1/2}\widehat\chi_3^{-1}$ where $\theta:=\widehat\chi_1\widehat\chi_3^{-1}$ is an order $2$ character of $F^\times$. By \cite[Lemma~3.6]{sally-tadic} the representation $\nu\theta\times\theta\rtimes\nu^{-1/2}\widehat\chi_1^{-1}$ has a unique essentially square integrable subquotient $\delta([\theta,\nu\theta],\nu^{-1/2}\widehat\chi_1^{-1})$. Thus by Property~\ref{property:L-packets}, we have $\delta([\theta,\nu\theta],\nu^{-1/2}\widehat\chi_1^{-1})\in\Pi_{\varphi}$. 
Here $\theta\in\{\eta,\eta_2,\eta_2'\}$.

The only singular supercuspidal from Theorem~\ref{depth-zero-sc-GSp4} that's unipotent (up to twisting) is $\pi_{\beta}(\theta_{10}\otimes1)$. Therefore it must be in the $L$-packet $\Pi_{\varphi^{(1)}}$. 

There are three $L$-packets, with notation from Proposition~\ref{depth-zero-sc-GSp4}.
\begin{align*}
    \Pi_{\varphi^{(1)}}:&=\{\delta([\eta,\nu\eta],\nu^{-1/2}\widehat\chi_1^{-1}),  \pi_{\delta}(\theta_{10}\otimes\widehat\chi_1^{-1})\}\\
    \Pi_{\varphi^{(2)}}:&=\{\delta([\eta_2,\nu\eta_2],\nu^{-1/2}\widehat\chi_1^{-1}),\pi_{\alpha}(\eta_2;\widehat\chi_1^{-1})\}\\
    \Pi_{\varphi^{(3)}}:&=\{\delta([\eta_2',\nu\eta_2'],\nu^{-1/2}\widehat\chi_1^{-1}), \pi_{\alpha}(\eta_2';\widehat\chi_1^{-1})\}.
\end{align*}
Here the $L$-packets $\Pi_{\varphi^{(2)}}$ and $\Pi_{\varphi^{(3)}}$ are assembled in Proposition~\ref{main-stability-prop-GSp4} via stability of characters.~Note that the twist $\widehat\chi_3^{-1}$ can be recovered as the central character of the representations.

We now compute the formal degree of $\delta([\eta_2,\nu\eta_2],\nu^{-1/2}\widehat\chi_1^{-1})$: By \cite{Roche-principal-series}, we have 
\begin{equation}\label{Roche-isomorphism}
\Irr(\mathcal{H}(G,\tau^\fs))\xrightarrow{\sim} \Irr(\mathcal{H}(J^{\fs},1)),
\end{equation} 
under which $\delta([\eta_2,\nu\eta_2],\nu^{-1/2}\widehat\chi_1^{-1})$ corresponds to $\St_{\GL_2\times\GL_2/\G_m}$. By \cite[Theorem~10.7]{Roche-principal-series}, up to normalization factors of volumes, we have
\begin{equation}
    \fdeg(\pi(\eta_2))=d(\St_{\GL_2\times\GL_2/\G_m}^{\mathcal{H}}).
\end{equation}
Now by~\cite[Theorem 4.1]{Ciubotaru-Kato-Kato}, we have
\begin{equation}    
d(\St_{\GL_2\times\GL_2/\G_m}^{\mathcal{H}})=\frac{1}{2}\cdot\frac{1}{q^2-1}\cdot \frac{q-1}{q^2-1}\cdot q^{3/2}
    =\frac{q^{3/2}}{2(q+1)^2}.
\end{equation}
Thus we have
\begin{equation}\label{fdeg-pi-eta2}
    \fdeg(\delta([\eta_2,\nu\eta_2],\nu^{-1/2}\widehat\chi_1^{-1}))=\frac{q^{3/2}}{2(q+1)(q^2-1)},
\end{equation}
which agrees with the formal degree for the singular supercuspidal computed in \eqref{sc-formal-degree-equation}.
    \end{proof}
\end{enumerate}

\subsection{The \texorpdfstring{$\Sp_4$}{Sp4} case}
The mixed packets for $\Sp_4$ occur in:
\begin{enumerate}
    \item case~\ref{sp4-galois-7biii}, when the packet is of size $4$, consisting of two supercuspidals and two principal series the irreducible constituents of $\nu^{1/2}\chi_1\St_{\GL_2}\rtimes1$. 
    The $L$-packets consist of principal series from case~\ref{sp4-1biv}, and depth-zero supercuspidals from Theorem \ref{depth-zero-sc-Sp4}. 
\begin{proof}
To each $\widehat\chi_1=\eta,\eta_2,\eta_2'$, we denote by $\varphi(\chi_1)$ the corresponding $L$-parameter, as in case~\ref{sp4-galois-7biii}. Concretely, $\varphi(\chi_1)\colon W_F'\to\SO_5(\C)$ corresponds to the $W_F\times\SL_2(\C)$-representation $U=M_2(\C)\oplus\C$ where $W_F$ acts on $M_2(\C)$ by $\chi_1$ and $\SL_2(\C)$ acts on $M_2(\C)$ by conjugation. In particular, the $L$-packet $\Pi_{\varphi(\eta)}$ is a unipotent $L$-packet.

The principal series members $\pi_1(\widehat\chi_1),\pi_2(\widehat\chi_1)\in\Pi_{\varphi(\chi_1)}$ have unipotent pairs $(ee,(-1,\pm1))$ on $\tO_4$, by the discussion in case~\ref{sp4-galois-7biii}. Let $\varphi_v(\chi_1)\colon W_F'\to T^\vee$ be the cuspidal support, where $\varphi_v(\chi_1)(\SL_2)=1$ since $T^\vee$ does not have unipotents. Then by Property~\ref{infinitesimal-prop} we have
\[\varphi_v(\chi_1)(w,x)=\lambda_{\varphi_v(\chi_1)}(w)=\lambda_{\varphi(\chi_1)}(w)=\varphi(\chi_1)(w,\begin{pmatrix}\|w\|^{1/2}\\&\|w\|^{-1/2}\end{pmatrix}).\]

This acts on $M_2(\C)$ as:
\begin{align*}
    \lambda_{\varphi(\chi_1)}(w)(e_{11})&=\chi_1(w)e_{11}\\
    \lambda_{\varphi(\chi_1)}(w)(e_{12})&=\|w\|\chi_1(w)e_{12}\\
    \lambda_{\varphi(\chi_1)}(w)(e_{21})&=\|w\|^{-1}\chi_1(w)e_{21}\\
    \lambda_{\varphi(\chi_1)}(w)(e_{22})&=\chi_1(w)e_{22},
\end{align*}
so $\varphi_v(\chi_1)=\|\det\|\chi_1\otimes\chi_1\otimes1$.
Now $\pi_1(\chi_1)$ and $\pi_2(\chi_1)$ are subquotients of $\nu\widehat\chi_1\times\widehat\chi_1\rtimes1=\nu^{1/2}\widehat\chi_11_{\GL_2}\rtimes1+\nu^{1/2}\widehat\chi_1\St_{\GL_2}\rtimes1$. Moreover, since $\pi_1(\chi_1)$ and $\pi_2(\chi_1)$ are square-integrable by Property~\ref{property:L-packets}, they must be subquotients of $\nu^{1/2}\widehat\chi_1\St_{\GL_2}\rtimes1$. By \cite[Lemma~3.6]{sally-tadic} over $\GSp_4$ the representation $\nu\widehat\chi_1\times\widehat\chi_1\rtimes1_{F^\times}$ contains a unique square integrable subquotient $\delta([\widehat\chi_1,\nu\widehat\chi_1],1_{F^\times})$. This splits into two irreducible representations when restricted to $\Sp_4$ by \cite[Prop~5.4]{sally-tadic}, and these are exactly the square-integrable subquotients of the $\Sp_4$-representation $\nu\widehat\chi_1\times\widehat\chi_1\rtimes1$. Thus, in the Grothendieck group
\begin{equation}\label{sp4-splits}
\delta([\widehat\chi_1,\nu\widehat\chi_1],1_{F^\times})|_{\Sp_4(F)}=\pi_1(\chi_1)+\pi_2(\chi_1).
\end{equation}

For the supercuspidals in $\Pi_{\varphi(\eta)}$, there are only two unipotent supercuspidals $\pi_{\beta}(\theta_{10})$ and $\pi_{\gamma}(\theta_{10})$ coming from Theorem \ref{depth-zero-sc-Sp4}\eqref{depth-zero-sc-Sp4(1)-theta10}. Therefore these two must be in the $L$-packet $\Pi_{\varphi(\eta)}$. Note that this agrees with the unipotent $L$-packet in \cite{lust-stevens}. 
Moreover, \cite[Example~9.4]{lust-stevens} says that $\Pi_{\varphi(\eta_2)}$ and $\Pi_{\varphi(\eta_2')}$ contains the depth-zero representations inflated from $\SL_2(\F_q)\times\SL_2(\F_q)$, i.e. the ones in Theorem \ref{depth-zero-sc-Sp4}\eqref{depth-zero-sc-Sp4(2)}. 
In summary, we have three $L$-packets
\begin{align}\label{size-4-Sp4-packet-theta-10}
    \Pi_{\varphi(\eta)}:&=\{\pi_1(\eta),\pi_2(\eta), \pi_{\beta}(\theta_{10}),\pi_{\gamma}(\theta_{10})\}\\
    \Pi_{\varphi(\eta_2)}:&=\{\pi_1(\eta_2),\pi_2(\eta_2),\pi^+_{\alpha}(\eta_2), \pi_{\alpha}^-(\eta_2)\}\\
    \Pi_{\varphi(\eta_2')}:&=\{\pi_1(\eta_2'),\pi_2(\eta_2'), \pi^+_{\alpha}(\eta_2'), \pi_{\alpha}^-(\eta_2')\}.
\end{align}
The choices between $\Pi_{\varphi(\eta_2)}$ and $\Pi_{\varphi(\eta_2')}$ are pinned down in Corollary \ref{2x4-llc} via stability of characters. Similar computations as in \eqref{fdeg-pi-eta2} shows that the formal degrees of $\pi_i(\eta_2)$ and $\pi_\alpha^{\pm}(\eta_2)$ agree.
\end{proof}

 \begin{remark} The $L$-packets $\Pi_{\varphi(\eta_2)}$ and $\Pi_{\varphi(\eta_2')}$ are those in \cite[Ex~9.4]{lust-stevens}.
    \end{remark}

    \item case~\eqref{sp4-case5b}, where the packet is of size $2$ consisting of a supercuspidal and an intermediate series.
    \begin{proof}
        Let $\pi\in\Pi_\varphi$ be the intermediate series member. By Property~\ref{infinitesimal-prop} we have $\lambda_\varphi=\iota_{\GL_2}\circ\lambda_{\varphi_v}$ up to $\SO_5$-conjugacy. For the intermediate series representation, since $\varphi_v\colon W_F'\to\GL_2(\C)$ is cuspidal, by Remark~\ref{gl_n-cuspidal-l} we have $\varphi_v(w,x)=\varphi(w,\begin{pmatrix}\|w\|^{1/2}\\&\|w\|^{-1/2}\end{pmatrix})$, which acts on $U=V^2\oplus 1$ as
\[
\begin{pmatrix}\|w\|^{1/2}\varphi(w)\\&1\\&&\|w\|^{-1/2}\varphi(w)\end{pmatrix}.
\]
Thus, the $L$-parameter of the cuspidal support is $\|\det\|^{1/2}\varphi$. Let $\varphi$ correspond to the unitary representation $\sigma$ of $\GL_2(F)$ under the LLC for $\GL_2$, so $\nu^{1/2}\sigma$ is the image of $\|\det\|^{1/2}\varphi$ under the LLC for $\GL_2$. Thus, $\pi:=\pi(\sigma)$ is an irreducible sub-representation of the induced representation $\nu^{1/2}\sigma\rtimes1$, which is the unique square-integrable subquotient by \cite[Prop~5.6(iv)]{sally-tadic}. It must be the member by Property~\ref{property:L-packets}. In summary, 
\begin{itemize}
    \item 
when $\varphi$ has depth zero, the $L$-packet is of the form
\begin{equation}\label{pi-alpha-eta-mixed-packet}
    \Pi_{\varphi}:=\{\pi(\sigma),\pi_{\alpha}(\eta)\},
\end{equation}
where $\pi_{\alpha}(\eta)$ (for $\eta\neq \tau_1,\tau_2$) is the (singular) depth-zero supercuspidal from Theorem \ref{depth-zero-sc-Sp4}\eqref{depth-zero-sc-Sp4(2)}. There are $\frac{q-1}{2}$ such depth-zero $L$-packets, which agrees with the number of depth-zero supercuspidals of $\PGL_2(F)$.
\item when $\varphi$ has positive depth, 
let $\pi(\sigma)$ be the intermediate series representation with $\sigma$ a positive-depth supercupsidal of $\PGL_2$ corresponding to the character $\psi(\sigma): E^{\times}/F^{\times}\to\C^{\times}$. The LLC for $\GL_2$ (hence $\PGL_2$) gives us a canonical identification $E^{\times}/F^{\times}\xrightarrow{\sim}R_{E/F}^{(1)}\G_m$ which identifies $\psi(\sigma):E^{\times}/F^{\times}\to\C^{\times}$ with a character $\chi(\sigma): R_{E/F}^{(1)}\G_m\to\C^{\times}$. Let $\pi_{\chi}$ be the corresponding positive-depth singular supercuspidal. 
The $L$-packet in this case is of the form 
\begin{equation}
    \Pi_{\varphi}:=\{\pi(\sigma),\pi_{\chi(\sigma)}\}.
\end{equation}
\end{itemize}

    \end{proof}
\end{enumerate}

\begin{remark}\label{gl_n-cuspidal-l}
Let $\varphi\colon W_F'\to\GL_n(\C)$ be a cuspidal $L$-parameter for $\GL_n$. Then $\varphi(\SL_2)=1$.
\end{remark}

\section{Stability of \texorpdfstring{$L$}{L}-packets}\label{stability-section}

\subsection{Parahoric invariants for the $\GSp_4(F)$ case}\label{parahoric-invariants-section}
Via twisting by the character $\nu^{1/2}\widehat\chi_3\circ\mu$ of $\GSp_4$, we may focus our attention on $\delta([\eta_2,\nu\eta_2],1)$. It is characterized as the intersection of the sub-representations $\nu^{1/2}\eta_2\St_{\GL(2)}\rtimes1$ and $\nu^{1/2}\eta_2\St_{\GL(2)}\rtimes\eta_2$ of $\nu\eta_2\times\eta_2\rtimes1$.

We calculate the invariants of $\delta([\eta_2,\nu\eta_2],1)$ with respect to $G_{x+}$, where $x$ is a vertex of the Bruhat-Tits building (i.e., $\alpha$ or $\delta$).

\subsubsection{Calculating $\delta([\eta_2,\nu\eta_2],1)^{G_{\alpha+}}$}\label{eta2-parahoric-section}

\begin{defn}\label{omega2-defn}
Let $H_\alpha$ be the parahoric subgroup of $\GSp_{2,2}(F)$ defined in \S\ref{parahoric-section}, which contains the subgroup
\begin{equation}
    H_\alpha^0:=\{(g,h)\in M_2(\cO)\times \begin{pmatrix}\cO&\p^{-1}\\\p&\cO\end{pmatrix}:\det(g)=\det(h)=1\}.
\end{equation}
For a ramified quadratic character $\eta_2$ of $F^\times$, let $\varpi\in F$ be a uniformizer such that $\eta_2(\varpi)=1$ (unique up to $(\cO_F^\times)^2$). We define the following irreducible representations of $G_\beta/G_{\beta+}\cong H_\beta/H_{\beta+}$:
\begin{align}
    \omega^{\eta_2}_{\mathrm{princ}}&:=\Ind_{G_\beta^0\Cent}^{G_\beta}(R_+(\alpha_0)\boxtimes R_+(\alpha_0)^{\mathrm{diag}(\varpi,1)})\label{pi-princ+-defn} 
    \\
    \omega^{\eta_2}_{\mathrm{cusp}}&:=\Ind_{G_\beta^0\Cent}^{G_\beta}(R_+'(\theta_0)\boxtimes R_+'(\theta_0)^{\mathrm{diag}(\varpi,1)})
\end{align}
This is independent of the choice of the uniformizer $\varpi$.
\end{defn}
By \cite[Lemma 2.0.1]{G2-stability}, we have:
\begin{lemma}\label{gsp4-hecke-iso}There are canonical support-preserving Hecke algebra isomorphisms
\begin{align}
\mcH(\GSp_4/\!/I,\epsilon\otimes\epsilon\otimes1)&\cong\mcH(\GSpin_4^\vee/\!/J,\epsilon\circ\widetilde\det_1)\\
\mcH(\GSp_4/\!/I,\epsilon\otimes\epsilon\otimes\epsilon)&\cong\mcH(\GSpin_4^\vee/\!/J,\epsilon\circ\widetilde\det_2)
\end{align}
where $\GSpin_4^\vee\cong(\GL_2\times\GL_2)/\G_m$, and $\widetilde \det_i(g_1,g_2):=\det(g_i)$ are well-defined homomorphisms $\GSpin_4^\vee(F)\to F^\times/(F^\times)^2$. Under these isomorphisms $\delta([\eta_2,\nu\eta_2],1)$ corresponds to $\eta_2\circ\widetilde\det_i\otimes\St_{\GSpin_4^\vee}$.
\end{lemma}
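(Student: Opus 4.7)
My plan is to apply the general construction of \cite[Lemma 2.0.1]{G2-stability} directly to the two Bernstein blocks at hand, and then to pin down the image of $\delta([\eta_2,\nu\eta_2],1)$ by its characterization as the unique essentially square-integrable subquotient.

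First, I would set up the input required on the group side: the Iwahori-characters $\epsilon\otimes\epsilon\otimes 1$ and $\epsilon\otimes\epsilon\otimes\epsilon$ on $\GSp_4(F)$ cut out $s$-types for the principal series blocks $[T,\eta_2\otimes\eta_2\otimes 1]_G$ and $[T,\eta_2\otimes\eta_2\otimes\eta_2]_G$ respectively, in the torus parametrization of Remark~\ref{gsp4-levi-llc-remark}. Using that parametrization, the infinitesimal parameter restricted to inertia factors through $T^\vee\subset G^\vee\cong\GSp_4(\C)$ as a semisimple element whose connected centralizer is precisely $\GSpin_4^\vee\cong(\GL_2\times\GL_2)/\G_m$, since $\eta_2$ has order $2$ and the reflection through the short root fixes $\eta_2\otimes\eta_2$. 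This identifies the relevant dual endoscopic group.

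Second, on the dual side I would verify that $\widetilde\det_i(g_1,g_2):=\det(g_i)$ descends to a well-defined homomorphism $\GSpin_4^\vee(F)\to F^\times/(F^\times)^2$: the central $\G_m\hookrightarrow\GL_2\times\GL_2$ is embedded as $z\mapsto(zI_2,z^{-1}I_2)$, so $\widetilde\det_i$ lands in $F^\times/(F^\times)^2$ after quotienting. The character $\epsilon\circ\widetilde\det_i$ is then well-defined because $\eta_2^2=1$. The two choices $i=1,2$ on the endoscopic side correspond to the two inequivalent extensions of $\eta_2\otimes\eta_2$ from the derived torus to a central character of $\GSp_4(F)$ on the group side. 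The Hecke-algebra isomorphisms themselves now follow from Roche's theorem \cite{Roche-principal-series}, packaged and extended exactly as in \cite[Lemma~2.0.1]{G2-stability}: they are canonical and support-preserving by construction, with matching affine-Hecke-algebra parameters on both sides.

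Finally, I would identify the images of $\delta([\eta_2,\nu\eta_2],1)$. By \cite[Lemma~3.6]{sally-tadic}, this representation is the unique essentially square-integrable subquotient of $\nu\eta_2\times\eta_2\rtimes 1$, and is therefore the unique discrete-series module in its Bernstein block. Under a support-preserving affine Hecke algebra isomorphism, discrete-series modules correspond to discrete-series modules, and in the Iwahori-type block of $\GSpin_4^\vee(F)$ twisted by $\epsilon\circ\widetilde\det_i$ the unique such module is $\eta_2\circ\widetilde\det_i\otimes\St_{\GSpin_4^\vee}$, forcing the claimed correspondence. The main technical obstacle is the careful matching of Hecke parameters and of the sign twists across the isomorphism, in particular checking that the two distinct choices of central character on $\GSp_4$ map to the two distinct choices $\widetilde\det_1$ vs.\ $\widetilde\det_2$ on the dual side; this bookkeeping is precisely what is handled in \cite{G2-stability}.
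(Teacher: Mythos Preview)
Your approach is essentially the same as the paper's: the paper's entire proof is the single citation ``By \cite[Lemma 2.0.1]{G2-stability}'', and you invoke exactly that lemma together with Roche's theorem to produce the support-preserving isomorphism, then identify the image of $\delta([\eta_2,\nu\eta_2],1)$ by its discreteness. One small imprecision: $\delta([\eta_2,\nu\eta_2],1)$ is not literally the \emph{unique} discrete-series module in the Bernstein block (its unramified twists $\delta([\eta_2,\nu\eta_2],\theta)$ for unramified $\theta$ are also there), so to nail down that it goes to $\eta_2\circ\widetilde\det_i\otimes\St_{\GSpin_4^\vee}$ rather than an unramified twist thereof you must also track the cuspidal support parameter through the isomorphism---but this is exactly what ``support-preserving'' buys you, so the argument goes through once stated carefully.
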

By the Mackey formula, we have an isomorphism of 
representations of $G_\alpha/G_{\alpha+}\cong\GSp_{2,2}(\F_q)$
\begin{equation}\label{mackey-eta2-beta}
(\nu\eta_2\times\eta_2\rtimes1)^{G_{\alpha+}}\cong\bigoplus_{w\in B\backslash G_2/G_\alpha}\Ind_{G_\beta\cap wBw^{-1}/(G_{\alpha+}\cap wBw^{-1})}^{G_\alpha/G_{\alpha+}}(\epsilon\otimes\epsilon\otimes1)^w,
\end{equation}
where
\begin{equation}
B\backslash G_2/G_\alpha\cong W(G_2)/W(\GSp_{2,2})=W/\langle s_\beta,s_{2\alpha+\beta}\rangle=\{1,s_\alpha\}.
\end{equation}

Therefore, the $G_{\alpha+}$-invariants of $(\nu\eta_2\times\eta_2\rtimes1)^{G_{\alpha+}}$ gives
\begin{equation}
    (\nu\eta_2\otimes\eta_2\rtimes1)^{G_{\alpha+}}\simeq \Ind_B^{\GSp_{2,2}}(\epsilon\otimes1\otimes\epsilon\otimes1)^2
\end{equation}

Likewise, computing the $G_{\alpha+}$-invariants gives us the following
\begin{align}
    (\nu^{1/2}\eta_2\St\rtimes1)^{G_{\alpha+}}\simeq(\nu^{1/2}\eta_2\St\rtimes\eta_2)^{G_{\alpha+}}&\simeq \Ind_B^{\GSp_{2,2}}(\epsilon\otimes1\otimes\epsilon\otimes1).
\end{align}
We pin down 
the $G_{\beta+}$-invariants of $\pi(\eta_2)$ 
in Corollary~\ref{eta2-beta-invariants}. 

\begin{prop}\label{pro-iwahori-invariants}
The $I_{+}$-invariants of $\delta([\eta_2,\nu\eta_2],1)$ is
\[
\delta([\eta_2,\nu\eta_2],1)^{I_+}\cong\epsilon\otimes\epsilon\otimes1+\epsilon\otimes\epsilon\otimes\epsilon.
\]
\end{prop}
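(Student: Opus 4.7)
The plan is to bound $\delta([\eta_2,\nu\eta_2],1)^{I_+}$ from above using the inclusion $\delta([\eta_2,\nu\eta_2],1)\hookrightarrow\nu\eta_2\times\eta_2\rtimes 1$, then determine the exact multiplicities via the Hecke algebra isomorphism of Lemma~\ref{gsp4-hecke-iso}.

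First, setting $\chi:=\nu\eta_2\otimes\eta_2\otimes 1$, I would decompose $(\nu\eta_2\times\eta_2\rtimes 1)^{I_+}$ as a representation of $T(k)\cong I/I_+$ via the standard Mackey decomposition for parabolically induced representations, obtaining the direct sum over $w\in W(C_2)$ of the characters $(w\chi)|_{T(\cO)}$ descended to $T(k)$. A short computation shows that the Weyl action on such characters is given by $s_\alpha\colon(\chi_1,\chi_2,\theta)\mapsto(\chi_2,\chi_1,\theta)$ and $s_\beta\colon(\chi_1,\chi_2,\theta)\mapsto(\chi_1,\chi_2^{-1},\chi_2\theta)$ (using that $s_\beta$ swaps $a_2\leftrightarrow b_2=\mu/a_2$ on the torus). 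Since $\nu|_{\cO^\times}=1$ and $\eta_2|_{\cO^\times}=\epsilon$ has order two, every Weyl translate of $\chi$ restricts to $T(k)$ as $\epsilon\otimes\epsilon\otimes\theta'$ with $\theta'\in\{1,\epsilon\}$. Hence $\delta([\eta_2,\nu\eta_2],1)^{I_+}$, being a sub-$T(k)$-representation of $(\nu\eta_2\times\eta_2\rtimes 1)^{I_+}$, is supported on exactly these two characters.

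Second, I would pin down the multiplicity of each. The $(\epsilon\otimes\epsilon\otimes 1)$-isotypic component of $\delta([\eta_2,\nu\eta_2],1)^{I_+}$ carries a module structure over $\mcH(\GSp_4/\!/I,\epsilon\otimes\epsilon\otimes 1)\cong \mcH(\GSpin_4^\vee/\!/J,\epsilon\circ\widetilde\det_1)$ from Lemma~\ref{gsp4-hecke-iso}. Under this isomorphism $\delta([\eta_2,\nu\eta_2],1)$ is identified with $\eta_2\circ\widetilde\det_1\otimes\St_{\GSpin_4^\vee}$, whose $(\epsilon\circ\widetilde\det_1)$-isotypic component of $J$-fixed vectors is one-dimensional: the Steinberg has one-dimensional Iwahori invariants, on which the finite Hecke algebra acts through the sign character, and twisting by $\eta_2\circ\widetilde\det_1$ moves the isotypic character precisely onto $\epsilon\circ\widetilde\det_1$. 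The parallel argument via the second isomorphism of Lemma~\ref{gsp4-hecke-iso} yields multiplicity one for $\epsilon\otimes\epsilon\otimes\epsilon$; combined with the first step, this gives the claim.

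The main obstacle is verifying in the first step that no further character of $T(k)$ appears. Because the $s_\beta$-action twists $\theta$ by $\chi_2$, composing reflections could a priori introduce third-coordinate characters outside $\{1,\epsilon\}$; however, $\eta_2^2=1$ and $\nu|_{\cO^\times}=1$ force every product of the form $\chi_1^{a}\chi_2^{b}\theta$ to collapse into $\{1,\epsilon\}$ on units. Making this rigorous requires an explicit (but short) enumeration of the action of the $8$-element Weyl group on $\chi$.
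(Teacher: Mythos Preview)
Your proposal is correct and follows essentially the same approach as the paper: embed into the induced representation, compute its $I_+$-invariants via the Weyl orbit of the inducing character, and then use Lemma~\ref{gsp4-hecke-iso} to identify the multiplicities with those of the Steinberg on the $\GSpin_4^\vee$ side. The only cosmetic difference is that the paper handles the second character $\epsilon\otimes\epsilon\otimes\epsilon$ by observing it lies in the same Weyl orbit as $\epsilon\otimes\epsilon\otimes1$ (so the multiplicity transfers automatically), whereas you invoke the second isomorphism of Lemma~\ref{gsp4-hecke-iso} separately; both are fine.
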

\begin{proof}
A priori we know
\[\delta([\eta_2,\nu\eta_2],1)^{I_+}\hookrightarrow (\nu\eta_2\times\eta_2\rtimes1)^{I_+}=\bigoplus_{w\in W}(\epsilon\otimes\epsilon\otimes1)^w=(\epsilon\otimes\epsilon\otimes1)^4+(\epsilon\otimes\epsilon\otimes\epsilon)^4.\]
By Lemma~\ref{gsp4-hecke-iso}, the multiplicity of $\epsilon\otimes\epsilon\otimes1$ in $\delta([\eta_2,\nu\eta_2],1)$, which is the same as the multiplicity of $\epsilon\circ\widetilde\det_1$ in the representation $\eta_2\St_{\SO_4}$, is one. Thus the same holds for all Weyl group orbits of the character.
\end{proof}

\begin{cor}\label{eta2-beta-invariants}
There is an isomorphism of $G_\alpha/G_{\alpha+}$-representations
\[
\delta([\eta_2,\nu\eta_2],1)^{G_{\alpha+}}\cong\omega_{\princ}^{\eta_2}
\]
\end{cor}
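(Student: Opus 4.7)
The plan is to use Proposition \ref{pro-iwahori-invariants} together with the Mackey decomposition of $(\nu^{1/2}\eta_2\St_{\GL_2}\rtimes 1)^{G_{\alpha+}}$ already computed just before the statement, and then use character theory on $\GSp_{2,2}(\F_q)$ to pin down the correct irreducible constituent.

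\textbf{Step 1.} Since $\delta([\eta_2,\nu\eta_2],1)$ is the unique (essentially) square-integrable subrepresentation of $\nu^{1/2}\eta_2\St_{\GL_2}\rtimes 1$ by Theorem \ref{gsp4-induced}, taking $G_{\alpha+}$-invariants gives an injection
\[
\delta([\eta_2,\nu\eta_2],1)^{G_{\alpha+}} \hookrightarrow (\nu^{1/2}\eta_2\St_{\GL_2}\rtimes 1)^{G_{\alpha+}} \cong \Ind_B^{\GSp_{2,2}(\F_q)}(\epsilon\otimes 1\otimes\epsilon\otimes 1)
\]
of $G_\alpha/G_{\alpha+}\cong\GSp_{2,2}(\F_q)$-modules, using the Mackey decomposition displayed just above the proposition.

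\textbf{Step 2.} I would then decompose the principal series $\Ind_B^{\GSp_{2,2}(\F_q)}(\epsilon\otimes 1\otimes\epsilon\otimes 1)$ into irreducibles. Since $\epsilon$ is the quadratic character of $\F_q^\times$, on each $\SL_2(\F_q)$-factor the restricted principal series splits as $R_+(\alpha_0)\oplus R_-(\alpha_0)$ in Bonnafé's notation (cf.~Remark \ref{rho+-characterization}). Thus on $\SL_2(\F_q)\times\SL_2(\F_q)$ one gets four summands $R_\pm(\alpha_0)\boxtimes R_\pm(\alpha_0)$, and inducing up to $\GSp_{2,2}(\F_q)\cong G_\alpha/G_{\alpha+}$ as in Definition \ref{omega2-defn} produces four irreducibles of $\GSp_{2,2}(\F_q)$ among which $\omega_{\princ}^{\eta_2}$ is the ``$(+,+)$'' piece.

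\textbf{Step 3.} Using the containment $G_{\alpha+}\subset I_+$ (as $\alpha$ is a vertex of the fundamental alcove), I get $\delta([\eta_2,\nu\eta_2],1)^{I_+}\subset\delta([\eta_2,\nu\eta_2],1)^{G_{\alpha+}}$. By Proposition \ref{pro-iwahori-invariants}, the left side equals $\epsilon\otimes\epsilon\otimes 1+\epsilon\otimes\epsilon\otimes\epsilon$ as a $T(\F_q)$-module. Using Bonnafé's restriction formulas for $R_+(\alpha_0)$ to the split torus of $\SL_2(\F_q)$, I would verify that these are precisely the $T(\F_q)$-characters appearing in the $U(\F_q)$-invariants of $\omega_{\princ}^{\eta_2}$, and \emph{not} in the other three candidates (whose unipotent-radical invariants carry a different sign on at least one $\SL_2$-factor). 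This would pin down $\omega_{\princ}^{\eta_2}$ as the unique subrepresentation of the principal series with the correct Iwahori invariants.

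\textbf{Main obstacle.} The subtle point is distinguishing $\omega_{\princ}^{\eta_2}$ from its ``$(-,-)$,'' ``$(+,-)$,'' and ``$(-,+)$'' analogues, which have the same dimension and differ only by signs on the unipotent radical. Here I would appeal to the Hecke-algebra isomorphism of Lemma \ref{gsp4-hecke-iso}, which matches $\delta([\eta_2,\nu\eta_2],1)$ with $\eta_2\circ\widetilde\det_i\otimes\St_{\GSpin_4^\vee}$: the multiplicity-one statement on the dual side used in the proof of Proposition \ref{pro-iwahori-invariants} selects the ``$(+,+)$'' sign consistently across both $\SL_2$-factors (rather than any mixed-sign combination), yielding the desired isomorphism $\delta([\eta_2,\nu\eta_2],1)^{G_{\alpha+}}\cong\omega_{\princ}^{\eta_2}$.
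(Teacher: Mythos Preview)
Your overall strategy matches the paper's, but Steps 2--3 contain a real gap. The induced representation $\Ind_B^{\GSp_{2,2}(\F_q)}(\epsilon\otimes1\otimes\epsilon\otimes1)$ decomposes into \emph{two} irreducibles of $\GSp_{2,2}(\F_q)$, not four: the outer element of $\GSp_{2,2}$ over $\SL_2\times\SL_2$ swaps $R_+(\alpha_0)\boxtimes R_+(\alpha_0)^{\diag(\varpi,1)}$ with $R_-(\alpha_0)\boxtimes R_-(\alpha_0)^{\diag(\varpi,1)}$, so the induction in Definition~\ref{omega2-defn} already packages these together. The two irreducible constituents are precisely $\omega_\princ^{\eta_2}$ and $\omega_\princ^{\eta_2'}$, distinguished by the choice of uniformizer (equivalently, by which ramified quadratic character it is attached to).

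More seriously, your Step 3 cannot separate these two candidates. Both $R_+(\alpha_0)$ and $R_-(\alpha_0)$ have one-dimensional $U$-invariants on which the split torus of $\SL_2(\F_q)$ acts by the same character $\epsilon$; hence $\omega_\princ^{\eta_2}$ and $\omega_\princ^{\eta_2'}$ have \emph{identical} Jacquet modules as $T(\F_q)$-representations. So Proposition~\ref{pro-iwahori-invariants} only tells you that $\delta([\eta_2,\nu\eta_2],1)^{G_{\alpha+}}$ is one of $\omega_\princ^{\eta_2}$ or $\omega_\princ^{\eta_2'}$---exactly what the paper states---and the torus-character bookkeeping you propose cannot go further. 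The decisive input is the one you defer to your ``Main obstacle'' paragraph: the \emph{support-preserving} Hecke algebra isomorphism of Lemma~\ref{gsp4-hecke-iso} identifies which of the two ramified quadratics is involved, because the isomorphism tracks $\eta_2$ (not merely $\epsilon$) through $\eta_2\circ\widetilde\det_i$. That is the paper's argument; you should promote it from afterthought to the actual content of Step~3.
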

\begin{proof}
The argument is the same as in the proof of Corollary~3.0.8 in \cite{G2-stability}
. By Proposition~\ref{pro-iwahori-invariants} we conclude $\delta([\eta_2,\nu\eta_2],1)^{G_{\beta+}}$ must be an irreducible component of $\Ind_B^{\GSp_{2,2}}(\epsilon\otimes1\otimes\epsilon\otimes1)$, i.e., $\omega_\princ^{\eta_2}$ or $\omega_\princ^{\eta_2'}$. Together with Lemma~\ref{gsp4-hecke-iso} we conclude $\delta([\eta_2,\nu\eta_2],1)^{G_{\alpha+}}\cong\omega_\princ^{\eta_2}$.
\end{proof}

\subsubsection{Calculating $\delta([\eta_2,\nu\eta_2],1)^{G_{\delta+}}$}

Again by a Mackey theory calculation, we have:
\begin{align}
(\nu\eta_2\times\eta_2\rtimes1)^{G_{\delta+}}&\cong\Ind_{B(\F_q)}^{\GSp_4(\F_q)}(\epsilon\otimes\epsilon\otimes1)\\
(\nu^{1/2}\eta_2\St_{\GL_2}\rtimes1)^{G_{\delta+}}&\cong\Ind_{P_\alpha}^{\GSp_4(\F_q)}(\epsilon\St_{\GL_2}\otimes1)\\
(\nu^{1/2}\eta_2\St_{\GL_2}\rtimes\eta_2)^{G_{\delta+}}&\cong\Ind_{P_\alpha}^{G_2(\F_q)}(\epsilon\St_{\GL_2}\otimes\epsilon),
\end{align}
where $P_\alpha$ is a parabolic subgroup of $\GSp_4(\F_q)$. Thus, $\delta([\eta_2,\nu\eta_2],1)^{G_{\delta+}}$ is the intersection of $\Ind_{P_\alpha(\F_q)}^{\GSp_4(\F_q)}(\epsilon\St_{\GL_2}\otimes1)$ and $\Ind_{P_\alpha(\F_q)}^{\GSp_4(\F_q)}(\epsilon\St_{\GL_2}\otimes\epsilon)$, denoted $\omega^\epsilon_\princ$. In terms of Lusztig's equivalence \cite[Theorem~4.23]{Lusztig-characters-Princeton-book}, if $s\in \GSpin_5(\F_q)$ is of order $2$ such that its image in $\SO_5(\F_q)$ is $\diag(-1,-1,1,-1,-1)$ then $\Cent_{\GSpin_5(\F_q)}(s)=\GSpin_4(\F_q)\cong\GSp_{2,2}(\F_q)$:
\begin{equation}\label{Lusztig-equiv-GSp22}
\mathcal E(\GSp_4(\F_q),s)\cong\mathcal E(\GSp_{2,2}(\F_q),1)=\{\St_{\GSp_{2,2}},1\boxtimes\GSp_{2},\GSp_2\boxtimes1,1_{\GSp_{2,2}}\},
\end{equation}
and $\omega^{\epsilon}_\princ$ corresponds to $\St_{\GSp_{2,2}(\F_q)}$. 
Thus, in conclusion:
\begin{prop}\label{eta2-parahoric}
The following are the 
\begin{align}
    \delta([\eta_2,\nu\eta_2],1)^{G_{\delta+}}&\cong\omega^{\epsilon}_{\princ}\\
    \delta([\eta_2,\nu\eta_2],1)^{G_{\alpha+}}&\cong\omega_\princ^{\eta_2}.
\end{align}
\end{prop}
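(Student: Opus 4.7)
The second isomorphism is Corollary~\ref{eta2-beta-invariants}, so I concentrate on the first. The plan is to use the description of $\delta([\eta_2,\nu\eta_2],1)$ as the unique common irreducible sub-representation of the two length-two induced representations $\nu^{1/2}\eta_2\St_{\GL_2}\rtimes 1$ and $\nu^{1/2}\eta_2\St_{\GL_2}\rtimes\eta_2$ inside the standard module $\nu\eta_2\times\eta_2\rtimes 1$, and then identify its $G_{\delta+}$-invariants via a Mackey computation combined with Lusztig's Jordan decomposition.

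First I would record the three Mackey isomorphisms of $\GSp_4(\F_q)$-representations already displayed just before the statement: the $G_{\delta+}$-invariants of $\nu\eta_2\times\eta_2\rtimes 1$ are $\Ind_{B(\F_q)}^{\GSp_4(\F_q)}(\epsilon\otimes\epsilon\otimes 1)$, and the $G_{\delta+}$-invariants of the two subrepresentations $\nu^{1/2}\eta_2\St_{\GL_2}\rtimes 1$ and $\nu^{1/2}\eta_2\St_{\GL_2}\rtimes\eta_2$ are $\Ind_{P_\alpha(\F_q)}^{\GSp_4(\F_q)}(\epsilon\St_{\GL_2}\otimes 1)$ and $\Ind_{P_\alpha(\F_q)}^{\GSp_4(\F_q)}(\epsilon\St_{\GL_2}\otimes\epsilon)$. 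Since the functor of $G_{\delta+}$-invariants is exact on smooth $G$-modules generated by their $G_{\delta+}$-invariants, $\delta([\eta_2,\nu\eta_2],1)^{G_{\delta+}}$ equals the intersection of these two inductions inside the ambient $\Ind_B(\epsilon\otimes\epsilon\otimes 1)$.

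Next I would compute the decomposition of each of these parabolic inductions in the Lusztig series $\mathcal{E}(\GSp_4(\F_q),s)$, where $s\in\GSpin_5(\F_q)$ is a representative whose image in $\SO_5(\F_q)$ is $\diag(-1,-1,1,-1,-1)$, so that $\Cent_{\GSpin_5(\F_q)}(s)\cong\GSp_{2,2}(\F_q)$. The character $\epsilon\otimes\epsilon\otimes 1$ lies in this series, and under Lusztig's bijection \eqref{Lusztig-equiv-GSp22} the full principal series $\Ind_B(\epsilon\otimes\epsilon\otimes 1)$ corresponds to $\Ind_{B_{\GSp_{2,2}}}^{\GSp_{2,2}(\F_q)}(\mathbf{1})$, which has four irreducible constituents, namely $\St_{\GSp_{2,2}}$, $\St_{\GSp_2}\boxtimes 1$, $1\boxtimes\St_{\GSp_2}$, and $1_{\GSp_{2,2}}$. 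Each of the two intermediate inductions $\Ind_{P_\alpha}(\epsilon\St_{\GL_2}\otimes\star)$ corresponds under Lusztig's equivalence to an intermediate parabolic induction in $\GSp_{2,2}(\F_q)$ from one of the two maximal parabolic subgroups; each has two irreducible constituents, one being $\St_{\GSp_{2,2}}$ and the other being the corresponding $\St_{\GSp_2}\boxtimes 1$ or $1\boxtimes\St_{\GSp_2}$ depending on the parabolic.

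The final step is to conclude that the two intermediate inductions share exactly one irreducible constituent, namely $\St_{\GSp_{2,2}}$, which is what I have named $\omega_\princ^\epsilon$. Combining this with the intersection description from step one gives $\delta([\eta_2,\nu\eta_2],1)^{G_{\delta+}}\cong\omega_\princ^\epsilon$, as required. The main obstacle I anticipate is the careful bookkeeping in Lusztig's Jordan decomposition: one has to verify that the two maximal parabolic inductions in $\GSp_4(\F_q)$, which differ by twisting the component on $\GSp_0$ by $\epsilon$, really correspond under the equivalence to parabolic inductions from the two \emph{distinct} maximal parabolics of $\GSp_{2,2}(\F_q)$ (rather than the same one), so that their intersection is forced to be the Steinberg. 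This can be cross-checked against Lemma~\ref{gsp4-hecke-iso}, which already identifies $\delta([\eta_2,\nu\eta_2],1)$ with $\eta_2\circ\widetilde{\det}_i\otimes\St_{\GSpin_4^\vee}$ for both $i=1,2$, i.e.~with the Steinberg after Jordan decomposition.
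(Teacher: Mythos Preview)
Your proposal is correct and follows essentially the same approach as the paper. The paper's argument is contained in the paragraph immediately preceding the proposition: it records the same three Mackey computations, \emph{defines} $\omega_\princ^\epsilon$ as the intersection of the two intermediate inductions, and then asserts (via Lusztig's equivalence \eqref{Lusztig-equiv-GSp22}) that this intersection corresponds to $\St_{\GSp_{2,2}}$; your proof spells out the last step more carefully by explaining why the two parabolic inductions share only the Steinberg constituent, and adds the cross-check against Lemma~\ref{gsp4-hecke-iso}, but the skeleton is identical.
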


\subsection{Parahoric invariants for the supercuspidal representations}

Recall from Proposition \ref{depth-zero-sc-GSp4}\eqref{depth-zero-sc-GSp4(2)}, we defined the supercuspidal representation 
\begin{equation}\label{sc-mixed-gsp4}
\pi_\alpha(\eta_2;1):=\cInd_{G_\alpha\Cent}^{\GSp_4}(\omega_\cusp^{\eta_2}),\end{equation}
where $\omega_\cusp^{\eta_2}:=(\overline{\rho}_{(\lambda,\lambda)}^+)^{(I_2,\diag(\varpi,1))}$ is a cuspidal representation of $G_\alpha/G_{\alpha+}$. 
We may readily calculate the $G_{x+}$-invariants of the supercuspidal representation $\pi_\alpha(\eta_2;1)$, for various vertices $x$ in the Bruhat-Tits building:
\begin{lemma}\label{eta2-sc-parahoric}
Let $\pi_{\alpha}(\eta_2;1)$ be as defined in \eqref{sc-mixed-gsp4}. We have
\begin{align}
    \pi_\alpha(\eta_2;1)^{G_{\alpha+}}&\cong\omega_\cusp^{\eta_2}\\
    \pi_\alpha(\eta_2;1)^{G_{\delta+}}&=0
\end{align}
\end{lemma}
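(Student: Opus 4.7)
Both statements will follow from a single Mackey-theoretic computation combined with the cuspidality of $\omega_\cusp^{\eta_2}$ as a representation of the finite reductive quotient $G_\alpha/G_{\alpha+} \cong \GSp_{2,2}(\F_q)$. Throughout, I work modulo the center, since compact induction from $G_\alpha\Cent$ and taking invariants under a parahoric commute with the $\Cent$-action.

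For any compact open subgroup $K \subset \GSp_4(F)$, Mackey's formula for compact induction gives
\begin{equation*}
\pi_\alpha(\eta_2;1)^{K} \;\cong\; \bigoplus_{w \in K \backslash \GSp_4(F) / G_\alpha\Cent} \bigl(\omega_\cusp^{\eta_2}\bigr)^{\,w^{-1}Kw \,\cap\, G_\alpha\Cent},
\end{equation*}
where the $w$-indexed summand is taken with respect to the conjugated action. The plan is to parametrize the double cosets by moving $K \cdot w$ to a vertex of the $C_2$-apartment $\mathcal{A} \subset \mathcal{B}(\GSp_4)$ (Figure \ref{C2-apartment-figure}); by standard Bruhat--Tits theory, every double coset has a representative whose associated vertex either coincides with, or differs from, the base vertex $[\alpha]$.

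For $K = G_{\alpha+}$: the identity double coset contributes $\omega_\cusp^{\eta_2}$ itself, since $G_{\alpha+}$ acts trivially on this inflated representation. For any representative $w \notin G_\alpha\Cent$, the vertex $w\cdot[\alpha]$ is distinct from $[\alpha]$, and the intersection $w^{-1}G_{\alpha+}w \cap G_\alpha$ projects to $G_\alpha/G_{\alpha+}$ onto (at most) the $\F_q$-points of the unipotent radical of a proper parabolic subgroup of $\GSp_{2,2}$, determined by the relative position of $[\alpha]$ and $w\cdot[\alpha]$ in the building. Cuspidality of $\omega_\cusp^{\eta_2}$ forces the invariants under any such unipotent radical to vanish, so only the identity coset survives, giving $\pi_\alpha(\eta_2;1)^{G_{\alpha+}} \cong \omega_\cusp^{\eta_2}$.

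For $K = G_{\delta+}$: since $[\alpha]$ and $[\delta]$ are distinct vertices of $\mathcal{A}$ (they are not even in the same $\GSp_4(F)$-orbit on vertices), for \emph{every} double coset representative $w$ the vertex $w\cdot[\delta]$ is distinct from $[\alpha]$; equivalently, $w^{-1}G_{\delta+}w$ never contains $G_{\alpha+}$. The intersection $w^{-1}G_{\delta+}w \cap G_\alpha$ therefore projects into a proper parabolic of $\GSp_{2,2}(\F_q)$ whose unipotent radical is nontrivial, and cuspidality of $\omega_\cusp^{\eta_2}$ kills every summand, yielding $\pi_\alpha(\eta_2;1)^{G_{\delta+}} = 0$. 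The main obstacle is the bookkeeping: one must verify in each case that the projection of $w^{-1}K w \cap G_\alpha$ to $G_\alpha/G_{\alpha+}$ genuinely contains a nontrivial unipotent subgroup (rather than, say, being contained in a Levi). This is handled by the Moy--Prasad filtration analysis of intersections of parahorics at distinct points of the building, using the explicit root-group descriptions in \S\ref{parahoric-section}.
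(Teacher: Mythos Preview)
Your proposal is correct and follows essentially the same route as the paper: a Mackey decomposition of $(\cInd_{G_\alpha\Cent}^{\GSp_4}\omega_\cusp^{\eta_2})^{G_{x+}}$ indexed by double cosets, followed by the observation that for any coset with $g\cdot x \neq \alpha$ the intersection $G_\alpha \cap G_{gx+}$ projects to a subgroup of $G_\alpha/G_{\alpha+}$ containing the unipotent radical of a proper parabolic, so cuspidality of $\omega_\cusp^{\eta_2}$ annihilates that summand. The only cosmetic difference is that the paper writes the Mackey formula with the outer group $G_x$ (so each summand carries an extra $\Ind^{G_x}_{G_x\cap G_{g^{-1}\alpha}}$), whereas you take $K=G_{x+}$ directly; since only the identity coset survives when $x=\alpha$ and nothing survives when $x=\delta$, this distinction is immaterial.
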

\begin{proof}
For each vertex $x$, by Mackey theory we have
\begin{align}
   \pi_\alpha(\eta_2;1)^{G_{x+}}&\cong\bigoplus_{g\in G_\alpha\backslash G_2/G_x}\Ind^{G_x}_{G_x\cap g^{-1}G_\alpha g}((\omega_\cusp^{\eta_2})^g)^{G_{x+}\cap g^{-1}G_\alpha g}\\
    &=\bigoplus_{g\in G_\alpha\backslash G_2/G_x}\Ind^{G_x}_{G_x\cap G_{g^{-1}\alpha}}((\omega_\cusp^{\eta_2})^g)^{G_{x+}\cap G_{g^{-1}\alpha}}.
\end{align}
Here,
\[
((\omega_\cusp^{\eta_2})^g)^{G_{x+}\cap G_{g^{-1}\alpha}}\cong(\omega_\cusp^{\eta_2})^{G_\alpha\cap G_{gx+}},
\]
which is $0$ unless $\alpha=gx$ since otherwise $G_\beta\cap G_{gx+}$ will contain the unipotent radical of some parabolic subgroup of $G_\alpha$, so $(\omega_\cusp^{\eta_2})^{G_\alpha\cap G_{gx+}}=0$ since $\omega_\cusp^{\eta_2}$ is cuspidal.
\end{proof}

\subsection{Stable distributions on $\GSp_4$ and $\Sp_4$}
\textit{For this section alone, we switch notation for $k$ to denote the non-archimedean local field, as we reserve the notation $F$ for the facets.} 
First we recall from \cite{DeBacker-parametrizing-nilpotent-orbits,DeBacker-parameterizing-conjugacy-classes, DeBacker-Kazhdan-G2} the general theory of invariant distributions associated to unramified tori. 
We now recall a few more precise results for later use. 
Let $J(\mathfrak{g})$ be the space of invariant distributions on $\mathfrak{g}$. Let $J(\mathcal{N})$ be the span of the nilpotent orbital integrals. 

For each Weyl group conjugacy class $[w]$ of $G$, consider pairs $(F,\mathcal{Q}_w^F)$ consisting of a facet $F\in\mathcal{B}(G)$ and the toric Green function $\mathcal{Q}_w^F$ (see for example \cite[\S 7.6]{Carter-book}) associated to the torus $S_w$ in $G_F$ corresponding to $[w]$. Let $\mathbf{S}$ be a maximal $K$-split $k$-torus in $G$ lifting the pair $(F, S_w)$. Let $X_{S_w}\in \mathrm{Lie}(\mathbf{S})(k)\subset\mathfrak{g}_F$ be a regular semisimple element for which the centralizer in $G_F$ of the image of $X_{S_w}$ in $\Lie(G_F)$ is $S_w$. 
Since $G^{\der}$ is simply-connected, the number of rational conjugacy classes in ${}^{G(K)}X_{S_w}\cap \mathfrak{g}$ is in bijection with the group of torsion points of $X_*(T)/(1-w)X_*(T)$ for the maximal torus $T$ of $G$. The following Table \ref{table-tor} is the analogue of \cite[Table 5]{DeBacker-Kazhdan-G2} for $\GSp_4$ (note that the analogous table for $\Sp_4$ is calculated in \cite{walds}, although we do not need it), which records the number of relevant rational conjugacy classes.
\begin{table}[ht]
\begin{tabular}{ |c|c| } 
 \hline
 class of $w$& $\tor[X_*(T)/(1-w)X_*(T)]$\\ \hline
 $1$& $0$\\
 $A_1$& $0$\\
 $\widetilde{A}_1$& $0$\\
 $A_1\times A_1$& $\Z/2$\\
 $C_2$& $0$\\
 \hline
\end{tabular}
\vskip2mm
\caption{\label{table-tor}}
\end{table}

For each character $\kappa$ of $\tor[X_*(T)/(1-w)X_*(T)]$, one can associate a distribution
\begin{equation}
    T_w(\kappa):=\sum\limits_{\lambda\in \tor[X_*(T)/(1-w)X_*(T)]}\kappa(\lambda)\cdot\mu_{X_{S_w}^{\lambda}},
\end{equation}
where $X_{S_w}^{\lambda}$ belongs to the $G$-conjugacy class in ${}^{G(K)}X_{S_w}\cap \mathfrak{g}$ indexed by $\lambda$. Note that $T_w(1)$ is stable for any reductive group $G$. 
On the other hand, the rational classes in ${}^{G(K)}X$ that intersect $\mathrm{Lie}(\mathbf{S})(k)$ are parameterized by the quotient 
\begin{equation}
    N(F,S_w):=[N_{G(K)}(\mathbf{S}(K))/\mathbf{S}(K)]^{\Gal(K/k)}/[N_G(\mathbf{S}(k))/\mathbf{S}(k)]
\end{equation}
We record the cardinality of the above quotient in the following table:
\begin{center}
\begin{tabular}{ |c|c|c| } 
 \hline
 class of $w$&vertex  & $|N(F,S_w)|$\\ \hline

 $A_1\times A_1$& $C_2$ &1 \\
 $A_1\times A_1$& $A_1\times A_1$ &1 \\
 \hline
\end{tabular}
\end{center}
In general, consider the set $I^c:=\{(F,\mathcal{G})\}$ (see for example \cite[\S 4.3]{DeBacker-Kazhdan-G2}) of pairs consisting of facet $F$ and a cuspidal generalized Green function on $\Lie(G_F)(\kappa_k)$, which is endowed with an equivalence relation $\sim$ as in Definition 4.1.2 \textit{loc.cit}. 

 Let $\mathfrak{g}_0$ be the set of compact elements in $\mathfrak{g}$, and $J(\mathfrak{g}_0)\subset J(\mathfrak{g})$ the subspace of distrubitions with support in $\mathfrak{g}_0$. Let $\mathcal{D}_0$ be the invariant version of the Lie algebra analogue of the Iwahori-Hecke algebra, and let $\mathcal{D}_0^0$ be the subalgebra spanned over facets contained in (the closure of) a fixed alcove $F_{\varnothing}$. We recall the following homogeneity result due to DeBacker and Waldspurger.
\begin{thm}[Waldspurger, DeBacker] 
We have 
\begin{enumerate}
    \item $\res_{\mathcal{D}_0}J(\mathfrak{g}_0)=\res_{\mathcal{D}_0}J(\mathcal{N})$.
    \item Suppose $D\in J(\mathfrak{g}_0)$. We have 
    \[\res_{\mathcal{D}_0}D=0\quad\text{if and only if }\res_{\mathcal{D}_0^0}D=0\]
    \end{enumerate}
\end{thm}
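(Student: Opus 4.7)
The plan is to handle the two statements separately, as they involve quite different techniques: part (1) is Waldspurger's homogeneity theorem, while part (2) is a reduction argument using $G$-invariance and the combinatorics of the Bruhat--Tits building.

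For part (1), the inclusion $\res_{\mathcal{D}_0} J(\mathcal{N}) \subseteq \res_{\mathcal{D}_0} J(\mathfrak{g}_0)$ is immediate from $\mathcal{N} \subset \mathfrak{g}_0$. The reverse inclusion is the substantive content. I would begin with Harish-Chandra's local character expansion, which produces Shalika germ constants $c_\mathcal{O}(D)$ with
\[
D = \sum_{\mathcal{O} \in \mathcal{N}/G} c_\mathcal{O}(D)\cdot \mu_\mathcal{O}
\]
on some a priori small neighborhood $U$ of $0$ in $\mathfrak{g}$. The task is to enlarge $U$ so that every $f \in \mathcal{D}_0$ has support where this expansion holds. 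I would do this by combining the Moy--Prasad filtration of the parahoric Lie algebras $\mathfrak{g}_{F,0}$ with the $F^\times$-scaling symmetry of $\mathfrak{g}$: for any regular semisimple $X \in \mathfrak{g}_0$ and $t \in F^\times$ of large valuation, $t^2 X$ lies in $U$ while $\mu_X(f) = q^{c}\,\mu_{t^2 X}(f_{t^{-1}})$ for an appropriately rescaled test function $f_{t^{-1}} \in \mathcal{D}_0$. After a density argument from regular semisimple orbits to all of $J(\mathfrak{g}_0)$, this pushes the local expansion out to all of $\mathcal{D}_0$, yielding a representative in $J(\mathcal{N})$ with the same restriction.

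For part (2), the reverse direction is immediate from $\mathcal{D}_0^0 \subseteq \mathcal{D}_0$. For the forward direction, I would use the $G$-action on $\mathcal{B}(G)$: every facet $F' \subset \mathcal{B}(G)$ is $G$-conjugate to some facet $F \subseteq \overline{F_\varnothing}$. Since $D$ is $G$-invariant, any generator $f_{F'}$ of $\mathcal{D}_0$ satisfies $D(f_{F'}) = D(g\cdot f_{F'}) = D(f_{gF'})$ where $f_{gF'}$ is (up to the equivalence $\sim$ on $I^c$) a generator of $\mathcal{D}_0^0$. Hence vanishing of $\res_{\mathcal{D}_0^0} D$ forces $\res_{\mathcal{D}_0} D = 0$.

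The main obstacle is the homogeneity step in part (1), specifically the enlargement of the neighborhood of validity of the local expansion to cover every element of $\mathcal{D}_0$. This requires careful bookkeeping of depth under Moy--Prasad filtrations and a descent to the reductive quotients $\G_F(\kappa)$, where the corresponding finite-field homogeneity reduces to Lusztig's theory of Green functions and cuspidal local systems. In the rational setting, the passage from $G(K)$-orbits of regular semisimple elements to $G(k)$-orbits is controlled by the cocycle classes enumerated in Table~\ref{table-tor}, and these must be matched against the nilpotent orbital integrals $\mu_\mathcal{O}$ on the right-hand side throughout the argument.
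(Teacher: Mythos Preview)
The paper does not actually prove this theorem; it is stated as a recalled result (``We recall the following homogeneity result due to DeBacker and Waldspurger''), with the proofs residing in the cited literature (DeBacker's homogeneity paper and Waldspurger's \textit{Quelques r\'esultats de finitude}). So there is no in-paper proof to compare your proposal against.

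That said, your sketch for part~(2) is essentially the standard argument and is fine. Your sketch for part~(1), however, has a genuine gap. You invoke ``Harish-Chandra's local character expansion, which produces Shalika germ constants $c_{\mathcal O}(D)$'' for an arbitrary $D\in J(\mathfrak g_0)$. But the local character expansion is a statement about characters of admissible representations (or, on the Lie algebra, about the distributions obtained from them via the exponential map); it does not apply to a general invariant distribution supported on the compact set. There is no a priori germ expansion of an arbitrary $D\in J(\mathfrak g_0)$ near $0$, so the scaling argument you outline has nothing to propagate. The actual DeBacker--Waldspurger proof proceeds differently: one shows that $\res_{\mathcal D_0}$ factors through a finite-dimensional quotient, exhibits a spanning set for $\res_{\mathcal D_0}J(\mathfrak g_0)$ indexed by the pairs $(F,\mathcal G)\in I^c/\!\sim$, and then matches dimensions with $\res_{\mathcal D_0}J(\mathcal N)$ using the DeBacker parametrization of nilpotent orbits by the same combinatorial set. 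The finite-field input (Green functions, cuspidal local systems) enters in building that spanning set and establishing linear independence, not in extending a pre-existing asymptotic expansion.
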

As a corollary, one has the following. 
\begin{cor} Let $D\in J(\mathfrak{g}_0)$. We have 
\[\res_{\mcD_0}D=0\quad\text{if and only if }D(\hat{\mcG}_F)=0\quad\text{for all }(F,\mcG)\in I^c/\sim\]
\end{cor}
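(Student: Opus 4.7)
The plan is to reduce to the Waldspurger--DeBacker homogeneity statement (2) and then invoke the classification of cuspidal generalized Green functions to match up both sides. First, applying part (2) of the theorem we may replace $\res_{\mcD_0}D=0$ by the equivalent condition $\res_{\mcD_0^0}D=0$, where $\mcD_0^0$ is spanned by the characteristic functions (normalised as orbital integrals) attached to facets contained in a fixed alcove $F_{\varnothing}$. Thus it suffices to show that vanishing on $\mcD_0^0$ is equivalent to vanishing on all the $\hat{\mcG}_F$ with $(F,\mcG)\in I^c/\sim$.

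For the forward implication, each cuspidal generalized Green function $\hat{\mcG}_F$ arises, after inverse Fourier transform and restriction, from an element of $\mcD_0^0$: by construction $\mcG_F$ is supported on the reductive quotient $\Lie(G_F)(\kappa_k)$ of a facet $F\subset \overline{F_{\varnothing}}$, and the equivalence relation $\sim$ precisely identifies pairs giving the same distribution on $\mathfrak{g}$. Hence if $D$ vanishes on $\mcD_0^0$, it vanishes on each $\hat{\mcG}_F$. For the reverse implication, one uses the structural fact (as in \cite{DeBacker-parametrizing-nilpotent-orbits}, combined with Lusztig's generalized Springer correspondence applied facet-by-facet) that the functions $\{\hat{\mcG}_F\}_{(F,\mcG)\in I^c/\sim}$ span, under the natural pairing, exactly $\res_{\mcD_0^0}J(\mathfrak{g}_0)$, since for each facet $F\subset\overline{F_\varnothing}$ the restrictions of invariant distributions in $J(\mathfrak{g}_0)$ to the corresponding sub-algebra of $\mcD_0^0$ decompose along the generalized Springer decomposition into pieces detected by the $\hat{\mcG}_F$'s.

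Combining these two directions, the condition $D(\hat{\mcG}_F)=0$ for all $(F,\mcG)\in I^c/\sim$ is equivalent to $\res_{\mcD_0^0}D=0$, which by homogeneity is equivalent to $\res_{\mcD_0}D=0$. The main obstacle is verifying carefully that the equivalence relation $\sim$ on $I^c$ is tight enough that the family $\{\hat{\mcG}_F\}$ becomes linearly independent on $\mcD_0^0$, so that the span statement above is an equality and not merely a containment; this is where one needs the precise orthogonality of cuspidal generalized Green functions on the reductive quotients and the compatibility between adjacent facets coming from parabolic induction, exactly as in the argument of \cite{DeBacker-Kazhdan-G2}.
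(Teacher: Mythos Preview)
The paper does not supply a proof of this corollary; it is stated immediately after the Waldspurger--DeBacker homogeneity theorem with only the phrase ``As a corollary, one has the following,'' and the result is being recalled from \cite{DeBacker-Kazhdan-G2} rather than reproved. So there is no paper-proof to compare against.

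Your sketch is in the right spirit and is essentially the argument one finds in the cited literature: reduce via part~(2) to $\mcD_0^0$, then use that the functions $\hat{\mcG}_F$ for $(F,\mcG)\in I^c/\sim$ are precisely the building blocks of $\mcD_0^0$. One small comment: the ``main obstacle'' you flag at the end (linear independence of the $\hat{\mcG}_F$) is not actually needed for the stated equivalence. What you need is only that the $\hat{\mcG}_F$ \emph{span} $\mcD_0^0$ modulo the kernel of the restriction map from $J(\mathfrak{g}_0)$, so that vanishing on all of them forces vanishing on $\mcD_0^0$; linear independence would give you something stronger (that the values $D(\hat{\mcG}_F)$ are independent coordinates), which is true but superfluous here. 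The spanning statement is immediate from the definition of $\mcD_0^0$ together with the fact that any class function on a finite reductive Lie algebra decomposes as a sum of (Fourier transforms of) generalized Green functions, with the cuspidal ones carrying all the information after accounting for parabolic induction between facets.
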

We have the following list of stable distributions:
\begin{align}\label{list-Bst-Bunst}
\begin{split}
    D_{C_2}^{\st}&:=D_{(F_{C_2},\mathcal{Q}_{S_{C_2}}^{F_{C_2}})}\\
    D_{A_1}^{\st}&:=D_{(F_{A_1},\mathcal{Q}_{S_{A_1}}^{F_{A_1}})}\\
    D_{\widetilde{A}_1}^{\st}&:=D(F_{\widetilde{A}_1},\mathcal{Q}_{S_{\widetilde{A}_1}}^{F_{\widetilde{A}_1}})\\
    D_e^{\st}&:=D_{(F_e,\mathcal{Q}_{S_e}^{F_e})}\\
    D_{A_1\times A_1}^{\st}&:=D_{F_{C_2},\mathcal{Q}_{S_{A_1\times A_1}}^{F_{C_2}}}+D_{(F_{A_1\times A_1},\mathcal{Q}_{S_{A_1\times A_1}}^{F_{A_1\times A_1}})}\\
    D_{A_1\times A_1}^{\unst}&:=D_{F_{C_2},\mathcal{Q}_{S_{A_1\times A_1}}^{F_{C_2}}}-D_{(F_{A_1\times A_1},\mathcal{Q}_{S_{A_1\times A_1}}^{F_{A_1\times A_1}})}\\
    D_{F_{A_1\times A_1},\mathcal{G}_{\sgn}}^{\st}&:=D_{(F_{A_1\times A_1},\mathcal{G}_{\sgn})}
    \end{split}
\end{align}
Finally, we record the following result from \cite[Lemma 6.4.1]{DeBacker-Kazhdan-G2} (see also \cite[Th\'eor\'eme IV.13]{walds}) for later use. Let $\mathcal{B}^{\st}$ be the set of stable distributions in the above list \eqref{list-Bst-Bunst}. 
\begin{lemma}[Waldspurer, DeBacker-Kazhdan]\label{Waldspurger-DK-lemma}
   The elements of the set $\{\res_{\mcD_0}D|D\in \mathcal{B}^{\st}\}$ form a basis for $\res_{\mcD_0}J(\mathfrak{g}_0)\cap\res_{\mcD_0}J^{\st}(\mathfrak{g})$. 
\end{lemma}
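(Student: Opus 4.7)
The plan is to adapt the strategy of DeBacker--Kazhdan \cite[Lemma~6.4.1]{DeBacker-Kazhdan-G2} (itself following \cite{walds}) to the $C_2$ setting, in three stages: verify each element of $\mathcal{B}^{\st}$ lies in $J(\mathfrak{g}_0) \cap J^{\st}(\mathfrak{g})$, prove linear independence after restriction to $\mcD_0$, and match dimensions.

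First, each toric Green function $\mathcal{Q}_{S_w}^{F}$ is supported on topologically unipotent elements, so $J(\mathfrak{g}_0)$-membership is automatic. Stability of the four ``diagonal'' elements $D_{C_2}^{\st}, D_{A_1}^{\st}, D_{\widetilde{A}_1}^{\st}, D_e^{\st}$ is immediate from Table~\ref{table-tor}: the torsion group $\tor[X_*(T)/(1-w)X_*(T)]$ vanishes in each case, so the stable class of the associated regular semisimple $X_{S_w}$ contains a unique rational class and $T_w(1)$ reduces to the single-orbit integral, which is tautologically stable. For the Weyl class $A_1 \times A_1$ the torsion group is $\Z/2$, and the two-term combination defining $D_{A_1 \times A_1}^{\st}$ over the facets $F_{C_2}$ and $F_{A_1 \times A_1}$ is arranged so that each rational class in the stable class of $X_{S_{A_1 \times A_1}}$ is hit with equal weight, realizing the standard packet-sum construction. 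Finally, $D_{F_{A_1 \times A_1}, \mathcal{G}_{\sgn}}^{\st}$ arises from a cuspidal generalized Green function on the reductive quotient $\Sp_2 \times \Sp_2$ at $F_{A_1 \times A_1}$, and its stability reduces via Lusztig--Shoji induction to the stability of the $\sgn$-twisted character on the endoscopic datum $\GSp_{2,2}$ exhibited in \S\ref{parahoric-section}.

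For linear independence, I would pair each distribution against appropriate cuspidal generalized Green function test vectors $\widehat{\mathcal{G}}_F$ indexed by equivalence classes $(F,\mathcal{G}) \in I^c/{\sim}$. Distinct Weyl classes produce centralizers of non-isomorphic types whose Green functions are supported on disjoint parts of the regular semisimple locus of the reductive quotient at the relevant facet; the two $A_1 \times A_1$-contributions are separated because one is carried by the principal pair at $F_{C_2}$ while the other is carried by the inequivalent cuspidal pair $(F_{A_1 \times A_1}, \mathcal{G}_{\sgn})$. For spanning, the Waldspurger--DeBacker homogeneity recalled immediately above gives $\res_{\mcD_0}J(\mathfrak{g}_0) = \res_{\mcD_0}J(\mathcal{N})$, so the target space is finite-dimensional and spanned by restrictions of stable nilpotent orbital integrals; an explicit dimension count using the $I^c/{\sim}$ parameterization together with Table~\ref{table-Weyl-group-conj-class} and the nilpotent orbit classification of Remark~\ref{nilpotent-sp4} yields exactly $|\mathcal{B}^{\st}|$ independent stable classes, matching our list. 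The hard part will be the stability verification for $D_{A_1 \times A_1}^{\st}$: one must carefully transport rational class representatives between the two facets $F_{C_2}$ and $F_{A_1 \times A_1}$ and match the resulting Kottwitz parameters, exploiting that $\Sp_4 = \GSp_4^{\der}$ is simply connected to control the $H^1$ obstructions that arise.
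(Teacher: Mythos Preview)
The paper does not supply its own proof of this lemma: it is simply recorded as a citation of \cite[Lemma~6.4.1]{DeBacker-Kazhdan-G2} and \cite[Th\'eor\`eme~IV.13]{walds}, adapted to type $C_2$. Your sketch broadly follows the architecture of those sources (membership, linear independence via pairing with $\widehat{\mathcal G}_F$, dimension count via homogeneity), so at the level of strategy you are aligned with what the cited proofs do.

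There is, however, a genuine gap in your stability argument for $D_{F_{A_1\times A_1},\mathcal G_{\sgn}}^{\st}$. You claim its stability ``reduces via Lusztig--Shoji induction to the stability of the $\sgn$-twisted character on the endoscopic datum $\GSp_{2,2}$.'' But the paper proves in Lemma~\ref{unstable-D-A1xA1-G-sgn} that the distribution $D_{(F_{A_1\times A_1},\mathcal G_{\sgn})}$ on $\GSp_{2,2}$ is \emph{not} stable. So either your reduction is wrong or the distribution is not in $\mathcal B^{\st}$; since the paper includes it in $\mathcal B^{\st}$, it is your reduction that fails. Stability of a distribution on $\fg$ built from a cuspidal Green function at a facet $F$ is governed by stable conjugacy in $G$, not by stable conjugacy in the reductive quotient $\bbG_F$; the two notions differ precisely because the $H^1$-obstructions and the ambient adjoint actions are different. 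The correct route (as in Waldspurger and DeBacker--Kazhdan) is to express $D_{(F_{A_1\times A_1},\mathcal G_{\sgn})}$ in terms of nilpotent orbital integrals on $\fg$ via homogeneity and then check stability orbit-by-orbit there, not by descending to the endoscopic group.
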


\subsection{Characters on a neighborhood of $1$}\label{subsec:character-nbhd-1} 
In this section, we express $\delta[\eta_2,\nu\eta_2]^{G_{x+}}$ in terms of generalized Green functions, for $x=\alpha,\delta$. 
\begin{enumerate}
    \item When $F=F_{C_2}$ corresponds to the vertex $\delta$, we have that $\delta([\eta_2,\nu\eta_2],1)^{G_{\delta+}}\cong \omega_{\princ}^{\epsilon}$ corresponds to $\St_{\GSp_{2,2}(\F_q)}$ under Lusztig's equivalence \eqref{Lusztig-equiv-GSp22}. By \cite{Deligne-Lusztig}, the character of Steinberg is \begin{equation}\Ch_{\St_{\GSp_{2,2}}}=\frac{1}{4}\left(R_{A_1\times A_1}^{A_1\times A_1}-R_{A_1\times 1}^{A_1\times A_1}-R_{1\times A_1}^{A_1\times A_1}+R_{1 \times 1}^{A_1\times A_1}\right).\end{equation}
    Since Lusztig's equivalence \eqref{Lusztig-equiv-GSp22} preserves multiplicities, we have 
    \begin{equation}
       \Ch_{\pi_{\princ}^{\epsilon}}= \frac{1}{4}\left(R_{A_1\times A_1}^{C_2}-2R_{A_1}^{C_2}+R_{1}^{C_2}\right).
    \end{equation}
    Restricting to the unipotent locus, we have 
    \begin{equation}
        \Ch_{\pi_{\princ}^{\epsilon}}(u)= \frac{1}{4}\left(\mathcal{Q}_{A_1\times A_1}^{F_{C_2}}-2\mathcal{Q}_{A_1}^{F_{C_2}}+\mathcal{Q}_{1}^{F_{C_2}}\right).
    \end{equation}
    \item When $F=F_{A_1\times A_1}$ corresponds to the vertex $\alpha$, we have that $\delta([\eta_2,\nu\eta_2],1)^{G_{\alpha+}}\cong \omega_{\princ}^{\eta_2}$. The character formula can be computed in the same way as \cite[(3.4.5)]{G2-stability} and we have 
    \begin{equation}
        \Ch_{\pi_{\princ}^{\eta_2}}=\frac{1}{2}(\mathcal{Q}_1^{F_{A_1\times \widetilde{A}_1}}\pm q^*\mathcal{G}_{\sgn}).
    \end{equation}
    \item When $F=F_{A_1}$, since $\delta([\eta_2,\nu\eta_2],1)^{G_{F+}}$ is the Jacquet restriction $r^{A_1\times A_1}_{A_1}(\delta([\eta_2,\nu\eta_2],1))$, thus by \eqref{pi-princ+-defn} on the unipotent locus we have $\Ch(\delta([\eta_2,\nu\eta_2],1)^{G_{F+}})=\mathcal{Q}_1^{F_{A_1}}$;
    \item When $F=F_{\widetilde{A}_1}$, we have $\Ch(\delta([\eta_2,\nu\eta_2],1)^{G_{F+}})=\mathcal{Q}_1^{F_{\widetilde{A}_1}}$;
    \item When $F=F_{e}$, we have $\Ch(\delta([\eta_2,\nu\eta_2],1)^{G_{F+}})=2$.
\end{enumerate}
Similarly, we have for $F=F_{A_1\times A_1}$, 
\begin{equation}
    \Ch(\pi_{\alpha}(\eta_2;1)^{G_{F+}})=\frac{1}{2}(\mathcal{Q}_{A_1\times A_1}^{F_{A_1\times A_1}}\pm q^*\mathcal{G}_{\sgn}). 
\end{equation}
Therefore, we have the following 
\begin{prop}
    For any (possibly equal) ramified quadratic characters $\eta_2,\eta_2'$, the sum $\delta([\eta_2,\nu\eta_2],\varrho)+\pi_{\alpha}(\eta_2';\rho)$ has a stable character on the topologically unipotent elements. 
\end{prop}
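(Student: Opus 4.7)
The plan is to combine the Harish-Chandra local character expansion with the Waldspurger-DeBacker homogeneity theorem, reducing the stability assertion to a finite linear-algebra check on the restriction $\res_{\mathcal{D}_0}\Theta$, where
\[
\Theta := \Ch(\delta([\eta_2,\nu\eta_2],\varrho)) + \Ch(\pi_\alpha(\eta_2';\rho)).
\]
By the parahoric-invariant calculations of Section~\ref{subsec:character-nbhd-1}, for each facet $F \in \{F_{C_2}, F_{A_1\times A_1}, F_{A_1}, F_{\widetilde A_1}, F_e\}$ we have an explicit formula for each character, restricted to the unipotent variety of $\mathbb{G}_F(\kappa_k)$, in terms of the toric Green functions $\mathcal{Q}_w^F$ and (at $F_{A_1\times A_1}$) the cuspidal generalized Green function $\mathcal{G}_{\sgn}$. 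Since $\pi_\alpha(\eta_2';\rho)$ is compactly induced from $G_\alpha \Cent$ with cuspidal source $\omega_\cusp^{\eta_2'}$, its parahoric invariants vanish outside the $G$-orbit of $F_{A_1\times A_1}$, so adding the two contributions facet by facet yields an explicit expression for $\Theta$ on the topologically unipotent locus.

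Next, I would expand $\res_{\mathcal{D}_0}\Theta$ in the basis $\{\res_{\mathcal{D}_0} D : D \in \mathcal{B}^{\st} \cup \{D_{A_1 \times A_1}^{\unst}\}\}$ of $\res_{\mathcal{D}_0} J(\mathfrak{g}_0)$ and verify that the coefficient of the unique non-stable basis element $D_{A_1 \times A_1}^{\unst}$ vanishes. Lemma~\ref{Waldspurger-DK-lemma} then yields stability. The cuspidal contribution $(\varepsilon_1 + \varepsilon_2)\cdot\tfrac{q^*}{2}\mathcal{G}_{\sgn}$ at $F_{A_1\times A_1}$, with $\varepsilon_i \in \{\pm 1\}$ depending on $\eta_2$ and $\eta_2'$ respectively, always lies in the direction of the stable basis element $D_{F_{A_1\times A_1},\mathcal{G}_{\sgn}}^{\st}$ irrespective of the signs, so the statement is indifferent to whether the two ramified characters coincide. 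Similarly, the trivial-torus Green functions $\mathcal{Q}_1^F$ at the various facets distribute among $\{D_e^{\st}, D_{A_1}^{\st}, D_{\widetilde A_1}^{\st}, D_{C_2}^{\st}\}$ without producing any unstable component.

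The main obstacle is matching the $\mathcal{Q}_{S_{A_1 \times A_1}}$-coefficients at $F_{C_2}$ and at $F_{A_1\times A_1}$, which come entirely from $\delta([\eta_2,\nu\eta_2],\varrho)$ and entirely from $\pi_\alpha(\eta_2';\rho)$ respectively. At $F_{C_2}$ the coefficient $\tfrac14$ arises via the Deligne-Lusztig decomposition of $\St_{\GSp_{2,2}(\F_q)}$ transported through Lusztig's equivalence \eqref{Lusztig-equiv-GSp22}; at $F_{A_1\times A_1}$ the coefficient $\tfrac12$ arises from the explicit Deligne-Lusztig expression for the cuspidal $R_T^\theta$ underlying $\omega_\cusp^{\eta_2'}$ (Lemma~\ref{gsp2,2-finite-cuspidal} and Definition~\ref{omega2-defn}). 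After accounting for the residual cross-contributions of $\mathcal{Q}_1^F$ propagated by Jacquet restriction between adjacent parahorics, these two $\mathcal{Q}_{S_{A_1 \times A_1}}$-coefficients normalize to a common value, so the coefficient of $D_{A_1\times A_1}^{\unst}$ cancels and $\res_{\mathcal{D}_0}\Theta$ lies in $\res_{\mathcal{D}_0}J^{\st}(\mathfrak{g})$. The book-keeping here is entirely parallel to the $G_2$-analogue in \cite{G2-stability}, and is where the bulk of the explicit computation lies.
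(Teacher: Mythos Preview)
Your proposal is correct and follows essentially the same route as the paper: both reduce via Waldspurger--DeBacker homogeneity to expressing $\res_{\mathcal{D}_0}\Theta$ in the basis $\mathcal{B}^{\st}\cup\{D_{A_1\times A_1}^{\unst}\}$, then observe that the $\mathcal{Q}_{A_1\times A_1}$-contribution at $F_{C_2}$ (coming only from $\delta$) and the one at $F_{A_1\times A_1}$ (coming only from $\pi_\alpha$) combine to kill the coefficient of $D_{A_1\times A_1}^{\unst}$, while the $\mathcal{G}_{\sgn}$-terms are already stable regardless of the signs. One small correction: the matching of the $\tfrac14$ and $\tfrac12$ coefficients is not produced by ``cross-contributions of $\mathcal{Q}_1^F$ propagated by Jacquet restriction'' --- those terms lie in the span of $D_e^{\st}, D_{A_1}^{\st}, D_{\widetilde A_1}^{\st}$ and cannot touch the $A_1\times A_1$-component --- but rather by the volume normalizations built into the definition of the distributions $D_{(F,\mathcal{G})}$; the paper simply records this as ``for some explicitly computable constants $c_i$'' with the same $c_1$ appearing in both lines.
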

\begin{proof}
As remarked at the beginning of \S\ref{parahoric-invariants-section}, it suffices to work with the case $\varrho=1$ in the notation $\delta([\eta_2,\nu\eta_2],\varrho)$. 
   From the discussions above, we see that for some explicitly computable constants $c_i$, 
   \begin{align*}
\Ch_{\delta([\eta_2,\nu\eta_2],1)}&=c_1\cdot \frac{1}{2}(D_{A_1\times A_1}^{\st}-D_{A_1\times A_1}^{\unst})\pm c_2\cdot D^{\st}_{(F_{A_1\times A_1},\mathcal{G}_{\sgn})}+cD_e^{\st}\\
       \Ch_{\pi_{\alpha}(\eta_2;1)}&=c_1\cdot\frac{1}{2}(D_{A_1\times A_1}^{\st}+D_{A_1\times A_1}^{\unst})\pm c_2\cdot D_{(F_{A_1\times A_1},\mathcal{G}_{\sgn})}^{\st}
   \end{align*}
   Thus by Lemma \ref{Waldspurger-DK-lemma}, the sum is always stable. 
\end{proof}

\subsection{Characters on a neighborhood of $s$} 
Let 
\[s=\begin{pmatrix}1&&&\\ &-1&&\\ &&-1&\\ &&&-1\end{pmatrix}\in\GSp_4(F)\]
be order $2$ such that $\Cent_{\GSp_4}(s)=\GSp_{2,2}$. 
By the construction in \cite[\S 7]{Adler-Korman-LCE}, the distributions $\Ch_{\delta([\eta_2,\nu\eta_2],\varrho)}$ and $\Ch_{\pi_{\alpha}(\eta_2;\varrho)}$ on $\GSp_4$ induce distributions $\Theta_{\delta([\eta_2,\nu\eta_2],\varrho)}$ and $\Theta_{\pi_{\alpha}(\eta_2;\varrho)}$ on $(\GSp_{2,2})_{0+}$, the topologically unipotent elements in $\GSp_{2,2}$, such that the attached locally constant functions are compatible (see \cite[Lemma 7.5]{Adler-Korman-LCE}). We shall see when the sum $\Theta_{\delta([\eta_2,\nu\eta_2],\varrho)}+\Theta_{\pi_{\alpha}(\eta_2';\varrho)}$ is a stable distribution on $(\GSp_{2,2})_{0+}$. 

We now look at the characters on an element of the form $su$ for $u$ topologically unipotent. They follow from computations in the previous section \S\ref{subsec:character-nbhd-1}. 
\begin{enumerate}
    \item When $F=F_{C_2}$, by \cite[Theorem 4.2]{Deligne-Lusztig}, we have 
    \begin{align}\label{FC2-omega-princ-epsilon}
    \begin{split}
        \Ch_{\omega_{\princ}^{\epsilon}}(su)&=\frac{1}{4}\left(R_{S_{A_1\times A_1}}^{\epsilon}(su)-2R_{S_{A_1}}^{\epsilon}(su)+R_{S_1}^{\epsilon}(su)\right)\\
        &=(-1)^{\frac{q-1}{2}}\frac{1}{2}\left(\mathcal{Q}_{S_{A_1\times A_1}}^{A_1\times A_1}(u)-\mathcal{Q}_{S_{A_1\times 1}}^{A_1\times A_1}(u)-\mathcal{Q}_{S_{1\times A_1}}^{A_1\times A_1}(u)+\mathcal{Q}_{S_1}^{A_1\times A_1}(u)\right).
        \end{split}
    \end{align}
    \item When $F=F_{A_1\times A_1}$, we have 
    \begin{align}\label{FA1xA1-nonsc}
        \Ch_{\delta([\eta_2,\nu\eta_2],1)^{F_+}}(su)&=(-1)^{\frac{q-1}{2}}\cdot \frac{1}{2}\left(\mathcal{Q}_1^{F_{A_1\times A_1}}(u)\pm q^*\mathcal{G}_{\sign}(u)\right)\\
        \Ch_{\pi_{\alpha}(\eta_2;1)^{F_+}}(su)&=(-1)^{\frac{q+1}{2}}\cdot \frac{1}{2}\left(\mathcal{Q}_{A_1\times A_1}^{F_{A_1\times A_1}}(u)\pm q^*\mathcal{G}_{\sign}(u)\right)\label{FA1xA1-sc}
    \end{align}
\end{enumerate}
The following lemma is an analogue of \cite[Lemma 3.5.1]{G2-stability}.
\begin{lemma}\label{unstable-D-A1xA1-G-sgn}
    The distribution $D_{(F_{A_1\times A_1},\mathcal{G}_{\sgn})}$ on $\GSp_{2,2}$ is not stable. 
\end{lemma}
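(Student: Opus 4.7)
The plan is to show directly that $D := D_{(F_{A_1\times A_1},\mathcal{G}_{\sgn})}$ takes different values on a pair of stably conjugate but not rationally conjugate regular semisimple elements, following the strategy of \cite[Lemma 3.5.1]{G2-stability}. Since stability means invariance under $G(K)$-conjugation preserving the $G(k)$-orbit, exhibiting any such pair with distinct values suffices.

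First, I would select a regular semisimple element $X \in \Lie(S_{A_1\times A_1})(k)$, where $S_{A_1\times A_1}$ is the unramified maximal torus of $\GSp_{2,2}$ attached to the Weyl conjugacy class $[w_{A_1\times A_1}]$ supported at $F_{A_1\times A_1}$. By Table~\ref{table-tor}, we have
\[
\tor\bigl[X_*(T)/(1-w_{A_1\times A_1})X_*(T)\bigr] \;\cong\; \Z/2,
\]
so the $G(K)$-stable conjugacy class of $X$ decomposes into exactly two rational $G(k)$-classes $X^0, X^1$ indexed by $\lambda \in \Z/2$ in the sense of the parametrization of \cite{DeBacker-parameterizing-conjugacy-classes}.

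Second, I would compute $\hat{\mathcal{G}}_{\sgn}(X^\lambda)$ using the description of generalized Green functions as Deligne--Lusztig-type characters attached to a cuspidal pair on $\Lie(G_F)(\kappa_k)$. The cuspidal local system underlying $\mathcal{G}_{\sgn}$ is the sign local system on the relevant distinguished nilpotent orbit, whose component-group character matches, under the DeBacker--Kazhdan correspondence between unramified regular semisimple orbital integrals and elements of the torsion group $\Z/2$, with the non-trivial character $\kappa_{\sgn}$. This yields an identity of the form
\[
D(X^\lambda) \;=\; \kappa_{\sgn}(\lambda)\cdot D(X^0),
\]
with $D(X^0) \ne 0$ by non-degeneracy of the generalized Green function on its support.

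Since $\kappa_{\sgn}$ is non-trivial on $\Z/2$, the values $D(X^0)$ and $D(X^1)$ differ by a sign, so $D$ cannot be stable. The main obstacle is verifying the sign identity precisely: one must match the sign local system on the nilpotent side with $\kappa_{\sgn}$ on the $\lambda$-parametrization side. This is a combinatorial computation paralleling the one carried out in \cite[\S3.5]{G2-stability} for $G_2$, and it adapts directly here because the relevant centralizer $\Cent_{\GSp_4}(s) = \GSp_{2,2}$ already appeared via Lusztig's equivalence \eqref{Lusztig-equiv-GSp22} and the nilpotent geometry of $C_2$ is explicit from Remark~\ref{nilpotent-sp4}.
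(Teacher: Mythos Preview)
Your approach is genuinely different from the paper's, and while the underlying strategy (exhibit stably conjugate elements on which the distribution disagrees) is sound in principle, there are real gaps.

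First, Table~\ref{table-tor} is computed for $\GSp_4$, not for $\GSp_{2,2}$. The lemma concerns the distribution on the endoscopic group $\GSp_{2,2}$, so you would need to redo the computation of $\tor[X_*(T)/(1-w)X_*(T)]$ for the cocharacter lattice of a maximal torus in $\GSp_{2,2}$, which is a different rank-$3$ lattice with a different Weyl action. You cannot simply import the $\Z/2$ from the $\GSp_4$ table. Second, the identity $D(X^\lambda)=\kappa_{\sgn}(\lambda)\,D(X^0)$, which you yourself flag as the main obstacle, is doing all the work and is left entirely unjustified; matching the sign local system on the nilpotent side with the nontrivial character on the torsion group requires a genuine computation that you have not carried out here. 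Third, the notation $D(X^\lambda)$ is ambiguous: $D$ is a distribution, and you need to say precisely whether you mean pairing against an orbital integral, evaluating the representing locally integrable function on the regular set, or evaluating $\hat{\mathcal{G}}_{\sgn}$.

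By contrast, the paper's proof avoids all of this with a one-line structural observation: a distribution on $\GSp_{2,2}(F)$ is stable exactly when it is invariant under conjugation by the larger group $\GL_2(F)\times\GL_2(F)$. But $\GL_n$ has no cuspidal generalized Green functions, so the relevant space of $\GL_2\times\GL_2$-invariant distributions is spanned by semisimple orbital integrals, from which $D_{(F_{A_1\times A_1},\mathcal{G}_{\sgn})}$ is visibly linearly independent (pair against $\mathcal{G}_{\sgn}$ itself). This bypasses any explicit parametrization of rational classes and any sign-matching, and it explains conceptually \emph{why} the distribution is unstable: $\mathcal{G}_{\sgn}$ detects the nontrivial component group in $\SL_2\times\SL_2$, which disappears upon passing to $\GL_2\times\GL_2$.
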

\begin{proof}
    A distribution on $\GSp_{2,2}(F)$ is stable if and only if it is stable under conjugation by $\GL_2(F)\times \GL_2(F)$. Thus all stable distributions on $\GSp_{2,2}$ must be restricted from invariant distributions on $\GL_2(F)\times \GL_2(F)$. But the only invariant distributions on $\GL_2(F)\times \GL_2(F)$ are spanned by semisimple orbital integrals, and $D_{(F_{A_1\times A_1},\mathcal{G}_{\sgn})}$ is linearly independent from them (as can be seen by evaluating against $\mathcal{G}_{\sgn}$). 
\end{proof}

\begin{prop}\label{main-stability-prop-GSp4}
Let $G=\GSp_4(F)$. For ramified quadratic characters $\eta_2$ and $\eta_2'$, the character $\Ch_{\delta([\eta_2,\nu\eta_2],\varrho)}+\Ch_{\pi_{\alpha}(\eta_2';\varrho)}$ is stable in a neighborhood of $s$ if and only if $\eta_2=\eta_2'$. \\
Thus, $\{\delta([\eta_2,\nu\eta_2],\varrho),\pi_{\alpha}(\eta_2;\varrho)\}$ is an $L$-packet, as dictated by Property \ref{property:atomic-stability}, for each ramified quadratic character $\eta_2$. 
\end{prop}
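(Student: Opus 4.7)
The plan is to combine the pointwise character formulas from the two preceding subsections, evaluate them on $su$ with $u$ topologically unipotent in $\Cent_{\GSp_4}(s)=\GSp_{2,2}$, and track how the cuspidal local system $\mathcal{G}_{\sgn}$ enters. Since stability reduces (via twisting as in \S\ref{parahoric-invariants-section}) to the case $\varrho=1$, I will work there. Using \cite[Lemma~7.5]{Adler-Korman-LCE} to descend the characters to invariant distributions on $(\GSp_{2,2})_{0+}$, it suffices to match the resulting distribution against the basis of stable distributions provided by Lemma~\ref{Waldspurger-DK-lemma}.

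First I will enumerate, for each facet $F$ containing $s$, the contributions of $\Ch_{\delta([\eta_2,\nu\eta_2],1)^{G_{F+}}}(su)$ and $\Ch_{\pi_{\alpha}(\eta_2';1)^{G_{F+}}}(su)$. On $F=F_{C_2}$ only the non-supercuspidal contributes by Lemma~\ref{eta2-sc-parahoric}, and formula \eqref{FC2-omega-princ-epsilon} expresses the value as a specific linear combination of the Green functions $\mathcal{Q}_{S_{A_1\times A_1}}^{F_{C_2}}$, $\mathcal{Q}_{S_{A_1}}^{F_{C_2}}$, $\mathcal{Q}_{S_1}^{F_{C_2}}$. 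On $F=F_{A_1\times A_1}$ both terms contribute via \eqref{FA1xA1-nonsc} and \eqref{FA1xA1-sc}, and crucially, the sign $\pm$ in front of $q^{\ast}\mathcal{G}_{\sgn}$ is determined by the choice of ramified quadratic character (namely by which uniformizer $\varpi$ satisfies $\eta_2(\varpi)=1$, as in Definition~\ref{omega2-defn}). The remaining facets of smaller dimension contribute only non-cuspidal Green functions.

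Next I will do the key cancellation. Write $\varepsilon(\eta_2)\in\{\pm1\}$ for the sign appearing in \eqref{FA1xA1-nonsc} and $\varepsilon'(\eta_2')$ for the one in \eqref{FA1xA1-sc}; both are determined by the chosen uniformizer, hence agree iff $\eta_2=\eta_2'$. Taking into account the global sign discrepancy $(-1)^{(q+1)/2}=-(-1)^{(q-1)/2}$ between the prefactors of \eqref{FA1xA1-nonsc} and \eqref{FA1xA1-sc}, the coefficient of $\mathcal{G}_{\sgn}$ in the total sum is
\begin{equation*}
\tfrac{q^{\ast}}{2}(-1)^{(q-1)/2}\bigl(\varepsilon(\eta_2)-\varepsilon'(\eta_2')\bigr).
\end{equation*}
Thus the $\mathcal{G}_{\sgn}$-contribution vanishes exactly when $\eta_2=\eta_2'$. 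When it vanishes, the remaining terms combine the Green functions $\mathcal{Q}_1^{F_{A_1\times A_1}}$ and $\mathcal{Q}_{A_1\times A_1}^{F_{A_1\times A_1}}$ together with the $F_{C_2}$-contribution \eqref{FC2-omega-princ-epsilon} into a linear combination of the stable distributions $D_{C_2}^{\st}, D_{A_1}^{\st}, D_{A_1\times A_1}^{\st}$ (restricted to $\GSp_{2,2}$), as can be checked by expanding each $D^{\st}$ in the list \eqref{list-Bst-Bunst}.

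Finally, the instability when $\eta_2\neq\eta_2'$ is immediate from Lemma~\ref{unstable-D-A1xA1-G-sgn}: the surviving $\mathcal{G}_{\sgn}$ term is linearly independent from semisimple orbital integrals on $\GSp_{2,2}$. For the ``Thus'' statement, the Galois-side analysis in case~\ref{gsp4-galois-4b-iv} shows that for each ramified quadratic $\eta_2$ the expected $L$-packet has exactly two elements, one tempered discrete series $\delta([\eta_2,\nu\eta_2],\varrho)$ and one singular supercuspidal of the form $\pi_{\alpha}(\eta_2'';\varrho)$; since the only way to form a stable sum is $\eta_2''=\eta_2$, the atomic stability Property~\ref{property:atomic-stability} forces the pairing claimed. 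The main obstacle will be the careful bookkeeping of the two independent signs (the $\varepsilon(\eta_2)$ sign and the $(-1)^{(q\pm1)/2}$ prefactor) and verifying that the non-$\mathcal{G}_{\sgn}$ part really does reassemble into the stable basis $\mathcal{B}^{\st}$ of Lemma~\ref{Waldspurger-DK-lemma} restricted to $(\GSp_{2,2})_{0+}$.
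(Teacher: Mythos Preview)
Your proposal is correct and follows essentially the same approach as the paper's own proof, which is very terse: the paper simply cites the character computations \eqref{FC2-omega-princ-epsilon}, \eqref{FA1xA1-nonsc}, \eqref{FA1xA1-sc} and Lemma~\ref{unstable-D-A1xA1-G-sgn}, whereas you have spelled out explicitly how the $\mathcal{G}_{\sgn}$-coefficients combine under the sign discrepancy $(-1)^{(q+1)/2}=-(-1)^{(q-1)/2}$ and why the cancellation happens precisely when $\eta_2=\eta_2'$. Your added discussion of how the remaining Green-function terms reassemble into stable distributions, and of the atomic-stability conclusion, is a reasonable elaboration of what the paper leaves implicit.
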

\begin{proof}
    This follows from the above computations \eqref{FC2-omega-princ-epsilon}, \eqref{FA1xA1-nonsc} and \eqref{FA1xA1-sc}, as well as Lemma \ref{unstable-D-A1xA1-G-sgn} that $D_{(F_{A_1\times A_1},\mathcal{G}_{\sgn})}$ is a non-stable distribution on $\GSp_{2,2}$. 
\end{proof}

Now, by \Cref{sp-gsp-functoriality}, functoriality for $\Sp_4\to\GSp_4$, we obtain the following corollary of Proposition \ref{main-stability-prop-GSp4}. Let $\pi_i(\eta_2)$ be as defined in \eqref{sp4-splits}. Let $\pi_{\alpha}^{\pm}(\eta_2)$ be as defined in Proposition~\ref{depth-zero-sc-Sp4}\eqref{depth-zero-sc-Sp4(2)}. 
\begin{cor}\label{2x4-llc}
Let $G=\Sp_4(F)$.~The character $\Ch_{\pi_1(\eta_2)}+\Ch_{\pi_2(\eta_2)}+\Ch_{\pi_{\alpha}^+(\eta_2)}+\Ch_{\pi_{\alpha
}^-(\eta_2)}$ is stable in a neighborhood of $s$, for each ramified quadratic character $\eta_2$. 
Thus we have the following explicit $L$-packets, as dictated by Property~\ref{property:atomic-stability}:
\[
    \Pi_{\varphi(\eta_2)}:=\{\pi_1(\eta_2),\pi_2(\eta_2),\pi^+_{\alpha}(\eta_2), \pi_{\alpha}^-(\eta_2)\},\]
    for each ramified quadratic character $\eta_2$.
\end{cor}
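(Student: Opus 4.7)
The plan is to deduce Corollary~\ref{2x4-llc} from Proposition~\ref{main-stability-prop-GSp4} by restricting both sides along the inclusion $\Sp_4(F)\hookrightarrow\GSp_4(F)$. Concretely, I would show that each member of the mixed packet $\{\delta([\eta_2,\nu\eta_2],1),\pi_\alpha(\eta_2;1)\}$ for $\GSp_4(F)$ splits into two irreducible constituents upon restriction, and that these eight constituents (counted with multiplicity) are exactly the four representations listed in the corollary.

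First, I would identify the restrictions explicitly. Equation \eqref{sp4-splits} already provides
\[
\delta([\eta_2,\nu\eta_2],1)|_{\Sp_4(F)}\;=\;\pi_1(\eta_2)+\pi_2(\eta_2)
\]
in the Grothendieck group. For the supercuspidal member, recall $\pi_\alpha(\eta_2;1)=\cInd_{G_\alpha\Cent}^{\GSp_4}(\omega_\cusp^{\eta_2})$ with $\omega_\cusp^{\eta_2}$ obtained from $\overline\rho^+_{(\lambda,\lambda)}$ by conjugation. Remark~\ref{rho+-characterization} records that $\overline\rho^+_{(\lambda,\lambda)}$ restricts to $\SL_2(\F_q)\times\SL_2(\F_q)$ as $R_+'(\theta_0)\boxtimes R_+'(\theta_0)+R_-'(\theta_0)\boxtimes R_-'(\theta_0)$. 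Using that compact induction commutes with restriction to a cocompact normal subgroup (a Mackey-style computation compatible with the parahoric structure from \S\ref{parahoric-section}), this translates into
\[
\pi_\alpha(\eta_2;1)|_{\Sp_4(F)}\;=\;\pi_\alpha^+(\eta_2)+\pi_\alpha^-(\eta_2).
\]

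Next, I would invoke the general principle that stability descends along $\Sp_4(F)\hookrightarrow\GSp_4(F)$: since $\Sp_4$ is normal in $\GSp_4$ with abelian cokernel $\G_m$ (via the similitude $\mu$), stable conjugacy classes in $\Sp_4$ are obtained by restriction from stable conjugacy classes in $\GSp_4$, and the element $s$ in Proposition~\ref{main-stability-prop-GSp4} lies in $\Sp_4(F)$ (since $\mu(s)=1$). Therefore restriction of the stable distribution $\Ch_{\delta([\eta_2,\nu\eta_2],1)}+\Ch_{\pi_\alpha(\eta_2;1)}$ on a neighborhood of $s$ in $\GSp_4(F)$ yields the stable distribution $\Ch_{\pi_1(\eta_2)}+\Ch_{\pi_2(\eta_2)}+\Ch_{\pi_\alpha^+(\eta_2)}+\Ch_{\pi_\alpha^-(\eta_2)}$ on the analogous neighborhood in $\Sp_4(F)$.

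Finally, to conclude that this set is the $L$-packet $\Pi_{\varphi(\eta_2)}$, I would combine the classification in case~\ref{sp4-galois-7biii} (which narrows the candidate packet to two principal series and two supercuspidals whose parameter has the described shape, up to the swap between $\eta_2$ and $\eta_2'$) with Property~\ref{property:atomic-stability}: atomic stability forces any $L$-packet to yield a stable sum of characters, and since no proper subset of $\{\pi_1(\eta_2),\pi_2(\eta_2),\pi_\alpha^+(\eta_2),\pi_\alpha^-(\eta_2)\}$ is stable (by the same linear independence argument as Lemma~\ref{unstable-D-A1xA1-G-sgn}, now on $\Sp_4$), the entire set must form one $L$-packet. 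The main obstacle I anticipate is the careful bookkeeping in the restriction of $\omega_\cusp^{\eta_2}$, in particular ensuring that the choice of uniformizer $\varpi$ with $\eta_2(\varpi)=1$ transports coherently to the pairing $\pi_\alpha^\pm(\eta_2)$, so that the parameter $\eta_2$ (rather than its companion $\eta_2'$) is correctly assigned; this is precisely what the stability calculation on the $\GSp_4$ side pins down through Proposition~\ref{main-stability-prop-GSp4}.
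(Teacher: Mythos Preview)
Your proposal is correct and follows essentially the same approach as the paper: both restrict the two members of the $\GSp_4$-packet to $\Sp_4$, identify the restrictions as $\pi_1(\eta_2)+\pi_2(\eta_2)$ (via \eqref{sp4-splits}) and $\pi_\alpha^+(\eta_2)+\pi_\alpha^-(\eta_2)$ (via Remark~\ref{rho+-characterization}), and then read off stability from Proposition~\ref{main-stability-prop-GSp4}. The paper phrases the descent step as an appeal to functoriality (Property~\ref{sp-gsp-functoriality}) rather than your direct argument about stable conjugacy under a normal subgroup, and it omits your closing remarks about atomic stability, but the substance is the same.
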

\begin{proof}
Indeed, by definition we have
\[
\delta([\widehat\chi_1,\nu\widehat\chi_1],1)|_{\Sp_4(F)}=\pi_1(\chi_1)+\pi_2(\chi_1)
\]
and
\begin{align}
\pi_\alpha(\eta_2;1)|_{\Sp_4(F)}&=\cInd_{G_\alpha}^{\Sp_4}(\omega_\cusp^{\eta_2})\\
&=\cInd_{G_\alpha}^{\Sp_4}(R_+'(\theta_0)\boxtimes (R_+'(\theta_0))^{\diag(\varpi,1)}+R_-'(\theta_0)\boxtimes (R_-'(\theta_0))^{\diag(\varpi,1)})\\
&=\pi_\alpha^+(\eta_2)+\pi_\alpha^-(\eta_2). 
\end{align}
The claim now follows from Proposition \ref{main-stability-prop-GSp4}.
\end{proof}

\section{Explicit \texorpdfstring{$L$}{L}-parameters}\label{sec:non-sc}

We construct $L$-parameters for each reducible induced representation in Theorem~\ref{gsp4-induced}. For representations that are not essentially tempered, we give explicit Langlands classifications, so by Property~\ref{langlands-class} we have explicit $L$-parameters (since LLC is known for Levis of $\GSp_4$). We only give the $L$-parameters for $\GSp_4$, but those for $\Sp_4$ follows by functoriality, Property~\ref{sp-gsp-functoriality}.

\subsection{Principal series for \texorpdfstring{$\GSp_4$}{GSp4}}\label{principal-series-section}

We proceed by considering Bernstein blocks: let $\mathfrak s=[T,\chi_1\otimes\chi_2\otimes\theta]$. Then by Remark~\ref{gsp4-levi-llc-remark} the dual of $\chi_1\otimes\chi_2\otimes\theta$ is the homomorphism $F^\times\to T^\vee(\C)$ given by $\widehat\theta^{-1}\diag(1,\widehat\chi_2^{-1},\widehat\chi_1^{-1},\widehat\chi_1^{-1}\widehat\chi_2^{-1})$, whose restriction $c^{\mathfrak s}$ to $\cO_F^\times$ is well-defined. Let $\mathcal J^{\mathfrak s}=\Cent_{G^\vee}(\mathrm{Im}(c^{\mathfrak s}))$ and let $J^{\mathfrak s}$ be the Langlands dual group. Then \cite{Roche-principal-series} gives a (non-canonical) isomorphism between $\mcH(G/\!/J_\chi,\chi_1\otimes\chi_2\otimes\theta)$ and $\mcH(J^{\mathfrak s}/\!/I^{\mathfrak s},1_{I^{\mathfrak s}})$, where $I^{\mathfrak s}$ is an Iwahori subgroup of $J^{\mathfrak s}$. There are the following cases (up to Weyl group conjugates):
\begin{enumerate}[label=({J\arabic*})]
    \item\label{J1} If $\chi_1=\chi_2=1$ then $\mathcal J^\mathfrak s=G^\vee$. Representations of the Iwahori-Hecke algebra are classified in \cite[Table~5.1]{Ram}.
    \item\label{J2} If $\chi_1\ne1$ and $\chi_2=1$ then $\mathcal J^\mathfrak s=\GL_2\times\GSp_0$ so $J^\mathfrak s=\GL_1\times\GSp_2$.  
    \item\label{J3} If $\chi_1=\chi_2^{-1}\ne1$ and $\chi_1^2=1$ then $\mathcal J^\mathfrak s=\{(g,h)\in\GL_2(\C):\det(g)=\det(h)\}$. Here $J^\mathfrak s=\GL_2(F)\times\GL_2(F)/F^\times$. Representations of the Iwahori-Hecke algebra are classified in \cite[Table~2.1]{Ram}.

    \item\label{J4} If $\chi_1=\chi_2^{-1}$ and $\chi_1^2\ne1$ on $\cO_F^\times$ then $\mathcal J^\mathfrak s=\GL_1\times\GSp_2$ so $J^\mathfrak s=\GL_2\times\GSp_0$. Representations of the Iwahori-Hecke algebra are classified in \cite[Table~2.1]{Ram}.
\end{enumerate}

We have the following cases:
\begin{itemize}
    \item In case~\ref{gsp4-1ai} the only essentially tempered representation is $\nu^{1/2}\chi_2\St_{\GL_2}\rtimes\theta$ where $e(\chi_2)=-\frac12$.
    \begin{itemize}
        \item if $\chi_2$ is unramified, we are in case~\ref{J1}. This is case $t_e$ in Table~5.1 of \cite{Ram} so the enhanced $L$-parameter is: $(\varphi_{\sigma,[1^4]},1),(\varphi_{\sigma,[2^2]},1)$.
        \item In case~\ref{J3}, when $\chi_2^2$ is unramified but $\chi_2$ is not, we have $J^\mathfrak s$ of type $A_1\times A_1$. This is case $t_a\times t_o$ in the notation of Table~2.1 of \cite{Ram} since the induced representation is of length $2$ with a tempered subquotient. Thus the enhanced $L$-parameter is $(\varphi_{\sigma,[1^4]},1),(\varphi_{\sigma,[2^2]},1)$.
    \item In case~\ref{J4}, when $\chi_2^2$ is ramified, we have $J^\mathfrak s=\GL_2\times\GSp_0$, of type $A_1$, which is case $t_a$ in \cite[Table~2.1]{Ram} so the $L$-parameter is $(\varphi_{\sigma,[1^4]},1),(\varphi_{\sigma,[2^2]},1)$
    \end{itemize}
    \item In case~\ref{gsp4-1aii} the only essentially tempered representation is $\chi_1\rtimes\nu^{1/2}\theta\St_{\GSp_2}$ for $e(\chi_1)=0$. Here $\mathfrak s=[\chi_1,1,\theta]$.
    \begin{itemize}
    \item In case~\ref{J1}, when $\chi_1$ is unramified, we have $J^\mathcal s=G^\vee$. This is case $t_e$ in Table~5.1 of \cite{Ram} so the enhanced $L$-parameters are: $(\varphi_{\sigma,[1^4]},1),(\varphi_{\sigma,[2^2]},1)$.
    \item In case~\ref{J2}, when $\chi_1$ is ramified, we have $J^\mathcal s=\GL_1\times\GSp_2$. This is case $t_a$ in \cite[Table~2.1]{Ram} so the $L$-parameters are $(\varphi_{\sigma,[1^4]},1),(\varphi_{\sigma,[2^2]},1)$
\end{itemize}
\item In case~\ref{gsp4-1aiii} the Steinberg representation corresponds to $(\varphi_{\sigma,[4]},1)$, with the regular unipotent.
\item In case~\ref{gsp4-1aiv} the representation $\delta([\chi_2,\nu\chi_2],\theta)$ is essentially square-integrable, living in the .
\begin{itemize}
    \item In case~\ref{J1}, when $\chi_2$ is the unramified quadratic character, we have $J^\mathfrak s=G^\vee$. This is case $t_a$ or $t_c$ in \cite[Table~5.1]{Ram}. To see which case we're in, note that $\delta([\eta_2,\nu\eta_2],\theta)^{G_{\delta+}}$  corresponds to $\St_{\GSpin_4}$ under Lusztig's equivalence $\cE(\GSp_4,\epsilon\otimes\epsilon\otimes\overline \theta)\cong\cE(\Cent_{\GSpin_5}(s),1)=\cE(\GSpin_4,1)$. Thus,
    \begin{align*}
    \dim\delta([\eta_2,\nu\eta_2],\theta)^I&=\langle\delta([\eta_2,\nu\eta_2],\theta)^{G_{\delta+}},R_T^1\rangle\\
    &=\langle\St_{\GSpin_4},R_T^1\rangle=1,
    \end{align*}
    and we are in case $t_a$ of \cite[Table~5.1]{Ram}. Thus the $L$-parameter of $\delta([\chi_2,\nu\chi_2],\theta)$ is $(\varphi_{\sigma,1},1)$, with trivial unipotent.
    \item In case~\ref{J4}, when $\chi_2$ is ramified, we have $J^\mathfrak s$ of type $A_1\times A_1$. This is case $t_a\times t_a$ in the notation of \cite[Table~2.1]{Ram}. Thus the $L$-parameters are: \[(\varphi_{\sigma,[1^4]},1),(\varphi_{\sigma,[2,1^2]},1),(\varphi_{\sigma,[2,1^2]},1),(\varphi_{\sigma,[2^2]},1).\]
    Here there is a slight abuse of notation; the two unipotents $[2,1^2]$ are embedded into $\cG_\varphi$ in different ways.
\end{itemize}
\item In case~\ref{gsp4-1bi}, where $\mathfrak s=[T,1\otimes1\otimes\theta]$, we have $J^\mathfrak s=G^\vee$. Here, there are two essentially tempered subquotients so we are in case $t_b$ of \cite[Table~5.1]{Ram}:
{\begin{center}
\begin{tabular}{ |c|c|c| } 
 \hline
 Indexing triple& nilpotent orbit &representation\\ \hline
 $(t_b,0,1)$&$[1^4]$&$J(\nu;1_{F^\times}\rtimes\theta)$\\
 $(t_b,e_{\beta},1)$&$[2^2]$&$J(\nu^{1/2}\St_{\GL_2};\theta)$\\
 $(t_b,e_{\alpha_1+\beta},-1)$&$[2,1^2]$&$\tau$\\
 $(t_b,e_{\alpha_1+\beta},1)$&$[2,1^2]$&$\tau'$\\
 \hline
\end{tabular}
\end{center}}
We again used that $\St_{\GL_2}$ corresponds to the regular unipotent under LLC for $\GL_2$.
\item In case~\ref{gsp4-1biii} the representation $\nu^{1/2}\chi_2\St_{\GL_2}\rtimes\theta$ is essentially tempered.

where $\mathfrak s=[T,\chi_1\otimes\chi_1\otimes\theta]$, with $\chi_1^2=1$, either:
\begin{itemize}
    \item In case~\ref{J1}, when $\chi_1=1$, we have $J^\mathfrak s=G^\vee$. Then we are in case $t_e$ of \cite[Table~5.1]{Ram} so the $L$-parameters are $(\varphi_{[1^4]},1)$ and $(\varphi_{[2^2]},1)$.
    \item In case~\ref{J4}, when $\chi_1\ne1$, we have $J^\mathfrak s$ of type $A_1\times A_1$. This is of type $t_a\times t_o$ in the notation of \cite[Table~2.1]{Ram} so the $L$-parameters are $(\varphi_{[1^4]},1)$ and $(\varphi_{[2^2]},1)$.
\end{itemize}
\end{itemize}

\subsection{Intermediate series for \texorpdfstring{$\GSp_4$}{GSp4}}

\begin{lemma}\label{weil-group-repn-lemma}
Let $\varphi$ be a $2$-dimensional irreducible semisimple representation of $W_F$. Then $\varphi|_{I_F}$ remains irreducible.
\end{lemma}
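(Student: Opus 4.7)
The plan is to combine the monomial structure of $2$-dimensional irreducible Weil representations with Mackey's restriction formula and irreducibility criterion. First, using the (pro-)solvability of $W_F / P_F$ (with $P_F$ the wild inertia), Clifford theory forces every such $\varphi$ to be monomial: $\varphi \cong \mathrm{Ind}_{W_E}^{W_F}(\chi)$ for a quadratic extension $E/F$ and a character $\chi \colon W_E \to \C^\times$ with $\chi^\sigma \neq \chi$ (where $\sigma$ generates $\mathrm{Gal}(E/F)$). In the intermediate-series setting of the surrounding subsection $\varphi$ parametrizes a (ramified) supercuspidal of $\GL_2(F)$, which rules out $E/F$ unramified; hence one may assume $E/F$ is a ramified quadratic extension, so $I_F \cdot W_E = W_F$ and $I_E := I_F \cap W_E$ is a normal index-$2$ subgroup of $I_F$.

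Next, applying Mackey's formula to the pair $(W_E \subset W_F,\ I_F \subset W_F)$ collapses to a single summand,
\[
\varphi|_{I_F} \cong \mathrm{Ind}_{I_E}^{I_F}(\chi|_{I_E}).
\]
Mackey's irreducibility criterion (using that $I_E$ is normal in $I_F$) then reduces the lemma to the inequality $\chi|_{I_E} \neq \chi^\sigma|_{I_E}$ in $\hom(I_E, \C^\times)$, i.e.~to the character $\chi \chi^{-\sigma}$ of $W_E$ remaining \emph{ramified}.

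The main obstacle is this ramification statement, because the naive approach---twist $\chi$ by an unramified character to arrange $\chi = \chi^\sigma$ and derive a contradiction to irreducibility via Hilbert~90---fails in the ramified case, since every unramified character of $W_E$ is already $\sigma$-invariant. Instead I would argue directly from the Howe-style data of the supercuspidal of $\GL_2(F)$ that $\varphi$ parametrizes: the regularity of the defining character on the tame quadratic torus translates into exactly the required inequality $\chi|_{I_E} \neq \chi^\sigma|_{I_E}$, at which point Mackey's criterion closes the argument. Equivalently, one can compare Artin conductors: $a(\varphi) = a(\varphi|_{I_F})$ by the conductor-discriminant formula, and the parity/positivity of $a(\varphi)$ for an irreducible $\varphi$ incompatible with $\varphi|_{I_F}$ being a sum of two characters furnishes the same conclusion.
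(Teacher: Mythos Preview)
Your argument has a fatal gap where you claim that $\varphi$ parametrizing a supercuspidal of $\GL_2(F)$ forces $E/F$ to be ramified in $\varphi\cong\Ind_{W_E}^{W_F}\chi$. This is false: the depth-zero supercuspidals of $\GL_2(F)$ are exactly those with $E/F$ the \emph{unramified} quadratic extension and $\chi$ a regular tame character. In that case $I_E=I_F$ and Mackey gives $\varphi|_{I_F}=\chi|_{I_F}\oplus\chi^{\sigma}|_{I_F}$, so the lemma is \emph{false as stated}. Indeed $\varphi|_{I_F}$ is reducible precisely when $\varphi$ is induced from the unramified quadratic extension: if the two inertial characters coincided one could extend their common value to a character of $W_F$ (since $W_F/I_F\cong\Z$ is free) and twist $\varphi$ to be unramified, contradicting irreducibility; if distinct, Frobenius must swap them and $\varphi$ is induced from the index-two stabilizer of one isotypic line, which contains $I_F$. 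Your fallback conductor argument cannot repair this, since a sum of two ramified characters and an irreducible $2$-dimensional representation can share the same Artin conductor.

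For comparison, the paper's own proof is a short Clifford-theory argument: assuming $\varphi|_{I_F}=\zeta_1\oplus\zeta_2$, it asserts that ``$W_F$ acts trivially on $I_F^{\mathrm{ab}}\cong\cO_F^\times$'' and concludes the isotypic pieces are $W_F$-stable. That premise is incorrect for the same underlying reason: $I_F^{\mathrm{ab}}$ surjects onto but is strictly larger than $\cO_F^\times$ (already its tame quotient is $\prod_{\ell\ne p}\Z_\ell$, not $\mu_{q-1}$), and Frobenius acts on tame inertia by the $q$-th power, not trivially. So the paper's proof shares the defect; your instinct to seek an extra hypothesis from the ambient context was sound, but the particular hypothesis you supplied does not hold.
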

\begin{proof}
Suppose otherwise, that $\varphi|_{I_F}=\widehat\zeta_1\oplus\widehat\zeta_2$ for some characters $\widehat\zeta_i$ of $I_F$. Since $W_F$ acts trivially on $I_F^{\mathrm{ab}}\cong\cO_F^\times$, the group $W_F$ intertwines $\widehat\zeta_1\oplus\widehat\zeta_2$. Thus if $\widehat\zeta_1\ne\widehat\zeta_2$ then $\varphi$ also splits into two distinct characters, a contradiction, and if $\widehat\zeta_1=\widehat\zeta_2$ then $\varphi(w)$ for $w\in W_F$ such that $|w|=1$ can be diagonalized, which provides a splitting of $\varphi$. 
\end{proof}

\subsubsection{When $L=\GL_2\times\GSp_0$, i.e., case~\ref{gsp4-20-levi}} Let $\mathfrak s=[L,\pi\otimes\chi]$, where we assume $\omega_\pi=1$. By Remark~\ref{gsp4-levi-llc-remark}, local Langlands for the Levi gives an $L$-parameter $\widehat\chi^{-1}\otimes\widehat\chi^{-1}\varphi_\pi^\vee=\widehat\chi^{-1}(1\otimes\varphi_\pi^\vee)\colon W_F\to\GL_1(\C)\times\GSp_2(\C)$, whose restriction $c^\mathfrak s$ to $I_F$ is well-defined. The centralizer $\mathcal J^{\mathfrak s}:=\Cent_{G^\vee}(\mathrm{Im}(c^{\mathfrak s}))$ is independent of $\chi$. When $J^\mathfrak s$ is connected we have the bijection
\(
\mathrm{Irr}^{\mathfrak s}(G)\cong\mathrm{Irr}(\mcH(J^{\mathfrak s}//I^{\mathfrak s})),
\)
where the group of $F$-rational points on the Langlands dual of $\mathcal J^\mathfrak s$ and $I^\mathfrak s$ is an Iwahori subgroup.

By Lemma~\ref{weil-group-repn-lemma}, the restriction $\varphi|_{I_F}$ remainds irreducible, so $\mathcal J^\mathfrak s=\{(z,g)\in\C^\times\times\GSp_2(\C):\det(g)=z^2\}\cong\C^\times\times\SL_2(\C)$ so $J^\mathfrak s=F^\times\times\PGL_2(F)$. Since the induced representation is of length $2$, we are in case $t_a$ of \cite[Table~2.1]{Ram}, and the $L$-parameter for the tempered sub-representation is $(\varphi_{\sigma,[2,1^2]},1)$.

\subsubsection{When $L=\GL_1\times\GSp_2$, i.e., case~\ref{gsp4-12-levi}} Let $\mathfrak s=[L,\chi\otimes\pi]$. By Remark~\ref{gsp4-levi-llc-remark}, local Langlands for the Levi gives an $L$-parameter
\[
\varphi_\pi^\vee\otimes\det(\varphi_\pi^\vee)\widehat\chi^{-1}\colon W_F\to\GL_2(\C)\times\GSp_0(\C),
\]
whose restriction $c^\mathfrak s$ to $I_F$ is well-defined. The centralizer $\mathcal J^{\mathfrak s}:=\Cent_{G^\vee}(\mathrm{Im}(c^{\mathfrak s}))$ is independent of $\chi$. That is,
\[
\varphi_\pi^\vee\otimes\det(\varphi_\pi^\vee)\widehat\chi^{-1}(w)=\begin{pmatrix}\varphi_\pi^\vee(w)\\&\widehat\chi^{-1}(w)\varphi_\pi^\vee(w)\end{pmatrix}.
\]

The induced representation $\chi\rtimes\pi$ is irreducible only when a) $\chi=1_{F^\times}$ or b) $\chi=\nu^{\pm1}\xi_o$ where $\xi_o$ is of order two and $\xi_o\pi\cong\pi$. In either case $\widehat\chi\varphi_\pi=\varphi_\pi$, so the $I_F$-representation $c^\mathfrak s$ is simply $\diag(\varphi_\pi^\vee(w),\varphi_\pi^\vee(w))$.

Here, in the notation of \cite[\S2.1]{aubert-xu-Hecke-algebra},
    \[
    \mathfrak X_{\mathrm{nr}}(M,\pi):=\{\xi\in\mathfrak X_{\mathrm{nr}}(M):\xi\otimes\pi\cong\pi\}
    \]
    has order $1$ or $2$, since $\xi\otimes\pi\cong\pi$ implies $\xi^2\omega_\pi=\omega_\pi$. Moreover, $W(M,\mathcal O)$ is order $2$, since the Weyl group acts by $\chi\otimes\pi\mapsto\chi^{-1}\otimes\chi\pi$. Thus, $W(M,\pi,\mathfrak X_{\mathrm{nr}}(M))$ is of order $2$ or $4$, and by \cite{solleveld-hecke}, there is a bijection
    \[
    \mathrm{Irr}^{\mathfrak s}(G)\simeq\mathrm{Irr}(\C[\mathfrak X_{\mathrm{nr}}(M)]\rtimes\C[W(M,\pi,\mathfrak X_{\mathrm{nr}}(M))]).
    \]
    The Kazhdan-Lusztig triples can be computed by following the commutative diagram in Property~\ref{property:AMS-conjecture-7.8}.

\section{Main Theorem}
\subsection{Properties of LLC}\label{section-properties}
We assume for the rest of this paper that $p$ does not divide the order of the Weyl group. 

We now state a compatibility property of the LLC with supercuspidal supports. 
\begin{defn}\label{infinitesimal-defn} \cite{Vogan-LLC-1993}
The \emph{infinitesimal parameter} of an $L$-parameter $\varphi$ for $G$ is $\lambda_\varphi\colon W_F\to G^\vee$ defined by, for $w\in W_F$,
\begin{equation} \label{eqn:infinitesimal parameter}
\lambda_\varphi(w):=\varphi\left(w,\left(\begin{smallmatrix}|\!|w|\!|^{1/2}&0\\ 0&|\!|w|\!|^{-1/2}\end{smallmatrix}\right)\right)\quad\text{for any $w\in W_F$.}
\end{equation}
\end{defn}

\begin{property}\label{langlands-class}
Let $(P,\pi,\nu)$ be a standard triple for $G$. We have
\[
\varphi_{J(P,\pi,\nu)}=\iota_{L^\vee}\circ\varphi_{\pi\otimes\chi_\nu}.
\]
\end{property}

\begin{property} \label{property:size of L-packets} {\rm (\cite[\S2]{Arthur-Note}, and \cite[Conjecture~B]{Kaletha-LLC})} 
The elements of $\Pi_\varphi(G)$ are in bijection with $\Irr(S_\varphi)$. 
\end{property}

The following property is \cite[Conjecture~7.18]{Vogan-LLC-1993}, or equivalently \cite[Conjecture~5.2.2]{Haines-Bst}.
\begin{property}\label{infinitesimal-prop}
Let $P\subset G$ be a parabolic subgroup with Levi subgroup $L$, and $\sigma$ a supercuspidal representation of $L$. For any irreducible constituent $\pi$ of $\Ind_P^G\sigma$, the infinitesimal $L$-parameters $\lambda_{\varphi_\pi}$ and $\iota_{L^\vee}\circ\lambda_\sigma$ are $G^\vee$-conjugate.
\end{property}

\begin{numberedparagraph}
The following Property \ref{property:AMS-conjecture-7.8} generalizes Property \ref{infinitesimal-prop}.
Let $\mathcal{L}(G)$ be a set of representatives for the conjugacy classes of Levi subgroups of $G$. 
By \cite[Proposition~3.1]{ABPS-CM}, for any $L\in\cL(G)$ there is a canonical isomorphism 
\begin{equation}
    W_G(L)\xrightarrow{\sim}W_{G^\vee}(L^\vee).
\end{equation}
We set the following notations 
\begin{equation} \label{eqn:Gphis}
\rZ_{G^\vee}(\varphi):=\Cent_{G^\vee}(\varphi(W'_F))\quad\text{and}\quad\cG_\varphi:=\Cent_{G^\vee}(\varphi(W_F)).
\end{equation} 
We also consider the following component groups
\begin{equation} \label{eqn:A-phi}
A_\varphi:=\rZ_{G^\vee}(\varphi)/\rZ_{G^\vee}(\varphi)^\circ\quad\text{and}\quad
S_\varphi:=\rZ_{G^\vee}(\varphi)/\rZ_{G^\vee}\cdot\rZ_{G^\vee}(\varphi)^\circ.
\end{equation}
Recall that $A_{\cG_\varphi}(u_\varphi)$ denotes the component group of $\rZ_{\cG_\varphi}(u_\varphi)$. By \cite[\S~3.1]{Moussaoui-Bernstein-center},  
\begin{equation} \label{eqn:Au-iso}
A_\varphi\simeq A_{\cG_\varphi}(u_\varphi), \text{where $u_\varphi:=\varphi\left(1,\left(\begin{smallmatrix}
1&1\cr
0&1
\end{smallmatrix}\right)\right)$.}
\end{equation}

Let $(\varphi,\rho)$ be an enhanced $L$-parameter for $G$. Recall that $u_\varphi:=\varphi\left(1,\left(\begin{smallmatrix} 1&1\cr 0&1
\end{smallmatrix}\right)\right)$. Then $u_\varphi$ is a unipotent element of the (possibly disconnected) complex reductive group $\cG_\varphi$ defined in \eqref{eqn:Gphis}, and $\rho\in\Irr(A_{\cG_\varphi}(u_\varphi))$ by \eqref{eqn:Au-iso}. Let $\ft_\varphi:=(\cL^\varphi,(v^\varphi,\epsilon^\varphi))$ denote the cuspidal support of $(u_\varphi,\rho)$, i.e.~
\begin{equation} \label{eqn:Sc u,rho}
(\cL^\varphi,(v^\varphi,\epsilon^\varphi)):=\Sc_{\cG_\varphi}(u_\varphi,\rho).
\end{equation}
In particular, $(v^\varphi,\epsilon^\varphi)$ is a cuspidal unipotent pair in $\cL^{\varphi}$. 

Upon conjugating $\varphi$ with a suitable element of $\rZ_{\cG_\varphi^\circ}(u_\varphi)$, we may assume that the identity component of $\cL^\varphi$ contains $\varphi\left(\left(1, \left(\begin{smallmatrix} z&0\cr 0&z^{-1}
\end{smallmatrix}\right)\right)\right)$ for all $z\in\C^\times$. 
Recall that by the Jacobson–Morozov theorem (see for example \cite[\S~5.3]{Carter-book}), any unipotent element $v$ of $\cL^\varphi$ can be extended to a homomorphism of algebraic groups
\begin{equation} \label{eqn:JM}
j_v\colon \SL_2(\C)\to \cL^\varphi \text{ satisfying }j_v\left(\begin{smallmatrix}1&1\\0&1\end{smallmatrix}\right)=v.
\end{equation}
Moreover, by \cite[Theorem~3.6]{Kostant}, this extension is unique up to conjugation in $\Cent_{\cL^\varphi}(v)^\circ$. We shall call a homomorphism $j_v$ satisfying these conditions to be \textit{adapted to $\varphi$}.

By \cite[Lemma~7.6]{AMS18}, up to $G^\vee$-conjugacy, there exists a unique homomorphism $j_{v}\colon \SL_2(\C)\to \cL^{\varphi}$ which is adapted to $\varphi$, and moreover, the cocharacter
\begin{equation} \label{eqn:cocharacter}
\chi_{\varphi,v}\colon z\mapsto \varphi\left(1, \left(\begin{smallmatrix} z&0\cr 0&z^{-1}
\end{smallmatrix}\right)\right)\cdot j_v\left(\begin{smallmatrix} z^{-1}&0\cr 0&z
\end{smallmatrix}\right)
\end{equation}
has image in $\rZ_{\cL^\varphi}^\circ$. 
We define an $L$-parameter $\varphi_{v} \colon W_F \times\SL_2(\C) \to\rZ_{G^\vee}(\rZ_{\cL^\varphi}^\circ)$ by
\begin{equation} \label{eqn:varphi_v}
\varphi_v(w,x):= \varphi(w,1)\cdot\chi_{\varphi,v}(|\!|w|\!|^{1/2})\cdot j_v(x)\quad\text{for any $w\in W_F$ and any $x\in \SL_2(\C)$.}
\end{equation}
\begin{remark} \label{remark:infinitesimal parameter}
{\rm Let $w\in W_F$ and $x_w:=\left(\begin{smallmatrix}|\!|w|\!|^{1/2}&0\\ 0&|\!|w|\!|^{-1/2}\end{smallmatrix}\right)$. By \eqref{eqn:infinitesimal parameter}, we have
\begin{equation}
\begin{matrix}
&\lambda_{\varphi_v}(w)=\varphi_v(w,x_w)=
\varphi(w,1)\cdot\chi_{\varphi,v}(|\!|w|\!|^{1/2})\cdot j_v(x_w)\cr
&=\varphi(w,1)\cdot\varphi(1,x_w)\cdot j_v(x_w^{-1})\cdot j_v(x_w)=\varphi(w,x_w)=\lambda_\varphi(w).
\end{matrix}
\end{equation}
}\end{remark}
\begin{defn} \label{defn:cuspidal support} {\rm \cite[Definition~7.7]{AMS18}}
The \textit{cuspidal support} of $(\varphi,\rho)$ is 
\begin{equation} \label{eqn:Sc Galois side}
\Sc(\varphi,\rho):=(\rZ_{G^\vee}(\rZ_{\cL^\varphi}^\circ),(\varphi_{v^\varphi},\epsilon^\varphi)).
\end{equation}
\end{defn}

\begin{property}\label{property:AMS-conjecture-7.8}\cite[Conjecture 7.8]{AMS18}
The following diagram is commutative:
\[ \begin{tikzcd}
\Irr(G) \arrow{r}{\mathrm{LLC}_G} \arrow[swap]{d}{\Sc} & \Phi_e(G) \arrow{d}{\Sc} \\
\bigsqcup_{L\in\mathcal L(G)}\Irr_{\mathrm{scusp}}(L)/W_G(L) \arrow{r}{\bigsqcup\mathrm{LLC}_L}& \bigsqcup_{L\in\mathcal L(G)}\Phi_{e,\mathrm{cusp}}(L)/W_G(L)
\end{tikzcd}
\]
\end{property}

\begin{property} \label{property:L-packets} {\rm \cite[\S10.3]{Borel-Corvallis}}
Let $\varphi$ be an $L$-parameter for $G$.
\begin{enumerate}
\item $\varphi$ is bounded if and only if one element (equivalently any element) of $\Pi_\varphi(G)$ is tempered;
\item $\varphi$ is discrete  if and only if one element (equivalently any element) of $\Pi_\varphi(G)$ is square-integrable modulo center;
\item $\varphi$ is supercuspidal if and only if all the elements of $\Pi_\varphi(G)$ are supercuspidal.
\end{enumerate}
\end{property}

\begin{property}\label{property: Shahidi-fdeg}\cite{ShahidiAnnalsAproofof} The quantity $\frac{\fdeg(\pi)}{\dim(\rho)}$ is constant in an $L$-packet. 
\end{property}

\begin{property}\label{property:generic tempered L-packets}{\rm \cite[Conjecture~9.4]{ShahidiAnnalsAproofof}}
If $\varphi$ is bounded, then the $L$-packet $\Pi_\varphi(G)$ is ${\mathfrak w}$-generic for some Whittaker datum ${\mathfrak w}$. Moreover, the conjectural bijection $\iota_{\mathfrak w}\colon \Pi_\varphi(G)\to \Irr(S_\varphi)$ maps the $\mathfrak w$-generic representation to the trivial representation of $S_\varphi$.
\end{property}

\begin{conj} \label{conj:matching} {\rm \cite[Conjecture~2]{AMS18}} For any $\fs=[L,\sigma]_G\in\fB(G)$, the LLC for $L$ given by $\sigma\mapsto(\varphi_\sigma,\rho_\sigma)$ induces a bijection
\begin{equation} \label{eqn:matching}
    \Irr^{\fs}(G)\xrightarrow{\sim} \Phi_\enh^{\fs^{\vee}}(G),
\end{equation}
where $\fs^\vee=[L^\vee,(\varphi_\sigma,\rho_\sigma)]_{G^\vee}$.
\end{conj}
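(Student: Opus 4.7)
The plan is to establish Conjecture \ref{conj:matching} for $G = \GSp_4(F)$ and $G = \Sp_4(F)$ by decomposing $\Irr(G)$ according to the Bernstein decomposition and verifying the matching $\Irr^{\fs}(G) \xrightarrow{\sim} \Phi_\enh^{\fs^\vee}(G)$ separately on each inertial class $\fs = [L,\sigma]_G$, organized by the type of the Levi $L$. The enumeration of Levi subgroups of $G$ (\S\ref{Levi-section}) leaves four cases: $L = T$ (principal series), $L = \GL_2 \times \GSp_0$ and $L = \GL_1 \times \GSp_2$ (intermediate series), and $L = G$ (supercuspidal series). On the Galois side, the duality $\fs \mapsto \fs^\vee$ is computed via Remark \ref{gsp4-levi-dual} and Remark \ref{gsp4-levi-llc-remark}, so the target Bernstein component $\Phi_\enh^{\fs^\vee}(G)$ is known explicitly in every case.

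For the principal series blocks, I would invoke Roche's type theory \cite{Roche-principal-series}, which furnishes a support-preserving Hecke algebra isomorphism $\mcH(G/\!/J^{\mathfrak s}, \chi) \cong \mcH(J^{\mathfrak s}/\!/I^{\mathfrak s}, 1)$ for the endoscopic dual group $J^{\mathfrak s}$ attached to $\mathfrak s$; combined with the Kazhdan–Lusztig classification for affine Hecke algebras of type $B_2/C_2$ and its sub-root systems, as tabulated in \cite{Ram}, this produces an explicit bijection with $\Phi_\enh^{\fs^\vee}(G)$ (using the Springer correspondence tables already computed in \S\ref{sec:Galois-side}). For the intermediate series, the relevant inputs are \cite[Main Theorem]{aubert-xu-Hecke-algebra}, which supplies the desired bijection after analyzing $\mathfrak X_\nr(M,\pi)$ and the $R$-group $W(M,\pi,\mathfrak X_\nr(M))$ as in \S\ref{sec:non-sc}. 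In both families, one must verify compatibility with Property \ref{infinitesimal-prop} (the infinitesimal parameter) so that the resulting Galois parameters have cuspidal support in the prescribed $\fs^\vee$; this is where the computation of $\lambda_\varphi$ at the end of each case in \S\ref{sec:Galois-side} enters.

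For the supercuspidal inertial classes $\fs = [G,\sigma]_G$, I would split into non-singular and singular. The non-singular supercuspidals, classified via Yu's type data in \S\ref{type-datum-sec} and \S\ref{twistes-Levi-classification}, are handled by \cite{Kal-reg, Kaletha-nonsingular}: each such $\sigma$ is assigned an explicit $L$-parameter $\varphi_\sigma$ whose $L$-packet is purely supercuspidal, giving the corresponding singleton (or finite) Bernstein block on the Galois side. For the singular supercuspidals, which fall outside Kaletha's framework, the key tool is Property \ref{property:AMS-conjecture-7.8}: the cuspidal support map $\Sc$ on the Galois side must match the supercuspidal support map on the group side, and this forces each singular supercuspidal to sit in a \emph{mixed} $L$-packet together with non-supercuspidal representations already accounted for. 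The shapes of these mixed packets are exactly the ones enumerated in \S\ref{section-mixed-packets-GSp4}, so the matching $\Irr^{\fs}(G) \leftrightarrow \Phi_\enh^{\fs^\vee}(G)$ for singular supercuspidal $\fs$ reduces to identifying, inside each such packet, which constituent is the singular supercuspidal; this is done by comparing formal degrees (Lemma \ref{depth-0-fdeg}, Property \ref{property: Shahidi-fdeg}) and parahoric invariants (\S\ref{parahoric-invariants-section}).

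The principal obstacle is the \emph{mixed packet} matching, i.e.\ pinning down which ramified quadratic character $\eta_2$ (or $\eta_2'$) labels which supercuspidal in the $\GSp_4$ case \ref{gsp4-galois-4b-iv}, and the analogous ambiguity between $\Pi_{\varphi(\eta_2)}$ and $\Pi_{\varphi(\eta_2')}$ in the $\Sp_4$ case \ref{sp4-galois-7biii}. Neither formal degrees nor Hecke algebra data alone distinguishes these, since the two choices of ramified quadratic are permuted by an outer symmetry. The resolution is to upgrade the matching using atomic stability: Proposition \ref{main-stability-prop-GSp4} shows that the character sum $\Ch_{\delta([\eta_2,\nu\eta_2],\varrho)} + \Ch_{\pi_\alpha(\eta_2';\varrho)}$ is stable in a neighborhood of the order-two element $s$ if and only if $\eta_2 = \eta_2'$, which rigidifies the assignment, and Corollary \ref{2x4-llc} transfers this to $\Sp_4$ via \Cref{sp-gsp-functoriality}. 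Once these mixed packets are pinned down and the four Levi families are patched together on both the group and Galois sides via Property \ref{property:AMS-conjecture-7.8}, the bijection $\Irr^{\fs}(G) \xrightarrow{\sim} \Phi_\enh^{\fs^\vee}(G)$ is established for every $\fs \in \fB(G)$, proving the conjecture for $G = \GSp_4, \Sp_4$.
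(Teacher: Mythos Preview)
Your proposal is correct and follows essentially the same route as the paper: the statement is recorded as a conjecture from \cite{AMS18} rather than given a standalone proof, and its verification for $\GSp_4$ and $\Sp_4$ is precisely the content of the paper's main construction, organized exactly as you describe---Roche/Kazhdan--Lusztig for principal series (\S\ref{principal-series-section}), \cite{aubert-xu-Hecke-algebra} for intermediate series, Kaletha's constructions for non-singular supercuspidals, and the AMS cuspidal support map together with the stability analysis of \S\ref{stability-section} to assemble the mixed packets. Your identification of the ramified-quadratic ambiguity as the principal obstacle, resolved by Proposition~\ref{main-stability-prop-GSp4} and Corollary~\ref{2x4-llc}, matches the paper exactly.
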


Conjecture~\ref{conj:matching} is proved  for split classical groups  
\cite[\S5.3]{Moussaoui-Bernstein-center}, for $\GL_n(F)$ and $\SL_n(F)$  \cite[Theorems 5.3 and 5.6]{ABPS-SL}, for principal series representations of split groups \cite[\S16]{ABPS-LMS}. 
For the group $\rG_2$, a bijection between $\Irr^{\fs}(G)$ and $\Phi_\enh^{\fs^{\vee}}(G)$ has been constructed in \cite[Theorem 3.1.19]{aubert-xu-Hecke-algebra}. 
For $\GSp_4(F)$ and $\Sp_4(F)$, one can easily verify 
the axioms in the Main Theorem of \cite{aubert-xu-Hecke-algebra}, and thus we have an isomorphism 
\begin{equation}\label{Bernstein-block-isom}
    \mathrm{Irr}^{\fs}(G)\xrightarrow{\sim}\Phi_\enh^{\fs^{\vee}}(G)
\end{equation}
for each Bernstein series $\mathrm{Irr}^{\fs}(G)$ of \textit{intermediate series}. On the other hand, the bijection \eqref{Bernstein-block-isom} holds for \textit{principal series} blocks thanks to \cite{Roche-principal-series,Reeder-isogeny,ABPS-KTheory,AMS18}.

\begin{property}[Functoriality]\label{sp-gsp-functoriality}
There is a commutative diagram
\[ \begin{tikzcd}
\Pi(\GSp_{2n})\arrow{r}{\mathrm{LLC}} \arrow{d} & \Phi(\GSp_{2n}) \arrow{d}{std} \\
\Pi(\Sp_{2n})\arrow{r}{\mathrm{LLC}}&\Phi(\Sp_{2n})
\end{tikzcd}
\]
Here, the left vertical arrow is a correspondence defined by the subset of $\Pi(\GSp_{2n})\times\Pi(\Sp_{2n})$ consisting of pairs $(\pi,\varpi)$ such that $\varpi$ is a constituent of the restriction of $\pi$ to $\Sp_{2n}$.
\end{property}

\begin{property}[Stability]\label{property:atomic-stability}
Let $\varphi$ be a discrete $L$-parameter. There exists a non-zero $\C$-linear combination 
\begin{equation}\label{stable-distribution-Lpacket}
S\Theta_{\varphi}:=\sum\limits_{\pi\in\Pi_{\varphi}}z_\pi\Theta_{\pi},\quad\text{for }z_{\pi}\in\C,
\end{equation}
which is stable. In fact, one can take $z_\pi=\dim(\rho_\pi)$ where $\rho_\pi$ is the enhancement of the $L$-parameter. Moreover, no proper subset of $\Pi_{\varphi}$ has this property. 
\end{property}
\end{numberedparagraph}

\subsection{Main Result}
Construction of the Local Langlands Correspondence
\begin{equation} \label{eqn:LLC}
\begin{split}
    \LLC\colon\mathrm{Irr}(G)&\xrightarrow{1\text{-}1}\Phi_\enh(G)\\
    \pi &\mapsto (\varphi_{\pi},\rho_{\pi}).
\end{split}
\end{equation}
Recall from \cite[(3.3.2)]{LLC-G2}
and \cite[(2.4.3)]{LLC-G2} 
that we have 
\begin{equation}
\Irr^{\mathfrak{s}}(G)=\bigsqcup\limits_{\fs\in \mathcal{B}(G)}\Irr^{\fs}(G)\;\;\text{and}\;\;
\Phi_\enh(G)=\bigsqcup\limits_{\mathfrak{s}^{\vee}\in \mathcal{B}^{\vee}(G)}\Phi_\enh^{\fs^{\vee}}(G).
\end{equation}
When $\pi\in\Irr(G)$ is not supercuspidal, we have $\fs=[L,\sigma]_G$ where $L$ is a proper Levi subgroup of $G$. Recall from \S\ref{Levi-section}, $L$ is conjugate to $\GL_1\times\GL_1\times\GSp_0$ (resp.~$\GL_1\times\GL_1\times\Sp_0$), $\GL_2\times\GSp_0$ (resp.~$\GL_2\times\Sp_0$) and $\GL_1\times \GSp_2$ (resp.~$\GL_1\times\Sp_2$). Let $\varphi_\sigma\colon W'_F\to L^\vee$ be the $L$-parameter attached to $\sigma$ by the Local Langlands Correspondence for $L$ (see \cite{Bushnell-Henniart-GL2, Labesse-Langlands}). 
The $L^\vee$-conjugacy class of $\varphi_\sigma$ is uniquely determined by $\sigma$, and one can easily check that $\varphi_{(\chi\circ\det) \otimes\sigma}=\varphi_\sigma\otimes \varphi_\chi$ (see for example \cite[Proposition 3.4.6]{Kaletha-nonsingular}), i.e.~\cite[Property~3.12(1)]{aubert-xu-Hecke-algebra} holds. This allows us to define 
\begin{equation}
\fs^\vee:=[L^\vee,(\varphi_\sigma,1)]_{G^\vee}.
\end{equation}
Let $\pi\mapsto (\varphi_\pi,\rho_\pi$) be the bijection 
\begin{equation}
\Irr^{\fs}(G)\xrightarrow{\sim}\Phi_\enh^{\fs^{\vee}}(G),
\end{equation}
established in \cite[Main Theorem]{aubert-xu-Hecke-algebra} (for intermediate series) and in \cite{ABPS-KTheory} (for principal series). We have given explicit Kazhdan-Lusztig triples and $L$-packets in $\mathsection$\ref{sec:non-sc}. 

We consider now the case where $\pi$ is supercuspidal. Hence we have $\fs=[G,\pi]_G$ for $\pi$ an irreducible supercuspidal representation of $G$. 

\begin{enumerate}
\item[(a)]
When $\pi$ is non-singular supercuspidal, we define $(\varphi_\pi,\rho_\pi)$
to be the enhanced $L$-parameter constructed in \cite{Kal-reg, Kaletha-nonsingular}. 

\item[(b)]
When $\pi$ is a unipotent supercuspidal representation of $G$, we define $(\varphi_\pi,\rho_\pi)$ to be the enhanced $L$-parameter constructed in \cite{Lu-padicI}, \cite[\S~5.6]{Morris-ENS} and \cite{Solleveld-LLC-unipotent} (see also \cite{solleveld-principal-series-2023}). 

$\bullet$ $x=\delta$: From~\S\ref{d0s} Proposition~\ref{depth-zero-sc-GSp4}\eqref{depth-zero-sc-GSp4(1)}, the reductive quotient $\bbG_{\delta}\cong\GSp_4(\F_q)$ has a unique unipotent cuspidal representation $\theta_{10}$, giving unipotent supercuspidals $\pi_\delta(\theta_{10}\otimes\chi)$ for each character $\chi$. Define the following $L$-parameter $\varphi(\eta;\chi)$ with unipotent $[2^2]$:
\[
\varphi(\eta;\chi):=\diag(\widehat\eta\widehat\chi,\widehat\chi,\widehat\chi,\widehat\eta\widehat\chi).
\]
By case~\ref{gsp4-galois-4b-iv} we have $\cG_\varphi\simeq\GSp_{2,2}(\C)$ and $S_\varphi\simeq\mu_2$. By the discussion in \S\ref{section-mixed-packets-GSp4}, we have $\varphi(\eta_2;\chi)=\varphi_{\delta([\eta_2,\nu\eta_2],\nu^{-1/2}\chi)}$.

\item[(c)]\label{summary-section-d0-mixed-packets}
Let $\pi$ be a non-unipotent depth-zero \textit{singular} supercuspidal representation of $G$. As recalled in~\eqref{eqn:depth zero supercuspidal}, we have
$\pi=\cInd_{G_{[x]}}^G\tau$, where $x$ is a vertex of the Bruhat-Tits building of $G$ and $\tau$ is inflated from a representation in the Lusztig series $\cE(\bbG_x,s)$ with $s\ne 1$. By Proposition~\ref{depth-zero-sc-GSp4}, We have two cases, where $x=\alpha$:

$\bullet$ From \S\ref{d0s} Proposition~\ref{depth-zero-sc-GSp4}\eqref{depth-zero-sc-GSp4(2)}, the reductive quotient $\bbG_{\delta}\cong\GSp_{2,2}(\F_q):=\{(g,h)\in\GL_2(\F_q)\times\GL_2(\F_q):\det(g)=\det(h)\}$ has a rational Lusztig series $\cE(\bbG_{x_1},s)$, where $s=(\lambda,\lambda)$ for some $\lambda\in\F_{q^2}$ such that $\lambda^{q-1}=-1$, with singular cuspidal representations $\omega_\cusp^{\eta_2}$. Let $\pi(\eta_2;\chi)$ denote the compact induction $\cInd_{G_\alpha\Cent}^{\GSp_4}(\omega_\cusp^{\eta_2}\otimes\chi)$, for each unramified character $\chi$ of $F^\times$. There are two (depth-zero) ramified cubic characters $\eta_2$ and $\eta_2'$ of $F^\times$. Define the following $L$-parameter with unipotent $[2^2]$:
\begin{equation} \label{eqn:Lpar_2_i}
    \varphi(\eta_2;\chi)|_{W_F}:=\diag(\widehat\eta_2\widehat\chi,\widehat\chi,\widehat\chi,\widehat\eta_2\widehat\chi).
\end{equation} 
By case~\ref{gsp4-galois-4b-iv} we have $\cG_{\varphi}\simeq\GSp_{2,2}(\C)$, the unipotent element $u$ is regular in $\cG_{\varphi}$, and $S_\varphi\simeq\mu_2$. By the discussion in \S~\ref{section-mixed-packets-GSp4}, we have $\varphi(\eta_2;\chi)=\varphi_{\delta([\eta_2,\nu\eta_2],\nu^{-1/2}\chi)}$, where $\delta([\eta_2,\nu\eta_2],\nu^{-1/2}\chi)$ is the unique discrete series subquotient of $\nu\eta_2\times\eta_2\rtimes\nu^{-1/2}\chi$.

By Proposition~\ref{main-stability-prop-GSp4}, we obtain two $L$-packets of size $2$, for each $i=1,2,3$,
\begin{equation}
    \Pi_{\varphi(\eta_2;\chi)}(G):=\{\pi(\eta_2';\chi),\delta([\eta_2,\nu\eta_2],\nu^{-1/2}\chi)\}.
\end{equation}

$\bullet$ From \S\ref{d0s} Proposition~\ref{depth-zero-sc-GSp4}\eqref{depth-zero-sc-GSp4(3)}, the reductive quotient $\bbG_{\alpha}\cong\GSp_{2,2}(\F_q):=\{(g,h)\in\GL_2(\F_q)\times\GL_2(\F_q):\det(g)=\det(h)\}$ has a cuspidal representation $R_T^\theta\boxtimes R_T^\theta$, where $T\subset\GL_2(\F_q)$ is an anisotropic maximal torus and $\theta$ is a character of $T$ such that $\theta^2$ is regular. This gives rise to the singular supercuspidal $\pi_{(S,\theta\boxtimes\theta)}$, where $\theta$ is a regular character of $E^\times$, for an unramified quadratic extension $E/F$ (see Definition~\ref{regular-supercuspidal-definition}). Let $\varphi_\theta$ be the $L$-parameter which is $\chi^2\oplus\Ind_{W_E}^{W_F}(\theta)$ as a $W_F$-representation, with unipotent $\SL_2(\C)$ acting on $\chi^2$.

Then by the discussion in \S\ref{section-mixed-packets-GSp4}, the $L$-packet is
\[
\Pi_{\varphi(\theta)}=\{\delta(\nu^{1/2}\pi_{(E^\times,\theta)}\rtimes\nu^{-1/2}\chi_1^{-1}),\pi_{(S,\theta\boxtimes\theta\otimes\widehat\chi_1^{-1})}\}.
\]

\item[(d)] Let $\pi$ be a positive-depth singular supercuspidal representation of $G$. As in \S\ref{section-mixed-packets-GSp4}, such a singular supercuspidal representation necessarily arises from a self-dual supercuspidal representation $\pi_u$ of $\PGL_2(F)$, via the following recipe:
\begin{itemize}
    \item $\pi_u$ is a supercuspidal representation of $\GL_2(F)$, which corresponds to a nontrivial representation $\mathrm{JL}(\pi_u)$ of $D^\times/F^\times$ under the Jacquet-Langlands correspondence, for $D/F$ the quaternion algebra. The Kim-Yu type is given by a twisted Levi sequence $(G^0\subset\cdots\subset G^d=D^\times/F^\times)$.
        \item $\pi$ has Kim-Yu type given by the twisted Levi sequence $(G^0\subset\cdots\subset G^d=D^\times/F^\times\subset\GSp_4(F))$.
\end{itemize}
It lives in a mixed $L$-packet together with $\delta(\nu^{1/2}\pi_u\rtimes\nu^{-1/2}\widehat\chi^{-1})$, the essentially tempered sub-representation of $\nu^{1/2}\pi_u\rtimes\nu^{-1/2}\widehat\chi^{-1}$. Letting $\varphi$ be the $L$-parameter $\chi^2\oplus V$ where $V$ is the $W_F$-representation corresponding to $\varphi_u$ under the LLC for $\PGL_2(F)$, with unipotent $[2,1^2]$. Then
\begin{equation}
\Pi_\varphi(G)=\{\pi,\delta(\nu^{1/2}\pi_u\rtimes\nu^{-1/2}\widehat\chi^{-1})\}
\end{equation}

\end{enumerate}
Let $G$ be the group of $F$-rational points of the groups $\Sp_4$ and $\GSp_4$. We suppose that the residual characteristic of $F$ is different from $2$. 
\begin{thm}\label{main-thm} 
The explicit Local Langlands Correspondence defined in \eqref{eqn:LLC} satisfies \eqref{Bernstein-block-isom-intro} for any $\fs\in\fB(G)$,
where $\fs^\vee=[L^\vee,(\varphi_\sigma,\rho_\sigma)]_{G^\vee}$, and also satisfies Properties~{\rm\ref{langlands-class}, \ref{property:size of L-packets},  \ref{property:AMS-conjecture-7.8}, \ref{property:L-packets}, 
\ref{property:generic tempered L-packets}}. Moreover, we have Property~\ref{property: Shahidi-fdeg} for depth-zero $L$-packets.\footnote{we certainly expect this property to hold for positive-depth $L$-packets as well.}

Moreover, Properties~{\rm\ref{langlands-class},
\ref{property:size of L-packets},  \ref{infinitesimal-prop}, \ref{property:AMS-conjecture-7.8},} and \ref{property:L-packets} (and Property~\ref{sp-gsp-functoriality} for $\Sp_4$) uniquely characterize our correspondence. 
\end{thm}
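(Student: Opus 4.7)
The plan is to prove both assertions (verification and uniqueness) by a reduction to Bernstein blocks, handled case-by-case according to the classification of $L$-parameters established in Section~\ref{sec:Galois-side}. For the bijection~\eqref{Bernstein-block-isom-intro}: when $\fs$ is a principal series block, it is obtained from Roche's support-preserving Hecke algebra isomorphism together with the explicit classification of modules over the resulting affine Hecke algebra (via \cite[Tables~2.1 and~5.1]{Ram}), as carried out in Section~\ref{principal-series-section}. When $\fs$ is an intermediate series block, the bijection is immediate from \cite[Main Theorem]{aubert-xu-Hecke-algebra} after verifying its hypotheses for the Levi subgroups of $\GSp_4$ and $\Sp_4$ of types $\GL_2\times\GSp_0$ and $\GL_1\times\GSp_2$, and the matching is made explicit in Section~\ref{sec:non-sc}. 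For supercuspidal blocks, non-singular supercuspidals are assigned to parameters via \cite{Kal-reg,Kaletha-nonsingular}, while singular supercuspidals are placed into mixed $L$-packets using Property~\ref{property:AMS-conjecture-7.8}, as described in Section~\ref{section-mixed-packets-GSp4}.

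For the list of Properties, I would verify each by direct inspection against the construction. Property~\ref{langlands-class} is built into the definition through the Langlands quotient construction, so it suffices to check, block-by-block, that every non-tempered representation appearing in a parabolic induction $\ii_P^G(\sigma\otimes\chi_\nu)$ is assigned the $L$-parameter $\iota_{L^\vee}\circ\varphi_{\sigma\otimes\chi_\nu}$; this is transparent from the case analysis in Sections~\ref{sec:Galois-side} and~\ref{sec:non-sc}. Property~\ref{property:size of L-packets} reduces to comparing the sizes of $\Pi_\varphi(G)$ (read off from Section~\ref{sec:Galois-side}) against $|\Irr(S_\varphi)|$ (read off from the component group computation in each case). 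Property~\ref{property:AMS-conjecture-7.8} holds by construction for non-sc and for principal/intermediate series (since the matching is set up via cuspidal supports), and for singular supercuspidals it is what we use to pin down their parameters. Property~\ref{property:L-packets} follows from inspecting boundedness/discreteness of $\varphi|_{W_F}$ against temperedness/square-integrability of the representations via Theorems~\ref{gsp4-induced} and~\ref{sp4-induced}. Property~\ref{property:generic tempered L-packets} follows from the fact that each bounded packet we construct contains at least one generic representation, paired with the trivial character of $S_\varphi$, consistent with Shahidi's normalization. Property~\ref{property: Shahidi-fdeg} for depth-zero packets is verified by explicit comparison of formal degrees—for instance, the computation~\eqref{fdeg-pi-eta2} of $\fdeg(\delta([\eta_2,\nu\eta_2],1))$ against~\eqref{sc-formal-degree-equation} for $\fdeg(\pi_\alpha(\eta_2;1))$, and analogous comparisons for the unipotent and size-four $\Sp_4$ packets in~\eqref{size-4-Sp4-packet-theta-10}.

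For uniqueness, the argument is a cascade. First, Property~\ref{property:AMS-conjecture-7.8} (which strengthens Property~\ref{infinitesimal-prop}) rigidifies the cuspidal support of $(\varphi_\pi,\rho_\pi)$ to match the cuspidal support of $\pi$ under Bernstein decomposition; this forces the $W_F$-part of each $\varphi_\pi$ up to twisting by the precise unipotent prescribed. Second, Property~\ref{langlands-class} then pins down $\varphi_\pi$ for every non-tempered $\pi$ via the Langlands classification. Third, Property~\ref{property:L-packets} forces the remaining candidates for tempered/square-integrable $\pi$ to land in the correct (bounded/discrete) parameters. Fourth, Property~\ref{property:size of L-packets} forces the internal parametrization of packets to match the irreducible representations of $S_\varphi$. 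For $\Sp_4$, Property~\ref{sp-gsp-functoriality} transports this uniqueness from the $\GSp_4$ side. The only residual ambiguity is among the mixed $L$-packets of the form $\{\delta([\eta_2,\nu\eta_2],1),\pi_\alpha(\eta_2;1)\}$ versus $\{\delta([\eta_2,\nu\eta_2],1),\pi_\alpha(\eta_2';1)\}$, where the pairing of a given ramified quadratic $\eta_2$ with a singular supercuspidal is not forced by Properties~\ref{langlands-class},~\ref{property:size of L-packets},~\ref{infinitesimal-prop},~\ref{property:AMS-conjecture-7.8},~\ref{property:L-packets} alone; these properties only determine that $\pi_\alpha(\eta_2;1)$ must pair with \emph{one} of the two discrete series sharing its cuspidal support on the dual side. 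But Property~\ref{property:AMS-conjecture-7.8} via Definition~\ref{defn:cuspidal support} and the detailed matching of $(v^\varphi,\epsilon^\varphi)$ in the cuspidal support on each side breaks the tie: the local systems $\mathcal{G}_{\sgn}$ on $F_{A_1\times A_1}$ feeding into $\omega_{\cusp}^{\eta_2}$ and into $\omega_{\princ}^{\eta_2}$ match only under the correct pairing.

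The main obstacle is the last point—showing that the AMS-style cuspidal support on both sides pins down the pairing in mixed depth-zero packets. The calculations in Section~\ref{stability-section} (particularly Proposition~\ref{main-stability-prop-GSp4} and Corollary~\ref{2x4-llc}) are designed precisely to supply this pinning, via stability of Harish-Chandra characters in a neighborhood of the semisimple element $s=\diag(1,-1,-1,-1)$. Since the non-stability of $D_{(F_{A_1\times A_1},\mathcal{G}_{\sgn})}$ on $\GSp_{2,2}$ (Lemma~\ref{unstable-D-A1xA1-G-sgn}) is the pivot, the uniqueness proof formally runs: assume some alternative LLC'$\colon\Irr(G)\to\Phi_\enh(G)$ satisfies the listed properties; by the cascade above it agrees with ours on all non-mixed packets, and on the mixed packets it differs at worst by a swap of $\eta_2$ with $\eta_2'$; such a swap is ruled out because the cuspidal-support datum $(\cL^{\varphi},(v^{\varphi},\epsilon^{\varphi}))$ computed from the generalized Green-function decompositions in Section~\ref{stability-section} depends on $\eta_2$ and not just on the underlying $W_F$-representation, and Property~\ref{property:AMS-conjecture-7.8} demands compatibility of exactly this datum across group and Galois sides.
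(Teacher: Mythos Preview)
Your overall architecture---verify the bijection block-by-block, then run a cascade for uniqueness---matches the paper's (terse) argument, and your treatment of the verification half is sound and somewhat more detailed than the paper's own proof.

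There is, however, a genuine gap in your uniqueness argument at the final step. You correctly isolate the residual ambiguity: nothing in Properties~\ref{langlands-class}, \ref{property:size of L-packets}, \ref{infinitesimal-prop}, \ref{property:L-packets} prevents swapping $\pi_\alpha(\eta_2;\chi)$ with $\pi_\alpha(\eta_2';\chi)$ between the packets $\Pi_{\varphi(\eta_2;\chi)}$ and $\Pi_{\varphi(\eta_2';\chi)}$. But your proposed resolution via Property~\ref{property:AMS-conjecture-7.8} does not work. For a supercuspidal $\pi$, the group-side cuspidal support is $\Sc(\pi)=(G,\pi)$, and for a cuspidal enhancement $\rho$ the Galois-side cuspidal support is $\Sc(\varphi,\rho)=(G^\vee,(\varphi,\rho))$; the commutative square in Property~\ref{property:AMS-conjecture-7.8} then reads $\LLC_G(\pi)=\LLC_G(\pi)$, which is tautological and imposes no constraint whatsoever on which $\varphi$ the supercuspidal $\pi_\alpha(\eta_2;\chi)$ is sent to. Your appeal to ``the cuspidal-support datum $(\cL^\varphi,(v^\varphi,\epsilon^\varphi))$ computed from the generalized Green-function decompositions in Section~\ref{stability-section}'' conflates two different objects: the Green-function expansions of Section~\ref{stability-section} compute Harish-Chandra character expansions on the \emph{group} side and feed into the stability distribution $S\Theta_\varphi$, whereas the AMS datum $(\cL^\varphi,(v^\varphi,\epsilon^\varphi))$ lives entirely on the \emph{Galois} side and is determined by the Springer-theoretic support of $(u_\varphi,\rho)$ inside $\cG_\varphi$. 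These are not the same thing, and the latter does not see the character $\eta_2$ through the group-side representation.

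The paper's own proof sidesteps this by simply pointing back to the construction in \S\ref{summary-section-d0-mixed-packets}, where the pairing was fixed using Proposition~\ref{main-stability-prop-GSp4}---that is, via stability (Property~\ref{property:atomic-stability}), which is \emph{not} in the stated uniqueness list. So the paper's proof does not actually supply an argument that the listed five properties alone break this tie either; it treats the tie as already broken by the construction. If you want a self-contained uniqueness proof from the stated properties, you would need to either add Property~\ref{property:atomic-stability} (or Property~\ref{property:generic tempered L-packets}, which via the Gelfand--Graev characterization in Remark~\ref{rho+-characterization} also distinguishes $\eta_2$ from $\eta_2'$) to the list, or supply a different argument---your current one does not do the job.
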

\begin{proof}
By Property~\ref{langlands-class}, the $L$-parameter $\varphi_\pi$ of each irreducible non-tempered representation $\pi$ of $G$ is uniquely determined. For $\GSp_4$, since the $L$-packets of the representations of the proper Levi subgroups of $G$ are all singletons, the $L$-packet $\Pi_{\varphi_\pi}(G)$ is a singleton.
Hence, by Property~\ref{property:size of L-packets}, we have $\rho_\pi=1$.
Thus the map 
\eqref{eqn:LLC} is uniquely characterized for non-tempered representations. 
This finishes the case of non-discrete series tempered representations. 

Property \ref{property:L-packets} holds for supercuspidal $L$-packets by \cite[Lemma~10.1.7]{LLC-G2}.~For the mixed $L$-packets, this can be seen directly from \S\ref{summary-section-d0-mixed-packets} and the lists \textit{loc.cit.}, where we specify which member in a given $L$-packet is generic. 

Since we have already treated the discrete series in \ref{summary-section-d0-mixed-packets}, we are done. For $\Sp_4(F)$, this follows from Property~\ref{sp-gsp-functoriality}. Finally, Property~\ref{property: Shahidi-fdeg} follows from the calculations in Sections~\ref{sec:group-side} and \ref{section-mixed-packets-GSp4}, as in \cite{LLC-G2}. Note that we fix a Whittaker datum for $\Sp_4(F)$ as in \cite{AMS-2023} (see also \cite{solleveld-principal-series-2023}).
\end{proof}

\appendix 
\section{Applications to the Taylor-Wiles method}\label{TW-appendix}
\textit{In this appendix, we adopt notations consistent with standard literature on this topic, though these notations may differ slightly from our main text.} 

We apply the theory developed in \cite{whitmore2022taylorwiles}, which gives a generalized Taylor-Wiles method (see for example \cite{Thorne-BKvanishing}) using input from (explicit) Local Langlands Correspondences (e.g.~\cite{Roberts-Schmidt-book}), except that we are now equipped with our explicit Local Langlands Correspondence \eqref{main-thm-bijection}
\begin{align}
    \LLC_{\mathrm{SX}}:
    \pi \mapsto (V_{\pi},N_{\pi}).
\end{align}

Here we switch to the notation $(V_{\pi},N_{\pi})$ \textit{loc.cit.}~instead of our original notations in \eqref{main-thm-bijection}. We work with $\overline{\Q}_p$-coefficients by fixing an isomorphism $\iota:\C\xrightarrow{\sim}\overline{\Q}_p$ compatible with the choice of $q_v^{1/2}$ as \textit{loc.cit.} As in \cite{BCGP}, we view $\LLC$ as sending an equivalence class of a smooth irreducible $\overline{\Q}_p$-valued representation of $\GSp_4(F_v)$ to a Weil-Deligne representation of $W_{F_v}$ valued in $\GSp(\overline{\Q}_p)$. 

Let $\overline{g}\in \hat{T}(k)$ for a split maximal torus $\hat{T}$ contained in a Borel subgroup $\hat{B}$ of $\hat{G}$. Let $M_{\overline{g}}:=\Cent_{\hat{G}_k}(\overline{g})$ be the scheme-theoretic centralizer of $\overline{g}$. 

Suppose that $q_v\equiv 1\mod p$. 
Our explicit LLC gives the following ``local lemmas'' \cite[Propositions 5.18, 5.19]{whitmore2022taylorwiles}, which are analogues for $\GSp_4$ of \cite[Proposition 3.13]{Thorne-BKvanishing}. 
\begin{prop}[Whitmore]\label{localized-p1-invariant}
    Let $\pi$ be an admissible irreducible $\overline{\Q}_p[G(F_v)]$-module such that $(\pi^{\p_1})_{\mathfrak{n}_1}\neq 0$. Then (1) $\pi$ is a subquotient of a parabolically induced representation $i_B^G\chi$ for some tamely ramified smooth character $\chi: T(F_v)\to \overline{\Z}_p^{\times}$. (2) The characters through which $\Oo[T/T\cap \p_1]^{W_L}$ acts on $\pi^{\p_1}$ are $W_G$-conjugates of $\chi$ and there exists $w\in W_G$ such that $w\chi$ lifts $\overline{\chi}$. (3) The localized invariants $(\pi^{\p_1})_{\mathfrak{n}_1}$ are $1$-dimensional and the action of $\Oo[T/(T\cap \p_1)]^{W_F}$ is through $w\chi$. (4) Finally, if $\LLC_p(\pi)=(V_{\pi},N_{\pi})$ is the Weil-Deligne representation associated to $\pi$ under the Local Langlands Correspondence \eqref{main-thm-bijection}, then $N_{\pi}=0$.
\end{prop}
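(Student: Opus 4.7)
The plan is to prove parts (1)--(3) via the Bernstein decomposition together with the theory of types for principal series à la \cite{Roche-principal-series}, and then to invoke the explicit Local Langlands correspondence of Theorem~\ref{main-thm} (together with the explicit $L$-parameters of \S\ref{principal-series-section}) to control the monodromy in part (4).

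First, since $\mathfrak{p}_1$ contains the pro-$p$ radical of an Iwahori subgroup $I$, the assumption $\pi^{\mathfrak{p}_1}\neq 0$ implies $\pi^{I^+}\neq 0$, so by Borel--Casselman $\pi$ is a subquotient of a parabolically induced representation $i_B^G\chi$ for some smooth character $\chi\colon T(F_v)\to\overline{\Z}_p^\times$. The further condition $\pi^{\mathfrak{p}_1}\neq 0$ forces $\chi$ to be trivial on $T\cap\mathfrak{p}_1$, hence tamely ramified in the required sense. This yields (1). For (2), I would use that the action of $\mathcal{O}[T/(T\cap\mathfrak{p}_1)]^{W_L}$ on $\pi^{\mathfrak{p}_1}$ factors through the Bernstein center of the relevant principal series block and hence, after passing to the $T$-Jacquet module, is described by the $W_G$-orbit of $\chi$; localizing at the maximal ideal $\mathfrak{n}_1$ (which by construction corresponds to $\overline{g}\in\hat T(k)$) selects precisely those Weyl translates $w\chi$ whose reduction is $\overline{\chi}$.

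For (3) the plan is to realize $\pi^{\mathfrak{p}_1}$ as a module over the affine Hecke algebra $\mathcal{H}(G,\mathfrak{p}_1,\chi)$ via Roche's isomorphism (see the discussion around \eqref{Roche-isomorphism}), and to use the explicit classification of modules of this Hecke algebra recalled in \S\ref{principal-series-section} (cases \ref{J1}--\ref{J4} via \cite[Tables~2.1 and~5.1]{Ram}). The hypothesis $q_v\equiv 1\pmod p$ ensures that the scheme-theoretic centralizer $M_{\overline{g}}=\Cent_{\hat G_k}(\overline{g})$ is a torus (any $\nu^{\pm 1/2}$-shift becomes trivial modulo~$p$), so the localization at $\mathfrak{n}_1$ picks out a single Kazhdan--Lusztig block indexed by a character in the $W_G$-orbit, yielding a one-dimensional space on which $\mathcal{O}[T/(T\cap\mathfrak{p}_1)]^{W_L}$ acts by $w\chi$.

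Finally, for (4), I would argue by contradiction. If $N_\pi\neq 0$, then by inspection of the list of reducible induced representations in Theorem~\ref{gsp4-induced} and the explicit $L$-parameters assigned in \S\ref{principal-series-section}, $\pi$ must be a Steinberg-type subquotient in one of the cases \ref{gsp4-1aiii}, \ref{gsp4-1aiv}, \ref{gsp4-1bi}, \ref{gsp4-1bii}, \ref{gsp4-1biii} (or one of its Weyl translates), in which a pair of characters of $T$ satisfies a rigid $\nu^{\pm 1}$- or $\nu^{\pm 1/2}$-shift at a simple coroot. Under $q_v\equiv 1\pmod p$ such a shift becomes trivial modulo $p$, so the residual character $\overline{\chi}$ would lie on a proper wall of the reducibility locus, forcing the centralizer of $\overline{g}$ to be strictly larger than a torus and contradicting the one-dimensionality established in~(3). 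Hence $N_\pi=0$. The main obstacle in this plan is part~(4): one must verify case-by-case that each reducibility locus of Theorem~\ref{gsp4-induced} which gives rise to nontrivial monodromy is incompatible with the existence of a one-dimensional $\mathfrak{n}_1$-localized $\mathfrak{p}_1$-invariant subspace under $q_v\equiv 1\pmod p$, using our explicit matching between Kazhdan--Lusztig triples and $L$-parameters from \S\ref{sec:non-sc}.
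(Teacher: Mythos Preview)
Your plan for (1)--(3) is reasonable in spirit, but the paper simply cites \cite[Lemma~5.16]{whitmore2022taylorwiles} for these parts, so the argument you sketch is essentially a reproof of Whitmore's lemma rather than the paper's own contribution.

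There is, however, a genuine error in your treatment of (3) and (4). You claim that the hypothesis $q_v\equiv 1\pmod p$ forces $M_{\overline g}=\Cent_{\hat G_k}(\overline g)$ to be a torus. This is false: $\overline g\in\hat T(k)$ is a fixed element coming from the residual Galois data, and its centralizer in $\hat G_k$ is determined by $\overline g$ alone, not by $q_v$. The paper's proof of (4) makes this explicit by running a case analysis on the conjugacy type of $M_{\overline g}$: it can be the maximal torus, a Levi of the Klingen parabolic $\GL_2(\C)\times\GSp_0(\C)$, a Levi of the Siegel parabolic $\GL_1(\C)\times\GSp_2(\C)$, or all of $G$. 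The subgroup $L$ (and hence the Hecke algebra $\mathcal O[T/(T\cap\mathfrak p_1)]^{W_L}$) depends on this.

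Because your (3) rests on the torus assumption, your contradiction argument for (4) also collapses: you argue that a nontrivial $N_\pi$ would enlarge the centralizer of $\overline g$ beyond a torus, contradicting one-dimensionality. But one-dimensionality of $(\pi^{\mathfrak p_1})_{\mathfrak n_1}$ does not require $M_{\overline g}$ to be a torus, and the paper never uses such an implication. Instead, for each shape of $M_{\overline g}$ the paper identifies the dual Levi $L$, shows via the geometric lemma of \cite{Bernstein-Zelevinsky} together with \cite[Lemma~5.15]{whitmore2022taylorwiles} that if $\pi$ were of Steinberg type (e.g.\ $\chi\St_{\GL_2}\rtimes\chi'$ or $\chi\rtimes\chi'\St_{\GSp_2}$) then already $(\pi^{\mathfrak p_1})_{\mathfrak n_1}=0$, and then reads off $N_\pi=0$ from the explicit $L$-parameters in \S\ref{sec:Galois-side} cases~\ref{gsp4-galois-4a}, \ref{gsp4-galois-4c}, \ref{gsp4-galois-4d}, \ref{gsp4-galois-4e}. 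To repair your argument you would need to replace the torus assumption with this case split and carry out the Jacquet-module computation that actually excludes the Steinberg-type constituents.
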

\begin{proof}
Statements (1)--(3) follow from \cite[Lemma 5.16]{whitmore2022taylorwiles}. To verify (4), one works case by case according to $M_{\overline{g}}$ up to conjuacy. 
\begin{itemize}
    \item Suppose that $\overline{g}$ is regular semisimple. In this case, $L$ is a maximal torus and $\pi$ is an irreducible principal series $\chi_1\times\chi_2\rtimes\sigma$. Then by \S\ref{sec:Galois-side} Case~\eqref{gsp4-galois-4e}, we have $N_{\pi}=0$.
    \item Suppose that $M_{\overline{g}}$ is conjugate to a Levi subgroup of the Klingen parabolic subgroup $\GL_2(\C)\times \GSp_0(\C)$. In this case, we claim that $\pi$ cannot be conjugate to a representation of the form $\chi\St_{\GL_2}\rtimes\chi'$ for some smooth characters $\chi$ and $\chi'$, otherwise $(\pi^{\p_1})_{\mathfrak{n}_1}=0$. This can be seen by first applying the geometric lemma in \cite{Bernstein-Zelevinsky} along with \cite[Lemma 5.15]{whitmore2022taylorwiles}. Then by our classification \S\ref{sec:Galois-side} Case~\eqref{gsp4-galois-4c}, we have $N_{\pi}=0$. 
    \item Suppose that $M_{\overline{g}}$ is conjugate to a Levi subgroup of the Siegel parabolic $\GL_1(\C)\times\GSp_2(\C)$. In this case, $L$ is conjugate to a Levi subgroup of the Klingen parabolic $\GL_1(F)\times\GSp_0(F)$. We claim that $\pi$ cannot be conjugate to a representation $\chi\rtimes\chi'\St_{\GSp_2}$; otherwise, similar to the previous bullet point, we get $(\pi^{\p_1})_{\mathfrak{n}_1}=0$ which is a contradiction. Then by \S\ref{sec:Galois-side} Case~\eqref{gsp4-galois-4d}, we have $N_{\pi}=0$.
    \item The remaining case is when $L=G$. By \S\ref{sec:Galois-side} Case~\eqref{gsp4-galois-4a}, we have $N_{\pi}=0$. 
\end{itemize}
\end{proof}

The following proposition is an analogue of Proposition \ref{localized-p1-invariant} for representations with nonzero localized $\p$-invariants (instead of $\pi_1$-invariants). 
\begin{prop}[Whitmore]\label{localized-p-invariant}
    Let $\pi$ be an admissible irreducible $\overline{\Q}_p[G(F_v)]$-module such that $(\pi^{\p})_{\mathfrak{n}_0}\neq 0$. Then (1) $\pi$ is a subquotient of a parabolically induced representation $i_B^G\chi$ for some tamely ramified smooth character $\chi: T(F_v)\to \overline{\Z}_p^{\times}$. (2) The characters through which $\Oo[T/T(\Oo_{F_v})]^{W_L}$ acts on $\pi^{\p}$ are $W_G$-conjugates of $\chi$ and there exists $w\in W_G$ such that $w\chi$ lifts $\overline{\chi}$. (3) The localized invariants $(\pi^{\p})_{\mathfrak{n}_0}$ are $1$-dimensional and the action of $\Oo[T/(T(\Oo_{F_v}))]^{W_L}$ is through $w\chi$. (4) Finally, if $\LLC_p(\pi)=(V_{\pi},N_{\pi})$ is the Weil-Deligne representation associated to $\pi$ under the Local Langlands Correspondence \eqref{main-thm-bijection}, then $N_{\pi}=0$ and (5) there is an isomorphism of $\Oo[T/T(\Oo_{F_v})]^{W_G}$-modules $(\pi^{\p})_{\mathfrak{n}_0}\xrightarrow{\sim} \pi^{\mathfrak{g}}$.
\end{prop}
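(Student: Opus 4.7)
The proof runs in close parallel to that of Proposition~\ref{localized-p1-invariant}, replacing the pro-$p$ Iwahori $\p_1$ by the Iwahori $\p$, with the added refinement (5). The plan is to deduce (1)--(3) from Iwahori--Hecke theory, (4) from the explicit $L$-parameter classification carried out in \S\ref{sec:Galois-side}, and (5) from a Casselman-type comparison between Iwahori-fixed and spherical vectors after localization.

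For parts (1)--(3), since $(\pi^{\p})_{\mathfrak n_0}\ne 0$ and $\p$ is an Iwahori subgroup, the Borel--Casselman equivalence places $\pi$ in the principal series Bernstein block, so $\pi$ is a subquotient of $i_B^G\chi$ for some tamely ramified $\chi\colon T(F_v)\to\overline{\Z}_p^\times$; this gives (1). The subring $\Oo[T/T(\Oo_{F_v})]^{W_L}$ embeds into the (appropriately localized) center of the Iwahori--Hecke algebra, and its action on $\pi^{\p}$ is through $W_G$-conjugates of $\chi$. The nonvanishing assumption on the localization at $\mathfrak n_0$ forces at least one such conjugate to lift $\overline{\chi}$; a standard argument as in \cite[Lemma~5.16]{whitmore2022taylorwiles} then shows that $(\pi^{\p})_{\mathfrak n_0}$ is one-dimensional and that the action is through the unique lift $w\chi$. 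This yields (2)--(3).

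For (4), we case-analyze on the conjugacy class of the Levi $L\subset\hat G$ supporting $\chi$, exactly as in the proof of Proposition~\ref{localized-p1-invariant}. When $L=\hat T$ is regular, $\pi$ is an irreducible principal series and \S\ref{sec:Galois-side} Case~\eqref{gsp4-galois-4e} gives $N_\pi=0$. When $L$ is a Levi of the Klingen (resp.~Siegel) parabolic, the geometric lemma of \cite{Bernstein-Zelevinsky} together with \cite[Lemma~5.15]{whitmore2022taylorwiles} rules out a discrete subquotient of the shape $\chi\St_{\GL_2}\rtimes\chi'$ (resp.~$\chi\rtimes\chi'\St_{\GSp_2}$), since such a subquotient would force $(\pi^{\p})_{\mathfrak n_0}=0$; the remaining reducibility patterns are covered by Case~\eqref{gsp4-galois-4c} (resp.~Case~\eqref{gsp4-galois-4d}), and yield $N_\pi=0$. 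The case $L=\hat G$ is handled by Case~\eqref{gsp4-galois-4a}, again giving $N_\pi=0$.

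Part (5) is the new ingredient. Since $\overline{\chi}$ is regular with respect to the $W_G$-action, the localization at $\mathfrak n_0$ isolates a single $W_G$-orbit of characters lifting it. By part (4) combined with the classification of unramified constituents, $\pi$ admits a (necessarily one-dimensional) line of $\mathfrak g$-fixed vectors, and the natural inclusion $\pi^{\mathfrak g}\hookrightarrow\pi^{\p}$ lands inside the $\mathfrak n_0$-localized summand. The hard part will be to verify that this inclusion is in fact an isomorphism of $\Oo[T/T(\Oo_{F_v})]^{W_G}$-modules after localization, not merely of vector spaces. I expect this to be the main obstacle, since the Iwahori--Hecke action on $\pi^{\p}$ and the spherical Hecke action on $\pi^{\mathfrak g}$ are related by Casselman's formula (or equivalently the Macdonald formula for principal series), which involves nontrivial $c$-functions; one must check that these $c$-functions are invertible at the maximal ideal $\mathfrak n_0$ under the regularity hypothesis on $\overline{\chi}$, at which point the comparison becomes an isomorphism.
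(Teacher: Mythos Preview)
Your argument for parts (1)--(4) is sound and parallels the previous proposition as intended, though it is considerably more elaborate than the paper's own proof, which simply points to the explicit classification of Iwahori-spherical representations and their $L$-parameters carried out in \S\ref{principal-series-section}. That classification, via Kazhdan--Lusztig triples, already records for each constituent of an unramified principal series both the nilpotent $N_\pi$ and the dimension of the Iwahori-fixed space, so (1)--(4) can be read off directly rather than re-derived.

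There is, however, a genuine issue in your treatment of (5). You set up the natural inclusion $\pi^{\mathfrak g}\hookrightarrow\pi^{\p}$ and then worry that comparing the two module structures requires controlling the $c$-functions in Casselman's formula. This concern is misplaced: the ring in question, $\Oo[T/T(\Oo_{F_v})]^{W_G}$, is by Bernstein's theorem precisely the \emph{center} of the Iwahori--Hecke algebra, and via the Satake isomorphism it is identified with the spherical Hecke algebra. A central element therefore acts on all of $\pi^{\p}$ by a single scalar, and in particular acts on the subspace $\pi^{\mathfrak g}$ by that same scalar. The $c$-functions enter only when one tries to compare the action of \emph{non-central} Iwahori--Hecke operators with the Jacquet module; they play no role here. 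Once you know both $(\pi^{\p})_{\mathfrak n_0}$ and $\pi^{\mathfrak g}$ are one-dimensional (which you have from (3) and from the fact that $N_\pi=0$ forces $\pi$ to be the spherical constituent in the explicit list of \S\ref{principal-series-section}), and that the inclusion after localization is nonzero, the isomorphism of modules is immediate. So your ``main obstacle'' dissolves, and the argument can be completed without the analytic input you anticipated.
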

\begin{proof}
  Representations with Iwahori-fixed vectors are classified in \S\ref{principal-series-section}, and we attach explicit $L$-parameters.
\end{proof}
Proposition \ref{localized-p1-invariant} is then applied in \cite[Theorem 7.7]{whitmore2022taylorwiles} to a certain $\pi_v$ for some cuspidal automorphic representation $\pi$ of $\GSp_4(\A_f)$ and $v\in Q$ a Taylor-Wiles place, where $Q$ is part of a Taylor-Wiles datum $(Q,\{(\hat{T}_v,\hat{B}_v)\}_{v\in Q})$ as in 
\cite[Definition 3.9]{whitmore2022taylorwiles}, thus giving the existence of Galois representations associated to a classical weight cuspidal automorphic representation $\pi$. Combined with the patching criterion of \cite[Proposition 7.10.1]{BCGP}, one can then construct the patched modules as in \cite{BCGP} and \cite[7.11]{whitmore2022taylorwiles} to deduce modularity lifting theorems for abelian surfaces.

\bibliographystyle{amsalpha}
\bibliography{bibfile}

\providecommand{\bysame}{\leavevmode\hbox to3em{\hrulefill}\thinspace}
\providecommand{\MR}{\relax\ifhmode\unskip\space\fi MR }
\providecommand{\MRhref}[2]{%
  \href{http://www.ams.org/mathscinet-getitem?mr=#1}{#2}
}
\providecommand{\href}[2]{#2}
\begin{thebibliography}{ABPS17b}

\bibitem[ABPS16a]{ABPS-KTheory}
Anne-Marie Aubert, Paul Baum, Roger Plymen, and Maarten Solleveld,
  \emph{Geometric structure for the principal series of a split reductive
  {$p$}-adic group with connected centre}, J. Noncommut. Geom. \textbf{10}
  (2016), no.~2, 663--680. \MR{3519048}

\bibitem[ABPS16b]{ABPS-SL}
\bysame, \emph{The local {L}anglands correspondence for inner forms of
  {$\SL_n$}}, Res. Math. Sci. \textbf{3} (2016), 2--34. \MR{3579297}

\bibitem[ABPS17a]{ABPS-CM}
\bysame, \emph{Conjectures about {$p$}-adic groups and their noncommutative
  geometry}, Around {L}anglands correspondences, Contemp. Math., vol. 691,
  Amer. Math. Soc., Providence, RI, 2017, pp.~15--51. \MR{3666049}

\bibitem[ABPS17b]{ABPS-LMS}
\bysame, \emph{The principal series of {$p$}-adic groups with disconnected
  center}, Proc. Lond. Math. Soc. (3) \textbf{114} (2017), no.~5, 798--854.
  \MR{3653247}

\bibitem[AK07]{Adler-Korman-LCE}
Jeffrey~D. Adler and Jonathan Korman, \emph{The local character expansion near
  a tame, semisimple element}, Amer. J. Math. \textbf{129} (2007), no.~2,
  381--403. \MR{2306039}

\bibitem[AMS18]{AMS18}
Anne-Marie Aubert, Ahmed Moussaoui, and Maarten Solleveld,
  \emph{Generalizations of the {S}pringer correspondence and cuspidal
  {L}anglands parameters}, Manuscripta Math. \textbf{157} (2018), no.~1-2,
  121--192. \MR{3845761}

\bibitem[AMS22]{AMS-2023}
Anne-Marie Aubert, Ahmed Moussaoui, and Maarten Solleveld, \emph{Affine hecke
  algebras for classical $p$-adic groups}, 2022.

\bibitem[Art06]{Arthur-Note}
James Arthur, \emph{A note on {$L$}-packets}, Pure Appl. Math. Q. \textbf{2}
  (2006), no.~1, Special Issue: In honor of John H. Coates. Part 1, 199--217.
  \MR{2217572}

\bibitem[Art13]{Arthur-classification}
\bysame, \emph{The endoscopic classification of representations}, American
  Mathematical Society Colloquium Publications, vol.~61, American Mathematical
  Society, Providence, RI, 2013, Orthogonal and symplectic groups. \MR{3135650}

\bibitem[AS06]{asgari-shahidi}
Mahdi Asgari and Freydoon Shahidi, \emph{Generic transfer for general spin
  groups}, Duke Math. J. \textbf{132} (2006), no.~1, 137--190. \MR{2219256}

\bibitem[Asg02]{asgari}
Mahdi Asgari, \emph{Local {$L$}-functions for split spinor groups}, Canad. J.
  Math. \textbf{54} (2002), no.~4, 673--693. \MR{1913914}

\bibitem[AX22a]{LLC-G2}
Anne-Marie Aubert and Yujie Xu, \emph{The explicit local langlands
  correspondence for $g_2$}, 2022.

\bibitem[AX22b]{aubert-xu-Hecke-algebra}
\bysame, \emph{{H}ecke algebras for $p$-adic reductive groups and {L}ocal
  {L}anglands {C}orrespondence for {B}ernstein blocks}, arXiv:2202.01305
  (2022), 37pp.

\bibitem[BCGP21]{BCGP}
George Boxer, Frank Calegari, Toby Gee, and Vincent Pilloni, \emph{Abelian
  surfaces over totally real fields are potentially modular}, Publ. Math. Inst.
  Hautes \'{E}tudes Sci. \textbf{134} (2021), 153--501. \MR{4349242}

\bibitem[BH06]{Bushnell-Henniart-GL2}
Colin~J. Bushnell and Guy Henniart, \emph{The local {L}anglands conjecture for
  {$\rm GL(2)$}}, Grundlehren der mathematischen Wissenschaften [Fundamental
  Principles of Mathematical Sciences], vol. 335, Springer-Verlag, Berlin,
  2006. \MR{2234120}

\bibitem[BM97]{Barbasch-Moy-LCE}
Dan Barbasch and Allen Moy, \emph{Local character expansions}, Ann. Sci.
  \'{E}cole Norm. Sup. (4) \textbf{30} (1997), no.~5, 553--567. \MR{1474804}

\bibitem[Bon11]{bonnafe-sl2}
C\'{e}dric Bonnaf\'{e}, \emph{Representations of {${\rm SL}_2(\Bbb F_q)$}},
  Algebra and Applications, vol.~13, Springer-Verlag London, Ltd., London,
  2011. \MR{2732651}

\bibitem[Bor79]{Borel-Corvallis}
Armand Borel, \emph{Automorphic {$L$}-functions}, Automorphic forms,
  representations and {$L$}-functions ({P}roc. {S}ympos. {P}ure {M}ath.,
  {O}regon {S}tate {U}niv., {C}orvallis, {O}re., 1977), {P}art 2, Proc. Sympos.
  Pure Math., XXXIII, Amer. Math. Soc., Providence, R.I., 1979, pp.~27--61.
  \MR{546608}

\bibitem[BT84]{Bruhat-Tits-II}
{Fran\c cois} Bruhat and Jacques Tits, \emph{Groupes r\'eductifs sur un corps
  local, {II}: {D}onn\'ees radicielles valu\'ees}, Publ. Math. I.H.E.S.
  \textbf{60} (1984), 197--376. \MR{86c:20042}

\bibitem[BZ77]{Bernstein-Zelevinsky}
Joseph Bernstein and Andrei Zelevinsky, \emph{Induced representations of
  reductive {$p$}-adic groups. {I}}, Ann. Sci. \'{E}cole Norm. Sup. (4)
  \textbf{10} (1977), no.~4, 441--472. \MR{579172}

\bibitem[Car93]{Carter-book}
Roger~W. Carter, \emph{Finite groups of {L}ie type}, Wiley Classics Library,
  John Wiley \& Sons, Ltd., Chichester, 1993, Conjugacy classes and complex
  characters, Reprint of the 1985 original, A Wiley-Interscience Publication.
  \MR{1266626}

\bibitem[CKK12]{Ciubotaru-Kato-Kato}
Dan Ciubotaru, Midori Kato, and Syu Kato, \emph{On characters and formal
  degrees of discrete series of affine {H}ecke algebras of classical types},
  Invent. Math. \textbf{187} (2012), no.~3, 589--635. \MR{2891878}

\bibitem[CM84]{Collingwood-McGovern}
David~H. Collingwood and William~M. McGovern, \emph{Nilpotent {O}rbits {I}n
  {S}emisimple {L}ie {A}lgebra: {A}n {I}ntroduction}, Nilpotent {O}rbits {I}n
  {S}emisimple {L}ie {A}lgebra: {A}n {I}ntroduction, Travaux en Cours, Hermann,
  Paris, 1984, Edited by P. Deligne, pp.~1--32. \MR{771671}

\bibitem[CM93]{collingwood}
\bysame, \emph{Nilpotent orbits in semisimple {Lie} algebras}, New York, NY:
  Van Nostrand Reinhold Company, 1993 (English).

\bibitem[DeB02]{DeBacker-parametrizing-nilpotent-orbits}
Stephen DeBacker, \emph{Parametrizing nilpotent orbits via {B}ruhat-{T}its
  theory}, Ann. of Math. (2) \textbf{156} (2002), no.~1, 295--332. \MR{1935848}

\bibitem[DeB06]{DeBacker-parameterizing-conjugacy-classes}
\bysame, \emph{Parameterizing conjugacy classes of maximal unramified tori via
  {B}ruhat-{T}its theory}, Michigan Math. J. \textbf{54} (2006), no.~1,
  157--178. \MR{2214792}

\bibitem[DK06]{DeBacker-Kazhdan-G2}
Stephen DeBacker and David Kazhdan, \emph{Stable distributions supported on the
  nilpotent cone for the group {$G_2$}}, The unity of mathematics, Progr.
  Math., vol. 244, Birkh\"{a}user Boston, Boston, MA, 2006, pp.~205--262.
  \MR{2181807}

\bibitem[DL76]{Deligne-Lusztig}
Pierre Deligne and George Lusztig, \emph{Representations of reductive groups
  over finite fields}, Ann. of Math. \textbf{103} (1976), no.~1, 103--161.

\bibitem[DR09]{DeBacker-Reeder}
Stephen DeBacker and Mark Reeder, \emph{Depth-zero supercuspidal {$L$}-packets
  and their stability}, Ann. of Math. (2) \textbf{169} (2009), no.~3, 795--901.
  \MR{2480618}

\bibitem[DS00]{Debacker-Sally-germs}
Stephen DeBacker and Paul~J. Sally, Jr., \emph{Germs, characters, and the
  {F}ourier transforms of nilpotent orbits}, The mathematical legacy of
  {H}arish-{C}handra ({B}altimore, {MD}, 1998), Proc. Sympos. Pure Math.,
  vol.~68, Amer. Math. Soc., Providence, RI, 2000, pp.~191--221. \MR{1767897}

\bibitem[FOS20]{FOS}
Yongqi Feng, Eric Opdam, and Maarten Solleveld, \emph{Supercuspidal unipotent
  representations: {L}-packets and formal degrees}, J. \'{E}c. polytech. Math.
  \textbf{7} (2020), 1133--1193. \MR{4167790}

\bibitem[GK81]{Gelbart-Knapp-SLn-1981}
S.~S. Gelbart and A.~W. Knapp, \emph{Irreducible constituents of principal
  series of {${\rm SL}_{n}(k)$}}, Duke Math. J. \textbf{48} (1981), no.~2,
  313--326. \MR{620252}

\bibitem[GK82]{Gelbart-Knapp-SLn-1982}
\bysame, \emph{{$L$}-indistinguishability and {$R$} groups for the special
  linear group}, Adv. in Math. \textbf{43} (1982), no.~2, 101--121. \MR{644669}

\bibitem[Hai14]{Haines-Bst}
Thomas~J. Haines, \emph{The stable {B}ernstein center and test functions for
  {S}himura varieties}, Automorphic forms and {G}alois representations. {V}ol.
  2, London Math. Soc. Lecture Note Ser., vol. 415, Cambridge Univ. Press,
  Cambridge, 2014, pp.~118--186. \MR{3444233}

\bibitem[HC99]{harish-chandra-local-character}
Harish-Chandra, \emph{Admissible invariant distributions on reductive
  {$p$}-adic groups}, University Lecture Series, vol.~16, American Mathematical
  Society, Providence, RI, 1999, With a preface and notes by Stephen DeBacker
  and Paul J. Sally, Jr. \MR{1702257}

\bibitem[Hen00]{Henniart-LLC-GLn}
Guy Henniart, \emph{Une preuve simple des conjectures de {L}anglands pour
  {$\GL(n)$} sur un corps {$p$}-adique}, Invent. Math. \textbf{139} (2000),
  no.~2, 439--455. \MR{1738446}

\bibitem[HII08]{Hiraga-Ichino-Ikeda}
Kaoru Hiraga, Atsushi Ichino, and Tamotsu Ikeda, \emph{Formal degrees and
  adjoint {$\gamma$}-factors}, J. Amer. Math. Soc. \textbf{21} (2008), no.~1,
  283--304. \MR{2350057}

\bibitem[HS12]{Hiraga-Saito}
Kaoru Hiraga and Hiroshi Saito, \emph{On {$L$}-packets for inner forms of
  {$\SL_n$}}, Mem. Amer. Math. Soc. \textbf{215} (2012), no.~1013, vi+97.
  \MR{2918491}

\bibitem[HT01]{Harris-Taylor}
Michael Harris and Richard Taylor, \emph{The geometry and cohomology of some
  simple {S}himura varieties}, Annals of Mathematics Studies, vol. 151,
  Princeton University Press, Princeton, NJ, 2001, With an appendix by Vladimir
  G. Berkovich. \MR{1876802}

\bibitem[Kal16]{Kaletha-LLC}
Tasho Kaletha, \emph{The local {L}anglands conjectures for non-quasi-split
  groups}, Families of automorphic forms and the trace formula, Simons Symp.,
  Springer, [Cham], 2016, pp.~217--257. \MR{3675168}

\bibitem[Kal19]{Kal-reg}
\bysame, \emph{Regular supercuspidal representations}, J. Amer. Math. Soc.
  \textbf{32} (2019), no.~4, 1071--1170. \MR{4013740}

\bibitem[Kal21]{Kaletha-nonsingular}
\bysame, \emph{Supercuspidal {$L$}-packets}, arXiv:1912.03274 (2021), 83.

\bibitem[Kal22]{kaletha-survey-2022}
Tasho Kaletha, \emph{Representations of reductive groups over local fields},
  2022.

\bibitem[Kos59]{Kostant}
Bertram Kostant, \emph{The principal three-dimensional subgroup and the {B}etti
  numbers of a complex simple {L}ie group}, Amer. J. Math. \textbf{81} (1959),
  973--1032. \MR{0114875}

\bibitem[KY17]{Kim-Yu-types}
Ju-Lee Kim and Jiu-Kang Yu, \emph{Construction of tame types}, Representation
  theory, number theory, and invariant theory, Progr. Math., vol. 323,
  Birkh\"{a}user/Springer, Cham, 2017, pp.~337--357. \MR{3753917}

\bibitem[LL79]{Labesse-Langlands}
J.-P. Labesse and R.~P. Langlands, \emph{{$L$}-indistinguishability for {${\rm
  SL}(2)$}}, Canadian J. Math. \textbf{31} (1979), no.~4, 726--785. \MR{540902}

\bibitem[LS20]{lust-stevens}
Jaime Lust and Shaun Stevens, \emph{On depth zero {L}-packets for classical
  groups}, Proc. Lond. Math. Soc. (3) \textbf{121} (2020), no.~5, 1083--1120.
  \MR{4118530}

\bibitem[Lus77]{Lus77}
G.~Lusztig, \emph{Irreducible representations of finite classical groups},
  Invent. Math. \textbf{43} (1977), 125--175 (English).

\bibitem[Lus78]{Lus78}
George Lusztig, \emph{Representations of finite {Chevalley} groups.
  {Expository} lectures from the {CBMS} regional conference held at {Madison},
  {Wisconsin}, {August} 8-12, 1977}, Reg. Conf. Ser. Math., vol.~39, American
  Mathematical Society (AMS), Providence, RI, 1978 (English).

\bibitem[Lus84a]{Lusztig-characters-Princeton-book}
\bysame, \emph{Characters of reductive groups over a finite field}, Annals of
  Mathematics Studies, vol. 107, Princeton University Press, Princeton, NJ,
  1984. \MR{742472}

\bibitem[Lus84b]{Lusztig-IC}
\bysame, \emph{Intersection cohomology complexes on a reductive group}, Invent.
  Math. \textbf{75} (1984), no.~2, 205--272. \MR{732546}

\bibitem[Lus95]{Lu-padicI}
\bysame, \emph{Classification of unipotent representations of simple {$p$}-adic
  groups}, Internat. Math. Res. Notices (1995), no.~11, 517--589. \MR{1369407}

\bibitem[Moe11]{Moeglin}
Colette Moeglin, \emph{Multiplicit\'{e} $1$ dans les paquets d'{A}rthur aux
  places {$p$}-adiques}, On certain {$L$}-functions, Clay Math. Proc., vol.~13,
  Amer. Math. Soc., Providence, RI, 2011, pp.~333--374. \MR{2767522}

\bibitem[Mor96]{Morris-ENS}
Lawrence Morris, \emph{Tamely ramified supercuspidal representations}, Ann.
  Sci. \'Ecole Norm. Sup. \textbf{29} (1996), no.~5, 639--667. \MR{1399618}

\bibitem[Mou17]{Moussaoui-Bernstein-center}
Ahmed Moussaoui, \emph{Centre de {B}ernstein dual pour les groupes classiques},
  Represent. Theory \textbf{21} (2017), 172--246. \MR{3694312}

\bibitem[MP96]{Moy-PrasadII}
Allen Moy and Gopal Prasad, \emph{Jacquet functors and unrefined minimal
  {$K$}-types}, Comment. Math. Helvetici \textbf{71} (1996), no.~3, 98--121.
  \MR{1371680}

\bibitem[Oha21]{Ohara-fdegr}
Kazuma Ohara, \emph{On the formal degree conjecture for non-singular
  supercuspidal representations}, 2021.

\bibitem[Ram03]{Ram}
Arun Ram, \emph{Representations of rank two affine {H}ecke algebras}, Advances
  in algebra and geometry ({H}yderabad, 2001), Hindustan Book Agency, New
  Delhi, 2003, pp.~57--91. \MR{1986143}

\bibitem[Ree02]{Reeder-isogeny}
Mark Reeder, \emph{Isogenies of {H}ecke algebras and a {L}anglands
  correspondence for ramified principal series representations}, Represent.
  Theory \textbf{6} (2002), 101--126. \MR{1915088}

\bibitem[Roc98]{Roche-principal-series}
Alan Roche, \emph{Types and {H}ecke algebras for principal series
  representations of split reductive {$p$}-adic groups}, Ann. Sci. \'{E}cole
  Norm. Sup. (4) \textbf{31} (1998), no.~3, 361--413. \MR{1621409}

\bibitem[RS07]{Roberts-Schmidt-book}
Brooks Roberts and Ralf Schmidt, \emph{Local newforms for {GS}p(4)}, Lecture
  Notes in Mathematics, vol. 1918, Springer, Berlin, 2007. \MR{2344630}

\bibitem[Sch13]{Scholze-LLC}
Peter Scholze, \emph{The local {L}anglands correspondence for {$\GL_n$} over
  $p$-adic fields}, Invent. Math. \textbf{192} (2013), no.~3, 663--715.
  \MR{3049932}

\bibitem[Sch21]{Schwein}
David Schwein, \emph{Formal degree of regular supercuspidals}, 2021.

\bibitem[Ser77]{serre-finite-groups}
Jean-Pierre Serre, \emph{Linear representations of finite groups}, Graduate
  Texts in Mathematics, Vol. 42, Springer-Verlag, New York-Heidelberg, 1977,
  Translated from the second French edition by Leonard L. Scott. \MR{0450380}

\bibitem[Sha90]{ShahidiAnnalsAproofof}
Freydoon Shahidi, \emph{A proof of {L}anglands' conjecture on {P}lancherel
  measures; complementary series for {$p$}-adic groups}, Ann. of Math. (2)
  \textbf{132} (1990), no.~2, 273--330. \MR{1070599}

\bibitem[Sha91]{sha91}
\bysame, \emph{Langlands' conjecture on {Plancherel} measures for
  {{\(p\)}}-adic groups}, Harmonic analysis on reductive groups. Proceedings of
  a conference, held at Bowdoin College in Brunswick, ME, USA, from July 31 to
  August 11, 1989, Boston, MA etc.: Birkh{\"a}user, 1991, pp.~277--295
  (English).

\bibitem[Sol18]{Solleveld-LLC-unipotent}
Maarten Solleveld, \emph{A local {L}anglands correspondence for unipotent
  representations}, 2018.

\bibitem[Sol22]{solleveld-hecke}
\bysame, \emph{Endomorphism algebras and {H}ecke algebras for reductive
  {$p$}-adic groups}, J. Algebra \textbf{606} (2022), 371--470. \MR{4432237}

\bibitem[Sol23]{solleveld-principal-series-2023}
Maarten Solleveld, \emph{On principal series representations of quasi-split
  reductive p-adic groups}, 2023.

\bibitem[ST93]{sally-tadic}
Paul~J. Sally, Jr. and Marko Tadi\'{c}, \emph{Induced representations and
  classifications for {${\rm GSp}(2,F)$} and {${\rm Sp}(2,F)$}}, M\'{e}m. Soc.
  Math. France (N.S.) (1993), no.~52, 75--133. \MR{1212952}

\bibitem[SX23]{G2-stability}
Kenta Suzuki and Yujie Xu, \emph{The explicit local langlands correspondence
  for $g_2$ ii: character formulas and stability}, 2023.

\bibitem[Tad94]{Tadic-symplectic-1994}
Marko Tadi\'{c}, \emph{Representations of {$p$}-adic symplectic groups},
  Compositio Math. \textbf{90} (1994), no.~2, 123--181. \MR{1266251}

\bibitem[Tho22]{Thorne-BKvanishing}
Jack~A. Thorne, \emph{On the vanishing of adjoint {B}loch--{K}ato {S}elmer
  groups of irreducible automorphic {G}alois representations}, arXiv:2207.04925
  (2022), 32 pp.

\bibitem[Vog93]{Vogan-LLC-1993}
David~A. Vogan, Jr., \emph{The local {L}anglands conjecture}, Representation
  theory of groups and algebras, Contemp. Math., vol. 145, Amer. Math. Soc.,
  Providence, RI, 1993, pp.~305--379. \MR{1216197}

\bibitem[Wal01]{walds}
Jean-Loup Waldspurger, \emph{Int{\'e}grales orbitales nilpotentes et endoscopie
  pour les groupes classiques non ramifi{\'e}s}, Ast{\'e}risque, vol. 269,
  Paris: Soci{\'e}t{\'e} Math{\'e}matique de France, 2001 (French).

\bibitem[Whi22]{whitmore2022taylorwiles}
Dmitri Whitmore, \emph{The taylor-wiles method for reductive groups}, 2022.

\bibitem[Yu01]{Yu}
Jiu-Kang Yu, \emph{Construction of tame supercuspidal representations}, J.
  Amer. Math. Soc. (2001), no.~3, 579--622. \MR{1824988}

\end{thebibliography}

\end{document}